\newtheorem{theorem}{Theorem}[section]
\newtheorem{lemma}[theorem]{Lemma}
\newtheorem{prop}[theorem]{Proposition}
\newtheorem{cor}[theorem]{Corollary}
\newtheorem{deF}[theorem]{Definition}
\newtheorem{example}[theorem]{Example}
\newtheorem{notation}[theorem]{Notation}
\renewcommand{\implies}{\Rightarrow}
\newcommand{\zz}{\mathbb{Z}}
\newcommand{\nat}{\mathbb{N}}
\newcommand{\real}{\mathbb{R}}
\newcommand{\dis}{\displaystyle}
\newcommand{\ncom}{\newcommand}
\renewcommand{\implies}{\Rightarrow}
\def\abs#1{\left\vert #1 \right\vert}
\def\set#1{\lbrace #1 \rbrace}
\newcommand{\Inv}{{}^{-1}}
\newcommand{\res}{\operatorname{res}}
\newcommand{\al}{\alpha}
\newcommand{\ome}{\omega}
\newcommand{\ep}{\epsilon}
 \ncom{\Del}{\Delta}
\newcommand{\zec}{Zeckendorf}
\newcommand{\fib}{Fibonacci}
\newcommand{\funds}{fundamental sequence}
\newcommand{\HS}[1]{\rule{#1}{0pt}}
\newcommand{\cE}{\mathcal{E}}
\newcommand{\lex}{lexicographical}
\newcommand{\dord}{<_\text{d}}
\newcommand{\dordeq}{\le_\text{d}}
\newcommand{\aord}{<_\text{a}}
\newcommand{\aordeq}{\le_\text{a}}
\newcommand{\cf}{coefficient function}
\newcommand{\UI}{\mathbf{I}}
\newcommand{\OI}{\UI}
\newcommand{\ord}{\mathrm{ord}}
\newcommand{\moD}[1]{\ (\mathrm{mod}\ #1)}
\newcommand{\wt}{\widetilde}
\newcommand{\wh}{\widehat}
\newcommand{\bbeta}{\bar\beta}
\newcommand{\ddelta}{\bar\theta}
\newcommand{\hbeta}{\hat \beta}
\newcommand{\immsucc}{immediate successor}
\newcommand{\immpred}{immediate predecessor}
\newcommand{\rev}{\mathrm{rev}}
\newcommand{\revbar}{\overline{\rev}}
\newcommand{\shf}{\mathrm{shf}}
\newcommand{\phitilde}{\widetilde{\phi}}
\newcommand{\hdelta}{\hat\delta}
\newcommand{\omtilde}{\widetilde{\ome}}
\newcommand{\dstar}{\delta^*}
\newcommand{\tsum}{\textstyle\sum}
\newcommand{\cEsbar}{\overline{\cE^*}}
\newcommand{\Lsbar}{\overline{L^*}}
\newcommand{\Lstar}{L^*}
\newcommand{\tLstar}{\wt L^*}
\newcommand{\bdecom}{block decomposition}
\newcommand{\decom}{decomposition}
\newcommand{\lesb}{Lesbegue}
\newcommand{\supint}{support interval}
\newcommand{\fin}{_\circ}
\newcommand{\inF}{\mathrm{inf}}
\newcommand{\Htilde}{\widetilde{H}}
\newcommand{\Ltilde}{\widetilde{L}}
\newcommand{\aorcoll}{ascendingly-ordered}
\newcommand{\cEtilde}{\widetilde{\cE}}
\newcommand{\eval}{\mathrm{eval}}
\begin{document}

\begin{center}
\bf\Large
Distribution of \zec\ expressions \\
\rm
\normalsize Sungkon Chang
\end{center}

\begin{quote}
{\bf Abstract:}
By \zec's Theorem, every positive integer is uniquely written as a sum of
distinct non-adjacent \fib\ terms.
In this paper, we investigate the asymptotic formula of the number of
binary expansions $<x$ that have no adjacent terms, and generalize the result to the setting of general linear recurrences with non-negative integer coefficients.
\end{quote}

\section{Introduction} \label{sec:introduction}

\zec's Theorem \cite{zec}
states that each positive integer  is expressed uniquely as a sum of distinct non-adjacent terms of the Fibonacci sequence $(1,2,3,5,\dots)$ where
we reset $(F_1,F_2)=(1,2)$, and 
the expression is called {\it the \zec\ expansion of a positive integer}. 
\zec\ expansions share the simplicity of representation with the binary expansion, but also they are quite curious in terms of the arithmetic operations, determining the $0$th digits, the partitions in \fib\ terms,
the minimal summand property of \zec\ expansions, and its converse; see
\cite{chang-2021,miller:minimality,daykin,grabner,kimberling,fenwick,robbins}.

\zec's Theorem consists of three components; the sequence, the condition on expressions, and the set of numbers represented by the sequence.
By varying each component and shifting the focus to a particular component,
we encounter many interesting questions.
For example, we may vary the \fib\ sequence slightly by changing its initial values,
but maintaining the recurrence relation, and
ask ourselves how many non-negative integers $<x$ are sums of distinct non-adjacent terms of the new sequence---this question is investigated in \cite{chang-2018}.

In this paper, we fix the condition on expressions, 
and change the sequence more than slightly.
The first example we considered was the sequence $\set{2^{k-1}}_{k=1}^\infty$ and \zec's expressions for the \fib\ sequence, i.e.,
we asked ourselves how many non-negative integers $<x$ are expressed as a sum of distinct non-adjacent powers of $2$.
For example, the binary expansion
$165=1 + 2^2 + 2^5 + 2^7 $ satisfies the non-adjacency condition
while the binary expansions of $166$ and $167$ do not. 
For generalization, we reformulate the task as follows.
\begin{deF}\rm \label{def:cZ}
Let $\nat_0$ be the set of non-negative integers.
The infinite tuples  $\mu$ in  $\prod_{k=1}^\infty \nat_0$ are called {\it \cf s}, and
let  $\mu_k$   for $k\in \nat$ denote the $k$th entry of $\mu$, i.e.,
$\mu=(\mu_1,\mu_2,\dots)$.

A set $\wt \cE$ of \cf s $\mu$ is said to be {\it for positive integers} if 
only finitely many entries of $\mu$ are positive for each $\mu\in\wt\cE$, and 
a set of \cf s  is also called a {\it collection}.
Given a collection $\wt\cE$ (of \cf s) for positive integers, an increasing   sequence $\set{\wt H_k}_{k=1}^\infty$ of positive integers 
is called  {\it a fundamental sequence of  $\wt \cE$}
if for each $n\in\nat$, there is unique $\mu\in\wt \cE$ such that 
$n=\sum_{k=1}^\infty \mu_k \wt H_k$.

\end{deF}

The following is immediate from Theorem \ref{thm:integers}.
\begin{lemma}\label{lem:binary-FS}
Let $\wt\cE$ be 
the collection of \cf s $\mu$ for positive integers such that 
$\mu_k\le 1$ for all $k\in \nat$, and 
let $\cE$ be 
the subcollection of $\wt \cE$ consisting of $\mu$ such that 
$\mu_k=1$ implies that $\mu_{k+1}=0$ for all $k\in\nat$.
Then,  
$\set{2^{k-1}}_{k=1}^\infty$ and the \fib\ sequence are the only fundamental sequences of $\wt \cE$ and 
$\cE$, respectively. 
\end{lemma}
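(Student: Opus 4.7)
The plan is to invoke Theorem~\ref{thm:integers} for existence (it presumably packages the fact that $\set{2^{k-1}}$ is a \funds\ of $\wt\cE$ and that the \fib\ sequence is a \funds\ of $\cE$, which is just Zeckendorf's theorem and the binary expansion), so the real work is uniqueness. I would prove both uniqueness statements by induction on the index $m$, arguing that at each step the next term $\wt H_{m+1}$ is pinned down as the smallest positive integer not yet representable by the partial sequence under the relevant adjacency condition.

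For the binary case, I first observe that $\wt H_1=1$: since $1$ must admit a representation $\sum \mu_k \wt H_k$ with $\mu_k\in\set{0,1}$, and the sequence $\set{\wt H_k}$ is increasing with positive integer values, only $\wt H_1=1$ permits this. Inductively, suppose $\wt H_k=2^{k-1}$ for $k\le m$. Then the $\wt\cE$-representations using only $\wt H_1,\dots,\wt H_m$ realize each integer in $\set{0,1,\dots,2^m-1}$ exactly once, so $2^m$ is the smallest integer not yet represented. Thus $\wt H_{m+1}\le 2^m$, for otherwise $2^m$ would have no $\wt\cE$-representation at all. Conversely, if $\wt H_{m+1}<2^m$, then the singleton representation $\wt H_{m+1}$ clashes with an existing representation of that same value using only $\wt H_1,\dots,\wt H_m$, violating uniqueness. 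Hence $\wt H_{m+1}=2^m$.

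For the $\cE$ case I run the same induction, this time comparing to the \fib\ sequence $(F_k)$ with $F_1=1$, $F_2=2$. The base cases $\wt H_1=1$ and $\wt H_2=2$ follow from the fact that $1$ and $2$ must admit $\cE$-representations and that no representation of $2$ is available when $\wt H_2\ge 3$ (the only available summands $<3$ being $\wt H_1=1$, and non-adjacency plus $\mu_k\le 1$ prevents reusing it). For the inductive step, assuming $\wt H_k=F_k$ for $k\le m$, the set of values realized by $\cE$-representations supported on $\set{1,\dots,m}$ is precisely $\set{0,1,\dots,F_{m+1}-1}$ by Zeckendorf's theorem applied to the \fib\ segment. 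Thus the smallest integer not yet represented is $F_{m+1}$, forcing $\wt H_{m+1}\le F_{m+1}$; and if $\wt H_{m+1}<F_{m+1}$, then since $\wt H_{m+1}>\wt H_m=F_m$, Zeckendorf gives a second $\cE$-representation of $\wt H_{m+1}$ using only $F_1,\dots,F_m$, contradicting uniqueness.

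The genuinely delicate point is the non-adjacency bookkeeping in the Fibonacci induction: when I claim that the $\cE$-representations supported on indices $\le m$ realize exactly $\set{0,\dots,F_{m+1}-1}$, I am using a consequence of Zeckendorf's theorem rather than a trivial cardinality count, so I would either cite Theorem~\ref{thm:integers} for this or spell out a short direct induction showing that the number of $\cE$-admissible subsets of $\set{1,\dots,m}$ equals $F_{m+1}$ and their sums fill $[0,F_{m+1}-1]$. Everything else is formal, and the two cases run in parallel once the correct \zquote{smallest-missing-integer} principle is isolated.
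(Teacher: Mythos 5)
Your argument is correct, but it takes a noticeably different route from the paper, and you slightly misread what Theorem~\ref{thm:integers} provides. The paper treats Lemma~\ref{lem:binary-FS} as \emph{immediate} from Theorem~\ref{thm:integers}: after observing that $\wt\cE$ and $\cE$ are the periodic \zec\ collections determined by $\wt L=(1,1)$ and $L=(1,0)$ respectively, Theorem~\ref{thm:integers} already asserts that there is a \emph{unique} increasing sequence $H$ with $\eval_H$ bijective, and the recursion (\ref{eq:full-recursion}) forces $H_n=2^{n-1}$ and $H_n=F_n$. So existence and uniqueness come bundled, and the paper's \zquote{proof} is just that citation. Your opening sentence treats Theorem~\ref{thm:integers} as supplying only existence and then reproves uniqueness by hand; the uniqueness half of your work is therefore redundant relative to the paper's intended argument. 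That said, your \zquote{smallest-missing-integer} induction is a valid, more elementary, self-contained alternative: it replaces the machinery of immediate predecessors and the recursion (\ref{eq:full-recursion}) with the classical facts that the binary expansion is unique and that non-adjacent subsets of $\set{F_1,\dots,F_m}$ sum bijectively onto $\set{0,\dots,F_{m+1}-1}$. The delicate point you flag in the Fibonacci step is real and you handle it correctly by leaning on Zeckendorf's theorem (the identity $F_m+F_{m-2}+\cdots=F_{m+1}-1$ is what closes the interval). One small inaccuracy: in the base case for $\cE$ you attribute the impossibility of reusing $\wt H_1$ to \zquote{non-adjacency plus $\mu_k\le1$}, but it is the bound $\mu_k\le1$ alone that forbids the coefficient $2$; non-adjacency only rules out $\beta^1+\beta^2$. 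Both approaches are sound; the paper's buys brevity by investing in general theory, while yours is transparent and needs only Zeckendorf's original theorem.
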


\begin{deF}
\rm \label{def:Z-binary}
Let  $\cE$ be the collection defined Lemma \ref{lem:binary-FS}.
If $\mu\in \cE$, then the binary expansion $\sum_{k=1}^\infty \mu_k 2^{k-1}$
is said to be {\it \zec}. 
\end{deF}
\noindent
Then, the earlier task is equivalent to finding
an asymptotic formula of the number of positive integers $ <x$
that have \zec\ binary expansions.

The main goal of this paper is to investigate the asymptotic formula of a function 
that counts the number of positive integers $n$ up to $x$ in
the setting
where the collections $\cEtilde$ and $\cE$ are replaced with {\it periodic \zec\ collections}; see
Definition \ref{def:periodic-collection}.
In \cite{chang-2021}, {\it generalized \zec\ expansions} are introduced in terms of a \lex\ order, which further generalized the expansions introduced in \cite{mw}; see Definition \ref{def:zec-N}.
The expansions introduced in \cite{mw} are for general linear recurrences with constant non-negative integer coefficients, and in the viewpoint of \cite{chang-2021}, the expansions are called {\it periodic}.
The main result of this paper is for the generalized \zec\ expansions that are periodic.
However, several results remain valid for non-periodic ones,
and for this reason,
we use the language introduced in \cite{chang-2021} to present
our work.
Appealing to the reader's intuition,
we formulate the first main result below without properly defining terms.
The terms will be properly introduced in later sections, and its technical version is stated in
Theorem \ref{thm:main-theorem} and \ref{thm:main-minimum}.
\begin{theorem}\label{thm:main-introduction}
Let $\wt\cE$ be a periodic
\zec\ collection for positive integers, and let $\set{\wt H_k}_{k=1}^\infty$ be the unique fundamental sequence of $\wt\cE$.
Let $\cE$ be a periodic
\zec\ subcollection of $\wt\cE$.
Let $z(x)$ be the number of non-negative integers $n<x$ such that 
$n=\sum_{k=1}^\infty \mu_k \wt H_k$ for some $\mu\in\cE$.
Then, (1) there are positive real numbers $\gamma<1$, $a$, and $b$ such that
$a< z(x)/x^\gamma <b$ for all sufficiently large $x$; (2)
there are finitely many explicit and computable sequences $\set{x_k}_{k=1}^\infty$
from which
the limsup and liminf of $z(x)/x^\gamma$ can be determined.

\end{theorem}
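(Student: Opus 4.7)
The plan is to exploit periodicity of $\wt\cE$ and $\cE$ to obtain linear recurrences governed by finite nonnegative integer matrices, and then extract the growth rate of $z(x)$ from their spectral data. Periodicity of $\wt\cE$ yields a nonnegative integer matrix $\wt A$ whose Perron eigenvalue $\wt\lambda>1$ controls the fundamental sequence, giving $\wt H_N=\Theta(\wt\lambda^{\,N})$. In parallel, let $c_N$ be the number of $\mu\in\cE$ with $\mu_k=0$ for all $k>N$; periodicity of $\cE$ produces a transition matrix $A$ coming from a finite automaton recognizing admissible $\cE$-coefficient functions, whose dominant eigenvalue $\lambda$ satisfies $c_N=\Theta(\lambda^N)$. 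Because $\cE$ imposes strictly more restrictions than $\wt\cE$, entry-wise domination forces $\lambda<\wt\lambda$, so $\gamma:=\log\lambda/\log\wt\lambda$ lies in $(0,1)$.

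For part (1), unique representability in $\wt\cE$ identifies the $\cE$-representable integers in $[0,\wt H_{N+1})$ with the $\mu\in\cE$ supported in $\set{1,\ldots,N}$, so $z(\wt H_{N+1})=c_N\asymp \wt H_{N+1}^{\,\gamma}$, which establishes the theorem along the subsequence $x=\wt H_{N+1}$. To handle arbitrary $x$, I write the unique $\wt\cE$-expansion $x=\sum_j \wt H_{i_j}$ with $i_1>i_2>\cdots$ and partition the integers below $x$ according to the leading block at which their $\wt\cE$-expansion first falls short of that of $x$. This yields a decomposition $z(x)=\sum_j d_j$ where each summand $d_j=O(\lambda^{i_j})$; since $\wt H_{i_j}\le x$, each $d_j=O(x^\gamma)$, and because the number of terms is controlled by the period, $z(x)<b\,x^\gamma$. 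For the lower bound, $z(x)\ge c_{i_1-1}\asymp \wt H_{i_1}^{\,\gamma}$, and $\wt H_{i_1}$ is at least a fixed positive fraction of $x$, so $z(x)>a\,x^\gamma$.

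For part (2), the ratio $z(x)/x^\gamma$ oscillates log-periodically rather than converging. Its extrema are realized along sequences $\set{x_k}$ whose $\wt\cE$-expansions exhibit periodic tail patterns, since asymptotically only the automaton state and the tail weight govern $z(x)/x^\gamma$. The automaton for $\cE\subseteq\wt\cE$ has finitely many states, so only finitely many periodic tails qualify; each determines an explicit, computable sequence $\set{x_k}$ whose ratio converges to a closed-form limit expressible via the Perron eigenvectors of $A$ and $\wt A$. The $\limsup$ and $\liminf$ of $z(x)/x^\gamma$ are then the largest and smallest of these finitely many limits. The main obstacle, and technical heart of the argument, is identifying \emph{which} periodic tails realize the extremal values and confirming that the corresponding subsequences actually attain, rather than merely bound, the extrema; this requires a careful variational analysis over the finite state space of the product automaton, comparing the spectral weight of admissible cycles against their lengths on a logarithmic scale, together with a non-degeneracy condition on the Perron eigenvectors to preclude spurious near-maximizers.
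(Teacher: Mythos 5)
For part (1), your automaton/transfer-matrix route is a genuinely different and valid approach from the paper's. The paper's key device is the \emph{duality formula} (Theorem~\ref{thm:cE-duality}): for $\mu\in\wt\cE$, $z(\sum\mu\wt H)=\sum\bar\mu H$, where $H$ is the fundamental sequence of $\cE$ and $\bar\mu$ is the immediate $\cE$-successor of the largest $\cE$-element $\aord\mu$. Combined with the Binet-type asymptotics of $H$ and $\wt H$ (Theorem~\ref{thm:ratios}), this gives a closed expression for $z(x)/x^\gamma$ in one stroke. Your approach instead identifies $z(\wt H_{N+1})$ with the word count $c_N$ of the automaton for $\cE$ (which in fact equals $H_{N+1}$ by the bijection of Theorem~\ref{thm:integers}), and then treats general $x$ by a leading-digit decomposition plus a geometric-series bound. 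This is more elementary and self-contained but needs care: your phrase ``the number of terms is controlled by the period'' is not quite right (the number of terms in the decomposition grows like $\log x$); it is the geometric decay $c_{i_j-1}=O(\lambda^{i_j})$ that saves the sum, not a bounded number of summands. Also, you assert ``entry-wise domination forces $\lambda<\wt\lambda$'' as if automatic; that step requires an irreducibility/primitivity argument for the dominating automaton, and in the paper it is a nontrivial standalone result (Theorem~\ref{thm:ratios}), proved by lifting both recurrences to common period $NM$ and exhibiting a strict inequality.

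Part (2) has a genuine gap. You assert that the extremizing sequences $\set{x_k}$ correspond to finitely many ``periodic tails'' because the product automaton has finitely many states. This does not follow: a priori, there are infinitely many candidate tails $\bbeta^{b+1}$ (one for each starting index $b$), and each can be prepended with an unbounded head of $L$-blocks, so the set of candidate extremizers in $\cEsbar-\cE^*$ is infinite. The finiteness is exactly what Theorems~\ref{thm:main-theorem} and~\ref{thm:main-minimum} establish, using Lemma~\ref{lem:gamma-ineq} (a quantitative sufficient condition, in terms of the exponent $p$, for $\delta^*$ to strictly increase under a perturbation) to show that the largest $\Lsbar$-support index of a global maximizer must satisfy $\ell<\max\set{2,p^*}$, and dually for the minimizer. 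You correctly flag this as ``the technical heart'' but then leave it entirely to a vague ``variational analysis over the finite state space,'' which is not a finite state space. Until that bound is produced, your finite search and hence the conclusion of (2) are unjustified. You also implicitly use, without establishing it, that $z(x)/x^\gamma$ is asymptotically governed by a continuous function of $\set{\log_{\wt\phi}x}$; that is the content of Proposition~\ref{prop:del-delstar} and Theorem~\ref{thm:continuous} in the paper, and its continuity is what lets one pass from local extrema of $\delta^*$ to the $\limsup$/$\liminf$ of $z(x)/x^\gamma$.
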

\noindent%
For the \zec\ binary expansion, 
using Theorem \ref{thm:main-theorem}, we prove
\begin{equation}\label{eq:lim-inf-sup-2}
\lim\sup_x\ \frac{z(x)}{x^\gamma} =\frac{ \phi+2}5 3^\gamma \approx 1.55,\quad
\lim\inf_x\ \frac{z(x)}{x^\gamma} = \frac{3 \phi+1}5\approx 1.17	
\end{equation}
where $\gamma:=\log_2\phi$ and $\phi$ is the golden ratio; see Section \ref{sec:base-N}.
This is interesting since the values still bear the golden ratio, which must have
come from the expressions in $\cE$; see Section \ref{sec:non-base} and Theorem \ref{thm:N-order}
for more examples of explicit calculations of the bounds.

Let us demonstrate the behavior of $z(x)/x^\gamma$ for the \zec\ binary expansions.
Shown in the first figure of Figure \ref{fig:graph-density} is the graph of $z(x)/x^\gamma$.
The fluctuating behavior of the graph suggests that $z(x)$ may not be asymptotic to an \lq\lq elementary\rq\rq\ increasing function.
However, it reveals certain self-similarities as in some dynamic systems, and in particular, the distribution of
the values are far from being random.

Shown in the second figure of Figure \ref{fig:graph-density} is the
frequency chart of
the values of $z(x)/x^\gamma$ for $262144\le x<349525$, which is one of the maximal intervals in the figure on which the values are not (always) decreasing---the binary expansions of these boundary values will be explained later. The values of the ratio range approximately from $1.17$ to $1.55$, which are represented in the horizontal axis, and we partitioned it into $200$ intervals of equal length, in order to count the number of the values that fall into each of the 200 intervals, which is represented vertically.
The third figure is the probability distribution of the values, i.e.,
it is the graph of $\mathrm{Prob}\set{ 262144\le x<349525 : z(x)/x^\gamma\le r}$ as a function of $r$.

\newcommand{\blankfig}{
\begin{figure}[b]
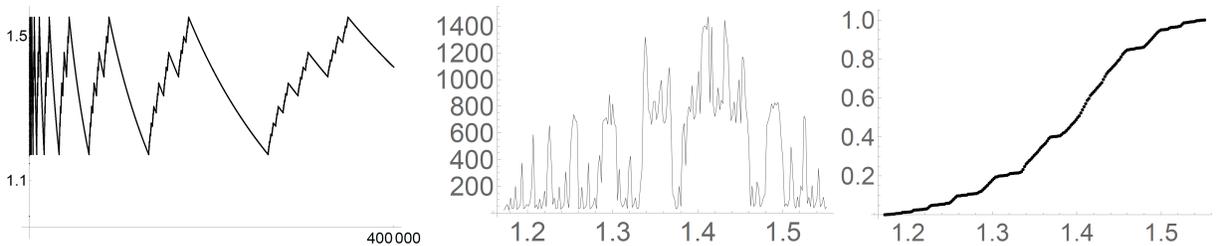
\centering
\includegraphics[height=.14\textheight]{figure.png}
\ %
\includegraphics[height=.14\textheight]{figure.png}
\ %
\includegraphics[height=.14\textheight]{figure.png}
\caption{Graph, frequency, and distribution of $z(x)/x^\gamma$}\label{fig:graph-density}
\end{figure}
}
\newcommand{\realfigpng}{
\begin{figure}[b]\centering
\includegraphics[height=.14\textheight]{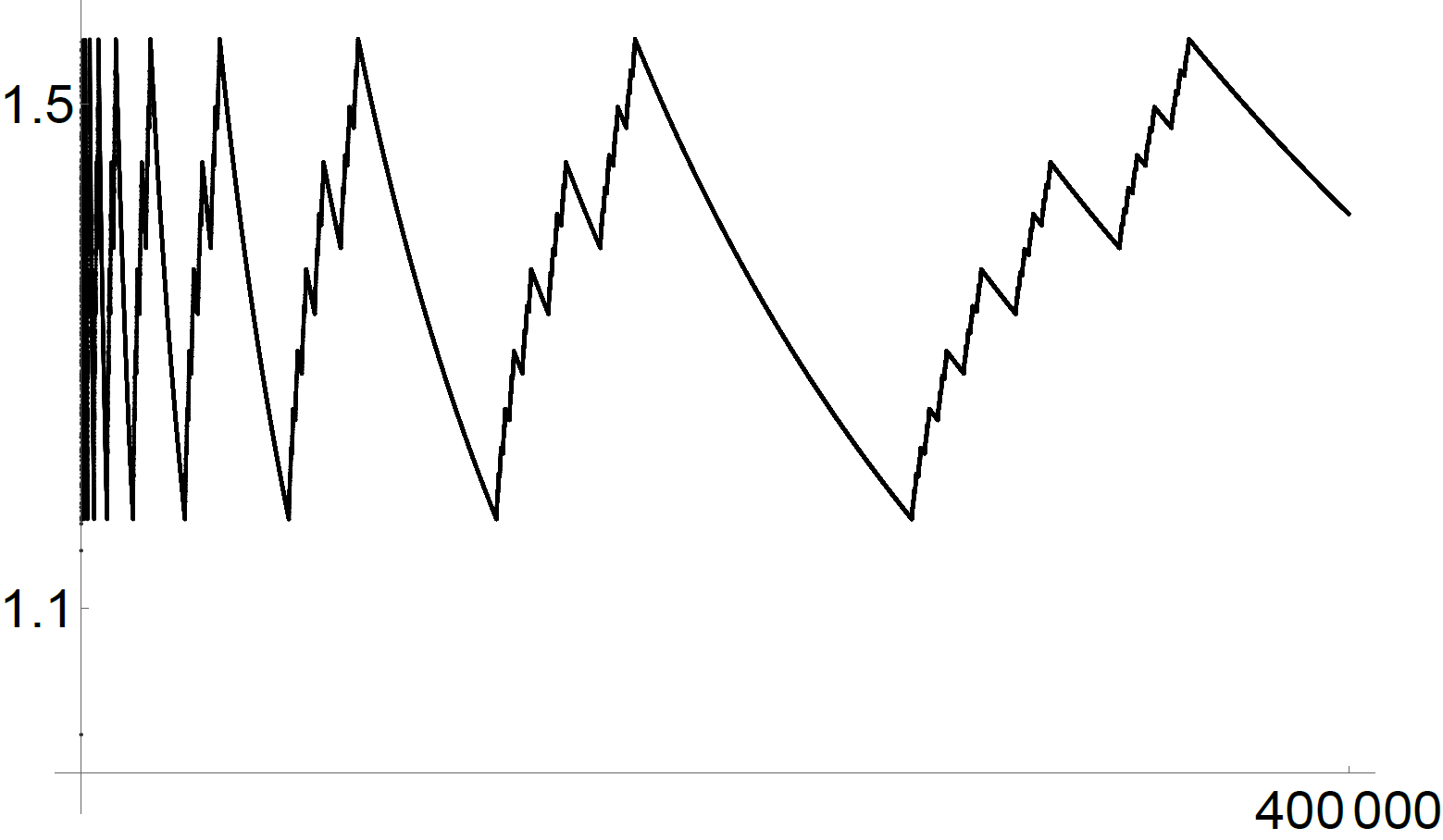} %
\ %
\includegraphics[height=.14\textheight]{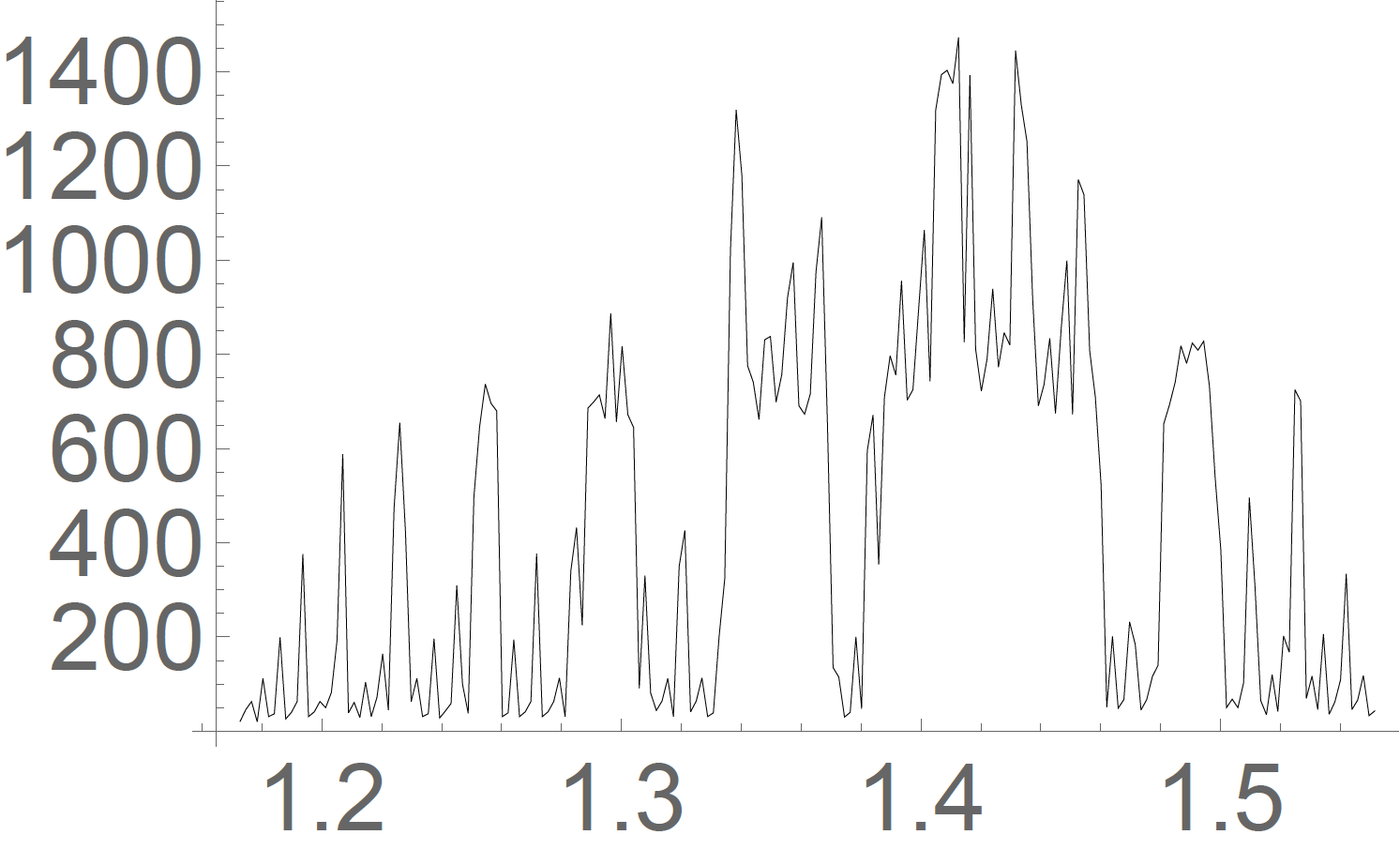}%
\ %
\includegraphics[height=.14\textheight]{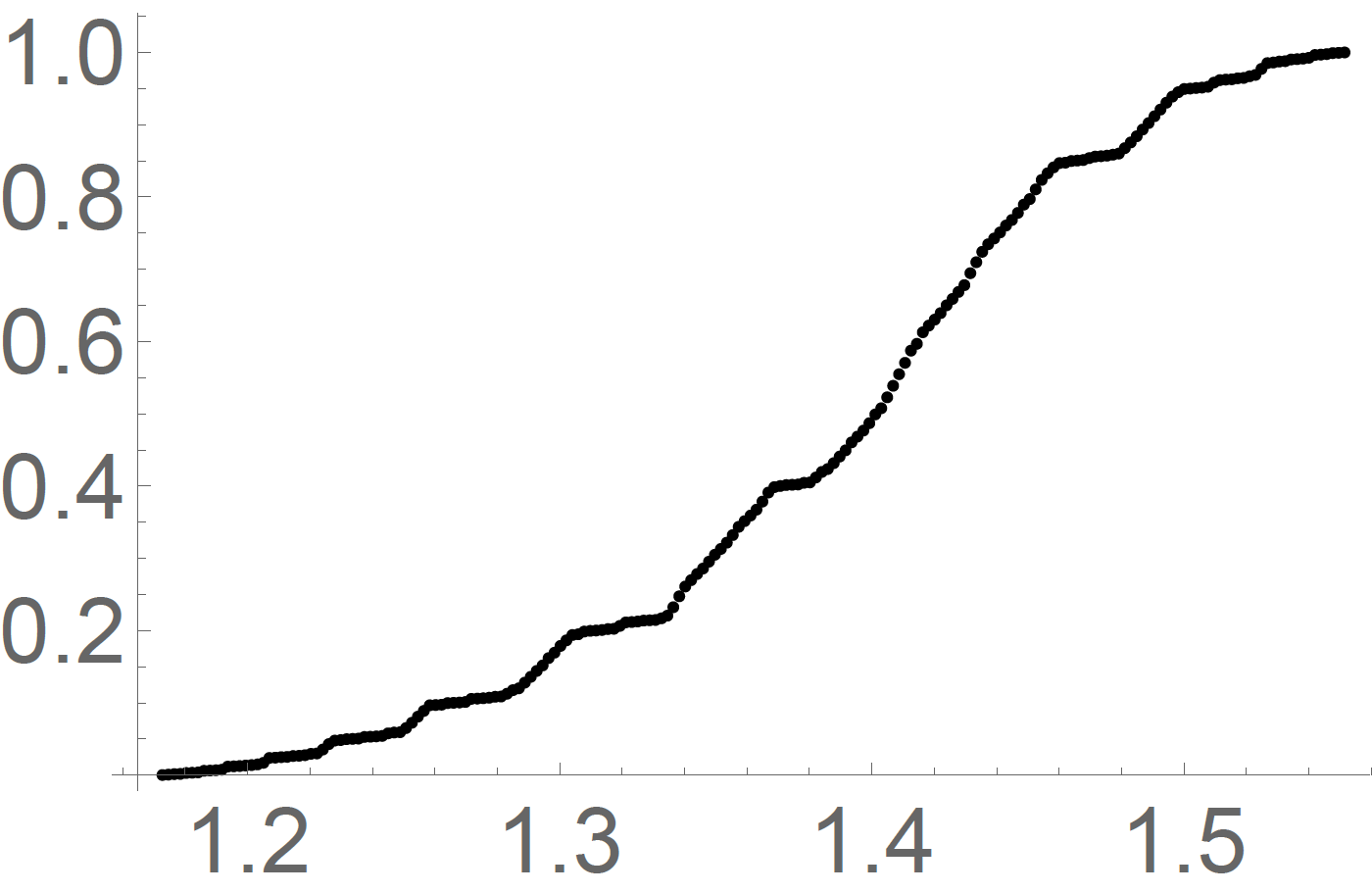}%
\caption{Graph, frequency, and distribution of $z(x)/x^\gamma$}\label{fig:graph-density}
\end{figure}
}
\newcommand{\realfig}{
\begin{figure}[b]\centering
\includegraphics[height=.14\textheight]{graph-delta-2.pdf} %
\ %
\includegraphics[height=.14\textheight]{delta-density-2.pdf}%
\ %
\includegraphics[height=.14\textheight]{delta-distr.pdf}%
\caption{Graph, frequency, and distribution of $z(x)/x^\gamma$}\label{fig:graph-density}
\end{figure}
}
\realfigpng

As observed in the first figure of Figure \ref{fig:graph-density},
there are values of $x$ where the ratios are locally extremal over a relatively large interval, and
we prove that their limits are as identified in (\ref{eq:lim-inf-sup-2}).
The local minima are obtained at $x_1=2^{n-1}$ for each $n\ge 2$, and the local maxima are obtained at
$x_2=\sum_{k=0}^t 2^{n-2k-1}$ where $t=\lfloor (n-1)/2\rfloor$.
The sample interval $262144\le x<349525$ used in the second figure of
Figure \ref{fig:graph-density} is obtained at these values where $n=19$.

The topic has a purely combinatorial interpretation as well.
Consider the sets $\wt\cE$ defined in Lemma \ref{lem:binary-FS}, and 
notice that $\cEtilde$ is \lex ly ordered; see Definition \ref{def:a-order}.
  We may ask ourselves
how many tuples in $\wt\cE$ that are \lq\lq less than\rq\rq\ a certain tuple in $\wt \cE$ and have no consecutive $1$s,
and the fluctuating behavior of the distribution of such tuples in $\wt\cE$
is precisely as presented in Figure \ref{fig:graph-density}.

In \cite{shallit}, the notion of {\it regular sequences} is introduced,
and proved in \cite{heuberger} is an asymptotic formula of $\sum_{k=1}^{x } \chi(k)$ in full generality
where $\chi(k)$ is a regular sequence.
It turns out that the fluctuation behavior is a common feature of the summatory function of a regular sequence, and
there are many examples of summatory functions in the literature that have similar fluctuation behaviors; see \cite{coquet,delange,petho,kirsch}.
This setup in \cite{heuberger} applies to some cases of ours.
For the case of \zec\ binary expansions,
the counting function $z(x)$ is equal to $\sum_{k=0}^{x-1} \chi(k)$ where $\chi$ is the regular sequence
defined by $\chi(k)=1$ if the binary expansion of $k$ is \zec, and $\chi(k)=0$ otherwise.
The asymptotic formula involves a fluctuation factor as a continuous periodic function on the real numbers, and we follow their formulation of the asymptotic formula, which has been standard in the literature;
see Section \ref{sec:gen-reg-seq}.\label{notion:chi}

The authors of \cite{heuberger} describe the fluctuation factor as a Fourier expansion.
In the context of the Mellin-Perron summation formula,
the description of the Fourier coefficients is given in terms of the coefficients of Laurent expansions of the Dirichlet
series $\frac1s\left(\chi(0) + \sum_{k=1}^\infty \chi(k)/k^s\right)$ at special points that line up in a vertical line in the complex plane.
One of our main goals is to obtain exact bounds on the fluctuation factor.
However, it does not seem feasible for us to use the Fourier expansion formulation and obtain exact bounds.
Moreover, the current notion of regular sequences in the literature is formulated for base-$N$ expansions, and
hence, the asymptotic formula in \cite{heuberger} does not apply to
the expansions in terms of  the fundamental sequences  of periodic \zec\ collections.
We shall introduce an example in Section \ref{sec:examples} where we count certain expressions under non-base-$N$ expansions.

Introduced in Theorem \ref{thm:cE-duality} is a formula for the counting function $z(x)$ in full generality,
and we call it {\it the duality formula}.
We use the duality formula to obtain exact bounds on the fluctuation factor of $z(x)$ rather than the
Fourier expansions described in \cite{heuberger}.
The duality formula is a manifestation of the phenomenon that the \cf s of $\cE$ themselves bring out  their own fundamental sequence $\set{H_k}_{k=1}^\infty$
into the formula of $z(x)$.
For example, as stated in Lemma \ref{lem:binary-FS}, the \fib\ sequence
is the fundamental sequence of the \cf s that do not have consecutive $1$s, and the
number of \zec\ binary expansions $<x$ is written
as a sum of \fib\ terms in a fashion similar to the binary expansion of $x$;
see Lemma \ref{thm:2-duality} below.
A subset of $\nat$ is called an index subset in the context of   series expansions,
and an element of the subset is called {\it an index}.
\begin{lemma}[Duality formula]\label{thm:2-duality}
Let $A$ be a finite subset of indices.
If $A$ contains no adjacent indices, define $\bar A:=A$.
If there is a largest index $j\in A$ such that $j+1\in A$, then
define $\bar A:=\set{ k\in A : k\ge j}$.
Then, $z(\sum_{k\in A} 2^{k-1}) = \sum_{k\in \bar A} F_k$.
\end{lemma}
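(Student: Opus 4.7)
The plan is to realize the counting function $z$ as a transfer of the \zec\ enumeration through a common ordering on the set $\cE$ of admissible coefficient functions. Augmenting $\cE$ with the zero tuple $\mb{0}$, define
\[
\psi(\mu) := \sum_{k=1}^\infty \mu_k\, 2^{k-1}, \qquad \phi(\mu) := \sum_{k=1}^\infty \mu_k\, F_k
\]
on $\cE \cup \{\mb{0}\}$. By \zec's Theorem, $\phi$ is a bijection onto $\nat_0$, while $\psi$ is an injection whose image is exactly the set of non-negative integers whose binary expansion has no two adjacent $1$s. Equip $\cE \cup \{\mb{0}\}$ with the order that compares two tuples at their highest differing index. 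The trivial bound $\sum_{k<n} \mu_k\, 2^{k-1} < 2^{n-1}$ and the classical Fibonacci identities $F_1 + F_3 + \cdots + F_{2m-1} = F_{2m} - 1$ and $F_2 + F_4 + \cdots + F_{2m} = F_{2m+1} - 1$, which together yield $\sum_{k<n,\ \mu\in\cE} \mu_k F_k \le F_n - 1 < F_n$, show that both $\psi$ and $\phi$ preserve this order strictly. Consequently, for every $\mu^* \in \cE \cup \{\mb{0}\}$ one has the transfer identity $z(\psi(\mu^*)) = \phi(\mu^*)$.

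If $A$ has no adjacent indices, then $\mb{1}_A \in \cE$, and the transfer identity applied to $\mu^* = \mb{1}_A$ instantly gives $z(x) = \sum_{k \in A} F_k = \sum_{k \in \bar A} F_k$.

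If $A$ has a largest adjacent pair $\{j, j+1\}$, decompose $A = A^{\mathrm{hi}} \sqcup \{j, j+1\} \sqcup A^{\mathrm{lo}}$ with $A^{\mathrm{hi}} \subseteq \{k \ge j+3\}$ admissible (position $j+2$ is missing from $A$ by maximality of $j$) and $A^{\mathrm{lo}} \subseteq \{1, \ldots, j-1\}$. Since $x$ itself is not admissible, there is a largest admissible integer $x_1 < x$, and $z(x) = z(x_1) + 1$. A greedy top-down construction identifies $x_1 = \psi(\mu_1)$, where $\mu_1 \in \cE$ matches $x$ above position $j+1$ (that is, on $A^{\mathrm{hi}}$), carries a $1$ at $j+1$, a $0$ at $j$ (to break the adjacency), and then the maximal admissible pattern supported on $L := \{j-1,\, j-3,\, \ldots\}$, terminating at position $1$ or $2$ according to the parity of $j$. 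A brief case check confirms $\mu_1$ is the largest element of $\cE$ with $\psi(\mu_1) < x$: any $\mu \in \cE$ strictly greater than $\mu_1$ in our order would have to differ from $\mu_1$ first at some position $p \le j-1$ outside $L$, forcing $\mu$ to carry $1$s at both $p$ and $p+1 \in L$ and violating admissibility. Applying the transfer identity to $\mu_1$, together with the Fibonacci identity $\sum_{k \in L} F_k = F_j - 1$, yields
\[
z(x) = z(x_1) + 1 = \sum_{k \in A^{\mathrm{hi}}} F_k + F_{j+1} + F_j = \sum_{k \in \bar A} F_k,
\]
since $\bar A = A^{\mathrm{hi}} \cup \{j, j+1\}$.

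The main obstacle is the Case 2 bookkeeping---identifying the binary expansion of $x_1$ correctly and handling the two parity regimes for $L$ uniformly. Both are resolved by the observation that $L$ is intrinsic to $j$ alone: the deficit $2^{j-1} - \sum_{k \in L} 2^{k-1}$ between the two candidates is always strictly positive, so the greedy choice remains optimal regardless of the shape of the low block $A^{\mathrm{lo}}$, and the parity split is absorbed cleanly by the two classical Fibonacci identities above.
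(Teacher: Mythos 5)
Your proof is correct, and it takes a more direct and self-contained route than the paper. In the paper, Lemma~\ref{thm:2-duality} is really a corollary of the general Duality Formula (Theorem~\ref{thm:cE-duality}): one applies $z(\sum\mu\wt H)=\sum\bar\mu H$ with $\wt H_k=2^{k-1}$, $H_k=F_k$, where $\bar\mu$ is produced by the Mixed Decomposition (Lemma~\ref{lem:ep-bar}), and then one observes (as the paper does in the remark after Theorem~\ref{thm:cE-duality}) that the Fibonacci recurrence converts $\sum\bar\mu F$ into $\sum_{k\in\bar A}F_k$. Your argument isolates the essential mechanism---that $\eval_{\wt H}$ and $\eval_H$ are order-preserving bijections from $\cE$ onto their images, giving the transfer identity $z(\psi(\mu^*))=\phi(\mu^*)$ for $\mu^*\in\cE$---and then, instead of invoking the $\bar\mu$ operator, you handle the inadmissible case by explicitly constructing the largest admissible integer $x_1<x$, using $z(x)=z(x_1)+1$, and absorbing the low-index alternating block with the telescoping identities $F_1+F_3+\cdots+F_{j-1}=F_j-1$ and $F_2+F_4+\cdots+F_{j-1}=F_j-1$. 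This buys a short, elementary proof of the specific instance at the cost of not generalizing to arbitrary periodic collections; the paper's longer detour through Lemma~\ref{lem:ep-bar} and Theorem~\ref{thm:cE-duality} is precisely the machinery needed for the general results that follow. Two small points you could make cleaner: your $\mu_1=\check{(\mb{1}_A)}$ is exactly the paper's notation from Definition~\ref{def:cE} for the immediate predecessor of $\bar{(\mb{1}_A)}$, and the step $z(x)=z(x_1)+1$ deserves one clause noting that $x_1$ itself is admissible so exactly one new admissible value is gained in passing from the threshold $x_1$ to $x$.
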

\noindent
For example, $100=2^2+2^5+2^6=\wt H_3+\wt H_6 + \wt H_7$, and
the indices of non-zero coefficients are $A=\set{3,6,7}$.
Thus, $\bar A=\set{6,7}$, and $z(100)= F_6+F_7 = F_8 =34$.

Let us formulate the counting function $z(x)$ using a continuous real-valued function as done in \cite{heuberger}.
Recall the example of the \zec\ binary expansions and the formulas (\ref{eq:lim-inf-sup-2}).
By \cite[Theorem A]{heuberger},
there is a continuous real-valued function $\Phi : \real \to \real$ such that
$z(x) \sim x^\gamma \Phi(\set{\log_2(x)})$ where $\set{\log_2(x)}$ denotes the fractional part of
$\log_2(x) $, and the authors of \cite{heuberger} describe
$\Phi(t)$ as a Fourier series.
As explained earlier, we use the duality formula, rather than the Fourier series description, and we also
use
{\it the generalized
\zec\ expansions for the real numbers in the interval $(0,1)$}, which
is developed in \cite{chang-2021}; see Section \ref{sec:general-real}.
In this section we state our results on the properties of $\Phi$ in Theorem \ref{thm:local-extremum} below, and
its technical versions are found in Theorem \ref{thm:not-diff}, \ref{thm:measure}, and \ref{thm:local-extrm}.
\begin{theorem}\label{thm:local-extremum}
Let $z(x)$ be the counting function defined in Theorem \ref{thm:main-introduction}.
Then, $z(x) \sim x^\gamma \Phi(\set{\log_{\wt\phi}(x)})$ for some positive real numbers $\gamma <1$ and $\wt\phi>1$ and a
continuous function $\Phi$.
Moreover, $\Phi$ is differentiable almost everywhere with respect to Lebesgue measure, and
there are explicit criteria in terms of the generalized \zec\ expansion of a real number $y$ for determining
whether $\Phi$ is differentiable at $y$ or not, and whether $\Phi$ has a local maximum, a local minimum, or neither at $y$.
\end{theorem}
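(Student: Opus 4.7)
The plan is to construct $\Phi$ on $[0,1)$ directly from the duality formula of Theorem~\ref{thm:cE-duality} and then read off its analytic properties from the structure of generalized \zec\ expansions of real numbers developed in Section~\ref{sec:general-real}. For a large integer $x$ with $\wt\cE$-expansion $x=\sum_k \mu_k\wt H_k$, the duality formula expresses $z(x)=\sum_{k\in\bar A} H_k$, where $\bar A$ is an index set computable from $\mu$. Since $\wt H_k\sim \wt c\,\wt\phi^k$ and $H_k\sim c\,\phi^k$ with $\gamma=\log\phi/\log\wt\phi<1$, the ratio $z(x)/x^\gamma$ depends to leading order only on the relative shape of the expansion of $x$, which is encoded by the fractional part $\set{\log_{\wt\phi}(x)}$. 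First I would write $x=\wt\phi^{n+y}$ with $n\in\nat$ and $y\in[0,1)$, identify $\wt\phi^y\in[1,\wt\phi)$ with its generalized \zec\ expansion for reals, and define $\Phi(y)$ as the limit of $z(x_n)/x_n^\gamma$ along any integer sequence $x_n\to\infty$ whose leading digits converge to that expansion. The duality formula plus geometric convergence of the tails yields both the existence of this limit and the claimed asymptotic identity.

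Continuity of $\Phi$ follows because nearby $y_1,y_2\in[0,1)$ correspond to real numbers in $[1,\wt\phi)$ whose generalized \zec\ expansions agree on an initial block whose length grows without bound as $|y_1-y_2|\to 0$; the tails contribute to $\Phi$ only through geometric series in $\wt\phi^{-k}$, giving a quantitative modulus of continuity. Almost-everywhere differentiability (Theorem~\ref{thm:measure}) I would establish by exhibiting $\Phi$ as a function of bounded variation and then invoking Lebesgue's theorem. Decompose $[0,1)$ into the cells determined by the first $n$ digits of the expansion of $\wt\phi^y$; on each cell, write $\Phi$ as a dominant piece coming from the fixed high digits (monotone and explicit in $y$) plus a rescaled copy of $\Phi$ applied to the tail variable. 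Self-similarity then furnishes a uniform geometric bound on the variation across cells at depth $n$, and summing over $n$ yields bounded total variation.

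For the explicit criteria (Theorems~\ref{thm:not-diff} and~\ref{thm:local-extrm}), the plan is to track how the index set $\bar A$ in the duality formula changes under small perturbations of $y$. At a $y$ whose generalized \zec\ expansion is non-terminating and avoids the finitely many \lq\lq bad\rq\rq\ terminal patterns of the \zec\ collection $\cE$, perturbations affect only arbitrarily high-order digits, so the left and right one-sided derivatives coincide and the local scaling gives differentiability. At a $y$ with a terminating expansion or a distinguished terminal block, the one-sided derivatives differ because $\bar A$ jumps; the jump is a finite alternating sum of $\phi^k$- and $\wt\phi^k$-weighted terms that is explicitly computable from the block, and its sign decides whether $y$ is a local maximum, a local minimum, or neither.

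The main obstacle is the bounded-variation step. While $z(x)$ is monotone in $x$, rescaling by $x^\gamma$ and restricting to the one-parameter fractional-part slice can in principle introduce unbounded oscillation, so the bounded-variation claim for $\Phi$ is not automatic. Overcoming this requires the self-similar recursion on each depth-$n$ cell to produce oscillations controlled geometrically by $\phi^{-n}$, so that the total variation summed over all cells at all depths converges; once this is in hand, the criteria for differentiability and for local extrema follow from the same recursion, made explicit for the terminal block.
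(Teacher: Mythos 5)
Your overall framework---use the duality formula of Theorem~\ref{thm:cE-duality}, rescale, and pass to a function on a fractional-part variable via the generalized \zec\ expansion of real numbers---is exactly the paper's route through Sections~\ref{sec:zec-exressions} and~\ref{sec:proofs}, where that function appears as $\delta^*$ on $\OI$. Where you diverge is in how you obtain almost-everywhere differentiability and the pointwise criteria, and the divergence matters.

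For a.e.\ differentiability you propose bounded variation followed by Lebesgue's theorem, and you correctly flag that you do not know how to get bounded variation from the self-similar recursion. The paper takes a more direct and informative route that sidesteps this entirely. Lemma~\ref{lem:ep+-} shows that for $\ep\in\cEtilde^*-\cEsbar$ the value $\revbar(\ep)$ is locally constant, so on the set $U=\eval_{\wt Q}(\cEtilde^*-\cEsbar)$ the function $\delta^*$ is locally of the form $y/t^\gamma$ with $y$ fixed, hence smooth and decreasing (Theorem~\ref{thm:continuous}); Theorem~\ref{thm:not-diff} shows the difference quotient blows up at every point of the complement $\eval_{\wt Q}(\cEsbar)$. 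The set $U$ is then shown to be open, written as an explicit disjoint union of intervals of lengths $\wt\ome^b(1-\rho)$ indexed by $\cE^*\fin$ (Proposition~\ref{prop:measurable}), and its measure is computed exactly to be $1$ via the generating-function identity $\sum_b C_b\wt\ome^b=(1-\wt\ome^N)/f(\wt\ome)$ together with $\rho=1-f(\wt\ome)/(1-\wt\ome^N)$ (Proposition~\ref{prop:generating}, Theorem~\ref{thm:measure}). This is both stronger and more economical than your plan: it produces an exact characterization of the differentiability set, which the theorem statement requires and which a Lebesgue-theorem argument alone would not yield.

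Your description of the pointwise criteria is also looser than what the paper proves. The precise dichotomy is not in terms of ``finitely many bad terminal patterns'' or the sign of a one-sided jump. What governs everything is the $\Lsbar$-block structure of $\ep\in\cEtilde^*$ (Corollary~\ref{cor:LL-decomposition} and Definition~\ref{def:LL-decomposition}): differentiability holds exactly on $\cEtilde^*-\cEsbar$, where $\delta^*$ is locally decreasing, so there is no local extremum; local minima occur exactly at $\cE^*\fin$ (finite-support $\Lstar$-block decompositions), local maxima exactly at $\cEsbar-\cE^*$ (tail equal to a maximal $\Lstar$-block $\bbeta^{b+1}$), and at $\cE^*-\cE^*\fin$ there is no local extremum even though the function is not differentiable there. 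The extremum proofs (Propositions~\ref{thm:local-minimum}, \ref{thm:not-local-extrm}, \ref{thm:local-max}) rest on Lemma~\ref{lem:gamma-ineq}, a quantitative criterion comparing $\Delta y/\Delta x$ against $\ome^p/\wt\ome^p$, not on reading off the sign of a finite alternating sum. In short: the skeleton of your proposal is right, but the bounded-variation step you identify as the obstacle should be replaced by the paper's explicit identification of the open set $U$ and its measure, and the criteria should be recast in terms of $\cEsbar$, $\cE^*\fin$, and $\cEsbar-\cE^*$ rather than in the vaguer form you sketch.
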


When the quotient $z(x)/x^\gamma$ is calculated,
we noticed that
its values are naturally transitioned to the quotient of two real numbers in the interval $(0,1)$, e.g.,
for the \zec\ binary expansions,
\begin{equation}
\frac{F_9 + F_6 + F_2}{\left(2^8 + 2^5 + 2 \right)^\gamma}
=
\frac{\al \ome+ \al \ome^2 + \al \ome^6 + E}{\left(\frac12 + \frac1{2^2} + \frac1{2^6}\right)^\gamma}
=\al\,\frac{ \ome+ \ome^2 + \ome^6 + E/\al}{\left(\frac12 + \frac1{2^2} + \frac1{2^6}\right)^\gamma}
\label{eq:sample-quotient}
\end{equation}
where $\ome=1/\phi$, $\al=(3\phi+1)/5$, and $E$ is a relatively smaller quantity.
Notice that properly estimating the value of the last expression in (\ref{eq:sample-quotient})
for general cases may rely on
the uniqueness of the expansions of the real numbers in the numerator and the denominator, and it leads us naturally to
the generalized \zec\ expansions of real numbers.
Motivated from the non-constant quotient factor of the last expression in (\ref{eq:sample-quotient}),
we define a function on the interval $(0,1)$ to represent the factor, and denote it by
$\delta^* : (0,1)\to \real$; see Definition \ref{def:rev-bar}.
Theorem \ref{thm:not-diff}, \ref{thm:measure}, and \ref{thm:local-extrm} are stated for $\delta^*$.
For the case of the \zec\ binary expansions, the relationship between the continuous functions
can be $\Phi(\set{\log_2(x)}) \sim \al \delta^*(2^{\set{\log_2{x}}-1})$ where
$\Phi$ is the function appearing in \cite{heuberger}.

As demonstrated in the case of \zec\ binary expansions,
the factor $\delta^*$ is infinitesimally fluctuating for the general cases as well, and
the main tools for analyzing the fluctuations were
the generalized \zec\ expansions for the real numbers (see the work toward Definition \ref{def:rev-bar}) and
Lemma \ref{lem:gamma-ineq}, which describes a sufficient condition for $\delta^*$
having a higher value.

The remainder of the paper is organized as follows.
In Section \ref{sec:general-zec}, we review the generalized \zec\ expansions for the positive integers and the real numbers in $(0,1)$.
These contents are also available in \cite{chang-2021}, but
for the readability of our work
we review the contents in this paper.
In Section \ref{sec:zec-exressions}, the setup of two generalized \zec\ expressions is introduced for the positive integers and for the interval $(0,1)$, and in this setup
the duality formula and the transition from the integers to the real numbers are introduced.
In Section \ref{sec:proofs}, the main results Theorem \ref{thm:main-introduction} and \ref{thm:local-extremum}
are proved.
In Section \ref{sec:examples},
calculations for the \zec\ binary expansions and two more pairs of collections are demonstrated, one of which
is a pair with non-base-$N$ expansions.
In Section \ref{sec:gen-reg-seq}, we conclude the paper discussing the generalization of regular sequences and their summatory functions.

\section{Generalized \zec\ expansions }\label{sec:general-zec}

We shall review the definitions and results related
to the generalized \zec\ expansions for positive integers that are periodic, which are also available in \cite{chang-2021}, and
by \cite[Theorem 7]{chang-2021},
our definition is equivalent to the definition introduced in \cite[Definition 1.1]{mw}.
We also review the definitions and results related
to the generalized \zec\ expansions for the real numbers in the interval $\OI:=(0,1)$ that are periodic;
see \cite{chang-2021} for non-periodic ones   for $\OI$.

\subsection{Notation and definitions}
We identify a sequence of numbers with an infinite tuple, and
denote its terms using subscripts.
For example, the sequence of positive odd integers is denoted by $Q=(1,3,5,7,\dots)$, and we use
subscripts to denote its values, e.g., $Q_3 = 5$.
We may define
a sequence by describing $Q_k$ where $k$ is assumed to be an index
$\ge 1$, e.g., the earlier example is the sequence
given by $Q_k = 2k-1$. 
Recall \cf s from Definition \ref{def:cZ}.
Given a \cf\ $\ep $ and a sequence $Q$,
we denote 
$\sum \ep Q$ to be the formal sum $\sum_{k=1}^\infty \ep_k Q_k$.

Given two \cf s $\ep$ and $\mu$, we define $\ep+\mu$ to be the \cf\ such that
$(\ep+\mu)_k=\ep_k + \mu_k$ for $k\ge 1$, and $\ep-\mu$ to be the \cf\ such that
$(\ep-\mu)_k=\ep_k - \mu_k$ for $k\ge 1$ if $\ep_k\ge \mu_k$ for all $k\ge 1$.
Given $c\in\nat_0$, we define $c\ep:=(c\ep_1,c\ep_2,\dots)$.

\begin{deF}\rm \label{def:beta-i}
Let $\beta^i$ be the \cf\ such that $\beta^i_k=0$ for all $k\ne i$ and
$\beta_i^i=1$, and call it {\it the $i$th basis \cf}; in particular, $\beta^i_k$ does not denote
the $i$th power of an integer.
\end{deF} 

\begin{deF}\label{def:bar}\rm
For \cf s $\ep$, we use the bar notation $\bar a$ to represent the repeating entries.
For example, $\ep=(1,2,3,\bar 0)$ is
the \cf\ such that $\ep_k=0$ for all $k>3$, and we denote it by $\ep=(1,2,3)$ as well.
If $\ep=(\bar 0, 1,2,3)$,
then it means that there is an index $n$ such that $\ep_k=0$ for $1\le k\le n$, and $\ep_{n+j}=j$ for $1\le j\le 3$.
We use the simpler notation $0$ for the zero \cf\ $(\bar 0)$.
If there is an index $M$ such that $\ep_k=0$ for all $k >M$, $\ep$ is said to
{\it have finite support}.
\end{deF}

\begin{deF}\label{def:restriction}
\rm  
Let $\ep$ be a \cf, and let $m\le n$ be two positive integers.
Define
$\rev_n(\ep)$ to be the \cf\ $(\ep_n,\ep_{n-1},\dots, \ep_1)$.

Define $\res_m(\ep)$ and $\res^m(\ep)$ to be the \cf s such that
$\res_m(\ep)_k=0$ for all $k> m$,
$\res_m(\ep)_k=\ep_k$ for all $1\le k\le m$,
$\res^m(\ep)_k=\ep_k$ for all $k\ge m$, and
$\res^m(\ep)_k=0$ for all $1\le k< m$.
Also we define $\res^0(\ep):=\ep$, $\res_0(\ep)=0$, and $\res_n^m(\ep)$ to be the \cf\ such that
$\res_n^m(\ep)_k = \ep_k$ for all $m\le k\le n$
and $\res_n^m(\ep)_k = 0$ for other indices $k$. 
\end{deF}

\subsection{Generalized \zec\ expansions for positive integers }\label{sec:zec-integers}

We shall use {\it the ascending \lex\ order} to define generalized \zec\ expansions for positive integers, and
it is defined as follows.

\begin{deF}
\rm \label{def:a-order}
Given two \cf s $\mu$ and $\mu'$ with finite support, if there is a largest positive integer $k$ such that
$\mu_j = \mu'_j$ for all $j>k$ and
$\mu_k < \mu'_k$, then we denote the property by $\mu \aord \mu'$.
\end{deF}
\noindent
For example, if $\mu=(1,2,10,3,7)$ and $\mu'=(1,3,1,4,7)$,
then $\mu \aord \mu'$ since $\mu_4 < \mu'_4$ and $\mu_5=\mu'_5$.

\begin{deF}\label{def:collection}\rm
We define
{\it
an \aorcoll\ collection $\cE$ of \cf s} to be
a set of \cf s with finite support ordered by the ascending \lex\ order that contains the zero \cf\ and all basis \cf s $\beta^i$.

Let $\cE$ be an \aorcoll\ collection, and let $\mu\in\cE$.
The smallest \cf\ in $\cE$ that is greater than $\mu$,
if (uniquely) exists, is called {\it the \immsucc\ of $\mu$ in $\cE$}, and we denote it by $\wt\mu$.
The largest \cf\ in $\cE$ that is less than $\mu$,
if (uniquely) exists, is called {\it the \immpred\ of $\mu$ in $\cE$}, and we denote it by $\wh\mu$.
In particular, $\hbeta^n$ shall denote
the \immpred\ of the $n$th basis \cf\ $\beta^n$ for each $n\ge 2$, if (uniquely) exists.
\end{deF}

\begin{deF}\label{def:order}\rm
Let $\mu$ be an element of an \aorcoll\ collection.
If $\mu \ne 0$,
the largest index $n$ such that $\mu_n \ne 0$ is called the order of $\mu$,
denoted by $\ord(\mu)$. If $\mu=0$, we define
$\ord(\mu)=0$.
\end{deF}
\noindent
By definition, an \aorcoll\ collection $\cE$ contains all the basis \cf s, i.e., $\beta^{n-1}\in\cE$ for $n\ge 2$, and hence, the \immpred\ $\hbeta^n$, if exists,
has
a non-zero value at index $n-1$, i.e., $\ord(\hbeta^n)=n-1$.

\begin{deF}\label{def:zec-N}\rm
Let $\cE$ be an \aorcoll\ collection of \cf s.
The collection is called {\it\zec} if it satisfies the following:
\begin{enumerate}
\item For each $\mu\in\cE$ there are at most finitely many \cf s that are less than $\mu$.

\item
Given $\mu\in \cE$, if
its \immsucc\ $\wt\mu$ is not $ \beta^1+\mu $,
then there is an index $n\ge 2$
such that
$\res_{n-1}(\mu)=\hbeta^n$ and $\wt\mu = \beta^n
+ \res^n(\mu)$.

\end{enumerate}
We also call $\cE$ {\it a \zec\ collection for positive integers}.
\end{deF}
\noindent
Surprisingly enough, it turns out that a \zec\ collection for positive integers is completely determined by the subset $\set{\hbeta^n : n\ge 2}$, which is the meaning of Theorem \ref{cor:generate-zec} below.

\begin{theorem}
[\cite{chang-2021}, Definition 8 \&\ Corollary 9]\label{cor:generate-zec}
Given \cf s $\theta^n$ of order $n-1$ for $n\ge 2$,
there is a unique \zec\ collection for positive integers such that $\hbeta^n=\theta^n$ for each $n\ge 2$.
\end{theorem}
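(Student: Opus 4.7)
The plan is to build $\cE$ explicitly as the orbit of the zero coefficient function under a successor operator defined from the prescribed sequence $(\theta^n)_{n\ge 2}$, verify the Zeckendorf axioms for this construction, and then argue uniqueness by showing the same operator is forced in any Zeckendorf collection with the given immediate predecessors.

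For any finite-support coefficient function $\mu$, let $N(\mu) := \{n\ge 2 : \res_{n-1}(\mu)=\theta^n\}$. Because $\ord(\theta^n)=n-1$, membership of $n$ in $N(\mu)$ forces $\mu_{n-1}=\theta^n_{n-1}\ne 0$, so $N(\mu)$ is finite, bounded by $\ord(\mu)+1$. I would set $\sigma(\mu):=\beta^{n^*}+\res^{n^*}(\mu)$ with $n^*:=\max N(\mu)$ when $N(\mu)\ne\emptyset$, and $\sigma(\mu):=\beta^1+\mu$ otherwise. Then I would define $\mu^0:=0$, $\mu^{k+1}:=\sigma(\mu^k)$, and $\cE:=\{\mu^k:k\ge 0\}$. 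A case-by-case inspection at the largest index where $\sigma(\mu)$ differs from $\mu$ (index $1$ in the first case, index $n^*$ in the second) yields $\mu^k\aord\mu^{k+1}$, so $\cE$ is enumerated in strictly increasing ascending lex order, giving axiom (1); axiom (2) is built into the definition of $\sigma$; and the identification $\hbeta^n=\theta^n$ follows because solving $\sigma(\mu)=\beta^n$ for $n\ge 2$ forces $n^*=n$, $\res^n(\mu)=0$, and $\res_{n-1}(\mu)=\theta^n$, hence $\mu=\theta^n$. What remains for existence is to confirm each $\beta^i\in\cE$, which I would prove by induction on $i$ using the direct computation $\sigma(\theta^{i+1})=\beta^{i+1}$, after first showing that the orbit reaches $\theta^{i+1}$ in finitely many steps starting from $\beta^i$.

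For uniqueness, let $\cE'$ be any Zeckendorf collection with $\hbeta^n=\theta^n$ for all $n\ge 2$. Axiom (1) well-orders $\cE'$ in ascending lex with $0$ as minimum, so $\cE'$ carries its own successor operator $\sigma'$, and axiom (2) restricts $\sigma'(\mu)$ to the candidate set $C(\mu):=\{\beta^1+\mu\}\cup\{\beta^n+\res^n(\mu):n\in N(\mu)\}$. I would induct along the orbit of $0$ under $\sigma'$ and show $\sigma'(\mu)=\sigma(\mu)$ step by step: the case $N(\mu)=\emptyset$ is immediate from axiom (2); for $N(\mu)\ne\emptyset$, I would argue that the smaller candidates $\beta^1+\mu$ and $\beta^n+\res^n(\mu)$ with $n<n^*$ cannot lie in $\cE'$, since their presence combined with the identification $\hbeta^{n^*}=\theta^{n^*}$ would force a chain of $\cE'$-elements terminating in an intermediate strictly between $\theta^{n^*}$ and $\beta^{n^*}$ in ascending lex order, contradicting the fact that nothing lies between $\hbeta^{n^*}$ and $\beta^{n^*}$.

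The principal obstacles will be the two completeness statements: in existence, verifying that the $\sigma$-orbit actually attains every $\theta^{i+1}$ (and hence every $\beta^{i+1}$); in uniqueness, systematically ruling out the smaller candidates in $C(\mu)$ when $N(\mu)\ne\emptyset$. Both reduce to a careful tracking of the lex positions of carry candidates relative to $\theta^{n^*}$ and $\beta^{n^*}$, combined with a nested induction on $\ord(\mu)$ exploiting the finiteness guaranteed by axiom (1) to close the loop. The key technical lemma I anticipate is that the orbit from $\beta^i$ to $\beta^{i+1}$ enumerates precisely the finite-support coefficient functions of order $i$ admissible in $\cE$, lex-terminating at $\theta^{i+1}$.
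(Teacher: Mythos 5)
The paper states this result as a citation of \cite{chang-2021} (Definition~8 and Corollary~9) and does not reprove it, so there is no in-paper proof to compare against. Evaluating your proposal on its own terms, the core construction is incorrect: the successor rule ``carry at $n^*=\max N(\mu)$ whenever $N(\mu)\ne\emptyset$'' does not follow from Definition~\ref{def:zec-N}. Axiom~(2) is a one-directional implication --- it constrains the form of the successor \emph{when} it is not $\beta^1+\mu$, but it never forces a carry merely because one is available. Whether the successor of $\mu$ is $\beta^1+\mu$ or a carry is governed by the entries of the other $\theta^m$'s, not by $N(\mu)$ alone.

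A concrete counterexample: take $\theta^2=(3)$ (order $1$), $\theta^3=(10,5)$ (order $2$), and any admissible $\theta^n$ for $n\ge 4$. In any \zec\ collection with these \immpred s, $\theta^3=(10,5)$ must belong to $\cE$, and the only $\nu$ of the allowed forms $\beta^1+\nu$ or $\beta^n+\res^n(\nu)$ ($n\ge 2$) with successor $(10,5)$ is $\nu=(9,5)$, since every carry candidate has first entry $0$. Iterating, the \immpred\ chain of $(10,5)$ is $(0,5),(1,5),\dots,(9,5),(10,5)$, so in particular the $\cE$-successor of $(3,5)$ is $(4,5)=\beta^1+(3,5)$. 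Your operator instead computes $N((3,5))=\{2\}$, $n^*=2$, $\sigma((3,5))=\beta^2+\res^2((3,5))=(0,6)$, and $(10,5)\aord(0,6)$, so your orbit jumps past $\theta^3$ and the constructed $\cE$ cannot have $\hbeta^3=\theta^3$. The same example defeats the uniqueness step: $(4,5)=\beta^1+(3,5)$ \emph{does} lie in $\cE$ despite $N((3,5))\ne\emptyset$, and its presence forces nothing strictly between $\theta^2=(3)$ and $\beta^2=(0,1)$, so the contradiction you anticipate does not materialize. A repair cannot be a pointwise greedy carry rule on $\mu$; it needs a global description of $\cE$ in the spirit of the block decomposition of Definition~\ref{def:periodic} (suitably generalized to the non-periodic case), so that carrying at $\mu$ happens precisely when the leading block of $\mu$ is saturated relative to the appropriate $\theta^m$.
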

Let us demonstrate the \zec\ collection for positive integers determined by $\theta^n$, which have a common periodic structure.
\begin{deF}\rm \label{def:periodic}
Let $L=(e_1,\dots,e_N)$ be a list of non-negative integers where $N\ge 2$ and $e_1 \ne 0$, and
let $\beta^*$ be the \cf\ such that $\beta^*_k = e_j$ where $1\le j\le N$ and $k\equiv j\moD N$.
Given $n\ge 2$, define $\theta^n:=\rev_{n-1}(\beta^*)$.

For an integer $n\ge 2$,
a \cf\ $\zeta$ is called {\it a proper $L$-block at index $n-1$} if
there is an index $k\le n-1$ such that $\res^{k+1}(\zeta)=\res^{k+1}(\theta^n)$, $\zeta_k<\theta^n_k$,
and $\res_{k-1}(\zeta)=0$.
The index interval $[k,n-1]$ is called {\it the support interval} of the proper $L$-block $\zeta$ at index $n-1$.
The \cf s $\theta^n$ for $n\ge 2$ are called {\it maximal $L$-blocks}, and the index interval $[1,n-1]$
is called {\it the support interval} of the maximal $L$-block at index $n-1$.
Both proper and maximal $L$-blocks are called {\it $L$-blocks}, and
two $L$-blocks $\zeta$ and $\xi$ are said to be {\it disjoint} if their support intervals are disjoint sets of indices.
\end{deF}

\begin{theorem}[\cite{chang-2021}, Theorem 7]\label{thm:periodic-zec}
Let $L$ and $\theta^n$ for integers $n\ge 2$ be as defined in Definition  \ref{def:periodic}.
Then, the collection $\cE$  consisting of the sums of finitely many disjoint
$L$-blocks is a \zec\ collection for positive integers such that $\hbeta^n=\theta^n$ for $n\ge 2$.
\end{theorem}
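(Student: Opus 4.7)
The plan is to verify, in order, that $\cE$ is an \aorcoll\ collection in the sense of Definition \ref{def:collection}, that it satisfies the two axioms of Definition \ref{def:zec-N}, and that the \immpred\ of each basis \cf\ $\beta^n$ coincides with $\theta^n$. For the first point, the zero \cf\ is the empty sum of $L$-blocks, so $0\in\cE$ automatically. Each basis \cf\ $\beta^i$ lies in $\cE$: when $e_1\ge 2$, $\beta^i$ is a proper $L$-block at index $i$ with support interval $\{i\}$ (take $k=i$ in Definition \ref{def:periodic}, using $\theta^{i+1}_i = e_1 > 1$); when $e_1 = 1$, a short induction on $i$ based on the periodic structure of $\beta^*$ handles the remaining cases, since either $\theta^{i+1}=\beta^i$ outright (when $\beta^*_j=0$ for $2\le j\le i$) or $\beta^i$ arises as a proper $L$-block whose support interval stretches up to the next index at which $\beta^*$ is positive.

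For the finiteness axiom (Definition \ref{def:zec-N}(1)), I would observe that $\mu'\aord\mu$ forces $\ord(\mu')\le \ord(\mu)$, and within this bounded range each entry of $\mu'\in\cE$ is bounded by $\max_j e_j$, since any entry of a sum of disjoint $L$-blocks comes from some entry of $\beta^*$. Hence only finitely many candidates exist.

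The core step is the \immsucc\ condition. Given $\mu \in \cE$ with its disjoint-$L$-block decomposition, I would examine whether $\mu + \beta^1$ remains in $\cE$: this succeeds precisely when adding a unit at index $1$ neither over-saturates an existing block at that index nor breaks the disjointness of support intervals, in which case $\wt\mu=\mu+\beta^1$. Otherwise, I would locate the unique $n \ge 2$ for which $\res_{n-1}(\mu) = \theta^n$, replace those low-order entries by a single unit at index $n$, and verify that $\beta^n + \res^n(\mu)$ lies in $\cE$ (by checking disjointness against the remaining blocks of $\mu$) and is the $\aord$-successor of $\mu$ via a direct lex-comparison.

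The main obstacle is the identity $\hbeta^n=\theta^n$, namely showing that no element of $\cE$ lies strictly between $\theta^n$ and $\beta^n$ in the $\aord$-order. Equivalently, among all sums of disjoint $L$-blocks of order at most $n-1$, the $\aord$-maximum is $\theta^n$. I would prove this by induction on $n$: any competing $\mu\in\cE$ of order $\le n-1$ must, at some largest index $k\le n-1$ where it disagrees with $\theta^n$, deviate downward, because the $\theta^n$-pattern is prescribed by reading the periodic list $L$ from the top, and any $L$-block occupying the top portion of $\mu$ either exactly matches this pattern above its starting index or strictly drops at the index where it begins. The topmost such downward deviation immediately yields $\mu\aord\theta^n$, completing the verification.
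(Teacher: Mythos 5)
The theorem you are proving is cited in the paper as ``[\cite{chang-2021}, Theorem 7]'' and is not proved in this paper at all; it is imported from the reference. So there is no in-paper proof to compare your proposal against, and I can only evaluate it on its own terms.

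Your overall architecture is sensible: check membership of $0$ and $\beta^i$, verify the finiteness axiom, verify the successor axiom, and identify $\hbeta^n$ with $\theta^n$. The arguments you sketch for the first, second, and fourth items are essentially correct (the fourth in particular is the crux, and your observation that the topmost block of any $\mu$ of order $\le n-1$ either is the maximal block $\theta^n$ or drops strictly below $\theta^n$ at the bottom index of its support interval is exactly the right idea). The weak point is the successor axiom. First, the index $n$ with $\res_{n-1}(\mu)=\theta^n$ is \emph{not} unique, contrary to what you assert: take $L=(1,0)$ (the Zeckendorf case) and $\mu=\theta^4=(1,0,1)$; then $\res_1(\mu)=(1)=\theta^2$ and $\res_3(\mu)=(1,0,1)=\theta^4$ both qualify. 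You must specify a rule (taking the \emph{largest} such $n$ works, which is also the only choice for which $\beta^n+\res^n(\mu)\in\cE$ --- note $(0,1,1)\notin\cE$ here). Second, asserting that $\beta^n+\res^n(\mu)$ ``is the $\aord$-successor via a direct lex-comparison'' does not establish immediacy; a lex-comparison only shows it is \emph{some} successor. To show nothing in $\cE$ lies strictly between, you must show there is no $\nu\in\cE$ with $\mu\aord\nu\aord\beta^n+\res^n(\mu)$, which forces $\res^n(\nu)=\res^n(\mu)$ and reduces to showing that $\res_{n-1}(\mu)=\theta^n$ is $\aord$-maximal among order-$\le n-1$ elements of $\cE$ --- i.e., you need to invoke the fourth item again here. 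Tightening the dependence between these two pieces is what remains to make this a proof.
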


\begin{deF}\rm \label{def:periodic-collection}
Let $L$ and $\cE$ be as defined in Theorem \ref{thm:periodic-zec}.
The collection $\cE$  is called
{\it the periodic \zec\ collection for positive integers determined by $L$}.
\end{deF}

\begin{deF}\rm \label{def:L-decompositions}
Let $L$ be the list defined in Definition \ref{def:periodic}.
Let $\ep = \sum_{m=1}^n \zeta^m$ where $\zeta^m$ are disjoint $L$-blocks at index $i_m$ such that
$i_m < i_{m+1}$ for $1\le m <n$.
This expression is called {\it an $L$-block decomposition}, and
if $\ep\ne 0$, the support intervals of $\zeta^m$ form a partition of $[1,i_n]$, and $\zeta^n\ne 0$, 
the summation is called {\it the (full) $L$-block decomposition of $\ep$}.
If $\ep\ne 0$ and $\zeta^m$ are non-zero disjoint $L$-blocks, then
the summation is called {\it the non-zero $L$-\bdecom of $\ep$}.
The expression $\mu + \tau$ is called an {\it $L$-\decom}\ %
if $ \sum_{m=1}^n \zeta^m$ is {\it an $L$-\bdecom}, $\mu=\sum_{m=1}^s \zeta^m$, and
$\tau=\sum_{m=t}^n \zeta^m$ where $1\le s<t \le n$.
\end{deF}

\begin{example}\rm \label{exm:periodic}
Let $L=(2,3,0)$, and let $\cE$ be the periodic \zec\ collection for positive integers determined by $L$.
Then, $\beta^*=(2,3,0,2,3,0,2,3,0,\dots)$, and the following are examples of the \immpred s:
$$
\hbeta^2=(2),\ \hbeta^3=(3,2),\ \hbeta^4=(0,3,2),\
\hbeta^5=(2,0,3,2),\ \hbeta^9=(3,2,0,3,2,0,3,2).$$
Listed below are examples of proper $L$-blocks at index $5$:
$$
\zeta^1 = (0,0,0,0,2),\ \zeta^2=(0,0,0,2,2),\ \zeta^3=(0,0,0,3,2).$$
The support intervals of $\zeta^k$ for $k=1,2,3$ are $[4,5]$, $[4,5]$, and $[2,5]$, respectively.
Stated below are two examples of the sum of finitely many disjoint $L$-blocks where a semicolon is inserted instead of a comma to indicate the end of the support interval of a block, and their \immsucc s are stated below as well:
\begin{gather*}
\ep=(2,0,3,2; 0;0, 0,3,2;2,2;1),\ \tau=(1,0,3,2; 0;0, 0,3,2;2,2;1)\\
\wt\ep=(0;0;0;0;1;0, 0,3,2;2,2;1),\ \wt\tau=(2,0,3,2; 0;0, 0,3,2;2,2;1).
\end{gather*}
\end{example}

Given a collection $\cE$ of \cf s with finite support and a sequence $G$ of positive integers,
let $\eval_G : \cE \to \nat_0$ be the evaluation map given by $\ep\mapsto \sum \ep G$. The weak converse of \zec's theorem for \zec\ collections for positive integers is
stated below.  Recall Definition \ref{def:a-order}.
\begin{theorem}[\cite{chang-2021}, Theorem 16, Lemma 3\ \&\ 37]
\label{thm:integers}
Let $\cE$ be a \zec\ collection for positive integers with the \immpred s $\hbeta^n$ for \
$n\ge 2$. Then, there is a unique increasing sequence $H$ of positive integers such that
$\eval_H : \cE \to \nat_0$ is bijective.
Moreover, for each $n\ge 2$,
\begin{equation}\label{eq:full-recursion}
H_n = H_1 + \sum\hbeta^n H,
\end{equation}
and the map $\eval_H$ is increasing, i.e.,
$\ep \aord \delta$ if and only if $
\eval_H(\ep)< \eval_H(\delta)$.
\end{theorem}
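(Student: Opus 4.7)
Property (1) of Definition \ref{def:zec-N} forces $(\cE,\aord)$ to have order type $\omega$, so enumerate $\cE$ ascendingly as $0 = \mu^{(0)} \aord \mu^{(1)} \aord \mu^{(2)} \aord \cdots$. Any increasing bijection $\eval_H : \cE \to \nat_0$ must send $\mu^{(k)}$ to $k$, so the plan is to derive $H$ by demanding that the increment $\eval_H(\mu^{(k+1)}) - \eval_H(\mu^{(k)})$ equal $1$ for every $k$, and then to verify that the $H$ so obtained actually realizes this enumeration.

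Property (2) of Definition \ref{def:zec-N} splits the successor step into two cases. If $\mu^{(k+1)} = \beta^1 + \mu^{(k)}$, the increment equals $H_1$, forcing $H_1 = 1$. Otherwise there is $n \ge 2$ with $\res_{n-1}(\mu^{(k)}) = \hbeta^n$ and $\mu^{(k+1)} = \beta^n + \res^n(\mu^{(k)})$; decomposing $\mu^{(k)} = \hbeta^n + \res^n(\mu^{(k)})$ into pieces with disjoint supports (since $\hbeta^n$ is supported in $[1,n-1]$ and $\res^n(\mu^{(k)})$ in $[n,\infty)$), the increment becomes $H_n - \sum \hbeta^n H$, forcing $H_n = 1 + \sum \hbeta^n H = H_1 + \sum \hbeta^n H$, which is (\ref{eq:full-recursion}). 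Since $\beta^n_1 = 0$ for $n \ge 2$, the successor step from $\hbeta^n$ to $\beta^n$ cannot be of the first type, so it is of the second type with index $n$; hence $\hbeta^n$ exists and the recursion is forced for every $n \ge 2$, establishing uniqueness.

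For existence, set $H_1 := 1$ and recursively $H_n := H_1 + \sum \hbeta^n H$ for $n \ge 2$. Because $\ord(\hbeta^n) = n-1$, only $H_1,\ldots,H_{n-1}$ appear on the right, so $H_n$ is well-defined, and $\hbeta^n_{n-1} \ge 1$ yields $H_n \ge H_1 + H_{n-1} > H_{n-1}$, so $H$ is strictly increasing and integer-valued. Running the case analysis forward with the recursion now in hand shows $\eval_H(\mu^{(k+1)}) - \eval_H(\mu^{(k)}) = 1$, and $\eval_H(\mu^{(k)}) = k$ follows by induction on $k$; since property (1) guarantees every element of $\cE$ appears as some $\mu^{(k)}$, the map $\eval_H$ is a bijection onto $\nat_0$, and being aligned with the $\aord$-enumeration it is order-preserving.

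The main obstacle is the verification, in the second paragraph, that the second case of property (2) is exercised for every $n \ge 2$, so that the recursion pins down every $H_n$; this rests on observing that $\hbeta^n \to \beta^n$ cannot be the first type of successor step. Once this is secured, the rest is bookkeeping: confirming that the natural splittings are by disjoint supports so that $\eval_H$ distributes additively over them, and then turning the derivation of the recursion into an inductive verification of bijectivity.
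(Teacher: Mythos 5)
The paper does not actually prove Theorem \ref{thm:integers}; it is imported verbatim from \cite{chang-2021} (Theorem 16, Lemma 3, Lemma 37), so there is no in-paper proof to compare against. Judged on its own merits, your reconstruction is in the right spirit and the existence half is sound: defining $H_1=1$ and $H_n = H_1 + \sum\hbeta^n H$, checking that $\ord(\hbeta^n)=n-1$ with $\hbeta^n_{n-1}\ge 1$ gives a well-defined strictly increasing sequence of positive integers, and then verifying via both cases of Definition \ref{def:zec-N}(2) that $\eval_H(\mu^{(k)})=k$ for the ascending enumeration of $\cE$. Your observation that the successor step $\hbeta^n \to \beta^n$ is necessarily of the second type, with index exactly $n$, is the right way to see that the recursion is pinned down for every $n\ge 2$; one should just note explicitly that comparing the $n$th entries in $\beta^n = \beta^m + \res^m(\hbeta^n)$ forces $m=n$ because $\hbeta^n_n=0$.

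There is a genuine gap in the uniqueness half. The theorem asserts that there is a unique \emph{increasing sequence} $H$ for which $\eval_H$ is merely a \emph{bijection}; that $\eval_H$ is also order-preserving is a \emph{conclusion} of the theorem, not a hypothesis. Your uniqueness argument opens with ``Any increasing bijection $\eval_H:\cE\to\nat_0$ must send $\mu^{(k)}$ to $k$,'' which takes order preservation as a given and thus only rules out competing sequences $H'$ for which $\eval_{H'}$ is an \emph{order-preserving} bijection. You have not excluded the possibility that some other increasing sequence $H'$ yields a bijective but non-monotone $\eval_{H'}$. This can be closed by induction on $k$: the value $0$ is hit only by the zero tuple, and any nonzero $\ep$ gives $\eval_{H'}(\ep)\ge H'_1$, forcing $H'_1=1$; assuming $H'_j=H_j$ for $j<k$, the restriction of $\eval_{H'}$ to $\set{\ep\in\cE : \ord(\ep)<k}$ agrees with $\eval_H$ there and hence has image $\set{0,\dots,H_k-1}$, so by injectivity $H'_k=\eval_{H'}(\beta^k)\ge H_k$, while surjectivity at the value $H_k$ requires some $\tau$ with $\ord(\tau)\ge k$ and $\eval_{H'}(\tau)=H_k\ge H'_{\ord(\tau)}\ge H'_k$, giving $H'_k=H_k$. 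With this insertion the proposal is complete.
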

\noindent
Recall Definition \ref{def:cZ}.
Then, by Theorem \ref{thm:integers}, 
given  a \zec\ collection $\cE$ for positive integers
there is one and only one
  fundamental sequence of $\cE$.
  
  \begin{deF}\label{def:periodic-Z-expansions}\rm
Given a \zec\ collection $\cE$ for positive integers, the expansion $n=\sum \ep H$
is called the $\cE$-expansion of $n$ where $H$ is the fundamental sequence of $\cE$.
\end{deF}
  
For the collection in Example \ref{exm:periodic}, the fundamental sequence $H$ is given by
the linear recurrence $H_{n+3} = 2 H_{n+2} + 3 H_{n+1} + H_n$ with
$(H_1,H_2,H_3)=(1,3,10)$.
In general, the \funds\ of a periodic \zec\ collection determined by $L=(e_1,\dots,e_N)$ is given by
\begin{equation}\label{eq:recurrence}
H_n=\sum_{k=1}^{N-1} e_k H_{n-k} + (1+e_N) H_{n-N},\quad
n>N
\end{equation}
with initial values $(H_1,H_2,\dots,H_N)$ determined by (\ref{eq:full-recursion}).
\begin{deF}
\rm \label{def:char-poly}
Let $L=(e_1,\dots,e_N)$ be the list defined in Definition \ref{def:periodic}.
The following is called {\it the characteristic polynomial of $L$ for positive integers:}
\begin{equation}\label{eq:char-polynomial}
x^N - \sum_{k=1}^{N-1} e_k x^{N-k} - (1+e_N ).
\end{equation}
\end{deF}

By Descartes' Rule of Signs, the   polynomial defined in (\ref{eq:char-polynomial}) has one positive simple real root $\phi$, and it is $>1$.
By \cite[Section 5.3.2]{chang-2021}, it is the only root with the largest modulus in
the complex plane.
Thus, by Binet's formula, $H_n =\al\phi^n + O(\phi^{nr})$ where $\al$ and $r<1$ are positive real constants.
Notice that the periodic \zec\ collection for positive integers determined by $L= (e_1,\dots,e_N,e_1,\dots, e_N)$
is equal to the collection determined by $ (e_1,\dots, e_N)$
since they have the same \immpred\ of $\beta^n$ for each $n\ge 2$.

\subsection{Generalized \zec\ expansions for real numbers } \label{sec:general-real}

Let us review the definitions and results for the periodic \zec\ collections for the real numbers in the interval
$\OI:=(0,1)$.
The general definition is given in \cite[Definition 10]{chang-2021}, and
in this paper,
we review the definition for the periodic \zec\ collections.

\begin{deF}
\rm \label{def:d-order}
Given two \cf s $\ep$ and $\ep'$, we define
{\it the descending \lex\ order} as follows.
If there is a smallest positive integer $k$ such that
$\ep_j = \ep'_j$ for all $j<k$ and
$\ep_k< \ep'_k$, then we denote the property by $\ep \dord \ep'$.
\end{deF}
\noindent
For example, if $\ep=(1,2,10,5,\dots)$ and $\ep'=(1,3,1,10,\dots)$,
then $\ep \dord \ep'$ since $\ep_2 < \ep'_2$ and $\ep_1=\ep'_1$.
\begin{deF}
\rm \label{def:ord-star}
If $\ep \ne 0$ is a \cf,
the smallest index $n$ such that $\ep_n \ne 0$ is
denoted by $\ord^*(\ep)$. If $\ep=0$, we define
$\ord^*(\ep)=\infty$.
\end{deF}

\begin{deF}\rm \label{def:periodic-real}
Let $L=(e_1,\dots,e_N)$ be a list of non-negative integers where $N\ge 2$ and $e_1 \ne 0$.
Given $n\ge 1$, let $\bbeta^n$ be the \cf\ such that
$\res_{n-1}(\bbeta^n)=0$ and $\bbeta^n_k = e_j$ for $k\ge n$ where $1\le j\le N$ and $k-n+1\equiv j\moD N$,
and we call $\bbeta^n$ {\it the maximal $L^*$-block at index $n$}.

For an integer $n\ge 2$,
a \cf\ $\zeta$ is called {\it a proper $L^*$-block at index $n$} if
there is an index $k\ge n$ such that $\res_{k-1}(\zeta)=\res_{k-1}(\bbeta^n)$, $\zeta_k<\bbeta^n_k$,
and $\res^{k+1}(\zeta)=0$.
The index interval $[n,k]$ is called {\it the support interval} of the proper $L^*$-block $\zeta$ at index $n$.
Two proper $L^*$-blocks $\zeta$ and $\xi$ are said to be {\it disjoint} if their support intervals are disjoint sets
of indices.

The descendingly ordered  collection
of non-zero \cf s consisting of the finite or infinite sums of disjoint
proper $L^*$-blocks is called {\it the periodic \zec\ collection for $\OI$ determined by $L$}. 
If $\cE$ denotes the periodic \zec\ collection  for positive integers determined by $L$,
the collection for $\OI$ is denoted by $\cE^*$.

Let $\ep = \sum_{m=1}^\infty \zeta^m$ where $\zeta^m$ are disjoint proper $L^*$-blocks at index $i_m$ such that
$i_m < i_{m+1}$.
This expression is called {\it an $L^*$-block decomposition of $\ep$}, and
if the support intervals of $\zeta^m$ form a partition of $[1,\infty)$,
the summation is called {\it the (full) $L^*$-block decomposition of $\ep$}.
If  $\zeta^m$ are non-zero disjoint proper $L^*$-blocks, then
the summation is called {\it the non-zero $L^*$-block decomposition of $\ep$}.
The expression $\mu + \tau$ is called {\it an $\Lstar$-\decom} if
$\sum_{m=1}^\infty \zeta^m$ is an $\Lstar$-\bdecom,
$\mu=\sum_{m=1}^s \zeta^m$, and $\tau = \sum_{m=t }^\infty \zeta^m$
where $1\le s < t$.

\end{deF}

Thus, given a list $L=(e_1,\dots,e_N)$, we have the periodic \zec\ collection $\cE$ for positive integers  and 
the periodic \zec\ collection $\cE^*$ for $\OI$.
The collection $\cE$ is ascendingly ordered, and $\cE^*$ is descendingly ordered.
Also notice that the zero \cf\ and the maximal blocks $\bbeta^n$ for $n\ge 1$ are not members of $\cE^*$,
but the zero \cf\ can be a proper $\Lstar$-block at any index $n$.

\begin{example}\rm \label{exm:periodic-real}
Let $L=(2,3,0)$, and let $\cE^*$ be the periodic \zec\ collection for $\OI$ determined by $L$.
The following are examples of maximal $L^*$-blocks:
$$
\bbeta^1=(2,3,0,2,3,0,\dots),\ \bbeta^2=(0,2,3,0,2,3,0,\dots),\ \bbeta^3=(0,0,2,3,0,2,3,0,\dots) .$$
Listed below are examples of proper $L^*$-blocks at index $4$, and semicolons are
inserted to indicate the ends of the support intervals.  See Definition \ref{def:bar} for the bar notation:
$$
\zeta^1 = (0;0;0;2,0;\bar 0),\ \zeta^2=(0;0;0;2,2;\bar 0),\ \zeta^3=(0;0;0;2,3,0,0;\bar 0).$$
The support intervals of $\zeta^k$ for $k=1,2,3$ are $[4,5]$, $[4,5]$, and $[4,7]$, respectively.
Stated below are two examples of \cf s where $\ep\not\in\cE^*$ and $\tau\in\cE^*$:
\begin{gather*}
\ep=(1,2,2,\bar 0) + \bbeta^4 = (1,2,2,\overline{2,3,0}),\quad
\quad\tau= (1,2,2, \overline{2,3,0,0}).
\end{gather*}
\end{example}

\newcommand{\temp}{\cite[Definition 10, Theorem 13]{chang-2021}}
\begin{lemma}[\cite{chang-2021}, Definition 10, Theorem 13] \label{lem:rev-ep}
Let $\ep$ be a \cf.
Then, $\rev_n(\ep)\in \cE$ for all $n\ge 1$ if and only if $\ep\in \cE^*$ or
$\ep=\sum_{m=1}^n \zeta^m + \bbeta^{b+1}$ where
$\sum_{m=1}^n \zeta^m$ is an $L^*$-block decomposition and $[i,b]$ is the support interval of $\zeta^n$.
\end{lemma}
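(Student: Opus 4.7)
The plan is to exploit a bijective correspondence, induced by the reversal operator $\rev_n$, between proper $L^*$-blocks in $\ep$ with support contained in $[1,n]$ and proper $L$-blocks with support contained in $[1,n]$. Concretely, if $\zeta$ is a proper $L^*$-block at index $i$ with support $[i,k]$ and $k \le n$, then unpacking $\res_{k-1}(\zeta) = \res_{k-1}(\bbeta^i)$ and using $\theta^{n-i+2}_j = \beta^*_{n-i+2-j}$ from Definition \ref{def:periodic} shows that $\rev_n(\zeta)$ is a proper $L$-block at index $n-i+1$ with reversed support $[n-k+1, n-i+1]$: its interior matches $\res^{n-k+2}(\theta^{n-i+2})$, and the strict inequality $\zeta_k < \bbeta^i_k$ becomes $\rev_n(\zeta)_{n-k+1} < \theta^{n-i+2}_{n-k+1}$. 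In the same way, the truncation $\res_n^{b+1}(\bbeta^{b+1})$ of a maximal $L^*$-block reverses to the maximal $L$-block $\theta^{n-b+1}$ on support $[1, n-b]$. This dictionary is the engine for both directions.

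For the backward direction I fix $n \ge 1$ and produce an explicit $L$-block decomposition of $\rev_n(\ep)$. If $\ep \in \cE^*$, take its full $L^*$-decomposition $\ep = \sum_{m\ge 1} \zeta^m$ with support intervals $[j_m, i_m]$ partitioning $[1,\infty)$, and let $s$ be the largest index with $i_s \le n$. Then $\rev_n(\ep) = \sum_{m=1}^s \rev_n(\zeta^m)$ plus, when $i_s < n$, the reversal of $\res_n^{i_s+1}(\ep)$; since $\zeta^{s+1}$ has not broken by position $n$, this last piece equals $\res_n^{i_s+1}(\bbeta^{i_s+1})$ and reverses to the maximal $L$-block $\theta^{n-i_s+1}$ at index $n-i_s$. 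These summands are disjoint $L$-blocks whose support intervals partition $[1,n]$, so $\rev_n(\ep) \in \cE$. The special-form case $\ep = \sum_{m=1}^s \zeta^m + \bbeta^{b+1}$ is handled identically, with the trailing $\bbeta^{b+1}$ itself supplying the maximal $L$-block on $[1, n-b]$ once $n > b$.

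For the forward direction I extract an $L^*$-decomposition from $\ep$ greedily: set $i_0 := 0$ and, inductively, let $i_m$ be the smallest index $>i_{m-1}$ with $\ep_{i_m} < \bbeta^{i_{m-1}+1}_{i_m}$, and put $\zeta^m := \res_{i_m}^{i_{m-1}+1}(\ep)$. If no such $i_m$ exists at some stage, then $\res^{i_{m-1}+1}(\ep) = \bbeta^{i_{m-1}+1}$ and $\ep = \sum_{k<m}\zeta^k + \bbeta^{i_{m-1}+1}$ has the special form; otherwise the process runs forever and exhibits $\ep \in \cE^*$. The substantive point is to verify that the construction is well-posed, namely that no overshoot $\ep_j > \bbeta^{i_{m-1}+1}_j$ occurs at positions $i_{m-1}+1 \le j < i_m$, so that $\zeta^m$ is genuinely a proper $L^*$-block. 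I would prove this by choosing $n \ge i_m$ and inspecting the unique $L$-block decomposition of $\rev_n(\ep) \in \cE$: the block covering the reversed support $[n-i_m+1, n-i_{m-1}]$ must be a proper $L$-block at index $n-i_{m-1}$, and its interior is pinned to $\theta^{n-i_{m-1}+1}$ by Definition \ref{def:periodic}, which, translated back via the dictionary above, forces $\ep_j = \bbeta^{i_{m-1}+1}_j$ on the interior.

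The main obstacle I anticipate is coherence across different choices of $n$: the leftmost block in the $L$-block decomposition of $\rev_n(\ep)$ can be either proper or maximal, and which happens depends on $n$. I would reconcile this by comparing decompositions for $\rev_n(\ep)$ and $\rev_{n+1}(\ep)$ and invoking the uniqueness of the $L$-block decomposition in $\cE$ furnished by Theorem \ref{thm:periodic-zec} (via the bijectivity of $\eval_H$ in Theorem \ref{thm:integers}); this coherence is what guarantees the greedy $L^*$-extraction above is forced rather than one of several possibilities. Once it is in hand, the dichotomy between the greedy process halting and running forever cleanly yields the two alternatives stated in the lemma.
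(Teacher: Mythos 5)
The paper does not actually prove this lemma; it is imported as a citation from \cite{chang-2021} (Definition 10, Theorem 13), so there is no in-paper proof to compare against. Your proof must therefore be judged on its own terms, and on that basis the approach is sound.

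The heart of your argument is the reversal dictionary matching proper $L^*$-blocks with support $[i,k]\subseteq[1,n]$ to proper $L$-blocks with support $[n+1-k,n+1-i]$, together with the observation that the truncation $\res_n(\bbeta^{i})$ reverses to the maximal $L$-block $\theta^{n-i+2}$. Both identifications check out: using $\bbeta^i_k=e_j$ with $k-i+1\equiv j\moD N$ and $\theta^m_j=\beta^*_{m-j}$, the interior entries, the strict inequality at the lower endpoint, and the vanishing tail all transfer correctly under $\rev_n$. Your backward direction (either hypothesis on $\ep$ implies $\rev_n(\ep)\in\cE$ for each $n$) follows immediately from this dictionary and is in good shape, including the edge case where the reversal truncates a maximal block mid-period.

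The forward direction is where your proposal is thinnest. Your greedy extraction correctly pins down where the proper $L^*$-blocks \emph{would} have to break if they exist, and you correctly identify that the only danger is an overshoot $\ep_j>\bbeta^{i_{m-1}+1}_j$ strictly between breaks. But the resolution is asserted rather than derived: you claim that ``the block covering the reversed support $[n-i_m+1,n-i_{m-1}]$ must be a proper $L$-block at index $n-i_{m-1}$,'' which is exactly the point that needs proof. The clean way to finish is by induction on $m$: assuming the $L$-block decomposition of $\rev_n(\ep)$ agrees with $\rev_n(\zeta^1),\dots,\rev_n(\zeta^{m-1})$ on $[n+1-i_{m-1},n]$, the remaining summand $\res_{n-i_{m-1}}(\rev_n(\ep))$ is itself in $\cE$, its top block $\eta$ has support $[k,n-i_{m-1}]$, and $\eta$ is either proper (forcing $n+1-k=i_m$ and ruling out overshoot) or the maximal $\theta^{n-i_{m-1}+1}$ (forcing $\ep$ to agree with $\bbeta^{i_{m-1}+1}$ up to $n$). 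Iterating over $n$ then separates the two alternatives in the statement. Your closing paragraph gestures at this via ``coherence across $n$'' and uniqueness of the $L$-block decomposition, which is the right tool, but as written it is a plan rather than an argument; you would want to spell out the induction and why the top block of the remainder is forced, since the bijectivity of $\eval_H$ gives uniqueness of integer representations but does not by itself give uniqueness of the $L$-block decomposition of a fixed $\ep\in\cE$ without a short supporting lemma.

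Also keep an eye on the degenerate case $\ep=\bbeta^1$ (empty proper part, $b=0$), which your greedy process should terminate on immediately; and be aware that the lemma's ``$L^*$-block decomposition'' in Definition \ref{def:periodic-real} does not by itself require supports to abut, so you should note (or enforce) that the decomposition your greedy process produces is the full one, as the statement implicitly intends.
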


Recall Definition \ref{def:d-order}.
\begin{theorem}[\cite{chang-2021}, Theorem 24]
\label{thm:real}
Let $\cE^*$ be the periodic \zec\ collection for $\OI$ determined by $L=(e_1,\dots,e_N)$. Then, there is a unique decreasing sequence $Q$ of real numbers in $\OI$ such that the map
$\eval_Q : \cE^* \to \OI$ given by $\ep \mapsto \sum \ep Q$ is bijective,
and the map $\eval_Q$ is increasing, i.e.,
$\ep \dord \delta$ if and only if $
\eval_Q(\ep)< \eval_Q(\delta)$.

Moreover, for each $n\ge 1$,
\begin{equation}\label{eq:full-recursion-real}
Q_n = \sum\bbeta^{n+1} Q,\ \text{and}\ Q_n = \ome^n
\end{equation}
where $\ome$ is the (only) positive real zero of the polynomial
\begin{equation}\label{eq:char-polynomial-real}
-1+\sum_{k=1}^{N-1} e_k x^{k} + (1+e_N)x^N.
\end{equation}
\end{theorem}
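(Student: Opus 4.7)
My plan is to construct $Q$ explicitly via the positive real root $\ome$ of~(\ref{eq:char-polynomial-real}), verify each asserted property by direct computation, and isolate the order-preservation step as the main technical obstacle.

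First, (\ref{eq:char-polynomial-real}) has a unique positive real root $\ome\in(0,1)$: the polynomial equals $-1$ at $x=0$ and is at least $e_1\ge 1$ at $x=1$, with strictly positive derivative on $(0,\infty)$ because $e_1\ge 1$ and $1+e_N\ge 1$. Setting $Q_n:=\ome^n$ gives a strictly decreasing sequence in $\OI$, and the recursion $Q_n=\sum\bbeta^{n+1}Q$ follows from grouping the periodic tail into geometric blocks of period $N$:
\[
\sum\bbeta^{n+1}Q \;=\; \ome^n\cdot\frac{e_1\ome+e_2\ome^2+\cdots+e_N\ome^N}{1-\ome^N}\;=\;\ome^n,
\]
the last equality being precisely the defining relation for $\ome$. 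Absolute convergence of $\eval_Q(\ep)$ for $\ep\in\cE^*$ is immediate from $\ep_k\le\max_j e_j$ and the geometric decay of $Q$.

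The technical core is the following key lemma: if $\ep\in\cE^*$ satisfies $\res_{n-1}(\ep)=0$, then $\eval_Q(\ep)<Q_{n-1}$. With the $L^*$-block decomposition $\ep=\sum_m\zeta^m$ at indices $n\le i_1<i_2<\cdots$ and cutoffs $k_m\ge i_m$, each block satisfies
\[
\eval_Q(\zeta^m) \;=\; Q_{i_m-1}\;-\;\bigl(\bbeta^{i_m}_{k_m}-\zeta^m_{k_m}\bigr)\ome^{k_m}\;-\!\!\sum_{j>k_m}\!\bbeta^{i_m}_j\,\ome^j,
\]
by the recursion. The disjointness $k_m<i_{m+1}$, combined with another application of the recursion, lets the $Q_{i_{m+1}-1}$ contribution of the next block be absorbed into the tail $\sum_{j>k_m}\bbeta^{i_m}_j\ome^j$ of the current one, producing a telescoping sum; the strict deficit $(\bbeta^{i_m}_{k_m}-\zeta^m_{k_m})\ome^{k_m}>0$ survives and yields $\eval_Q(\ep)<Q_{i_1-1}\le Q_{n-1}$. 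Order-preservation then follows: given $\ep\dord\delta$ with first differing index $k$ and $\delta_k>\ep_k$, the gap $(\delta_k-\ep_k)Q_k$ dominates the tail differences, each of which is controlled by $Q_k$ via the lemma.

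For surjectivity I would run a greedy algorithm: given $y\in\OI$, let $i_1$ be the smallest index with $y<Q_{i_1-1}$, peel off the largest proper $L^*$-block at $i_1$ whose $\eval_Q$ value does not exceed $y$, and iterate on the residual. Strictly increasing block indices force the construction to yield a genuine element of $\cE^*$, which by order-preservation evaluates to $y$; injectivity is then automatic. Uniqueness of $Q$ follows because any decreasing sequence $Q'$ in $\OI$ for which $\eval_{Q'}$ is an order-preserving bijection must satisfy $Q'_n=\sum\bbeta^{n+1}Q'$ (as the supremum of $\eval_{Q'}$ on elements of $\cE^*$ whose support begins at index $\ge n+1$), and this periodic recursion together with $Q'_n\in\OI$ forces $Q'_n=\ome^n$ through the characteristic polynomial. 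The hardest part will be managing the telescoping in the key lemma, since the cutoffs $k_m$ may lie arbitrarily far out with correspondingly tiny deficits $\ome^{k_m}$, and the telescoping must be arranged so that strict positivity is preserved across possibly infinite $L^*$-block decompositions.
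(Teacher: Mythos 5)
The paper does not prove this theorem; it cites it verbatim from a prior reference (Theorem~24 of \cite{chang-2021}), so there is no in-paper argument to compare against. Assessing your construction on its own: the skeleton is right. Setting $Q_n=\ome^n$ and deriving the recursion $Q_n=\sum\bbeta^{n+1}Q$ from the defining equation for $\ome$ is exactly the intended construction, and your key lemma ($\res_{n-1}(\ep)=0\implies\eval_Q(\ep)<Q_{n-1}$) is both true and provable by the telescoping you outline: each block with support $[i_m,b_m]$ contributes $\eval_Q(\zeta^m)\le\ome^{i_m-1}-\ome^{b_m}$ with a strictly positive deficit coming from the discarded tail $\sum_{j>b_m}\bbeta^{i_m}_j\ome^j$, and the disjointness $b_m<i_{m+1}$ makes these upper bounds telescope below $\ome^{n-1}$, including in the infinite-block case.

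The genuine gap is in deducing order-preservation from the key lemma. You say the tail differences are ``controlled by $Q_k$ via the lemma,'' but the lemma bounds $\eval_Q$ only of elements of $\cE^*$ supported on $[k+1,\infty)$, and $\res^{k+1}(\ep)$ is such an element only when $k$ sits at or after the right endpoint of one of $\ep$'s block support intervals. If $k$ falls in the interior of a support interval $[i_m,b_m]$ (so that $\ep_k=\bbeta^{i_m}_k$), then $\res^{k+1}(\ep)$ is a fragment of a block, not a member of $\cE^*$, and the bound $\sum_{j>k}\ep_j\ome^j<\ome^k$ can fail outright: for $L=(1,5)$ one has $\ome=1/3$, and for $\ep=(1,5,0,\dots)$ the tail past index $1$ is $5\ome^2=5/9>\ome$. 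The argument is rescued only by an additional structural fact you never state: the first index $k$ where $\ep\dord\delta$ differ cannot lie in the interior of a block of $\ep$, because $\delta$ agrees with $\ep$ on $[1,k-1]$ and membership of $\delta$ in $\cE^*$ forces its block beginning at $i_m$ to match $\bbeta^{i_m}$ through index $k$, which caps $\delta_k$ at $\bbeta^{i_m}_k=\ep_k$ and contradicts $\delta_k>\ep_k$. Without this observation---which is really the crux of strict monotonicity---your sketch does not close.
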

\begin{deF}
\rm \label{def:FS-real}
Let $\cE^*$ be a periodic \zec\ collection for $\OI$.
The sequence $Q$ defined in Theorem \ref{thm:real} is  
called {\it the fundamental sequence of $\cE^*$}, and 
the polynomial (\ref{eq:char-polynomial-real}) is called 
 {\it the characteristic polynomial of $L$ for $\OI$.}
\end{deF} 

For the collection in Example \ref{exm:periodic-real}, the fundamental sequence $Q$ is given by
the linear recurrence $Q_{n } = 2 Q_{n+1} + 3 Q_{n+2} + Q_{n+3}$, and their initial values
are given by the formula in (\ref{eq:full-recursion-real}).
In general, the \funds\ of the periodic \zec\ collection for $\OI$ determined by $L =(e_1,\dots,e_N)$ is given by
\begin{equation}\label{eq:recurrence-real}
Q_n=\sum_{k=1}^{N-1} e_k Q_{n+k} + (1+e_N) Q_{n+N},\quad
n\ge 1 .
\end{equation} 
The polynomial in (\ref{eq:char-polynomial-real}) is a reciprocal version of the polynomial $f(x)$ in (\ref{eq:char-polynomial}), i.e., it is equal to
$-x^N f(1/x)$.
Thus, it has a positive simple real zero $\ome<1$, and it is the only zero with the smallest modulus.

For the remainder of Section \ref{sec:general-real}, let $\cE^*$ be the periodic \zec\ collection for $\OI$ determined by $L$, and
let $Q$ denote the \funds\ of $\cE^*$.
The results introduced below will be used in Section \ref{sec:proofs}, but 
they are introduced here in order to provide the reader with an opportunity to 
become more familiar with generalized \zec\ expansions of real numbers.
\begin{deF}\label{def:inf-version}\rm
If $\ep\in \cE^*$, we define $\inF_{\cE^*}(\ep)$ as follows.
If $\ep$ does not have finite support, then $\inF_{\cE^*}(\ep):=\ep$.
If $c$ is the largest index such that $\ep_c\ge 1$,
then $\inF_{\cE^*}(\ep)=\res_c(\ep)-\beta^c +\bbeta^{c+1}$,
which is not a member of $\cE^*$.
If there is no confusion, let $\inf$ denote $\inf_{\cE^*}$.
\end{deF}
\noindent

The long recursion (\ref{eq:full-recursion-real}) implies the following lemma, and we leave the proof to the
reader.
\begin{lemma}
If $\ep\in\cE^*$, then $\sum\ep Q = \sum\inf (\ep) Q$.
\end{lemma}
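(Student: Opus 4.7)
The plan is to split on whether $\ep$ has finite support. When $\ep$ has infinite support, the equality $\sum\ep Q = \sum\inf_{\cE^*}(\ep)Q$ is immediate because Definition~\ref{def:inf-version} declares $\inf_{\cE^*}(\ep) = \ep$ in that case, so there is nothing to prove.

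For the nontrivial case when $\ep$ has finite support, I would let $c$ be the largest index with $\ep_c \ge 1$. Since $\ep_k = 0$ for every $k > c$, we have $\res_c(\ep) = \ep$, so reading Definition~\ref{def:inf-version} gives the formal identity
\[
\inf_{\cE^*}(\ep) - \ep \;=\; -\beta^c + \bbeta^{c+1}.
\]
Applying the linear functional $\mu \mapsto \sum \mu\, Q$ to this difference yields
\[
\sum \inf_{\cE^*}(\ep)\, Q \;-\; \sum \ep\, Q \;=\; -Q_c \,+\, \sum \bbeta^{c+1}\, Q,
\]
and the first relation in (\ref{eq:full-recursion-real}), namely $Q_n = \sum \bbeta^{n+1} Q$, applied with $n = c$, shows the right-hand side is zero. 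This closes the computation.

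The argument therefore reduces to a single invocation of the long recursion defining $Q$, so the only thing worth verifying is that the formal manipulation above is legitimate when $\bbeta^{c+1}$ has infinite support. Because $Q_n = \ome^n$ with $0 < \ome < 1$ by Theorem~\ref{thm:real} and the entries of $\bbeta^{c+1}$ are uniformly bounded by $\max_j e_j$, the series $\sum \bbeta^{c+1} Q$ is absolutely dominated by a geometric series, so splitting $\sum \inf_{\cE^*}(\ep)\, Q$ into the contributions of $\res_c(\ep) - \beta^c$ and of $\bbeta^{c+1}$ poses no convergence issue. I expect no genuine obstacle here—the lemma is essentially a restatement of the recursion that characterizes $Q$, packaged so that it can later be exploited in the duality formula.
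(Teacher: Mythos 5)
Your argument is correct and is precisely the one the paper intends: the paper does not write out a proof but notes that ``the long recursion (\ref{eq:full-recursion-real}) implies the following lemma, and we leave the proof to the reader,'' and your case split plus the identity $Q_c = \sum\bbeta^{c+1}Q$ is exactly that. The convergence remark at the end is a sensible sanity check but not strictly needed, since $\bbeta^{c+1}\in\cEsbar$ and the evaluation $\eval_Q$ is already known to converge on such \cf s.
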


Recall the restriction notation from Definition \ref{def:restriction} and 
$\Lstar$-decompositions from Definition \ref{def:periodic-real}.
Throughout the proof of Lemma \ref{lem:ep-converge}, we use the increasing property of 
$\eval_Q$ defined in Theorem \ref{thm:real}.
\begin{lemma}\label{lem:ep-converge}
Let
$\ep\in\cE^*$,
let
$x=\sum\ep Q >0$,
and let $\Delta x$ be a sufficiently small positive real number such that $x+\Delta x<1$
and $x-\Delta x>0$.
If $x+\Delta x=\sum\ep^+ Q$ for $\ep^+\in \cE^*$, and
$n$ is the largest integer $\ge 0$ such that
$\res_n(\ep)=\res_n(\ep^+)$, then $n\to\infty$ as $\Delta x\to 0$.
If $x-\Delta x=\sum\ep^- Q$ for $\ep^-\in \cE^*$, and
$n$ is the largest integer $\ge 0$ such that
$\res_n(\inF(\ep))=\res_n(\ep^-)$, then $n\to\infty$ as $\Delta x\to 0$.

\end{lemma}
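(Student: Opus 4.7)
The plan is to analyze, for each fixed $N$, the ``prefix cluster'' $\cE^*_\sigma := \{\ep' \in \cE^* : \res_N(\ep') = \sigma\}$ using the $\dord$-increasing bijection $\eval_Q : \cE^* \to \OI$ from Theorem~\ref{thm:real}. Two \cf s with distinct $N$-prefixes differ at some index $\le N$, so the one smaller there is $\dord$-smaller; hence each cluster $\cE^*_\sigma$ is convex in $\dord$, and its image $I_\sigma := \eval_Q(\cE^*_\sigma)$ is an interval in $\OI$. Distinct clusters have disjoint images, and bijectivity of $\eval_Q$ gives $\OI = \bigsqcup_\sigma I_\sigma$. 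Write $m_\sigma := \inf I_\sigma$ and $M_\sigma := \sup I_\sigma$.

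For part~(1), take $\sigma := \res_N(\ep)$, so $\ep \in \cE^*_\sigma$ and $x = \sum \ep Q \in I_\sigma$. Once $x < M_\sigma$ is established, for $0 < \Delta x < M_\sigma - x$ we have $x + \Delta x \in I_\sigma$, forcing $\ep^+ \in \cE^*_\sigma$ and $\res_N(\ep^+) = \res_N(\ep)$, so $n \ge N$; since $N$ is arbitrary, $n \to \infty$. The strict inequality $x < M_\sigma$ amounts to exhibiting $\ep' \in \cE^*_\sigma$ with $\ep \dord \ep'$. If $\ep$ has finite support with last nonzero index $b$, I pick any $K > \max(N, b)$ and set $\ep' := \ep + \beta^K$; since $e_1 > 0$, $\beta^K$ itself is a valid proper $L^*$-block at $K$ with termination index in $[K, K + N - 1]$, disjoint from the blocks of $\ep$, so $\ep' \in \cE^*_\sigma$ and $\ep \dord \ep'$. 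If $\ep$ has infinite support, I replace one of the blocks of $\ep$ lying in $[N+1,\infty)$ by a proper $L^*$-block with the same starting index and a strictly later termination index (absorbing the immediately following block if needed to preserve disjointness).

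Part~(2) follows by a dual strategy with $\sigma := \res_N(\inF(\ep))$. When $\ep$ has infinite support, $\inF(\ep) = \ep$ and $m_\sigma < x$ follows symmetrically: simply remove any block of $\ep$ whose support lies entirely in $[N+1,\infty)$ to produce a $\dord$-smaller $\ep' \in \cE^*_\sigma$. When $\ep$ has finite support with last nonzero index $c$, the identity $\sum \bbeta^{c+1} Q = Q_c$ from Theorem~\ref{thm:real} gives $\sum \inF(\ep) Q = x$, but $\inF(\ep) \notin \cE^*$. For $N \ge c+1$ I claim $x = M_\sigma$ (not achieved): every $\ep' \in \cE^*_\sigma$ must terminate the partial maximal-block pattern $\res^{c+1}_N(\bbeta^{c+1})$ embedded in $\sigma$ at some $k \ge N+1$ with $\ep'_k < \bbeta^{c+1}_k$, and bijectivity of $\eval_Q$ together with $\ep \notin \cE^*_\sigma$ (note $\res_N(\ep)_c = \ep_c \ne \ep_c - 1 = \sigma_c$) forces $\sum \ep' Q < x$ strictly; pushing the termination index $k \to \infty$ drives $\sum \ep' Q \to x$, so $M_\sigma = x$. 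Meanwhile $m_\sigma \le \sum \sigma Q < x$ since omitting the positive tail $\res^{N+1}(\bbeta^{c+1})$ of $\inF(\ep)$ strictly decreases the value. Hence for $0 < \Delta x < x - m_\sigma$, $y = x - \Delta x \in (m_\sigma, x) \subset I_\sigma$, so $\ep^- \in \cE^*_\sigma$ and $\res_N(\ep^-) = \res_N(\inF(\ep))$.

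The main technical obstacle is the block-manipulation witnessing $x < M_\sigma$ and $m_\sigma < x$: one must check that the enlarged, shortened, added, or removed proper $L^*$-block fits disjointly into the existing block decomposition of $\ep$, using the periodicity from Definition~\ref{def:periodic-real} and the fact that valid termination indices recur at least once every $N$ positions (since $e_1 > 0$). The finite-support case of part~(2) is the most delicate because $\inF(\ep)$ lies outside $\cE^*$; Lemma~\ref{lem:rev-ep} supplies the framework for reasoning about this ``boundary'' object, whose value $\sum \inF(\ep) Q = x$ must be identified with an unachieved supremum rather than with an actual image of $\eval_Q$.
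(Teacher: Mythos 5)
Your proposal is correct and takes a genuinely cleaner, more structural route than the paper. The paper proves the lemma by direct computation: it takes the $L^*$-block decomposition of $\ep$, constructs explicit ``bracketing'' elements (e.g.\ $\sum_{m=1}^{T-1}\zeta^m + \xi + \beta^{k_T}$) whose $\eval_Q$-values sandwich $x+\Delta x$, and then invokes the $\dord$-monotonicity of $\eval_Q$ from Theorem~\ref{thm:real} to match prefixes. You instead package the same monotonicity into the partition $\OI = \bigsqcup_\sigma I_\sigma$ by $N$-prefix, reducing everything to showing $x < M_\sigma$ (resp.\ $m_\sigma < x = M_\sigma$), which is a nicer way to see \emph{why} the lemma is true: the preimage of a small interval around $x$ must lie in one prefix cluster. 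The one place where your write-up is genuinely hand-wavy is the infinite-support case of Part~(1): ``replace a block by a longer one, absorbing the immediately following block if needed'' does not fully specify the result, since the enlarged block can overlap \emph{several} subsequent blocks, and ``absorbing'' them is not a well-defined operation. The easy repair is to simply truncate: set $\ep' := \sum_{m'<m}\zeta^{m'} + \xi$ with $\xi := \zeta^m + \beta^{b_m}$, which is exactly the witness the paper uses. With that fix (and a harmless relaxation of ``$K > \max(N,b)$'' to ``$K$ sufficiently large'' so that $\beta^K$ is genuinely disjoint from the final block of $\ep$, whose support interval may extend past $b$), the argument goes through. Your identification of the finite-support case of Part~(2) as the delicate one --- $x = M_\sigma$ as an unachieved supremum because $\inF(\ep)\notin\cE^*$ --- matches where the paper's proof is also the longest, and your order-theoretic treatment of it is tidier than the paper's explicit residue calculation.
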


\begin{proof}
Let $ \ep=\sum_{m=1}^\infty \zeta^m $ be an
$\Lstar$-\bdecom, and let $[i_m,b_m]$ be the \supint\ of
$\zeta^m$.
Notice that given $m\ge 1$, there is a smallest index $k_m>b_m$
such that $\bbeta^{i_m}_{k_m}>0$, and that $b_m$ and $ k_m$ approach $\infty$
as $m\to\infty$.

Let $\Delta x=\sum\ep^0 Q$ where $\ord^*(\ep^0)=t$, and
let $T$ be the largest index such that $k_T< t$ where $t$ is sufficiently large.
Then, $\Delta x < Q_{k_T}$.
Recall from Definition \ref{def:periodic-real} the support intervals of proper $\Lstar$-blocks.
Then, $\xi:=\zeta^T + \beta^{b_T}$ is a proper $\Lstar$-block, and its \supint\ is either
$[i_T,b_T]$ or $[i_T,k_T]$,
and hence, $ \sum_{m=1}^{T-1 } \zeta^m + \xi + \beta^{k_T}\in \cE^*$.
Thus,
\begin{gather*}
x=\sum\left(\tsum_{m=1}^{\infty } \zeta^m\right) \dis Q<
x+\Delta x<
\dis\sum\left(\tsum_{m=1}^{T } \zeta^m\right) Q + Q_{b_T} + \Delta x
<\dis\sum\left(\tsum_{m=1}^{T-1 } \zeta^m\right) Q +\dis\sum\xi Q
+ Q_{k_T} .
\end{gather*}
By Theorem \ref{thm:real},
$\tsum_{m=1}^{\infty } \zeta^m\dord \ep^+ \dord\tsum_{m=1}^{T-1 } \zeta^m + \xi+\beta^{k_T}$,
and by the definition of the \lex\ order, we have
$\res_{b_{T-1}}(\ep^+)=\tsum_{m=1}^{T-1 } \zeta^m=\res_{b_{T-1}}(\ep )$.
Notice that as $\Delta x\to 0$, we have $t\to\infty$ and $T\to\infty$,
and hence, $b_{T-1}\to\infty$.

Let us prove the statement about $x-\Delta x$.
Suppose that $ \ep=\sum_{m=1}^\infty \zeta^m $ is the non-zero $\Lstar$-\bdecom.
Then, $\inF(\ep)=\ep$, and
\begin{gather*}
x=\sum\left(\tsum_{m=1}^{\infty } \zeta^m\right) \dis Q>
x-\Delta x >
\dis\sum\left(\tsum_{m=1}^{T -1} \zeta^m\right) Q +\sum\zeta^T Q - \Delta x
>\dis\sum\left(\tsum_{m=1}^{T -1} \zeta^m\right) Q .
\end{gather*}
As in the earlier case,
we have $\res_{b_{T-1}}(\ep^-)=\tsum_{m=1}^{T-1 } \zeta^m=\res_{b_{T-1}}(\ep )$,
and $b_{T-1}\to\infty$ as $t\to\infty$.

Suppose that $ \ep=\sum_{m=1}^\ell \zeta^m $ is the non-zero $\Lstar$-\bdecom,
and let $c$ be the largest index such that $\zeta^\ell_c\ge 1$.
Then, $\inF(\ep)=\sum_{m=1}^{\ell-1}\zeta^m + \xi + \bbeta^{c+1}$
where $\xi=\zeta^\ell-\beta^{c}$.
Then, there is a largest index $p$ such that $p<t$
and $c +1\equiv p\moD N$ where $N$ is the length of $ L$, and hence,
by the periodic structure of the entries of $\bbeta^{c+1}$, 
we have $\bbeta^{c+1} = \res_{p-1}(\bbeta^{c +1})+\bbeta^p$.
Thus, 
\begin{gather*}
x=\sum\inF(\ep) Q
>
x-\Delta x =
\dis\sum\left(\tsum_{m=1}^{\ell -1} \zeta^m\right) Q +\sum\xi Q
+\sum\res_{p-1}(\bbeta^{c +1}) Q +\sum\bbeta^p Q- \Delta x \\
>\dis\sum\left(\tsum_{m=1}^{\ell -1} \zeta^m\right) Q +\sum\xi Q
+\sum\res_{p-1}(\bbeta^{c +1}) Q
=\sum\res_{p-1}(\inF(\ep)) Q.
\end{gather*}
Thus, $\sum( \tsum_{m=1}^{\ell -1} \zeta^m +\zeta^\ell)Q >
\sum\ep^- Q >\sum\res_{p-1}(\inF(\ep)) Q$.
It's clear that $\res_{c-1}(\ep)=\res_{c-1}(\ep^-)=\res_{c-1}(\inF(\ep))$
and $\ep^-_{c}=\zeta^\ell_{c}-1=\inF(\ep)_c$.
Thus, $\ep^-=\res_c(\ep^-) + \res^{c+1}(\ep^-)$ is an $\Lstar$-decomposition, and
hence,
$ \res^{c+1}(\ep^-)$
is a member of $\cE^*$.
By Theorem \ref{thm:real},
$\sum\ep^- Q >\sum\res_{p-1}(\inF(\ep)) Q$
implies $\res_{p-1}(\inF(\ep))\dord \ep^-$, and hence,
$\res^{c+1} _{p-1}(\bbeta^{c+1})=\res^{c+1} _{p-1}(\inF(\ep)) \dord \res^{c+1}(\ep^-) $.
Thus, we have
$\res^{c+1} _{p-1}(\bbeta^{c+1}) \dord \res^{c+1}(\ep^-) \dord \bbeta^{c+1}$,
and hence, $\res_{p-1}(\ep^-)=\res_{p-1}(\inF(\ep))$.
Since $p\to\infty$ as $t\to\infty$, we prove the result.
\end{proof}

\subsection*{Acknowledgments}
 The author would like to thank the referees who brought attention to the works that are related to the author's work.

\section{Generalized \zec\ expressions}\label{sec:zec-exressions}

\subsection{Expressions for positive integers}\label{sec:zec-integers-expressions}
Throughout this section,
let $\cE$ and $\cEtilde$ be periodic \zec\ collections for positive integers determined by lists 
$L=(e_1,\dots,e_N)$ and
$\wt L =(\wt e_1,\dots,\wt e_M)$, respectively,
such that $\cE$ is a proper subcollection of $\cEtilde$.
Let $H$ and $\Htilde$ be their fundamental sequences,
respectively. Let $\hbeta^n$ and $\hat\theta^n$ be the \immpred\ of $\beta^n$ in $\cE$ and $\cEtilde$, respectively.
Recall from Definition \ref{def:periodic-Z-expansions} that  $ \sum\ep \Htilde$ 
is called  an $\cEtilde$-expansion 
if $\ep\in \cEtilde$,
and if, in addition, $\ep\in\cE$, then the summation, which is written in terms of $\wt H$, is called {\it an $\cE$-expression}.
The main object of this paper is the function that counts 
the number of positive integers $<x$ whose $\cEtilde$-expansion  is an $\cE$-expression.

The proofs of Proposition \ref{thm:subcollection}, Lemma \ref{thm:reduction}, and Theorem \ref{thm:cE-duality} given below in this section remain valid for non-periodic \zec\ collections.
The main result of this paper is for periodic ones, and they are stated for periodic collections.

\begin{prop}\label{thm:subcollection}
Let $\cEtilde$ and $\cE$ be  \zec\ collections for positive integers.
Then, the  collection $\cE$ is a subcollection of $\cEtilde$ if and only if
the \immpred s $\hbeta^n$ in $\cE$ are members of $\cEtilde$ for $n\ge 2$.
\end{prop}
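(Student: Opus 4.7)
The forward direction ($\Rightarrow$) is immediate: each $\hbeta^n$ lies in $\cE$ by Definition~\ref{def:collection}, so the inclusion $\cE\subseteq\cEtilde$ forces $\hbeta^n\in\cEtilde$. For the reverse direction, assume every $\hbeta^n$ lies in $\cEtilde$. The plan is to prove $\cE\subseteq\cEtilde$ by well-founded induction on $\cE$ in the ascending \lex\ order $\aord$; this is valid because property~(1) of Definition~\ref{def:zec-N} guarantees that each $\mu\in\cE$ has only finitely many $\cE$-predecessors. The base case $\mu=0$ and the basis \cf s $\beta^i$ lie in $\cEtilde$ by Definition~\ref{def:collection}, so the work is in the inductive step. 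Take $\mu\in\cE$ with $\mu\ne 0,\beta^i$, and assume every $\mu'\aord\mu$ in $\cE$ already lies in $\cEtilde$; let $\nu=\wh\mu\in\cE$ denote the $\cE$-\immpred\ of $\mu$, which then lies in $\cEtilde$ by the induction hypothesis.

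Next, I would invoke property~(2) of Definition~\ref{def:zec-N} simultaneously in $\cE$ and in $\cEtilde$, using $\nu$: in $\cE$, either (I) $\mu=\beta^1+\nu$, which holds precisely when $\res_{n-1}(\nu)\ne\hbeta^n$ for every $n\ge 2$, or (II) $\mu=\beta^n+\res^n(\nu)$ with $\res_{n-1}(\nu)=\hbeta^n$ for the (unique) applicable $n\ge 2$; and analogously in $\cEtilde$ with $\hat\theta^{n'}$ in place of $\hbeta^{n'}$ and $\wt\mu$ the $\cEtilde$-\immsucc\ of $\nu$, giving cases ($\widetilde{\text{I}}$) $\wt\mu=\beta^1+\nu$ or ($\widetilde{\text{II}}$) $\wt\mu=\beta^{n'}+\res^{n'}(\nu)$ with $\res_{n'-1}(\nu)=\hat\theta^{n'}$. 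Of the four combinations, the matched pair (I,\,$\widetilde{\text{I}}$) gives $\mu=\beta^1+\nu=\wt\mu\in\cEtilde$ immediately, and the pair (II,\,$\widetilde{\text{II}}$) with $n=n'$ forces $\hbeta^n=\res_{n-1}(\nu)=\hat\theta^n$, so again $\mu=\wt\mu\in\cEtilde$.

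The nontrivial content is the mismatched combinations, which I would handle via the auxiliary observation that the hypothesis forces $\hbeta^n$ and $\hat\theta^n$ to be compatible on prefixes of $\cE$-elements: \emph{if $\xi\in\cE$ and $\res_{n-1}(\xi)=\hat\theta^n$ for some $n\ge 2$, then $\hat\theta^n=\hbeta^n$.} Granting this, case~(I,\,$\widetilde{\text{II}}$) applied to $\xi=\nu$ would force $\res_{n'-1}(\nu)=\hbeta^{n'}$, contradicting the Case~(I) characterization that $\res_{n-1}(\nu)\ne\hbeta^n$ for every $n$; case~(II,\,$\widetilde{\text{II}}$) with $n\ne n'$ yields $\hat\theta^{n'}=\hbeta^{n'}$ and hence a second index $n'$ with $\res_{n'-1}(\nu)=\hbeta^{n'}$, contradicting the uniqueness of $n$ in Case~(II); and case~(II,\,$\widetilde{\text{I}}$) is resolved by decomposing $\mu=\beta^n+\res^n(\nu)$, noting that $\res^n(\nu)=\nu-\hbeta^n$ is $\aord$-strictly less than $\mu$ and hence in $\cEtilde$ by induction, and combining it with $\hbeta^n,\beta^n\in\cEtilde$ to realize $\mu$ as a disjoint $\wt L$-block composition. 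The main obstacle is proving the auxiliary observation: the hypothesis $\hbeta^n\in\cEtilde$ gives $\hbeta^n\aordeq\hat\theta^n$ (since $\hat\theta^n$ is the $\cEtilde$-\immpred\ of $\beta^n$), so the remaining step is to show that any prefix $\hat\theta^n$ of an $\cE$-element satisfies $\hat\theta^n\aordeq\hbeta^n$—a structural fact that I expect to derive from the uniqueness in Theorem~\ref{cor:generate-zec} and, in the periodic setting, from the $L$-block decomposition in Definitions~\ref{def:periodic} and~\ref{def:L-decompositions}.
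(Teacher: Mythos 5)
Your strategy --- inducting on $\cE$ in the ascending order $\aord$ and matching the $\cE$- and $\cEtilde$-successors of $\nu=\wh\mu$ --- is genuinely different from the paper's (which works directly with $L$- and $\wt L$-block decompositions), but the auxiliary observation it hinges on is \emph{false}. Take $\wt L=(3,2)$ and $L=(2,3,1)$, a pair the paper itself verifies gives $\cE\subset\cEtilde$. Then $\xi=\hbeta^3=(3,2)\in\cE$ has $\res_1(\xi)=(3)=\hat\theta^2$, yet $\hbeta^2=(2)\ne(3)=\hat\theta^2$. The failure stems from the fact that $\res_{n-1}(\xi)$ need not lie in $\cE$ when $n-1$ falls strictly inside the support interval of an $L$-block of $\xi$: the truncation $(3)$ exceeds the order-$1$ ceiling $\hbeta^2=(2)$ and so is not in $\cE$, which destroys the step $\hat\theta^n\aordeq\hbeta^n$ that you were counting on.

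This gap is load-bearing, not cosmetic. The same $\nu=(3,2)$ realizes your mismatched case (II, $\widetilde{\mathrm{II}}$) with $n=3\ne 2=n'$: its $\cE$-successor is $\beta^3$ (via $\res_2(\nu)=\hbeta^3$) and its $\cEtilde$-successor is $\beta^2+\res^2(\nu)=(0,3)$ (via $\res_1(\nu)=\hat\theta^2$). That branch genuinely occurs, produces no contradiction, and $\mu=\beta^3\in\cEtilde$ for the trivial reason that basis \cf s are always present --- a reason your case split never reaches. (Separately, the index $n$ in Definition~\ref{def:zec-N}(2) need not be unique for periodic $\cE$, since $n$ and $n+N$ can satisfy $\res_{n-1}(\mu)=\hbeta^n$ simultaneously over periodic stretches, so the ``uniqueness of $n$ in Case~(II)'' also requires its own argument.) The paper's proof sidesteps the entire successor bookkeeping: given a proper $L$-block $\xi$ with support $[a,n-1]$, it takes the $\wt L$-block decomposition $\hbeta^n=\sum\zeta^m$, identifies the $\wt L$-block $\zeta^{m_0}$ whose support interval contains $a$, truncates it below $a$ and lowers its $a$-entry to $\xi_a$ to obtain a proper $\wt L$-block $\eta$, and then verifies $\xi=\eta+\sum_{m>m_0}\zeta^m$ is an $\wt L$-block decomposition whose supports fill exactly $[a,n-1]$. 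This ``supports fill the same interval'' bookkeeping is what lets arbitrary disjoint sums of $L$-blocks (including a leading maximal $\hbeta^c$) assemble into $\wt L$-block decompositions, with no carry analysis. If you want to rescue an inductive argument, you would need to replace the auxiliary observation with a version that is only invoked when $n-1$ is the top of an $L$-block of $\nu$, and that essentially forces you into the block-decomposition argument anyway.
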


\begin{proof}
If $\cE$ is a subcollection of $\cEtilde$, then $\hbeta^n\in \cE$ for $n\ge 2$ are members of $\cEtilde$. Suppose that $\hbeta^n$ for $n\ge 2$ are members of $\cEtilde$, and let us show that $\cE\subset \cEtilde$.
First let us show that
the proper $L$-blocks are members of $\cEtilde$.
Let $n\ge 2$, and let $\hbeta^n =\sum_{m=1}^M \zeta^m$ be the $\wt L$-block decomposition (including zero blocks);
see Definition \ref{def:periodic}.
Let $\xi$ be a non-zero proper $L$-block with support interval $[a,n-1]$, i.e., $\xi_a<\hbeta^n_a$ and
$\xi_k = \hbeta^n_k$ for all $a<k\le n-1$, and
let us show that $\xi\in\cEtilde$.

Notice that $a$ is contained in the support interval of $\zeta^{m_0}$ for some $1\le m_0\le M$, and 
let $[b,s]$ be the support interval where $b\le a\le s$.
Then, $\zeta^{m_0}$ is a nonzero $\wt L$-block since $\xi_a<\hbeta^n_a=\zeta^{m_0}_a$ implies that $\zeta^{m_0}_a>0$. Let $\eta$ be the \cf\ such that $\eta_a=\xi_a$,
$\eta_k=\zeta^{m_0}_k$ for all $a< k \le s=\ord(\zeta^{m_0})$, and
$\eta_k=0$ for other indices $k$.
Then, $\eta$ is a proper $\wt L$-block with support interval $[a,s]$
since $\eta_a=\xi_a<\hbeta^n_a=\zeta^{m_0}_a\le \hat\theta^{s+1}_a$ and
$\eta_k = \zeta^{m_0}_k=\hat\theta^{s+1}_k$ for $a<k\le s$.
Notice that $\xi=\eta+ \sum_{m=m_0+1}^M \zeta^m$, which is the sum of disjoint $\wt L$-blocks,
i.e., $\xi\in \cEtilde$.
Moreover, the support interval of the proper $L$-block $\xi$ is $[a,n-1]$, and
the (disjoint) union of the support intervals of the $\wt L$-blocks $\eta$ and $\zeta^m$ for $m_0+1\le m \le M$
is $[a,n-1]$ as well. Thus, in general, a disjoint sum of proper $L$-blocks is a member of $\cEtilde$.

Let us consider the case where the sum involves a maximal $L$-block $\hbeta^c$.
Let $\ep$ be the sum of  disjoint proper $L$-blocks $\xi^m$ for $1\le m \le T$, i.e.,
 $\ep = \sum_{m=1}^T \xi^m$, and suppose that 
  $[c,n]$ where $c>1$ is the union of the support intervals of the proper $L$-blocks $\xi^m$.
 Then, as shown earlier,   the union of the support intervals of the  $\wt L$-block decompositions of $\xi^m$
 for $1\le m\le T$ is 
$J_1:=[c,n]$ as well.
Recall that $\hbeta^c\in \cEtilde$, and hence, it is the  $\wt L$-block decomposition, the union of whose support intervals
is $J_2:=[1,c-1]$.  Since $J_1$ and $J_2$ are disjoint, the $L$-decomposition $\hbeta^c + \ep$ is 
an $\wt L$-decomposition, and hence, it is a member of $\cEtilde$ as well.
This
proves that all $L$-\bdecom s are members of $\cEtilde$.

\end{proof}

\begin{example}\rm
Let us demonstrate examples of subcollections.
For each of the following, we first check if
$\hbeta^n \aordeq \hat\theta^n$ for $n=2,\dots,7$,
and check if $\hbeta^n$ is a member of $\cEtilde$.
We use the semicolon to indicate the end of the \supint\ of an $\wt L$-block.
\begin{enumerate}
\item
Let $\wt L=(1,3)$ and $L=(1,2,1)$.
Then, $\hbeta^7=(1,2;1,1;2,1) \aord \hat\theta^7=(3,1,3,1,3,1)$.
However, $\hbeta^7$ does not have an $\wt L$-\bdecom\ since
$\res^3(\hbeta^7)$ has an $\wt L$-\bdecom, but $\res_2(\hbeta^7)$ does not an $\wt L$-\bdecom.
Thus, $\cE$ determined by $L$ is not a subcollection of $\cEtilde$.
\item
Let $\wt L=(3,2)$ and $L=(2,3,1)$.
Then, $\hbeta^7=(;1,3;2;1,3;2) \aord \hat\theta^7=(2,3,2,3,2,3)$, and
$\hbeta^7$ has an $\wt L$-\bdecom.
This example is sufficient to understand that $\hbeta^n$ is a member of $\cEtilde$ for all $n\ge 2$.
\item
Let $\wt L=(3,2)$ and $L=(3,1,2)$.
Then, $\hbeta^7=(;2;1,3;2;1,3) \aord \hdelta^7=(2,3,2,3,2,3)$, and
$\hbeta^7$ has an $\wt L$-\bdecom.
This example is sufficient to understand that $\hbeta^n$ is a member of $\cEtilde$ for all $n\ge 2$.

\end{enumerate}
\end{example}

Let us compare the \funds s of $\cE$ and $\cEtilde$ below.
We identify  later in Theorem \ref{thm:alpha} the values of $\al$ and $\wt \al$ that are mentioned in Theorem \ref{thm:ratios}.
\begin{theorem}\label{thm:ratios}
There are positive real numbers
$\al$, $\widetilde{\al}$, $\phi$, $\phitilde$, and $r<1$ such that
\begin{gather}\label{eq:three-ratios}
H_n = \al \phi^{n-1} + O(\phi^{n r}),\quad
\Htilde_n = \widetilde{\al}\phitilde^{n-1}+ O(\phitilde^{n r}),\quad
\text{and}\quad
\lim_{n\to\infty} \frac{H_n}{\Htilde_n^\gamma}
=\frac{\al}{\widetilde{\al}^\gamma}>0
\end{gather}
where $\gamma=\log_{\phitilde}\phi$ and $1<\phi<\phitilde$.
\end{theorem}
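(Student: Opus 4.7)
The plan is to deduce the first two asymptotic expansions from the standard Binet / partial-fraction theory of linear recurrences, to verify that the proper containment $\cE\subsetneq\cEtilde$ forces $\phi<\phitilde$ strictly, and then to read off the limit from these expansions.

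For the Binet expansions, the sequence $H_n$ satisfies the order-$N$ linear recurrence (\ref{eq:recurrence}) whose characteristic polynomial (\ref{eq:char-polynomial}) has $\phi$ as its unique simple dominant root, by Descartes' Rule of Signs together with the argument in \cite[Section 5.3.2]{chang-2021} already summarized in the text following Definition \ref{def:char-poly}. Diagonalizing the companion matrix thus gives $H_n=\al\phi^{n-1}+\sum_{j} c_j\lam_j^{n-1}$ with subdominant roots $\lam_j$ satisfying $|\lam_j|<\phi$; setting $r:=\max_j \log_\phi|\lam_j|<1$ collapses the subdominant sum into $O(\phi^{nr})$. The coefficient $\al$ is positive because the increasing sequence $H_n$ tends to infinity through positive integers, which is incompatible with $\al\le 0$. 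The identical argument applied to the list $\wt L$ and the collection $\cEtilde$ yields $\wt H_n=\widetilde{\al}\,\phitilde^{n-1}+O(\phitilde^{nr})$, after possibly enlarging $r<1$ to serve both error terms.

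Next, the order-preserving bijections $\eval_H$ and $\eval_{\wt H}$ from Theorem \ref{thm:integers}, combined with $\cE\subsetneq\cEtilde$, immediately yield $H_n\le\wt H_n$ for all $n$, hence $\phi\le\phitilde$ after extracting $n$-th roots. To upgrade this to strict inequality I would argue as follows. By Theorem \ref{cor:generate-zec}, a \zec\ collection is determined by its tuple of \immpred s; since $\cE\ne\cEtilde$, there is some $n_0$ with $\hbeta^{n_0}\aord\hat\theta^{n_0}$, and the periodic form of $\hbeta^n$ in terms of $L$ propagates this strict gap through the entire tail. Translating the gap into the defining equations $g_L(\phi)=1=g_{\wt L}(\phitilde)$, where $g_L(x):=\sum_{k=1}^{N-1}e_k x^{-k}+(1+e_N)x^{-N}$, produces $g_L(\phitilde)<g_{\wt L}(\phitilde)=1$, and strict decreasingness of $g_L$ on $(0,\infty)$ then forces $\phi<\phitilde$. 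This step is the main technical obstacle, because the counting argument alone gives only the non-strict inequality.

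With $\phi<\phitilde$ in hand, $\gamma:=\log_{\phitilde}\phi$ lies in $(0,1)$ and satisfies $\phitilde^\gamma=\phi$. Raising the expansion for $\wt H_n$ to the $\gamma$-th power,
\[
\wt H_n^\gamma=\widetilde{\al}^\gamma\,\phitilde^{\gamma(n-1)}\bigl(1+O(\phitilde^{n(r-1)})\bigr)^\gamma=\widetilde{\al}^\gamma\,\phi^{n-1}(1+o(1)),
\]
and dividing by $\al\phi^{n-1}(1+o(1))$ gives $H_n/\wt H_n^\gamma\to\al/\widetilde{\al}^\gamma>0$, completing all three asymptotic claims.
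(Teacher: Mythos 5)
Your Binet-type expansions of $H_n$ and $\wt H_n$, the non-strict inequality $\phi\le\phitilde$ from $H_n\le\wt H_n$, and the closing limit computation are all correct; the paper does not record the non-strict step but it is a harmless warm-up.

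The genuine gap is the single sentence asserting that the lexicographic gap $\hbeta^{n_0}\aord\hat\theta^{n_0}$, propagated through the periodic tails, ``produces'' $g_L(\phitilde)<g_{\wt L}(\phitilde)=1$. That implication does not hold on its own. Take $L=(1,100)$ and $\wt L=(2,0)$. Then $\hbeta^n\aord\hat\theta^n$ for every $n\ge 2$ (the top entry $e_1=1$ is less than $\wt e_1=2$), yet the dominant roots are $\phi=(1+\sqrt{405})/2\approx 10.56$ and $\phitilde=1+\sqrt{2}\approx 2.41$, so $g_L(\phitilde)\gg 1$. In that example $\cE\not\subset\cEtilde$, and that is precisely the point: $\hbeta^n\aordeq\hat\theta^n$ for all $n$ is only a \emph{necessary} condition for $\cE\subset\cEtilde$ (compare Example 3.2(1) in the paper), and lexicographic domination of the periodic coefficient sequences does not by itself control the power series $g_L$ evaluated at $\phitilde$. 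You need the full strength of Proposition \ref{thm:subcollection}, i.e., that each $\hbeta^n$ is actually a \emph{member} of $\cEtilde$, not merely bounded above by $\hat\theta^n$.

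With that input the route you sketched can be completed, and it is genuinely cleaner than the paper's. For $x>1$ one has the identity $g_L(x)-1=(1-x^{-N})\bigl(\sum_{m\ge1}(\bbeta^1)_m x^{-m}-1\bigr)$, where $\bbeta^1$ is the maximal $L^*$-block at index $1$, so $g_L(\phitilde)<1$ is equivalent to $\rho:=\sum\bbeta^1\wt Q<1$. Since $\rev_n(\bbeta^1)=\hbeta^{n+1}\in\cEtilde$ for every $n$ by Proposition \ref{thm:subcollection}, Lemma \ref{lem:rev-ep} applied to $\cEtilde$ and $\cEtilde^*$ shows that $\bbeta^1$ either lies in $\cEtilde^*$ or equals $\inF_{\cEtilde^*}$ of a finite-support element of $\cEtilde^*$; either way its image under $\eval_{\wt Q}$ lands in the open interval $(0,1)$, giving $\rho<1$ and hence $\phi<\phitilde$. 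The paper instead establishes $f_*(\phitilde)>0$ by a page-long integer-side estimate pitting $\wt H_{1+tNM}$ against $\wt H_{s_1}+\sum\res^{s_1}(\hbeta^{1+tNM})\wt H$, so your approach trades that computation for a structural appeal to the $\cEtilde^*$-machinery. But the one sentence where you pass from the lexicographic gap to the analytic inequality is exactly where all the work lives, and as written it is unsupported.
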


\begin{proof}
Recall $L=(e_1,\dots,e_N)$ and $\wt L=(\wt e_1,\dots,\wt e_M)$.
Let $\mathbf b=(e_N,\dots,e_1)$ and $\mathbf{\widetilde b}=
(\tilde e_M,\dots, \tilde e_1)$, and let $f$ and $\tilde f$ be the characteristic polynomials of $L$ and $\wt L$, respectively.
As explained in the paragraphs below (\ref{eq:char-polynomial}),
there are positive real numbers
$\al$, $\widetilde{\al}$, $\phi$, $\phitilde$, and $r$ such that
(\ref{eq:three-ratios}) holds and $\phi>1$,
where $\phi$ and $\wt \phi$ are the dominant positive real zeros of $f$ and $\wt f$, respectively.

Let us prove that $\phitilde>\phi $. Since $\cE\ne \cEtilde$,
if we duplicate the repeating blocks of $\cE$ and
$\cEtilde$ to the length of $NM$ as follows, Proposition \ref{thm:subcollection} implies
\begin{equation*}
\mathbf b'=(e_{N },\dots,e_N,\dots,e_1,e_N,\dots,e_1)\aord\mathbf{\widetilde b}'=(\tilde e_{ M},\dots,
\tilde e_M,\dots,
\tilde e_1,\tilde e_M,\dots,\tilde e_1).
\end{equation*}
Notice that
$\cE$ is equal to the periodic \zec\ collection determined by the reverse of the list $\mathbf b'$, which is equal to $\hbeta^{1+NM }$, and
$\cEtilde$ is equal to the periodic \zec\ collection determined by the reverse of the list $\mathbf{\widetilde b}'$, which is equal to $\hat\theta^{1+NM }$.
By Proposition \ref{thm:subcollection}, there is a largest index $s\le NM$ such that
$\hbeta^{1+NM}_s < \hat\theta^{1+NM }_s$.
Let $f_*(x):=x^{ NM}-(e_1 x^{ NM-1}+\cdots + e_{N-1} x +(1+e_{N}))$
be the characteristic polynomial of $\rev_{NM}(\mathbf b')$ for positive integers.
Then, the following induction step shows that $\phi$ is a zero of $f_*$:
\begin{align*}
\phi^{kN}&=e_1 \phi^{ kN-1}+\cdots + e_{N-1} \phi +(1+e_{ N }) \\
\implies
\phi^{kN+N}&=e_1 \phi^{ k N-1 +N}+\cdots + e_{ N-1} \phi^{1+N} +(1+e_{N })\phi^N \\
\implies
\phi^{(k+1)N} &
=e_1 \phi^{ (k+1)N-1 }+\cdots + e_{ N-1} \phi^{1+N}
+ e_{ N }\phi^N+ e_1\phi^{N-1}+\cdots +(1+e_N).
\end{align*}
Since $\phi$ is the only positive real root of $f_*$, it would be sufficient
to prove that $f_*(\phitilde)>0$.

Notice that for any integer $t>3$,
$\hbeta^{1+tNM }_{s+(t-1)NM}< \hat\theta^{1+tNM}_{s+(t-1)NM}$.
For convenience, let $\hbeta:=\hbeta^{1+tNM}$,
$\hat\theta :=\hat\theta^{1+tNM}$,
$s_*:=s+(t-1)NM$, and $s_1:=1+(t-1)NM$.
Let $y:=\sum\res^{s_1}(\hbeta)\Htilde$.
Below we shall establish that $\Htilde_{1+tNM} - (y+\Htilde_{s_1})$
is greater than a term $\Htilde_{t_*}$ where $t_*<1+tNM$ is sufficiently close to $1+tNM$.
Then, the asymptotic version of the inequality will imply that $f_*(\phi)$ is positive.

Notice below that if $s=1$, i.e., $s_*=1+ (t-1)NM=s_1$, then
$\sum\res_{s_*-1}^{s_1}(\hbeta )\Htilde$ is interpreted as $0$;
\begin{align*}
\Htilde_{(t-2)NM} 	 + \Htilde_{s_1} + y
& =\Htilde_{(t-2)NM} 	+ \Htilde_{s_1}
+\sum\res_{s_*-1}^{s_1}(\hbeta )\Htilde
+
\sum\res^{s_*}(\hbeta )\Htilde
\\
& = \Htilde_{(t-2)NM} 	+ \Htilde_{s_1}
+\sum\res_{s_*-1}(\hbeta)\wt H - \sum \hbeta^{s_1} \wt H
+
\sum\res^{s_*}(\hbeta )\Htilde.\\
\intertext{Notice that
by the definition of $s$, the \cf\ $\res^{s_*}(\hbeta)$ is a proper $\Ltilde$-block,
which implies that  $\res_{s_*-1}(\hbeta) \in\cEtilde$.  Thus, }
\Htilde_{(t-2)NM} 	 + \Htilde_{s_1} + y
& < \Htilde_{(t-2)NM} 	+ \Htilde_{s_1}
+\Htilde_{s_*}-\sum \hbeta^{s_1}\wt H
+
\sum\res^{s_*}(\hbeta )\Htilde
\\
&\le
\Htilde_{(t-2)NM}-\sum \hbeta^{s_1}\wt H 	+ \Htilde_{s_1}+
\sum \res^{s_*}(\hat\theta ) \Htilde.
\end{align*}
Notice that
$0<y_1:=\Htilde_{(t-2)NM}-\sum \hbeta^{s_1}\wt H 	+ \Htilde_{s_1}<\Htilde_{s_1}$
implies there is $\ep^0\in\cEtilde$ such that $\ord(\ep^0)\le s_1-1$
and $y_1=\sum\ep^0\wt H$.
Let $\ep^0=\sum_{m=1}^\ell \zeta^m$ be an $\wt L$-\bdecom\ such that
the \supint\ of $\zeta^\ell$ is $[a,s_1-1]$, and let us claim that $\ep:=\ep^0 + \res^{s_*}(\hat\theta )\in \cEtilde$.
Let $[s_0,(t-1)NM]$ be the \supint\ of the proper $\wt L$-block
$\res^{s_*}(\hat\theta )$.
Notice that the \supint\ of the $\wt L$-block $\res^{s_*}(\hat\theta )$ may not be
$[s_*,(t-1)NM]$ or 
$[s_*-1,(t-1)NM]$, e.g.,
if $\wt L=(1,3,0)$ and $s_*=2+(t-1)NM$, then the \supint\ is $[s_1-1,(t-1)NM]$.
Since $\wt e_1=\hat\theta_{s_1-1}>0$ and $\res^{s_*}(\hat\theta )_{s_1-1}=0$, we have
$s_1-1\le s_0$.
If $s_1\le s_0$,
then $\ep$ is a sum of two disjoint $\wt L$-blocks.
If $s_1-1=s_0$, then
$\hat\theta_{s_1}=\wt e_M$ implies that
$\zeta^\ell + \res^{s_1}(\hat\theta )$ forms a single $\wt L$-block.
Thus, $\ep^0+\res^{s_*}(\hat\theta )\in \cEtilde$, and 
\begin{align*}
\Htilde_{(t-2)NM} 	 + \Htilde_{s_1} +y
& \le \sum\ep^0 \wt H+
\sum \res^{s_*}(\hat\theta ) \Htilde \le
\sum \hat\theta \Htilde < \Htilde_{1+tNM} \\
\implies \quad \Htilde_{(t-2)NM}
&< \Htilde_{tNM+1} - (y + \Htilde_{s_1})
\\
\implies
\quad
\phitilde^{-NM-1} &\le \phitilde^{ NM}
-(e_1 \phitilde^{ NM-1}+\cdots +(1+e_{N}))+ o_t(1).
\end{align*}
This proves that
$0<\phitilde^{-NM-1}\le f_*(\phitilde) $ as $t\to\infty$.
Since $f_*(\phi)=0$, we prove that $\phitilde>\phi$.
\end{proof}

\subsection{The duality formula}
Recall Lemma \ref{thm:2-duality}, which is the specialized version of the duality formula.
We introduce the general version for \zec\ collections for positive integers, including non-periodic ones.

\begin{deF}\label{def:cE}
Given a non-zero \cf\
$\mu$ with finite support that is not necessarily in $\cEtilde$, we denote by $\check\mu$ the largest \cf\ in $\cE$ that is $\aord \mu$, and we denote the \immsucc\ of $\check\mu$ in $\cE$ by $\bar\mu$.
\end{deF}

\begin{lemma}[Mixed decomposition]\label{lem:ep-bar}
Given $\ep\in \cEtilde-\cE$, write $\ep =\zeta^0 +\mu$ as an $\wt L$-decomposition such that
$\mu=\sum_{m= 1}^n \zeta^m$ is an $ L$-block decomposition with the maximal integer $n\ge 0$.
If $n\ge 1$, then let $[a,i]$ be the support interval of $\zeta^1$, and 
if $n=0$, then let $a=\ord(\zeta^0)+1$.
Then, $\hbeta^a\aord\zeta^0 $ and $\bar\ep = \beta^a +\sum_{m=1}^n \zeta^m$.
\end{lemma}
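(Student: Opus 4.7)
The plan is to (i) establish the decomposition, (ii) prove the lex bound $\hbeta^a \aord \zeta^0$, and (iii) identify $\bar\ep = \beta^a + \mu$ by showing that $\hbeta^a + \mu$ is its immediate predecessor in $\cE$.

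For (i), I would start with the $\wt L$-block decomposition of $\ep\in\cEtilde$ from Theorem \ref{thm:periodic-zec}; since every $L$-block belongs to $\cEtilde$ by Proposition \ref{thm:subcollection}, a top portion of that decomposition regroups into a sum of $L$-blocks, and I pick the split that maximizes $n$. Because $\ep\notin\cE$ while $\mu\in\cE$, the remainder $\zeta^0$ must be nonzero, and the index $a$ is defined as in the statement. For (ii), I argue by contradiction, supposing $\zeta^0 \aordeq \hbeta^a$. If $\zeta^0=\hbeta^a$, then $\ep=\hbeta^a+\mu$ is a disjoint $L$-block sum (with supports $[1,a-1]$ and $[a,i_n]$), giving $\ep\in\cE$, a contradiction. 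Otherwise, letting $k$ be the largest index where $\zeta^0_k<\hbeta^a_k$, the two cfs agree above $k$, so $\xi:=\res^k(\zeta^0)$ satisfies the conditions defining a proper $L$-block at index $a-1$ with support $[k,a-1]$. Appending $\xi$ to $\mu$ produces an $L$-block decomposition of length $n+1$ while leaving $\zeta^0-\xi$ as the lower $\wt L$-part, contradicting the maximality of $n$.

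For (iii), first $\beta^a+\mu\in\cE$: when $n\ge 1$ the inequality $\zeta^1_a<\hbeta^{i+1}_a$ gives $\zeta^1_a+1\le\hbeta^{i+1}_a$, so $\beta^a+\zeta^1$ remains a proper $L$-block at index $i$ (with its support interval possibly extending below $a$ when equality holds), disjoint from $\zeta^2,\dots,\zeta^n$; when $n=0$, $\beta^a+\mu=\beta^a\in\cE$. A comparison at index $a$ yields $\ep\aord\beta^a+\mu$. Next, $\hbeta^a+\mu$ is itself a full $L$-block decomposition and therefore in $\cE$, and applying Definition \ref{def:zec-N}(2) to $\nu:=\hbeta^a+\mu$ with $n=a$ (noting $\res_{a-1}(\nu)=\hbeta^a$) identifies its immediate successor in $\cE$ as $\beta^a+\res^a(\nu)=\beta^a+\mu$. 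Combining this with the bound from (ii) gives $\hbeta^a+\mu\aord\ep\aord\beta^a+\mu$, and because nothing in $\cE$ lies strictly between consecutive elements, $\check\ep=\hbeta^a+\mu$ and hence $\bar\ep=\beta^a+\mu$.

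The most delicate step will be verifying in (ii) that splitting off $\xi$ yields a genuine $\wt L$-decomposition of $\ep$ rather than just a cf-level sum; this comes down to checking that the $\wt L$-block boundaries of $\zeta^0$ are compatible with a cut at index $k$, which I expect to follow from the fact that $\xi$, being an $L$-block, has its own $\wt L$-decomposition (by Proposition \ref{thm:subcollection}) that attaches cleanly to the remaining $\wt L$-blocks of $\zeta^0$ below $k$, together with the periodic structure imposed by $\wt L$.
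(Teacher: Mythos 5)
Your proposal is correct and takes essentially the same route as the paper's proof: both derive $\hbeta^a \aord \zeta^0$ by showing that $\zeta^0 \aordeq \hbeta^a$ would yield an extra proper $L$-block at index $a-1$ (namely $\res^j_{a-1}(\zeta^0)$, your $\xi$), contradicting the maximality of $n$ (or, for the equality case, giving $\ep\in\cE$), and both then identify $\check\ep = \hbeta^a + \sum_{m=1}^n\zeta^m$ so that $\bar\ep = \beta^a + \sum_{m=1}^n\zeta^m$. You spell out step (iii) more explicitly than the paper's one-line assertion of $\check\ep$, and the $\wt L$-decomposition compatibility at the cut that you flag as delicate is in fact also left implicit in the paper's proof, so your treatment is at or above the paper's level of rigor there.
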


\begin{proof}
Notice that $\ep \not \in\cE$ implies that $a\ge 2$.
Suppose that $\zeta^0 \aord \hbeta^{a}$.
There is an index $j\le a-1$ such that $\zeta^0_j< \hbeta^{a}_j$ and
$\zeta^0_k= \hbeta^{a}_k$ for $j+1\le k\le a-1$.
Thus, $\res^{j}_{a-1}(\zeta^0)$ is a proper $L$-block
with \supint\ $[j,a-1]$
, and this contradicts the maximality of $n$.
Notice that $\zeta^0=\hbeta^{a}$ also contradicts the maximality of $n$.
Thus, $\hbeta^{a}\aord \zeta^0$, and $\hbeta^{a}+\sum_{m=1}^n \zeta^m
\aord \zeta^0+\sum_{m=1}^n \zeta^m$.
Notice that $\check\ep = \hbeta^{a}+\sum_{m=1}^n \zeta^m$, and hence,
$\bar\ep = \beta^a +\sum_{m=1}^n \zeta^m$.
\end{proof}

\begin{example}\label{exm:ep-bar}\rm
Let $L=(1,0)$ and $\wt L=(1,1)$.
Suppose that $\ep=(1,1,1,0,1,0,1)$, which is a member of $\cEtilde - \cE$.
Notice that there is no $L$-block at index $7$ in $\ep$, and hence, the maximal number $n$ described in Lemma \ref{lem:ep-bar} is $0$, and
$a=8$.
So, $\hbeta^8\aord \ep$, and hence, $\bar\ep = \beta^8$.
\end{example}
\begin{deF}\rm \label{def:counting-function}
Given a positive integer $x$, let $R(x)$ denote
the subset of non-negative integers $\sum\ep \Htilde<x$ where $\ep\in\cE$, and
let $z(x)$ denote $\#R(x)$.
\end{deF}\noindent

\begin{lemma}\label{thm:reduction}
For each $\mu \in \cEtilde$,
$R(\sum\mu \Htilde) = R(\sum\bar\mu\Htilde)$.
\end{lemma}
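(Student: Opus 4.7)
The plan is to reduce the equality of sets to an ordering statement via Theorem \ref{thm:integers}, which guarantees that $\eval_{\Htilde}$ is order-preserving on $\cEtilde$ (so numerical inequalities on $\Htilde$-expansions exactly correspond to the ascending lex order $\aord$). Since $\cE\subset\cEtilde$, this order-preservation applies equally to elements of $\cE$. The proof then reduces to establishing the sandwich $\check\mu \aordeq \mu \aordeq \bar\mu$ and invoking that $\bar\mu$ is the \immsucc\ of $\check\mu$ \emph{in $\cE$}, leaving no room in $\cE$ strictly between them.

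First I would dispose of the trivial case $\mu\in\cE$. In that case Definition \ref{def:cE} gives that $\check\mu$ is the \immpred\ of $\mu$ in $\cE$, so its \immsucc\ is $\mu$ itself, i.e.\ $\bar\mu=\mu$, and the claim is immediate. Otherwise $\mu\in\cEtilde\setminus\cE$, and Lemma \ref{lem:ep-bar} provides an explicit description of $\bar\mu$; in particular $\check\mu \aord \mu$ by the definition of $\check\mu$, and $\mu \aord \bar\mu$ since $\bar\mu\in\cE$ with $\bar\mu\aord\mu$ would contradict the maximality of $\check\mu$, while $\bar\mu=\mu$ would place $\mu$ in $\cE$.

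Given the sandwich $\check\mu \aord \mu \aord \bar\mu$, the order-preserving property of $\eval_{\Htilde}$ on $\cEtilde$ yields $\sum\check\mu\Htilde < \sum\mu\Htilde < \sum\bar\mu\Htilde$. For the forward inclusion $R(\sum\mu\Htilde)\subseteq R(\sum\bar\mu\Htilde)$, if $\ep\in\cE$ satisfies $\sum\ep\Htilde < \sum\mu\Htilde$, then $\sum\ep\Htilde < \sum\bar\mu\Htilde$ as well. For the reverse inclusion, let $\ep\in\cE$ with $\sum\ep\Htilde < \sum\bar\mu\Htilde$. By Theorem \ref{thm:integers} this gives $\ep\aord\bar\mu$ in $\cEtilde$; but $\bar\mu$ is the \immsucc\ of $\check\mu$ in $\cE$, so no element of $\cE$ lies strictly between $\check\mu$ and $\bar\mu$, forcing $\ep \aordeq \check\mu$. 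Combined with $\check\mu\aord\mu$, this gives $\ep\aord\mu$, and another application of order-preservation yields $\sum\ep\Htilde < \sum\mu\Htilde$, i.e.\ $\ep\in R(\sum\mu\Htilde)$.

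There is no serious obstacle here; the only real content is the sandwich $\check\mu \aord \mu \aord \bar\mu$, which follows directly from the definitions of $\check\mu$ and $\bar\mu$ together with the hypothesis $\mu\notin\cE$. The entire argument is structural and does not use the periodicity of $\cE$ or $\cEtilde$, consistent with the remark preceding Proposition \ref{thm:subcollection} that this result remains valid for non-periodic collections.
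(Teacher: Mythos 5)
Your proof is correct and uses essentially the same ideas as the paper: the order-preserving property of $\eval_{\Htilde}$ from Theorem \ref{thm:integers}, the definition of $\check\mu$ as the largest element of $\cE$ strictly below $\mu$, and the fact that $\bar\mu$ is its immediate successor in $\cE$, leaving no element of $\cE$ in between. The only cosmetic difference is that you split off the trivial case $\mu\in\cE$ and use $\mu\aordeq\bar\mu$ directly for the easy inclusion, whereas the paper runs both inclusions through the chain $\ep\aordeq\check\mu\aord\bar\mu$ without case-splitting; both are valid and equivalent in substance.
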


\begin{proof}
Let $y\in R(\sum\bar\mu\Htilde)$.
Then $y=\sum\ep\Htilde$ where $\ep\in \cE$ and
$\sum\ep\Htilde<\sum\bar\mu\Htilde$.
By Theorem \ref{thm:integers} applied to $\cEtilde$ and the fundamental sequence $\Htilde$,
we have $\ep\aord \bar \mu $, and
hence,
$\ep \aordeq \check\mu \aord\bar \mu$.
Since $\check\mu\aord\mu$, we have $\ep \aord \mu$.
Thus,
$y=\sum\ep\Htilde < \sum \mu \Htilde$, and hence,
$y\in R(\sum \mu \Htilde)$.

Let $y=\sum\ep\Htilde \in R(\sum \mu\Htilde)$ for some $\ep\in\cE$.
Then, by Theorem \ref{thm:integers}, $\ep\aord\mu$, and hence, $\ep\aordeq \check\mu<\bar\mu$.
Thus, $y=\sum\ep \Htilde \le \sum\check\mu\Htilde<\sum\bar\mu\Htilde$, i.e.,
$y\in R(\sum\bar\mu\Htilde)$.

\end{proof}

\begin{theorem}[Duality Formula]\label{thm:cE-duality}
For each $\mu \in \cEtilde$, we have
$z\left( \sum\mu\Htilde \right) =\sum\bar\mu H$.
\end{theorem}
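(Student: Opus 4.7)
The plan is to derive the duality formula by chaining two applications of Theorem \ref{thm:integers} via Lemma \ref{thm:reduction}. First, Lemma \ref{thm:reduction} immediately reduces the problem to evaluating $z(\sum\bar\mu\Htilde)$, since by definition $z(x)=\#R(x)$ and the equality $R(\sum\mu\Htilde)=R(\sum\bar\mu\Htilde)$ transfers directly to cardinalities. Crucially, $\bar\mu\in\cE$ by Definition \ref{def:cE} (it is the \immsucc\ of $\check\mu$ in $\cE$), so the remaining task is to count the elements of $\cE$ whose $\Htilde$-evaluation is strictly less than $\sum\bar\mu\Htilde$.

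Next I would apply Theorem \ref{thm:integers} to $\cEtilde$ with fundamental sequence $\Htilde$. Since $\eval_{\Htilde}$ is a bijective order-preserving map on $\cEtilde$ and $\cE\subset\cEtilde$, the condition $\sum\ep\Htilde<\sum\bar\mu\Htilde$ for $\ep\in\cE$ is equivalent to the ascending \lex\ inequality $\ep\aord\bar\mu$; this uses that the order on $\cE$ is literally the restriction of the order on $\cEtilde$, which is immediate from Definition \ref{def:a-order}. Hence
\[
z\!\left(\sum\bar\mu\Htilde\right) \;=\; \#\{\,\ep\in\cE : \ep\aord\bar\mu\,\}.
\]

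A second application of Theorem \ref{thm:integers}, now to $\cE$ itself with its own fundamental sequence $H$, shows that $\eval_H:\cE\to\nat_0$ is also a bijective order-preserving map. Therefore the set $\{\ep\in\cE:\ep\aord\bar\mu\}$ is carried bijectively onto $\{0,1,\dots,\eval_H(\bar\mu)-1\}$, giving cardinality $\eval_H(\bar\mu)=\sum\bar\mu H$. Chaining the two reductions yields $z(\sum\mu\Htilde)=\sum\bar\mu H$, as claimed.

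I do not anticipate a serious obstacle: the proof is essentially bookkeeping built on the fact that both evaluation maps $\eval_H$ and $\eval_{\Htilde}$ respect the same ascending \lex\ order, combined with the fact that $\bar\mu\in\cE$, which is already guaranteed by Definition \ref{def:cE} and made concrete by the preparatory Lemma \ref{lem:ep-bar} (which exhibits $\bar\mu=\beta^a+\sum_{m=1}^n\zeta^m$ from a mixed $\wt L$-decomposition of $\mu$). The only subtlety worth flagging is the edge case $\mu\in\cE$, in which $\check\mu$ is the \immpred\ of $\mu$ and hence $\bar\mu=\mu$; the argument above handles this uniformly since $R(\sum\mu\Htilde)=R(\sum\bar\mu\Htilde)$ still holds trivially.
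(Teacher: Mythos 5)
Your proof is correct and follows essentially the same approach as the paper: reduce via Lemma~\ref{thm:reduction} to $z(\sum\bar\mu\Htilde)$, then apply Theorem~\ref{thm:integers} twice (once to $\cEtilde,\Htilde$ and once to $\cE,H$) to put $R(\sum\bar\mu\Htilde)$, the set $\{\ep\in\cE:\ep\aord\bar\mu\}$, and $\{0,\dots,\sum\bar\mu H-1\}$ in mutual bijection. The only cosmetic difference is the order in which you invoke the two applications of Theorem~\ref{thm:integers}.
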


\begin{proof}
By Lemma \ref{thm:reduction},
$z(\sum\mu \Htilde)=z(\sum\bar\mu \Htilde)$.
If Theorem \ref{thm:integers} is applied
to $\cE$ and $H$, then
$\eval_H$ restricts to a bijective
function
from $S:=\set{ \ep \in \cE : \ep \aord \bar \mu}$
to $T:=\set{n\in \nat_0 : n<\sum\bar\mu H}$.
Theorem \ref{thm:integers} can be
applied to $\cEtilde$ and $\Htilde$ as well.
Since
$\cE$ is a subcollection of $\cEtilde$,
$\eval_{\wt H}$ restricts to a bijective
function
from $S:=\set{ \ep \in \cE : \ep \aord \bar \mu}$
to $\wt T:=R(\sum\bar\mu \Htilde)$. 
In other words, the three subsets $S$, $T$, and $\wt T$ are bijective to each other.
Thus, $z(\sum\bar\mu \Htilde)=\#\wt T =\#S=\#T = \sum \bar\mu H$,
which proves that $z(\sum\mu \Htilde)=z(\sum\bar\mu\Htilde)
=\sum \bar\mu H$.

\end{proof}

Recall the setup in Example \ref{exm:ep-bar}.
Then, $H_k=F_k$ is the \funds\ of $\cE$ determined by $L$.
By the duality formula, $z(\sum\ep \wt H)=\sum \bar\ep F=F_8$.
However, it turns out that
using $\mu=(0,1,1,0,1,0,1)$ is also convenient for this case.
Notice that the linear recurrence of $F$ implies that
$\sum\mu F = \sum\bar\ep F$.
This observation is formulated in Lemma \ref{thm:2-duality}.

\subsection{Transition to expressions for the interval $\OI$}
By the limit value in (\ref{eq:three-ratios}) and the Duality Formula,
the magnitude of $z(x)$ is approximately $x^\gamma$, and
for the case of $\wt L=(B-1,\dots,B-1)$, the same magnitude is predicted in \cite{heuberger}.
As in \cite{heuberger}, we define $\delta(x):=z(x)/x^\gamma$, and transform
this ratio as a function on real numbers.

Recall from Definition \ref{def:periodic-real} and Theorem \ref{thm:real}  periodic \zec\ collections
for $\OI$, their maximal blocks, and 
their \funds s.
\begin{notation}
\rm \label{def:bbeta}
For $n\in\nat$,
let  $\bbeta^n$   and $\bar\theta^n$ be the maximal $L^*$-block  and $\wt L^*$-block  at index $n$,
respectively, and 
let $\omtilde = \phitilde\Inv$ and $\ome=\phi\Inv$.
Let $Q$ and $\wt Q$
denote  the \funds s of $\cE^*$ and $\cEtilde^*$.
\end{notation}

Recall Definition \ref{def:cE}.
For $\ep\in\cEtilde$ of order $n\ge 1$, we have $\bar\ep\in \cE$.
Suppose that $\bar\ep\ne \beta^{n+1}$, and recall the function $\rev_n$ from Definition \ref{def:restriction}. Then,
(\ref{eq:three-ratios}) implies
\begin{align}
\delta(x) &= \frac{\sum \bar\ep H }{(\sum \ep \wt H)^\gamma}
=\frac{\sum \bar\ep_k (\al \phi^{k-1} + O(\phi^{rk})) }
{(\sum \ep_k (\wt \al \phitilde^{k-1} + O(\phitilde^{rk})) )^\gamma}
\quad\text{where $r<1$ and $\textstyle\sum=\sum_{k=1}^n$}\notag\\
&=\frac{\left(\sum \bar\ep_k \al \phi^{k-1}\right) + O(\phi^{rn}) }
{\left(\left( \sum \ep_k \wt \al \phitilde^{k-1}\right) + O(\phitilde^{rn}) \right) ^\gamma}
	=\frac{\al \phi^{n }}{\wt \al ^\gamma \phitilde^{\gamma n}}
\cdot \frac{\sum \bar\ep_k \phi^{k-n-1} +   O(\phi^{rn-n })}
{\left(\sum \ep_k \phitilde^{k-n-1} + O(\phitilde^{rn-n})\right)^\gamma}\notag\\
&=\frac{\al }{\wt \al^\gamma }
\cdot \frac{\sum_{t=1}^n \bar\ep_{n+1-t} \ome^{t} + O(\ome^{(1-r)n }) }
{\left(\sum_{t=1}^n \ep_{n+1-t} \omtilde^{t} + O(\omtilde^{(1-r)n }) \right)^\gamma}
=\frac{\al }{\wt \al^\gamma }
\cdot \frac{\sum \rev_n(\bar\ep) Q + O(\ome^{n}) }
{\left(\sum \rev_n( \ep) \wt Q + O(\omtilde^n) \right)^\gamma}.
\label{eq:real-ratio}\\
\intertext{If $\bar\ep= \beta^{n+1}$, then}
\delta(x) &=\frac{\al }{\wt \al^\gamma }
\cdot \frac{1 + O(\ome^{(1-r)n }) }
{\left(\sum_{t=1}^n \ep_{n+1-t} \omtilde^{t} + O(\omtilde^{(1-r)n }) \right)^\gamma}
	=\frac{\al }{\wt \al^\gamma }
\cdot \frac{1 + O(\ome^n) }
{\left(\sum \rev_n(\ep) \wt Q + O(\omtilde^{n}) \right)^\gamma}.\label{eq:real-ratio-special}
\end{align}
Notice that $n\to\infty$ as $x\to\infty$, and hence, $O(\ome^n)=o(1)\to 0$.
Thus, as $x\to \infty$, 
the ratio in (\ref{eq:real-ratio}) approaches 
a value depending only on the real number $\rev_n(\ep) \wt Q\in\OI$.
This observation motivates us to define $\delta(x)$ as a function on $\OI$,
and it is introduced in Definition \ref{def:rev-bar} and Proposition \ref{prop:del-delstar} below. 

For that goal, we need to understand the relationship between
$\rev_n(\ep)$ and $\rev_n(\bar\ep)$ as \cf s in $\cEtilde^*$, which are 
apprearing in (\ref{eq:real-ratio}).
We begin with the transition of Lemma \ref{lem:ep-bar} for $\cEtilde^*$, which is Section \ref{sec:mixed},
and the relationship between the \cf s is explained in Definition \ref{def:rev-bar} in Section \ref{sec:transition}.

\subsubsection{Mixed decomposition}\label{sec:mixed}
 
Recall from Notation \ref{def:bbeta} that $\bbeta^n$ for $n\ge 1$ denote  the maximal $\Lstar$-blocks of $\cE^*$.

\begin{lemma}\label{lem:no-proper-blocks}
Let $\ep $ be a \cf\ not necessarily in $\cEtilde^*$ such that $\res_{b}(\ep)=0$ for an integer $b\ge 0$.
Then,
$\ep \dord \bbeta^{b+1} $
if and only if
$\res_n(\ep)$ is a proper $L^*$-block with support interval $[b+1,n]$ for some $n\ge b+1$.
\end{lemma}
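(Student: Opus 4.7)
The plan is to unwind the two definitions (of the descending lexicographic order from Definition \ref{def:d-order}, and of a proper $L^*$-block at index $b+1$ from Definition \ref{def:periodic-real}) and observe that they match up almost tautologically once one pins down the index at which $\ep$ and $\bbeta^{b+1}$ first differ.

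\textbf{Forward direction.} Assume $\ep \dord \bbeta^{b+1}$. By Definition \ref{def:d-order} there is a smallest $n\ge 1$ with $\ep_j=\bbeta^{b+1}_j$ for $j<n$ and $\ep_n<\bbeta^{b+1}_n$. Since $\res_b(\ep)=0=\res_b(\bbeta^{b+1})$, any difference must occur at an index $\ge b+1$, so $n\ge b+1$. I will then claim that $\res_n(\ep)$ is the required proper $L^*$-block at index $b+1$: by construction $\res_{n-1}(\res_n(\ep))=\res_{n-1}(\ep)=\res_{n-1}(\bbeta^{b+1})$, while $\res_n(\ep)_n=\ep_n<\bbeta^{b+1}_n$, and $\res^{n+1}(\res_n(\ep))=0$ is automatic since $\res_n$ annihilates indices $>n$. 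This is exactly the Definition \ref{def:periodic-real} condition for a proper $L^*$-block at index $b+1$ whose support interval is $[b+1,n]$. (The strict inequality $\ep_n<\bbeta^{b+1}_n$ forces $\bbeta^{b+1}_n\ge 1$ since \cf\ values are non-negative, so the witness index is well-defined.)

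\textbf{Reverse direction.} Conversely, suppose $\res_n(\ep)$ is a proper $L^*$-block at index $b+1$ with support interval $[b+1,n]$ for some $n\ge b+1$. By Definition \ref{def:periodic-real} the endpoint $n$ is precisely the index where the defining strict inequality occurs, i.e., $\res_{n-1}(\res_n(\ep))=\res_{n-1}(\bbeta^{b+1})$ and $\res_n(\ep)_n<\bbeta^{b+1}_n$. Unpacking $\res_n$ this gives $\ep_j=\bbeta^{b+1}_j$ for $j\le n-1$ and $\ep_n<\bbeta^{b+1}_n$. Since there is at least one index ($n$) where they differ with $\ep_n$ being smaller, the set of such indices has a smallest element, which verifies the definition of $\ep\dord\bbeta^{b+1}$.

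\textbf{Main obstacle.} There is no genuine difficulty: the result is a direct translation between two definitions phrased on the same data. The only care needed is bookkeeping of the restriction operators $\res_n,\res^{n+1}$, and noting the compatibility between the two occurrences of the letter ``$n$'' in the two definitions (the endpoint of the support interval on the $L^*$-block side coincides with the smallest index of disagreement on the $\dord$ side, which is why $n\ge b+1$ is forced by $\res_b(\ep)=0$). Once this identification is fixed, each direction is a one-line verification.
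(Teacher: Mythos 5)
Your proposal is correct and follows essentially the same route as the paper: both directions unwind the descending lexicographic order and the definition of a proper $L^*$-block at index $b+1$, matching the first index of disagreement with the endpoint of the support interval. The paper's own proof is just a more compressed version of yours.
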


\begin{proof}
Suppose that $\ep \dord \bbeta^{b+1} $ for an integer $b\ge 0$.
By the definition of the inequality there is an index $n\ge b+1$ such that $\res_{n-1}(\ep)=\res_{n-1}(\bbeta^{b+1})$ and
$\ep_n<\bbeta^{b+1}_n$, so $\res_n(\ep)$ is a proper $L^*$-block with support interval $[b+1,n]$.

Suppose that $\res_n(\ep)$ is a proper $L^*$-block with support interval $[b+1,n]$ for some $n\ge b+1$.
By the definition of a proper $L^*$-block with support interval $[b+1,n]$,
we have $\res_n(\ep) \dord \bbeta^{b+1}$,
$\res_n(\ep)_n <\bbeta^{b+1}_n$, and hence, $\ep \dord \bbeta^{b+1}$.
\end{proof}

In Corollary \ref{cor:LL-decomposition} below and on, we define
$\bbeta^\infty :=0$ for convenience.
\begin{cor}\label{cor:LL-decomposition}
Let $\ep\in\cEtilde^*$. Then, there is a (unique) largest integer $\ell \ge 0$ or $\ell=\infty$
such that $\ep=\sum_{m=1}^\ell \zeta^m + \mu$ and
$\sum_{m=1}^\ell \zeta^m$ is a proper $L^*$-block decomposition
where
the support interval of $\zeta^\ell$ is $[i,b]$,
$\res_b(\mu)=0$, and $\bbeta^{b+1}\dordeq \mu$;
if $\ell=0$, then $b=0$ and $\ep=\mu$, and if $\ell=\infty$, then $\mu=0$ and 
$\ep \in \cE^*$.
\end{cor}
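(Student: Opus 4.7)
The plan is to prove the corollary by a greedy extraction procedure driven by Lemma \ref{lem:no-proper-blocks}. Set $\ep^{(0)} := \ep$ and $b_0 := 0$. Inductively, assume proper $L^*$-blocks $\zeta^1, \ldots, \zeta^{m-1}$ have been produced with support intervals $[b_{j-1}+1, b_j]$ for $j = 1, \ldots, m-1$ and residual $\ep^{(m-1)} := \ep - \sum_{j=1}^{m-1} \zeta^j$ satisfying $\res_{b_{m-1}}(\ep^{(m-1)}) = 0$. If $\ep^{(m-1)} \dord \bbeta^{b_{m-1}+1}$, then Lemma \ref{lem:no-proper-blocks} yields a unique smallest index $b_m \ge b_{m-1}+1$ such that $\zeta^m := \res_{b_m}(\ep^{(m-1)})$ is a proper $L^*$-block with support $[b_{m-1}+1, b_m]$; since $\zeta^m$ is obtained by restriction, $\zeta^m_k \le \ep^{(m-1)}_k$ entry-wise, so the next residual $\ep^{(m)} := \ep^{(m-1)} - \zeta^m$ has nonnegative entries and $\res_{b_m}(\ep^{(m)}) = 0$, continuing the induction.

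Two cases arise. If at some finite stage $\ell \ge 0$ the condition $\ep^{(\ell)} \dord \bbeta^{b_\ell+1}$ fails (equivalently, $\bbeta^{b_\ell+1} \dordeq \ep^{(\ell)}$ by the converse direction of Lemma \ref{lem:no-proper-blocks}), the procedure halts; set $b := b_\ell$ and $\mu := \ep^{(\ell)}$, yielding $\ep = \sum_{m=1}^\ell \zeta^m + \mu$ with $\res_b(\mu) = 0$ and $\bbeta^{b+1} \dordeq \mu$, exactly as required. Otherwise, the procedure never halts: $b_m$ is strictly increasing, the supports $[b_{m-1}+1, b_m]$ disjointly cover $[1, \infty)$, and every index $k$ belongs to exactly one such interval, so $\ep_k = \zeta^m_k$ for that $m$. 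Hence pointwise $\ep = \sum_{m=1}^\infty \zeta^m$ is a full $L^*$-block decomposition, so $\ep \in \cE^*$ by Definition \ref{def:periodic-real}, and we set $\ell = \infty$ and $\mu = 0$.

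Maximality and uniqueness of $\ell$ follow because at every stage Lemma \ref{lem:no-proper-blocks} pins down the block with support beginning at $b_{m-1}+1$ uniquely as $\res_{b_m}(\ep^{(m-1)})$ for the smallest deviation index $b_m$: any alternative decomposition meeting the hypotheses must have its proper $L^*$-blocks partition $[1, b]$ consecutively and hence must coincide, stage by stage, with the greedy one. The most delicate point is the infinite case, namely checking that the greedy construction captures every entry of $\ep$ when it never halts so that $\ep = \sum_{m=1}^\infty \zeta^m$ pointwise; this reduces to observing $b_m \to \infty$ (since $b_m \ge m$) together with the disjointness and exhaustion of the support intervals produced. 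Once that bookkeeping is in place, the equivalence $\ell = \infty \iff \ep \in \cE^*$ is immediate from Definition \ref{def:periodic-real}.
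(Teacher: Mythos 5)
Your greedy-extraction argument is correct and is essentially the same approach as the paper's: both proofs hinge on Lemma \ref{lem:no-proper-blocks} to determine where the proper $L^*$-block decomposition must stop, and both separate the cases $\ep\in\cE^*$ (giving $\ell=\infty$) and $\ep\notin\cE^*$ (giving a finite $\ell$ with $\bbeta^{b+1}\dordeq\mu$). Your version is somewhat more explicit in that it constructs the maximal decomposition step by step and verifies the bookkeeping for the nonterminating case, whereas the paper simply asserts the existence of a unique largest $\ell$ when $\ep\notin\cE^*$ and then applies the lemma to the residual; a small slip is your parenthetical attributing the equivalence $\neg(\ep^{(\ell)}\dord\bbeta^{b_\ell+1})\iff\bbeta^{b_\ell+1}\dordeq\ep^{(\ell)}$ to the converse of Lemma \ref{lem:no-proper-blocks}, when it is just totality of $\dord$; the lemma is what converts this into the nonexistence of a further proper $L^*$-block.
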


\begin{proof}
Suppose that $\ep\in\cE^*$.
Then, we have the $\Lstar$-\bdecom\ $\ep= \sum_{m=1}^\infty \zeta^m$, and this corresponds
to the case $\ell=\infty$.

Suppose that $\ep\not\in\cE^*$.
Then, there must be a unique largest integer $\ell\ge 0$ such that
$\sum_{m=1}^\ell \zeta^m $ is a proper $L^*$-block decomposition where $[i,b]$ is the support interval of $\zeta^\ell$,
$\ep=\sum_{m=1}^\ell \zeta^m + \mu$, and $\res_b(\mu)=0$.
As in Lemma \ref{lem:ep-bar}, the maximality of $\ell$ implies that $\res_n(\mu)$ is not a proper $\Lstar$-block for any $n\ge b+1$,
and hence, by Lemma \ref{lem:no-proper-blocks},
we find $\bbeta^{b+1}\dordeq \mu$.

\end{proof}
\begin{deF}\label{def:LL-decomposition}\rm
Given $\ep\in\cEtilde^*$, an expansion $\ep=\sum_{m=1}^\ell \zeta^m + \mu$ where
$\zeta^m$, $\ell$, and $\mu$ are as described in Corollary \ref{cor:LL-decomposition}
is called {\it an $\Lsbar$-block decomposition of a member of $\cEtilde^*$} where
$\zeta^\ell$ is uniquely determined, and
if $[i,b]$ is the \supint\ of $\zeta^\ell$, the index $b$ is called
{\it the largest $\Lsbar$-support index of $\ep$};  $b=\infty$ if $\ep\in\cE^*$, 
and $b=0$ if $\ep=\mu$.
Let $\overline{\cE^*}$ denote the union of $\cE^*$ and
the set of \cf s $\sum_{m=1}^\ell \zeta^m + \bbeta^{b+1}$ where
$\zeta^m$ and $\mu=\bbeta^{b+1}$ are as described in Corollary \ref{cor:LL-decomposition}.

\end{deF}

The following lemma and corollary show that an $\wt L^*$-\bdecom\ of an
$\Lsbar$-\bdecom\ maintains the boundaries of the \supint s of the proper $\Lstar$-blocks.

\begin{lemma}\label{lem:Ltilde-decom}
Let $\zeta$ be a non-zero proper $\Lstar$-block with \supint\ $[i,b]$.
Then, there is an $\wt L^*$-\bdecom\ $\zeta=\sum_{m=1}^t \eta^m$
such that the \supint\ of $\eta^t$ is $[a,b]$.
\end{lemma}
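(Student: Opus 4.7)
The plan is to convert this descending-setting claim to an ascending-setting statement about proper $L$-blocks via the reversal map $\rev_b$, invoke Proposition \ref{thm:subcollection} there, and then reverse back.

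First I would verify that $\rev_b(\zeta)$ is a proper $L$-block with support interval $[1,b-i+1]$. Using the periodic formulas $\bbeta^i_j = e_s$ with $j-i+1\equiv s\pmod N$ and $\hbeta^{b-i+2}_k = e_s$ with $b-i+2-k\equiv s\pmod N$, a direct congruence check gives $\bbeta^i_{b+1-k} = \hbeta^{b-i+2}_k$ for $1\le k\le b-i+1$. The defining relations of $\zeta$ (namely $\zeta_j = \bbeta^i_j$ for $i\le j<b$ and $\zeta_b<\bbeta^i_b$) then translate under $\rev_b$ to $\rev_b(\zeta)_k = \hbeta^{b-i+2}_k$ for $2\le k\le b-i+1$ and $\rev_b(\zeta)_1 < \hbeta^{b-i+2}_1$, which by Definition \ref{def:periodic} exhibits $\rev_b(\zeta)$ as a proper $L$-block with support $[1,b-i+1]$.

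Next, since $\cE\subset\cEtilde$, Proposition \ref{thm:subcollection} gives an $\wt L$-block decomposition $\rev_b(\zeta) = \sum_{m=1}^t \xi^m$ whose support intervals, say $[a_m,c_m]$, partition $[1,b-i+1]$ with $a_1=1$. The explicit construction in the proof of Proposition \ref{thm:subcollection} forces $\xi^1_1 = \rev_b(\zeta)_1 < \hat\theta^{c_1+1}_1$, so $\xi^1$ is a proper $\wt L$-block; each $\xi^m$ for $m\ge 2$ has $a_m>1$ and is automatically proper, because maximal $\wt L$-blocks always have support starting at $1$. By linearity of $\rev_b$, $\zeta = \sum_{m=1}^t \rev_b(\xi^m)$, and the analogous modular calculation (now with $\wt L$, $M$ in place of $L$, $N$) shows that each $\rev_b(\xi^m)$ is a proper $\wt L^*$-block with support $[b+1-c_m, b+1-a_m]$.

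Relabeling $\eta^m := \rev_b(\xi^{t+1-m})$ produces an $\wt L^*$-block decomposition of $\zeta$ in which $\eta^t = \rev_b(\xi^1)$ has support $[b+1-c_1, b]$, completing the claim with $a := b+1-c_1$. The main obstacle is the modular bookkeeping that makes reversal interchange the patterns of $\bbeta^i$ and $\hbeta^{b-i+2}$ (and similarly for the $\wt L$-maximal blocks), together with the delicate point that only the leftmost block $\xi^1$ could a priori be maximal; this possibility is excluded precisely by the strict inequality $\rev_b(\zeta)_1<\hbeta^{b-i+2}_1$ inherited from $\zeta_b<\bbeta^i_b$, which is what keeps $\rev_b(\xi^1)$ a proper $\wt L^*$-block rather than degenerating into a prefix of $\bar\theta^{b+1-c_1}$.
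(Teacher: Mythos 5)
Your proof is correct but genuinely different from the paper's. The paper works directly in the descending $\Lstar$ setting: it takes the full $\wt L^*$-block decomposition $\bbeta^i=\sum_{m=1}^\infty\eta^m$, locates the unique block $\eta^t$ whose support interval $[i_t,c_t]$ contains $b$, observes that $\zeta$ agrees with $\bbeta^i$ strictly below $b$ while $\zeta_b<\bbeta^i_b\le\ddelta^{i_t}_b$, and truncates: $\eta^0:=\res_b^{i_t}(\zeta)$ is a proper $\wt L^*$-block with support $[i_t,b]$ and $\zeta=\sum_{m=1}^{t-1}\eta^m+\eta^0$. Your proof passes to the ascending side with $\rev_b$, applies the block-splitting construction from the proof of Proposition \ref{thm:subcollection}, and reverses back. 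The trade-offs are: your route re-uses Proposition \ref{thm:subcollection} rather than repeating the truncation argument, and it sidesteps the (true, but not separately argued in the paper) fact that $\bbeta^i$ itself admits a full $\wt L^*$-block decomposition; on the other hand, it incurs the cost of the two reversal dictionaries (proper $L^*$-block with support $[i,b]$ $\leftrightarrow$ proper $L$-block with support $[1,b-i+1]$, and likewise for $\wt L$), which you only sketch via the modular congruences --- those congruences do check out, and your identification $a=b+1-c_1$ is consistent with the paper's $a=i_t$, so the argument is sound.
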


\begin{proof}
Let $\zeta$ be a non-zero proper $\Lstar$-block with \supint\ $[i,b]$.
Let $\bbeta^i=\sum_{m=1}^\infty \eta^m$ be the full $\wt L^*$-\bdecom\ where
$[i_m,c_m]$ is the \supint\ of $\eta^m$, so $b$ is contained in a
unique interval
$[i_t,c_t]$.
Notice that
$\res_{b-1}(\zeta) = \res_{b-1}(\bbeta^i)$ and
$\zeta_b<\bbeta^i_b=\eta^{i_t}_b\le\ddelta^{i_t}_b$.
It follows that $\zeta_b < \ddelta^{i_t}_b$, and hence, $\eta^0:=\res_b^{i_t}(\zeta)$
is a proper $\wt L^*$-block with \supint\ $[i_t,b]$.
Thus, $\zeta = \sum_{m=1}^{t-1} \eta^m + \eta^0$ is an $\wt L^*$-\bdecom.
\end{proof}

\begin{cor}\label{cor:mu}
Let $\ep=\sum_{m=1}^\ell \zeta^m + \mu$ be an $\Lsbar$-\bdecom\ of $\ep\in \cEtilde^*$.
Then, $\mu\in\cEtilde^*$.
\end{cor}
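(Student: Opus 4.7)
The plan is to exhibit $\mu$ as a sum of disjoint proper $\wt L^*$-blocks by combining the $\wt L^*$-\bdecom\ of $\ep$ furnished by $\ep\in\cEtilde^*$ with the one produced by Lemma \ref{lem:Ltilde-decom} applied to each $\zeta^m$. First I would observe that $\mu=\res^{b+1}(\ep)$, since the supports of $\zeta^1,\dots,\zeta^\ell$ lie in $[1,b]$ and $\res_b(\mu)=0$, so the task reduces to showing that $\res^{b+1}(\ep)$ admits a decomposition into disjoint proper $\wt L^*$-blocks.

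Next, I would apply Lemma \ref{lem:Ltilde-decom} to each non-zero $\zeta^m$ to obtain a $\wt L^*$-\bdecom\ of $\zeta^m$ whose last constituent block has support ending exactly at $b_m$. Concatenating these decompositions across $m$ (and inserting zero blocks to fill any gaps between the supports $[i_m,b_m]$) gives a $\wt L^*$-\bdecom\ of $\sum_{m=1}^\ell\zeta^m=\res_b(\ep)$ whose supports lie in $[1,b]$ and which includes a distinguished block whose support ends exactly at $b$.

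Now let $\ep=\sum_j\eta^j$ be a full $\wt L^*$-\bdecom\ of $\ep$ with supports $[\wt i_j,\wt c_j]$. The key step is to show that no block straddles the boundary: there is no $j^*$ with $\wt i_{j^*}\le b<\wt c_{j^*}$. Suppose for contradiction such $\eta^{j^*}$ exists. Then $\ep|_{[\wt i_{j^*},b]}=\ddelta^{\wt i_{j^*}}|_{[\wt i_{j^*},b]}$ (the periodic $\wt L$-pattern), and since $\res_b(\ep)=\sum_m\zeta^m$, the $L^*$-decomposition from the previous step must trace this $\wt L$-pattern on $[\wt i_{j^*},b]$. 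However, the distinguished block in that decomposition ends at $b$ with the strict drop inherited from $\zeta^\ell_b<\bbeta^{i_\ell}_b$, while $\ddelta^{\wt i_{j^*}}$ continues past $b$ without dropping; reconciling the two periodic patterns on $[\wt i_{j^*},b]$ and using the inequality $\bbeta^{b+1}\dordeq\mu$ together with the maximality of $\ell$ in Corollary \ref{cor:LL-decomposition} (which prevents an additional proper $L^*$-block from being appended to $\sum\zeta^m$) yields a contradiction. It then follows that each $\eta^j$ has support entirely in $[1,b]$ or entirely in $[b+1,\infty)$, so $\mu=\sum_{j:\,\wt i_j\ge b+1}\eta^j$ is a sum of disjoint proper $\wt L^*$-blocks, i.e.\ $\mu\in\cEtilde^*$.

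The main obstacle will be the no-straddling argument in the third step: carefully matching the $L$- and $\wt L$-periodicities along $[\wt i_{j^*},b]$ using the structural constraint that each $\zeta^m$ is a proper $L^*$-block (hence ending with a strict drop) while $\ddelta^{\wt i_{j^*}}$ is forced to continue past $b$, and then invoking the maximality clause of the $\Lsbar$-\bdecom\ to close the contradiction.
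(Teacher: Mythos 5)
Your overall plan matches the paper's: produce, via Lemma \ref{lem:Ltilde-decom} applied to each $\zeta^m$, an $\wt L^*$-block decomposition of $\sum_m\zeta^m$ supported in $[1,b]$ whose last block ends at $b$, and then argue that the full $\wt L^*$-decomposition of $\ep$ splits at $b$ so that $\mu=\res^{b+1}(\ep)$ inherits a decomposition. The paper's two-sentence proof leaves this splitting implicit, so it is reasonable of you to try to make the no-straddling step explicit. However, your argument for it does not close. The conditions you bring in to finish, $\bbeta^{b+1}\dordeq\mu$ and the maximality of $\ell$ from Corollary \ref{cor:LL-decomposition}, are statements about the $\Lstar$-block structure of $\ep$ relative to $\cE^*$; they describe a different periodic pattern from the $\wt L^*$-structure under scrutiny and have no direct bearing on whether a proper $\wt L^*$-block of $\ep$ can straddle $b$. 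Concretely, a straddling block with support $[\wt i_{j^*},\wt c_{j^*}]$ forces $\ep_b=\ddelta^{\wt i_{j^*}}_b$, while the distinguished block from Lemma \ref{lem:Ltilde-decom} with support $[a,b]$ forces $\ep_b<\ddelta^a_b$. Since $a$ and $\wt i_{j^*}$ need not be congruent modulo $M$, the two maximal $\wt L^*$-blocks can give different entries of $\wt L$ at index $b$, and those two inequalities together are consistent rather than contradictory.

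What actually closes the gap is the uniqueness of the full $\wt L^*$-block decomposition, whose support endpoints are determined recursively from the entries of the \cf\ being decomposed. Since $\ep$ and $A:=\res_b(\ep)=\sum_m\zeta^m$ agree on $[1,b]$, their full decompositions have the same support endpoints so long as those endpoints stay $\le b$; and the decomposition of $A$ supplied by Lemma \ref{lem:Ltilde-decom}, padded with zero blocks, is its full decomposition and has $b$ as an endpoint. Therefore $b$ is also a support endpoint in the full decomposition of $\ep$, which is exactly no-straddling. Note that this argument is purely $\wt L^*$-structural and never uses the $\Lsbar$-data ($\bbeta^{b+1}\dordeq\mu$, maximality of $\ell$) you invoked.
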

\begin{proof}
If $[i,b]$ is the \supint\ of $\zeta^\ell$, by Lemma \ref{lem:Ltilde-decom},
the (disjoint) union of the \supint s of an $\wt L^*$-\bdecom\ of $\sum_{m=1}^\ell \zeta^m$ is contained
in $[1,b]$. Thus, $\mu\in\cEtilde$ has an $\wt L^*$-\bdecom, and hence, it is a member of
$\cEtilde^*$.

\end{proof}

\subsubsection{Transition to a function on $\OI$} \label{sec:transition}

Given $\ep\in \cEtilde^*$ and an integer $n\ge 1$,
notice that
$\rev_n(\ep)\in \cEtilde$ and $\overline{ \rev_n(\ep)}\in \cE$, which is defined in
Definition \ref{def:cE}.
\begin{lemma}\label{lem:ep-bar-real}
Suppose that $\ep\in\cEtilde^* $ and $\ep\dord \bbeta^1$.
If $\ep\in \cE^*$, then $\rev_n\left(\,\overline{ \rev_n(\ep)}\, \right) = \res_n(\ep)$.
If $\ep\in \cEtilde^* -\cE^*$, i.e.,
$\ep=\sum_{m=1}^\ell \zeta^m + \mu$ is an $\Lsbar$-block decomposition
with the largest $\Lsbar$-support index $b<\infty$
where the integer $\ell\ge 1$ and
$b\ge 1$,
then for sufficiently large $n$,
$$\rev_n\left(\,\overline{ \rev_n(\ep)}\, \right)=
\begin{cases}
\sum_{m=1}^\ell \zeta^m + \beta^b &\text{if }\bbeta^{b+1}\dord \mu,\\
\res_n(\ep) &\text{if }\bbeta^{b+1}= \mu.
\end{cases}
$$
\end{lemma}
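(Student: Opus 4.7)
The plan is to reduce the two claims to Lemma~\ref{lem:rev-ep} and Lemma~\ref{lem:ep-bar}, together with the identity $\rev_n\bigl(\rev_n(\eta)\bigr) = \res_n(\eta)$, which follows directly from $\rev_n(\eta)_k = \eta_{n+1-k}$.

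For the first claim ($\ep \in \cE^*$) and for the sub-case $\mu = \bbeta^{b+1}$ of the second claim, Lemma~\ref{lem:rev-ep} yields $\rev_n(\ep) \in \cE$ for all $n \ge 1$, since both forms match its two alternatives. For any $\eta \in \cE$, one has $\bar\eta = \eta$ directly from Definition~\ref{def:cE} (the \immsucc\ of its \immpred\ is $\eta$ itself), so $\overline{\rev_n(\ep)} = \rev_n(\ep)$; applying $\rev_n$ once more then yields $\res_n(\ep)$, as claimed.

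For the remaining sub-case $\bbeta^{b+1} \dord \mu$, the first step is a reversal duality between proper $\Lstar$-blocks and proper $L$-blocks: each $\zeta^m$ with support $[i_m, b_m]$ reverses to a proper $L$-block $\rev_n(\zeta^m)$ with support $[n+1-b_m, n+1-i_m]$, since the relations $\bbeta^{i_m}_k = e_j \iff k - i_m + 1 \equiv j \moD{N}$ and $\hbeta^{n+2-i_m}_{j'} = e_j \iff (n+2-i_m) - j' \equiv j \moD{N}$ are equivalent under the substitution $j' = n+1-k$, and the ``drop'' at index $b_m$ in $\zeta^m$ translates to the required drop at index $n+1-b_m$ in $\rev_n(\zeta^m)$. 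Consequently $\rev_n(\ep) = \rev_n(\mu) + \sum_{m=1}^\ell \rev_n(\zeta^m)$, where the $\rev_n(\zeta^m)$ are $\ell$ disjoint proper $L$-blocks at the top, the topmost having left endpoint $n+1-b$, and $\rev_n(\mu)$ is supported in $[1, n-b]$. Since $\ep$ is neither in $\cE^*$ nor of the form covered by the second alternative of Lemma~\ref{lem:rev-ep}, that lemma produces $\rev_n(\ep) \in \cEtilde - \cE$ for $n$ sufficiently large, so Lemma~\ref{lem:ep-bar} applies.

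The main obstacle is verifying that the maximal count in Lemma~\ref{lem:ep-bar} equals exactly $\ell$: the top of $\rev_n(\mu)$ must not form any additional $L$-block at index $n-b$. The strict inequality $\bbeta^{b+1} \dord \mu$ produces a smallest $k_0 \ge b+1$ with $\mu_{k_0} > \bbeta^{b+1}_{k_0}$, and I would split into two cases. If $k_0 = b+1$, then $\rev_n(\mu)_{n-b} = \mu_{b+1} > e_1 = \hbeta^{n-b+1}_{n-b}$, which already exceeds the largest permissible top value for any $L$-block at index $n-b$. If $k_0 > b+1$, then $\rev_n(\mu)$ agrees with $\hbeta^{n-b+1}$ on $[n+2-k_0, n-b]$ and strictly exceeds it at $j = n+1-k_0$; this ``wrong direction'' drop rules out every proper $L$-block ending at $n-b$, and for $n \ge k_0$ the same discrepancy prevents $\rev_n(\mu)$ from equalling the maximal $L$-block $\hbeta^{n-b+1}$. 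Granting this maximality, Lemma~\ref{lem:ep-bar} gives $\overline{\rev_n(\ep)} = \beta^{n+1-b} + \sum_{m=1}^\ell \rev_n(\zeta^m)$, and one final application of $\rev_n$, using $\rev_n(\beta^{n+1-b}) = \beta^b$ and $\rev_n(\rev_n(\zeta^m)) = \zeta^m$, yields the claimed $\sum_{m=1}^\ell \zeta^m + \beta^b$.
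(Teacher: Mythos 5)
Your proof is correct and follows essentially the same route as the paper's: reverse $\ep$, apply Lemma~\ref{lem:ep-bar} to $\rev_n(\ep)$, and reverse back, with the easy cases handled through Lemma~\ref{lem:rev-ep} and $\bar\eta=\eta$ for $\eta\in\cE$. Where the paper disposes of the maximality question by observing that $\bbeta^{b+1}\dord\mu$ gives $\rev_n(\tsum_{m}\zeta^m+\bbeta^{b+1})\aord\rev_n(\ep)$ (i.e.\ $\hbeta^{n-b+1}\aord\rev_n(\mu)$, which is exactly the obstruction condition in Lemma~\ref{lem:ep-bar}) and leaves the reversal duality between $\Lstar$- and $L$-blocks implicit, you work out both explicitly via the $k_0$ casework --- a genuinely more complete write-up of the same argument. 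One small imprecision: Lemma~\ref{lem:rev-ep} characterizes \emph{for all $n$}, so its negation only yields $\rev_{n_0}(\ep)\notin\cE$ for \emph{some} $n_0$, not automatically for all large $n$; but your subsequent ``wrong-direction drop'' analysis establishes $\rev_n(\ep)\in\cEtilde-\cE$ directly for all $n\ge k_0$, so the attribution is harmless.
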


\begin{proof}
If $\ep\in \cE^*$, then
$\rev_n(\ep)\in \cE$, so $\rev_n\left(\,\overline{ \rev_n(\ep)}\, \right) =\rev_n\left(\, \rev_n(\ep) \, \right) = \res_n(\ep)$.
Suppose that $\ep\in \cEtilde^* -\cE^*$, so we have the $\Lsbar$-block decomposition as described in
the lemma, and notice that
by Lemma \ref{lem:no-proper-blocks}, $\ep\dord \bbeta^1$ implies $b\ge 1$.
Let
$n\ge b+1$.
%
If $\bbeta^{b+1}\dord \mu$,
then there is a smallest index $p\ge b+1$ such that $\bbeta^{b+1}_p< \mu_p$.
So,
if $n\ge p$, then
$\rev_n(\sum_{m=1}^\ell \zeta^m + \bbeta^{b+1}) \aord\rev_n(\sum_{m=1}^\ell \zeta^m + \mu) $, and
by applying Lemma \ref{lem:ep-bar} to $\rev_n(\ep)=\rev_n(\sum_{m=1}^\ell \zeta^m + \mu)$, we have
$ \overline{\rev_n(\ep)}
= \rev_n\left(\sum_{m=1}^\ell \zeta^m + \beta^b \right)$, i.e.,
$\rev_n\left(\overline{\rev_n(\ep)}\right)
= \res_n(\sum_{m=1}^\ell \zeta^m + \beta^b)=\sum_{m=1}^\ell \zeta^m + \beta^b$.
If $\bbeta^{b+1}=\mu$, then
$\rev_n(\ep)= \rev_n(\sum_{m=1}^\ell \zeta^m + \bbeta^{b+1})\in \cE$, and
hence, $\overline{\rev_n(\ep)}=\rev_n(\ep)$.
Thus, $\rev_n\left(\,\overline{ \rev_n(\ep)}\, \right)=\rev_n\left( \rev_n(\ep) \, \right)
=\res_n(\ep)$.
\end{proof}

Let us use Lemma \ref{lem:ep-bar-real} to define $\revbar(\ep)$ for $ \ep \in\cEtilde^*$,
which is a version of Lemma \ref{lem:ep-bar} for $\OI$.
\begin{deF}\label{def:rev-bar}\rm
Given $\ep\in \cEtilde^*$,
if $\bbeta^1\dordeq \ep$, then define
$\revbar(\ep): =\bbeta^1$, and if $\ep\dord \bbeta^1$,
define $\revbar(\ep): = \lim_{n\to\infty}\rev_n\left(\,\overline{ \rev_n(\ep)}\, \right)$.
Let $\delta_1^* : [\wt \ome,1)\to \real$ be the function given by
$$ \delta_1^*(\tsum \ep \wt Q):=\dis
\frac{\sum \revbar(\ep) Q }
{\left(\sum \ep \wt Q \right)^\gamma}
\quad\text{if $\omtilde\le \sum\ep \wt Q<1$ for $\ep\in\cEtilde^*$.}
$$
Given a real number $y\in \OI$, there is a unique positive integer $n$ such that $y\in [\ome^n, \ome^{n-1})$.
Let $\delta^*:  \OI \to \real$ be the function given by $\delta^*(y) = \delta_1^*(y/\ome^{n-1})$ where $n$ is defined as above.
We also denote $\delta^*(\sum\ep \wt Q)$ simply by $\delta^*[\ep]$ for all $\ep\in\cEtilde^*$.

\end{deF}
If $\ep \dord \bbeta^1$ and $\ep\in \cEtilde^*$,
then by Lemma \ref{lem:ep-bar-real}, the limit in Definition \ref{def:rev-bar} is well-defined.
By Theorem \ref{thm:real}, given a real number $y\in [\ome,1)$, there is a unique $\ep\in \cEtilde^*$
such that $y=\sum\ep \wt Q$, so $\delta_1^*$ is well-defined.

In Proposition \ref{prop:del-delstar}, we compare the values of $\delta$ with those of $\delta^*$, and 
 we begin with a few preliminaries:
\begin{lemma}\label{lem:ep-bar-ep}
Let $\ep\in\cEtilde^*$.
Then, $\ep\in\cEsbar$
if and only if
$\revbar(\ep)=\ep$.
\end{lemma}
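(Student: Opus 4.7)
The plan is to prove both implications by a case analysis driven by the $\Lsbar$-\bdecom\ from Corollary \ref{cor:LL-decomposition} together with the explicit formulas for $\rev_n(\overline{\rev_n(\ep)})$ provided by Lemma \ref{lem:ep-bar-real}.

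For the forward direction ($\ep \in \cEsbar \Rightarrow \revbar(\ep) = \ep$), I would split along the two components of $\cEsbar$. If $\ep \in \cE^*$, then its first non-zero proper $L^*$-block forces $\ep \dord \bbeta^1$, so Lemma \ref{lem:ep-bar-real} yields $\rev_n(\overline{\rev_n(\ep)}) = \res_n(\ep)$ for all sufficiently large $n$, whose entrywise limit is $\ep$. If instead $\ep = \sum_{m=1}^\ell \zeta^m + \bbeta^{b+1}$ as in the second clause of Definition \ref{def:LL-decomposition}, I would separately handle $\ell = 0$ (which by Corollary \ref{cor:LL-decomposition} forces $b = 0$ and $\ep = \bbeta^1$, captured by the first clause of Definition \ref{def:rev-bar}) and $\ell \ge 1$ (in which case $\ep \dord \bbeta^1$, and the second sub-case of Lemma \ref{lem:ep-bar-real} with $\mu = \bbeta^{b+1}$ again gives $\rev_n(\overline{\rev_n(\ep)}) = \res_n(\ep) \to \ep$).

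For the reverse direction, I would again dichotomize on whether $\bbeta^1 \dordeq \ep$ or $\ep \dord \bbeta^1$. In the first case, $\revbar(\ep) = \bbeta^1$ forces $\ep = \bbeta^1$, which belongs to $\cEsbar$ via $\ell = 0$ and $\mu = \bbeta^1$. In the second case, Corollary \ref{cor:LL-decomposition} supplies a decomposition $\ep = \sum_{m=1}^\ell \zeta^m + \mu$; if $\ell = \infty$ then $\ep \in \cE^* \subset \cEsbar$, and if $\ell < \infty$ Lemma \ref{lem:ep-bar-real} presents two sub-cases. The sub-case $\mu = \bbeta^{b+1}$ is precisely membership in $\cEsbar$, and so concludes this direction.

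The main obstacle I foresee is ruling out the remaining sub-case $\bbeta^{b+1} \dord \mu$, where Lemma \ref{lem:ep-bar-real} gives $\revbar(\ep) = \sum_{m=1}^\ell \zeta^m + \beta^b$. The clean resolution is an entrywise comparison at index $b$: the condition $\res_b(\mu) = 0$ forces $\ep_b = \zeta^\ell_b$, whereas the added $\beta^b$ summand yields $\revbar(\ep)_b = \zeta^\ell_b + 1$, contradicting the hypothesis $\revbar(\ep) = \ep$. Once this single index-$b$ numerical contradiction is in hand, everything else is a direct unpacking of the definitions of $\revbar$, $\cEsbar$, and the $\Lsbar$-\bdecom.
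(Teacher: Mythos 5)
Your proof is correct and takes essentially the same route as the paper: both arguments rest on the $\Lsbar$-block decomposition from Corollary \ref{cor:LL-decomposition} and the case-by-case description of $\rev_n\bigl(\,\overline{\rev_n(\ep)}\,\bigr)$ in Lemma \ref{lem:ep-bar-real}. The only differences are cosmetic --- the paper first dichotomizes on $\bbeta^1\dordeq\ep$ versus $\ep\dord\bbeta^1$ while you split on $\cE^*$ versus $\cEsbar-\cE^*$, and in the final sub-case you contradict $\revbar(\ep)=\ep$ by comparing the single entry at index $b$, whereas the paper records the same observation as $\ep\dord\revbar(\ep)$.
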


\begin{proof}
Suppose that $\bbeta^1\dordeq \ep$.
Then, by Lemma \ref{lem:no-proper-blocks} and Corollary \ref{cor:LL-decomposition}, $\ep\in\cEsbar$ if and only if $\ep=\bbeta^1$.
If $\ep\in\cEsbar$, then $\ep=\bbeta^1$, and by Definition \ref{def:rev-bar}, $\revbar(\ep)=\ep$.
If $\ep\not\in\cEsbar$, then $ \bbeta^1\dord \ep$ by Corollary \ref{cor:LL-decomposition}, and
by Definition \ref{def:rev-bar}, $\revbar(\ep)=\bbeta^1 \ne \ep$.

Suppose that $\ep \dord \bbeta^1$.
If $\ep\in \cEsbar - \cE^*$, then, by Corollary \ref{cor:LL-decomposition},
we have an $\Lsbar$-\bdecom\ $\ep = \sum_{m=1}^\ell \zeta^m + \bbeta^{b+1}$
where $b$ is the largest $\Lsbar$-support index of $\ep$ (see Definition \ref{def:LL-decomposition}),
and by Lemma \ref{lem:no-proper-blocks}, $\ep \dord \bbeta^1$ implies that $b\ge 1$.
If $\ep\in\cEsbar$, i.e., $\ep\in\cE^*$ or $\ep = \sum_{m=1}^\ell \zeta^m + \bbeta^{b+1}$,
then by Corollary \ref{cor:LL-decomposition}, $1\le \ell\le \infty$, and by Lemma \ref{lem:ep-bar-real},
$\rev_n\left(\,\overline{ \rev_n(\ep)}\, \right)
=\res_n(\ep)$ for sufficiently large $n$, which implies that $\revbar(\ep)= \ep$.
If $\ep\not\in\cEsbar$, i.e.,
$\ep = \sum_{m=1}^\ell \zeta^m + \mu$ where $\bbeta^{b+1}\dord \mu$,
then by Lemma \ref{lem:ep-bar-real},
$\rev_n\left(\,\overline{ \rev_n(\ep)}\, \right)
= \sum_{m=1}^\ell \zeta^m + \beta^b$ for sufficiently large $n$.
Thus,
$\revbar(\ep)=\sum_{m=1}^\ell \zeta^m + \beta^b >_\textrm{d} \ep$;
in particular, $\revbar(\ep)\ne \ep$.
\end{proof}

In Lemma \ref{lem:rev-rev}, we show that $\revbar$ extends the map
$\rev_n( \ep)\mapsto \rev_n(\bar \ep)$ appearing in 
(\ref{eq:real-ratio}).
Recall Definition \ref{def:cE}.
\begin{lemma}\label{lem:rev-rev}
Suppose that $\mu\in \cEtilde$, and $t:=\ord(\mu)>0$.
If $\mu \aordeq \hbeta^{t+1}$, then
$ \revbar(\rev_t(\mu)) =\rev_t(\bar\mu)$.
If $ \hbeta^{t+1}\aord \mu$, then $\revbar(\rev_t(\mu))=\bbeta^1$.
\end{lemma}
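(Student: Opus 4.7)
The plan is to handle the two cases of the lemma separately, reducing the ascending comparison $\aord$ on cfs with support in $[1,t]$ to the descending comparison $\dord$ on reversed cfs in $\cEtilde^*$, using the identity $\rev_t(\hbeta^{t+1}) = \res_t(\bbeta^1)$ together with the periodic symmetry between $L$-blocks and $L^*$-blocks.

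For the easier case $\hbeta^{t+1} \aord \mu$, I would note that the ascending order on cfs supported in $[1,t]$ reverses, under $\rev_t$, to the descending order, so the hypothesis becomes $\res_t(\bbeta^1) \dord \rev_t(\mu)$. The smallest index at which these two disagree lies in $[1,t]$, where $\bbeta^1$ and $\res_t(\bbeta^1)$ coincide, so the same index also witnesses $\bbeta^1 \dord \rev_t(\mu)$. Definition \ref{def:rev-bar} then immediately gives $\revbar(\rev_t(\mu)) = \bbeta^1$.

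For the case $\mu \aordeq \hbeta^{t+1}$ I would split according to whether $\mu \in \cE$. If $\mu \in \cE$, Definition \ref{def:cE} makes $\bar\mu = \mu$ (the successor of the predecessor of $\mu$), and shift-invariance of the periodic $L$-block structure lets me verify through Lemma \ref{lem:rev-ep} that $\rev_t(\mu) \in \cE^*$. Then the first case of Lemma \ref{lem:ep-bar-real} yields $\rev_n(\overline{\rev_n(\rev_t(\mu))}) = \res_n(\rev_t(\mu))$, whose pointwise limit is $\rev_t(\mu) = \rev_t(\bar\mu)$. If instead $\mu \in \cEtilde - \cE$, Lemma \ref{lem:ep-bar} supplies $\mu = \zeta^0 + \sum_{m=1}^n \zeta^m$ with maximal $n \ge 1$ (the case $n = 0$ is ruled out because it forces $\hbeta^{t+1} \aord \mu$, contrary to our hypothesis) and $\bar\mu = \beta^a + \sum_{m=1}^n \zeta^m$, where $a$ is the bottom of $\zeta^1$'s support. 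I would apply $\rev_t$ piecewise: the identities $\theta^n_j = \beta^*_{n-j}$ and $\bbeta^{n'}_{j'} = \beta^*_{j'-n'+1}$ show that a proper $L$-block with support $[k,n-1]$ reverses to a proper $L^*$-block with support $[t+2-n,\,t+1-k]$, and similarly that $\zeta^0$ becomes a leftover piece that is an $\wt L^*$-block but not an $L^*$-block. This produces an $\Lsbar$-block decomposition (in the sense of Corollary \ref{cor:LL-decomposition}) of $\rev_t(\mu)$ with largest $\Lsbar$-support index $b = t+1-a$, while the relation $\hbeta^a \aord \zeta^0$ reverses to $\bbeta^{b+1} \dord \rev_t(\zeta^0)$. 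The second case of Lemma \ref{lem:ep-bar-real} then gives, for sufficiently large $n'$, $\rev_{n'}(\overline{\rev_{n'}(\rev_t(\mu))}) = \sum_{m=1}^n \rev_t(\zeta^m) + \beta^b$, and since $\rev_t(\beta^a) = \beta^{t+1-a} = \beta^b$, passing to the limit in Definition \ref{def:rev-bar} yields $\revbar(\rev_t(\mu)) = \rev_t(\bar\mu)$.

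The main obstacle will be the careful bookkeeping of the reversal symmetry: verifying that proper $L$-blocks map to proper $L^*$-blocks at the correct reversed indices, that the strict inequality at the bottom of $\zeta^0$ transfers to the top of $\rev_t(\zeta^0)$ with all lower-index entries agreeing with $\bbeta^{b+1}$, and that the identified decomposition actually meets the hypotheses of Corollary \ref{cor:LL-decomposition}, in particular $b \ge 1$. Boundary cases such as $\mu = \hbeta^{t+1}$ itself should be checked to fall cleanly into the $\mu \in \cE$ branch, where both sides of the claimed equality reduce to $\res_t(\bbeta^1)$.
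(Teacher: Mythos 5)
Your proposal matches the paper's proof: both cases are handled by reversing the ascending comparison to a descending one via $\rev_t(\hbeta^{t+1})=\res_t(\bbeta^1)$, then (for $\mu\aordeq\hbeta^{t+1}$) splitting on whether $\mu\in\cE$ and invoking Lemma \ref{lem:rev-ep} and Lemma \ref{lem:ep-bar-ep} in the first sub-case, and Lemma \ref{lem:ep-bar} together with Lemma \ref{lem:ep-bar-real} in the second. You spell out the index bookkeeping of the $L$-block/$L^*$-block reversal and the $b\ge 1$ check more explicitly than the paper does, but the argument is the same.
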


\begin{proof}
Suppose that $\mu\aordeq \hbeta^{t+1}$.
If $\mu\in\cE$, then $\bar\mu=\mu$, and by Lemma \ref{lem:rev-ep}, $\rev_t(\mu)\in\cE^*$.
Thus, by Lemma \ref{lem:ep-bar-ep}, $\revbar(\rev_t(\mu))=\rev_t(\mu)=\rev_t(\bar\mu)$.

Suppose that $\mu\in\cEtilde-\cE$ and $\mu \aord \hbeta^{t+1}$. 
By Lemma \ref{lem:ep-bar}, $ \mu =\zeta^0+ \sum_{m=1}^n \zeta^m$ 
where $\zeta^0$ and $\zeta^m$ for $1\le m\le n$ 
are as defined in the lemma,  and $[a,i]$ is the support interval of $\zeta^1$,
 so that $\hbeta^{a-1}\aord\zeta^0$,
and $n\ge 1$.
Then, 
$\bar\mu = \beta^a+ \sum_{m=1}^n \zeta^m$.
Let $\ep:=\rev_t(\mu)\in\cEtilde^*$.
Then, $\ep \in \cEtilde^* - \cE^*$, and $\ep\dord \bbeta^t \dordeq \bbeta^1$.
By Lemma \ref{lem:ep-bar-real},
for sufficiently large $n$, $\rev_n\left(\,\overline{ \rev_n(\ep)}\, \right) =
\rev_t\left( \beta^a+ \sum_{m=1}^n \zeta^m \right)
=\rev_t(\bar\mu)$.
Thus, $\revbar(\ep)=\rev_t(\bar\mu)$, i.e., $\revbar(\rev_t(\mu))=\rev_t(\bar\mu)$.

Suppose that $ \hbeta^{t+1}\aord \mu$.
Then, there is an index $1\le p \le t$ such that $\hbeta^{t+1}_p<\mu_p$
and $\res^{p+1}(\hbeta^{t+1})=\res^{p+1}(\mu)$.
Thus, $\res_t(\bbeta^{1})\dord \rev_t(\mu)$, and
hence, $ \bbeta^{1} \dord \rev_t(\mu)$.
By definition, $\revbar(\rev_t(\mu))=\bbeta^1$.

\end{proof}

\begin{prop}\label{prop:del-delstar}
Given a positive integer $x$,
let $x=\sum\mu\wt H$ where $\mu\in\cEtilde$ and $\ord(\mu)=n$.
Then,
$ \delta(x) \sim \delta^*(\sum\rev_n(\mu)\wt Q)\al/\wt\al^\gamma$ as $x\to\infty$.
\end{prop}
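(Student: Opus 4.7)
The plan is to leverage the two asymptotic derivations (\ref{eq:real-ratio}) and (\ref{eq:real-ratio-special}) already carried out in the excerpt, and then identify the resulting ratio with $\delta^*(\sum\rev_n(\mu)\wt Q)$ via Lemma \ref{lem:rev-rev}.

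First, apply the Duality Formula (Theorem \ref{thm:cE-duality}) to rewrite $z(x)=\sum\bar\mu H$, then substitute the Binet asymptotics (\ref{eq:three-ratios}) for both $H$ and $\wt H$. Factoring $\phi^n$ out of the numerator and $\wt\phi^{n\gamma}$ out of the denominator, and using $\wt\phi^{\gamma}=\phi$, one arrives at the expression displayed in (\ref{eq:real-ratio}),
\begin{equation*}
\delta(x)=\frac{\al}{\wt\al^{\gamma}}\cdot\frac{\sum\rev_n(\bar\mu)Q+O(\ome^n)}{\left(\sum\rev_n(\mu)\wt Q+O(\omtilde^n)\right)^{\gamma}}.
\end{equation*}
Since $\mu_n\ge 1$ forces $\sum\rev_n(\mu)\wt Q\ge\omtilde>0$, both error terms become negligible as $n\to\infty$.

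Second, split into the two cases arising from Lemma \ref{lem:rev-rev}. If $\mu\aordeq\hbeta^{n+1}$, the lemma gives $\rev_n(\bar\mu)=\revbar(\rev_n(\mu))$, so the main ratio is exactly $\delta_1^*(\sum\rev_n(\mu)\wt Q)$ by Definition \ref{def:rev-bar}. If instead $\hbeta^{n+1}\aord\mu$, one checks from Definition \ref{def:cE} that $\bar\mu=\beta^{n+1}$; then $\sum\bar\mu H=H_{n+1}\sim\al\phi^n$, so after rescaling the numerator becomes $1+o(1)$, matching (\ref{eq:real-ratio-special}). Simultaneously the lemma yields $\revbar(\rev_n(\mu))=\bbeta^1$, and the identity $\sum\bbeta^1 Q=Q_0=1$ (an instance of (\ref{eq:full-recursion-real}) combined with $Q_k=\ome^k$) shows that $\delta_1^*(\sum\rev_n(\mu)\wt Q)$ once again supplies the correct numerator.

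Third, observe that $\sum\rev_n(\mu)\wt Q\in[\omtilde,1)$ lies in the natural domain of $\delta_1^*$, and that $\delta^*$ coincides with $\delta_1^*$ on this range by construction. Letting $x\to\infty$ drives $n\to\infty$ and kills both error terms, producing the claimed asymptotic $\delta(x)\sim(\al/\wt\al^{\gamma})\,\delta^*\!\left(\sum\rev_n(\mu)\wt Q\right)$. The principal obstacle is the boundary case $\hbeta^{n+1}\aord\mu$, where the literal equality $\rev_n(\bar\mu)=\revbar(\rev_n(\mu))$ fails; isolating $\sum\bbeta^1 Q=1$ and matching it with $H_{n+1}\sim\al\phi^n$ is what forces the two definitional conventions to agree at leading order, after which everything is routine asymptotic bookkeeping.
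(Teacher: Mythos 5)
Your proposal matches the paper's argument step for step: you invoke the Duality Formula and Binet asymptotics to land on (\ref{eq:real-ratio}) and (\ref{eq:real-ratio-special}), then apply Lemma \ref{lem:rev-rev} to replace $\rev_n(\bar\mu)$ by $\revbar(\rev_n(\mu))$ when $\mu\aordeq\hbeta^{n+1}$, and handle the boundary case $\hbeta^{n+1}\aord\mu$ by computing $\bar\mu=\beta^{n+1}$, $\revbar(\rev_n(\mu))=\bbeta^1$, and $\sum\bbeta^1 Q=1$ from (\ref{eq:full-recursion-real}). This is precisely the paper's proof, so the proposal is correct and not a different route.
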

\begin{proof}
If $\mu \aordeq \hbeta^{n+1}$, then by (\ref{eq:real-ratio}) and Lemma \ref{lem:rev-rev},
\begin{equation*}
\delta(x)=\frac{\al }{\wt \al^\gamma }
\cdot \frac{\sum \rev_n(\bar\mu) Q + O(\ome^{n}) }
{\left(\sum \rev_n( \mu) \wt Q + O(\omtilde^n) \right)^\gamma}
\sim \frac{\al }{\wt \al^\gamma }
\cdot \frac{\sum \revbar(\rev_n( \mu))Q  }
{\left(\sum \rev_n( \mu) \wt Q \right)^\gamma}
=\delta^*(\sum\rev_n(\mu)\wt Q)\,\frac{\al }{\wt \al^\gamma }.
\end{equation*} 
If $\hbeta^{n+1} \aord \mu$, then
$\bar\mu = \beta^{n+1}$, and by Lemma \ref{lem:rev-rev} and (\ref{eq:full-recursion-real}),
$$\delta^*(\tsum\rev_n(\mu)\wt Q)\dis
= \frac{\sum\revbar(\rev_n(\mu) ) Q }{\left(\sum\rev_n(\mu) \wt Q\right)^\gamma }
= \frac{\sum\bbeta^1 Q }{\left(\sum\rev_n(\mu) \wt Q\right)^\gamma }
= \frac{1 }{\left(\sum\rev_n(\mu) \wt Q\right)^\gamma }.
$$
By (\ref{eq:real-ratio-special}), the quotient of
$ \delta(x)$ and $ \delta^*(\sum\rev_n(\mu) \wt Q)\al/\wt\al^\gamma$ approaches $1$.
\end{proof}

Notice that for large values of positive integers $x=\sum\mu \wt H$ and $\ord(\mu)=n$,
i.e., $x=\wt \al \sum_{k=1}^n \mu_k \wt\phi^{k-1} + O(\wt\phi^{nr})$ for $0<r<1$,
we have
$\sum\rev_n(\mu) \wt Q\sim \wt \phi^{\set{\log_{\wt \phi}(x/\wt \al)}-1}$.
So, we may state the asymptotic relation in Proposition \ref{prop:del-delstar} as follows:
\begin{equation}\label{eq:delta*-x}
\delta(x) \sim \frac\al{\wt \al^\gamma}\, \delta^*\left( \wt \phi^{\set{\log_{\wt \phi}(x/\wt \al)}-1} \right).
\end{equation}

\section{Proof of the main result}\label{sec:proofs}

We shall prove the main results in this section.
Assume the notation and context of Section \ref{sec:zec-exressions}.
Recall from Theorem \ref{thm:ratios} that $\wt\ome < \ome$ and $\gamma=\log_{\phitilde}\phi<1$,
and from Theorem \ref{thm:real}, the evaluation map $\eval_{\wt Q} : \cEtilde^* \to \OI$, which
is increasing and bijective.
Also recall the subset $\cEsbar$ of $\cEtilde^*$  from Definition \ref{def:LL-decomposition}.
For the remainder of the paper, we use the following notation:
\begin{notation}
\rm \label{notation:U}
Let $\cE^*\fin$ denote the subset of $\cE^*$ consisting of \cf s that have finite support,
and let $U$ denote the image $\eval_{\wt Q}(\cEtilde^*-\cEsbar)$ in $\OI$.
\end{notation}

\subsection{Continuity and differentiability}\label{sec:cont-diff}

For the case of $\wt L=(B-1,\dots,B-1)$, the asymptotic formula introduced in
\cite{heuberger} implies that $\delta^*$ is continuous.
For the general cases considered in this paper, $\delta^*$ is continuous as well,
and differentiable precisely on $U$ which
has Lesbegue measure $1$.
Let us prove these facts.

\subsubsection{Continuity}
Let us first understand the behavior of $\revbar$ in the \lq\lq neighborhood\rq\rq\ of $\ep\in\cEtilde^*$.
Recall $\inF_{\cEtilde^*}$ from Definition \ref{def:inf-version},
and let $\inF:=\inF_{\cEtilde^*}$.  For example, $\inf(\beta^1) = \ddelta^2$, and 
by (\ref{eq:full-recursion-real}), we have
$\sum \beta^1 \wt Q=\sum \inf(\beta^1) \wt Q =\sum \ddelta^2 \wt Q$.
\begin{lemma}\label{lem:ep+-}
Let $x=\sum\ep\wt Q>0$ for $\ep\in\cEtilde^*$, and
let $\Delta x$ be a sufficiently small positive real number.
Let $x+\Delta x = \sum\ep^+\wt Q$ and
$x-\Delta x = \sum\ep^-\wt Q$ where
$\set{\ep^+,\ep^-}\subset \cEtilde^*$.

For the following statements, $n$ represents an index determined by
$\Delta x$ such that $n\to\infty$ as $\Delta x\to 0$.
If $\ep\in \cEtilde^*-\cEsbar$, then $ \revbar(\ep) = \revbar(\ep^+) =\revbar(\ep^- )$.
If $\ep\in \cEsbar - \cE^* $, then
$ \revbar(\ep)
= \inF_{\cE^*}(\revbar(\ep^+))$, and
$\res_n(\revbar(\ep)) =\res_n(\revbar(\ep^-))$.
If $\ep\in \cE^* $, then $\res_n(\revbar(\ep))=\res_n(\revbar(\ep^+))=\res_n(\revbar(\ep^-)) $.
\end{lemma}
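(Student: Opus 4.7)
The plan is to use the $\cEtilde^*$-analog of Lemma \ref{lem:ep-converge}: for sufficiently small $\Delta x>0$, there is an index $n=n(\Delta x)\to\infty$ with $\res_n(\ep^+)=\res_n(\ep)$ and $\res_n(\ep^-)=\res_n(\inF_{\cEtilde^*}(\ep))$. Once the perturbations are pinned to $\ep$ on such a long prefix, each case is handled by reading off the $\Lsbar$-block decomposition of $\ep^\pm$ from Corollary \ref{cor:LL-decomposition} and invoking the explicit description of $\revbar$ in Lemma \ref{lem:ep-bar-real} and Definition \ref{def:rev-bar}.

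In Case 1, Corollary \ref{cor:LL-decomposition} gives $\ep=\sum_{m=1}^\ell \zeta^m+\mu$ with $\bbeta^{b+1}\dord\mu$ strictly, and Lemma \ref{lem:no-proper-blocks} locates a finite index $p$ witnessing the inequality; the boundary subcase $\bbeta^1\dord\ep$ (where $\ell=0$) is dispatched separately via Definition \ref{def:rev-bar}. Choosing $\Delta x$ small enough that $n(\Delta x)\ge p$, both $\ep^+$ and $\ep^-$ must share the same first $\ell$ proper $L^*$-blocks and retain a tail strictly exceeding $\bbeta^{b+1}$, so they remain in $\cEtilde^*-\cEsbar$ with the same $b$. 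Lemma \ref{lem:ep-bar-real} then yields the exact equality $\revbar(\ep^+)=\revbar(\ep^-)=\sum_{m=1}^\ell \zeta^m+\beta^b=\revbar(\ep)$.

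In Case 2, $\ep=\sum_{m=1}^\ell \zeta^m+\bbeta^{b+1}$ has infinite support, so $\inF_{\cEtilde^*}(\ep)=\ep$ and $\revbar(\ep)=\ep$ by Lemma \ref{lem:ep-bar-ep}. For $\ep^+$, prefix matching together with $\ep^+_{n+1}>\bbeta^{b+1}_{n+1}$ forces $\ep^+$ back into the Case 1 regime with the same $\ell,b$, producing $\revbar(\ep^+)=\sum_{m=1}^\ell \zeta^m+\beta^b$; since its largest nonzero index is $b$, the formula in Definition \ref{def:inf-version} gives $\inF_{\cE^*}(\revbar(\ep^+))=\sum_{m=1}^\ell \zeta^m+\bbeta^{b+1}=\revbar(\ep)$. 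For $\ep^-$, the reverse inequality $\ep^-_{n+1}<\bbeta^{b+1}_{n+1}$ creates (via Lemma \ref{lem:no-proper-blocks}) a fresh proper $L^*$-block of support $[b+1,q]$ with $q>n$; whichever subset of $\cEtilde^*$ contains $\ep^-$, the resulting $\revbar(\ep^-)$ agrees with $\ep=\revbar(\ep)$ on the prefix $[1,q-1]$, and $q\to\infty$ as $\Delta x\to 0$. Case 3 then combines both perturbation analyses of Case 2 applied to the indefinitely long $L^*$-block decomposition of $\ep\in\cE^*$: for both $\ep^+$ and $\ep^-$, the divergence occurs at a block boundary that grows to infinity, and $\revbar(\ep^\pm)$ agrees with $\ep=\revbar(\ep)$ on a prefix that grows accordingly.

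The principal obstacle is the $\ep^+$ step in Case 2: one must verify that $\revbar(\ep^+)=\sum_{m=1}^\ell \zeta^m+\beta^b$ indeed lies in $\cE^*$ so that $\inF_{\cE^*}$ of Definition \ref{def:inf-version} applies, including the saturated subcase $\zeta^\ell_b+1=\bbeta^{i}_b$ (with $i$ the starting index of $\zeta^\ell$) in which the last block becomes maximal-like. This should be resolved by exploiting the periodicity of $L$ to reinterpret the saturated piece $\zeta^\ell+\beta^b$ as a proper $L^*$-block with support $[i,b']$, where $b'$ is the next index past $b$ at which $\bbeta^{i}$ is nonzero, leaving the associated \cf\ unchanged and the resulting decomposition valid.
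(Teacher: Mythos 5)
Your overall approach matches the paper's: invoke the $\cEtilde^*$-version of Lemma \ref{lem:ep-converge} to pin $\ep^+$ (respectively $\ep^-$) to $\ep$ (respectively $\inF(\ep)$) on an arbitrarily long prefix, then read off $\revbar$ via Corollary \ref{cor:LL-decomposition} and Lemma \ref{lem:ep-bar-real}. Your Cases 1 and 2 follow the paper essentially verbatim, and your handling of the boundary subcase $\bbeta^1\dord\ep$ in Case 1, as well as your identification and resolution of the ``saturated'' subcase $\zeta^\ell_b+1=\bbeta^i_b$ in Case 2 (where the support interval of the last block extends to the next nonzero index of $\bbeta^i$), are both correct and are actually points the paper leaves implicit.

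There is, however, a real gap in Case 3. You assert that ``for both $\ep^+$ and $\ep^-$, the divergence occurs at a block boundary that grows to infinity,'' but this fails for the subcase $\ep\in\cE^*\fin$. There $\ep$ has finite support with largest nonzero index $c$, so $\inF(\ep)=\res_c(\ep)-\beta^c+\bbeta^{c+1}\ne\ep$, and the perturbed $\ep^-$ agrees with $\inF(\ep)$ (not with $\ep$) on a long prefix. Concretely, $\ep^-_c=\ep_c-1$ is fixed, so $\ep^-$ diverges from $\ep$ at the \emph{fixed} index $c$, not at a block boundary that recedes to infinity. The conclusion $\res_n(\revbar(\ep^-))=\res_n(\revbar(\ep))$ still holds, but only because one shows $\revbar(\ep^-)=\sum_{m=1}^T\zeta^m-\beta^c+\beta^c=\ep$ exactly, by applying Lemma \ref{lem:ep-bar-real} to the $\Lsbar$-block decomposition $\ep^-=\sum_{m=1}^T\zeta^m-\beta^c+\tau^-$ where $\bbeta^{c+1}\dord\tau^-\dord\beta^c$. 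This is structurally the same kind of argument you used for $\ep^-$ in Case 2, applied to $\inF(\ep)$ rather than to $\ep$ directly, but your account of Case 3 does not distinguish $\cE^*-\cE^*\fin$ from $\cE^*\fin$, does not track the role of $\inF(\ep)$, and so the ``growing block boundary'' justification cannot be carried over as stated. You should split Case 3 into the two subcases and handle $\ep^-$ for $\ep\in\cE^*\fin$ separately.
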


\begin{proof}
Let $\ep=\sum_{m=1}^\ell \zeta^m + \mu$ be an
$\Lsbar$-\bdecom\ with the largest $\Lstar$-block support index $b$.
Recall that, by Lemma \ref{lem:ep-converge},
there is $n\in \nat$ determined by the sufficiently small $\Delta x$ such that 
$\res_n(\ep)=\res_n(\ep^+)$ and $\res_n(\inF(\ep))=\res_n( \ep^- )$, and that $n\to\infty$ as $\Delta x\to 0$.
Also recall the meaning of $\Lstar$- or $\wt L^*$-\decom s from Definition \ref{def:periodic-real}.
For the remainder of the proof, all the expressions of $\ep$, $\ep^+$, $\ep^-$, $\inF(\ep)$, and
$\inF(\ep^+)$ are $\tLstar$-\decom s %
unless specified differently.

Suppose that $\ep\in \cEtilde^*-\cEsbar $, i.e., $\bbeta^{b+1}\dord \mu$.
By Theorem \ref{thm:real},
$\sum_{m=1}^\ell \zeta^m +\bbeta^{b+1} \dord \ep^- \dord \ep \dord \ep^+$
for sufficiently small $\Delta x$.
By Lemma \ref{lem:ep-converge}, we have
$\ep^+=\sum_{m=1}^\ell \zeta^m + \tau^+$.
Then, 
the earlier inequalities and the \lex\ order imply $\bbeta^{b+1}\dord \tau^+$, and 
also $\ep^-=\sum_{m=1}^\ell \zeta^m + \tau^-$ where $\bbeta^{b+1}\dord \tau^-$.
Thus, both expressions are $\Lsbar$-block decompositions, 
and by Lemma \ref{lem:ep-bar-real}, $\revbar(\ep^+)=\revbar(\ep^-)=\revbar(\ep)=\sum_{m=1}^\ell \zeta^m +\beta^b$.

Suppose that $\ep\in \cEsbar -\cE^*$, i.e., $\mu=\bbeta^{b+1} $.
So, by Lemma \ref{lem:ep-converge}, 
$\ep^+=\sum_{m=1}^\ell \zeta^m + \tau^+$ where $\bbeta^{b+1}\dord \tau^+$.
Since $\ep$ does not have finite support, we have $\inF(\ep )=\ep $,
and, by Lemma \ref{lem:ep-converge}, $\ep^-=\sum_{m=1}^\ell \zeta^m + \tau^-$ where $ \tau^-\dord\bbeta^{b+1}$.
Since $\bbeta^{b+1}\dord \tau^+$, as in the earlier case,
$\revbar(\ep^+)=\sum_{m=1}^\ell \zeta^m +\beta^b$,
and $\inF_{\cE^*}(\revbar(\ep^+))=\ep=\revbar(\ep)$.
Notice that, by Lemma \ref{lem:no-proper-blocks},  $ \tau^-\dord\bbeta^{b+1}$ implies that
$\tau^-$
has an $\Lsbar$-\bdecom\ $\sum_{m=\ell+1}^s \zeta^m + \mu^-$ where $ \ell+1\le s \le \infty$, and
let $[b+1,c]$ be the \supint\ of the proper $\Lstar$-block $\zeta^{\ell+1}$.
From Lemma \ref{lem:ep-converge}, we have
$\res_n(\inF(\ep))=\res_n( \ep )=\res_n( \ep^- )$ for $n\to\infty$,
which implies that $ \res_n(\bbeta^{b+1} )=\res_n( \tau^- )$.
Since we have $\zeta^{\ell+1}_c<\bbeta^{b+1}_c$ from $\zeta^{\ell+1}$ being a proper $\Lstar$-block
at index $b+1$,
we have $c>n$, and hence, $ c\to\infty$ as $n\to\infty$.
By considering all cases of $\mu^-$ for Lemma \ref{lem:ep-bar-real},
$\res_{c-1 }(\revbar(\ep^-))=\res_{c-1}(\ep)=\res_{c-1}(\revbar(\ep))$.

Suppose that $\ep\in \cE^*-\cE^*\fin$, i.e., $\mu=0$ and $\ell=\infty$.
So, we have $\inF(\ep )=\ep $, and by Lemma \ref{lem:ep-converge}, 
$\ep^+=\sum_{m=1}^T \zeta^m + \tau^+$
and $\ep^-=\sum_{m=1}^T \zeta^m + \tau^-$ where $T\to\infty$ as $\Delta x\to 0$.
For all cases of $\tau^+$ and $\tau^-$ for Lemma \ref{lem:ep-bar-real},
Corollary \ref{cor:mu} implies
$\revbar(\ep^+)=\sum_{m=1}^{T-1} \zeta^m + \eta^+$
and
$\revbar(\ep^-)=\sum_{m=1}^{T-1} \zeta^m + \eta^-$
are $\tLstar$-decompositions.
Since $T\to\infty$,
we have $\res_n(\revbar(\ep))=\res_n(\revbar(\ep^+))=\res_n(\revbar(\ep^-)) $.

Suppose that $\ep\in \cE^*\fin$, i.e., $\mu=0$, $\ell=\infty$, and $\zeta^m=0$ for all sufficiently large $m$.
Let $T$ be the largest index such that $\zeta^T\ne 0$, and 
let $c$ be the largest index such that $\zeta^T_c\ge 1$.
By Lemma \ref{lem:ep-converge}, we have
$\ep^+=\sum_{m=1}^T \zeta^m + \tau^+$
and $\ep^-=\sum_{m=1}^T \zeta^m -\beta^c + \tau^-$
where there is sufficiently large $n\in\nat$ such that $\res_{n}(\tau^+)=0$,
and $\res_n(\tau^-)=\res_n(\ddelta^{c+1})$, which implies 
$\bbeta^{c+1}\dord \tau^-\dord \beta^{c}$.
For all cases of $\tau^+$ and $\tau^-$, by Corollary \ref{cor:mu} and Lemma \ref{lem:ep-bar-real},
$\revbar(\ep^+)=\sum_{m=1}^T \zeta^m + \revbar(\tau^+)$
and
$\revbar(\ep^-)=\sum_{m=1}^T \zeta^m-\beta^c +\beta^c=\ep$.
By Lemma \ref{lem:ep-bar-real} applied to all cases of $\tau^+$,
there is sufficiently large $n\in\nat$ such that
 $\res_n(\revbar(\ep^+))=\sum_{m=1}^T \zeta^m=\ep=\revbar(\ep)=\res_n(\revbar(\ep))$
 and 
 $\revbar(\ep^-)= \ep=\res_n(\revbar(\ep))$.
This proves the assertion.

\end{proof}

Recall the subset $U$ from Notation \ref{notation:U}.
\begin{theorem}\label{thm:continuous}
The function $\delta^*$ is continuous, and
locally decreasing and differentiable on $U$.
\end{theorem}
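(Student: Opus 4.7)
The plan is to reduce the claims to statements about $\delta_1^*$ on $[\omtilde,1)$ and then transfer to $\delta^*$ on $\OI$ via the piecewise scaling $\delta^*(y)=\delta_1^*(y/\ome^{n-1})$ of Definition~\ref{def:rev-bar}. Writing $\delta_1^*(y)=N(y)/y^\gamma$ with $N(y):=\sum\revbar(\ep)Q$ and $\ep=\eval_{\wt Q}\Inv(y)\in\cEtilde^*$, the denominator is $C^\infty$ and strictly positive, so all three assertions reduce to the behaviour of~$N$. The main input is Lemma~\ref{lem:ep+-}, which tracks $\revbar(\ep^\pm)$ in terms of $\revbar(\ep)$ under small perturbations $y\pm\Delta y=\sum\ep^\pm\wt Q$.

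For continuity of $\delta_1^*$ at $y=\sum\ep\wt Q$, I would run the three cases of Lemma~\ref{lem:ep+-}. If $\ep\in\cEtilde^*-\cEsbar$ then $\revbar(\ep^+)=\revbar(\ep^-)=\revbar(\ep)$, so $N$ is locally constant. If $\ep\in\cEsbar-\cE^*$, the identity $\sum\eta Q=\sum\inF_{\cE^*}(\eta)Q$ for $\eta\in\cE^*$ (the short lemma after Definition~\ref{def:inf-version}), combined with $\revbar(\ep)=\inF_{\cE^*}(\revbar(\ep^+))$, yields $N(y+\Delta y)=N(y)$; from the left the agreement $\res_n(\revbar(\ep))=\res_n(\revbar(\ep^-))$ with $n\to\infty$, together with the entrywise bound $\revbar(\cdot)_k\le\max_j e_j$ and $Q_k=\ome^k$, forces $|N(y-\Delta y)-N(y)|=O(\ome^n)\to 0$. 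Case~3 ($\ep\in\cE^*$) is symmetric on both sides. This establishes continuity of $\delta_1^*$ on $[\omtilde,1)$, and hence of $\delta^*$ on each open piece $(\ome^n,\ome^{n-1})$ through the scaling.

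Local differentiability and decrease on $U$ come directly from Case~1: for $x\in U$ with $\ep\in\cEtilde^*-\cEsbar$, the lemma supplies an open interval $I$ about $x$ on which $\revbar\circ\eval_{\wt Q}\Inv$ is constantly $\revbar(\ep)$. Thus on $I$ one has $\delta_1^*(y)=C\,y^{-\gamma}$ with $C=\sum\revbar(\ep)Q>0$, which is smooth and strictly decreasing since $\gamma>0$, and the linear scaling preserves both properties when pulled back to $\delta^*$ on the corresponding interval of $\OI$.

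The step I expect to be the main obstacle is the continuity of $\delta^*$ at the join points $y=\ome^{n-1}$ between consecutive pieces $[\ome^n,\ome^{n-1})$ and $[\ome^{n-1},\ome^{n-2})$. Approaching from the right reduces to continuity of $\delta_1^*$ at $\ome$, which is already covered. Approaching from the left, however, one must match $\delta_1^*(\ome)$ against $\lim_{y\to 1^-}\delta_1^*(y)$; as $y\to 1^-$ the corresponding $\ep$'s satisfy $\res_n(\ep)=\res_n(\bar\theta^1)$ with $n\to\infty$, forcing $\revbar(\ep)\to\bbeta^1$ and hence the limit equals $\sum\bbeta^1 Q=1$ by the long recursion~(\ref{eq:full-recursion-real}). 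Verifying that $\delta_1^*(\ome)$ equals this same value requires computing $\revbar(\ep_0)$ for the unique $\ep_0\in\cEtilde^*$ with $\sum\ep_0\wt Q=\ome$ and invoking~(\ref{eq:full-recursion-real}) once more; this is the only step that does not fall out directly from Lemma~\ref{lem:ep+-}.
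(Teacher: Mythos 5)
Your case analysis of $\delta_1^*$ follows the paper's own route almost exactly: both proofs split into the three cases of Lemma~\ref{lem:ep+-}, both use the unnumbered lemma after Definition~\ref{def:inf-version} to neutralize the $\inF_{\cE^*}$ for right-hand limits in the $\cEsbar-\cE^*$ case, both use the entrywise bound and $Q_k=\ome^k$ to get an $O(\ome^n)$ error for left-hand limits, and the argument for local monotone differentiability on $U$ (locally constant numerator, smooth denominator) is identical. So the core of the proposal is sound and matches the paper.

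The issue is in your final paragraph on the join points, and it is a genuine gap. You correctly observe that the literal text of Definition~\ref{def:rev-bar} partitions $\OI$ into $[\ome^n,\ome^{n-1})$ and therefore the join-point condition is $\delta_1^*(\ome)=\lim_{x\to 1^-}\delta_1^*(x)$, and you defer this as a computation you expect to succeed. It does not succeed: one can check, e.g.\ for the \zec\ binary case ($L=(1,0)$, $\wt L=(1,1)$), that the $\cEtilde^*$-expansion of $\ome=(\sqrt5-1)/2$ is $\ep_0=(1,0,0,1,1,1,1,\dots)$, hence $\revbar(\ep_0)=(1,0,1,0,\bar0)$ and $\delta_1^*(\ome)=(\ome+\ome^3)/\ome^\gamma\approx 1.19$, while $\lim_{x\to 1^-}\delta_1^*(x)=1$. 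The paper's own proof instead establishes $\delta_1^*(\wt\ome)=\lim_{x\to 1^-}\delta_1^*(x)=1$ (using $\revbar(\beta^1)=\beta^1$ and $\wt\ome^\gamma=\ome$), which is the correct matching condition if the partition in Definition~\ref{def:rev-bar} is read with $\wt\ome^{n-1}$ rather than $\ome^{n-1}$; this is also the only reading consistent with $y/\wt\ome^{n-1}$ landing in the stated domain $[\wt\ome,1)$ of $\delta_1^*$ and with the formula~(\ref{eq:delta*-x}). So the step you flagged as "the only one that does not fall out of Lemma~\ref{lem:ep+-}" is exactly where the argument breaks under the literal reading; to close the proof you should replace $\ome$ by $\wt\ome$ in the scaling and verify $\delta_1^*(\wt\ome)=1$, as the paper does.
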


\begin{proof}
Let us prove that $\delta_1^*$, defined in Definition \ref{def:rev-bar}, is continuous.
Let $\ep\in\cEtilde^*$ such that $ \beta^1\dord \ep$, and let $x=\sum \ep \wt Q$ and
$y= \sum \revbar(\ep) Q$.
Let $\Delta x$, $\ep^+$, and $\ep^-$ be as defined in Lemma \ref{lem:ep+-}.

Suppose that $\ep\in \cEtilde^*- \cEsbar$.
Then, by Lemma \ref{lem:ep+-},
$\delta^*(x+\Delta x) = \sum\revbar(\ep )\wt Q/(x+\Delta x)^\gamma$
and $\delta^*(x-\Delta x) = \sum\revbar(\ep )\wt Q/(x-\Delta x)^\gamma$.
Since the numerator remains constant, $\delta^*$ is decreasing and differentiable on an open neighborhood of $x$; in particular, it is continuous at $x$.
This proves the assertion about $U$.

Suppose that $\ep\in \cEsbar - \cE^*$.  Notice that
given $\tau\in\cEsbar$, by (\ref{eq:full-recursion-real}),
$\sum \tau Q = \sum \inf_{\cE^*}(\tau) Q$.
Since $\revbar (\ep^+ )\in \cEsbar$,
by Lemma \ref{lem:ep+-}, $\sum\revbar (\ep^+ )Q =\sum\inf_{\cE^*}(\revbar (\ep^+ ))Q
= \sum\revbar(\ep)Q$.
Thus, $\sum\revbar(\ep^+)Q /(x+\Delta x)^\gamma=\sum\revbar(\ep)Q /(x+\Delta x)^\gamma
\to \sum\revbar(\ep)Q /x^\gamma $ as $\Delta x\to 0$.
By Lemma \ref{lem:ep+-},
 there is sufficiently large $n$ such that 
$\res_n(\ep)=\res_n(\revbar(\ep))=\res_n(\revbar(\ep^-))$, and 
let $\tau: =\res^{n+1}(\revbar (\ep^- ))$, so that
$ \revbar (\ep^- )=\res_n(\revbar (\ep^- )) + \tau$.
By Lemma \ref{lem:ep+-}, $\sum\revbar (\ep^- )Q
= \sum(\res_n (\ep^- )+ \tau)Q
= \sum \res_n(\revbar (\ep) )Q + \sum\tau Q$.
Notice that $\revbar(\ep^-)\in \cEsbar$ implies $\revbar(\ep^-)_k\le M$ for all $k\in\nat$
where $M$ is the largest entry in $L$, and hence,
$\sum\tau Q =\sum_{k=n+1}^\infty \tau_k \ome^k\le\sum_{k=n+1}^\infty M \ome^k
\to 0 $ as $n\to\infty$. 
By Lemma \ref{lem:ep+-}, $\sum\revbar (\ep^- )Q
= \sum(\res_n (\ep^- )+ \tau)Q
= \sum \res_n(\revbar (\ep) )Q + \sum\tau Q$. 
Thus, $\sum\revbar (\ep^- )Q \to \sum \revbar (\ep) Q $ as $\Delta x\to 0$, which proves
that $\sum\revbar(\ep^-)Q /(x-\Delta x)^\gamma\to \sum\revbar(\ep)Q /x^\gamma $ as $\Delta x\to 0$.

It remains to show that $\delta_1^*[\ep^{\pm}]\to \delta_1^*[\ep]$ where $\ep\in\cE^*$ with $\beta^1\dord \ep$
and that $\delta_1^*[\ep^+]\to \delta_1^*[\ep]$ where $\ep=\beta^1$.
The proofs of these cases are similar to the earlier cases, and
we leave it to the reader.
This
concludes the proof of $\delta_1$ being continuous on $[\wt\ome,1)$.

Let us show $\delta_1^*(\wt \ome) = \lim_{x\to 1^-}\delta_1^*(x)=1$, so
it demonstrates that
when we define $\delta^*$ via $\delta^*_1$, the function values of those end points coincide.
Notice that $\delta^*(\wt \ome) = \ome/\wt \ome^\gamma=1$.
Recall that $\bar\theta^1$ is the maximal $\wt L^*$-block at index $1$, and $\sum\bar\theta^1\wt Q=1$.
If $\sum\res_n(\ddelta^1) \wt Q<x:=\sum\ep \wt Q<1$ where $n$ is sufficiently large and $\ep\in\cEtilde^*$, then
by Theorem \ref{thm:real},
	$\res_{n}(\bar\theta^1) = \res_{n}(\ep)$.
	By (\ref{eq:full-recursion-real}),
$$
\abs{1 - \sum \ep \wt Q }=\abs{\sum\bar\theta^1\wt Q - \sum \ep \wt Q }
=\abs{ \sum_{k=n+1}^\infty (\bar\theta^1_k-\ep_k) \wt\ome^k}
=O(\wt\ome^{ n})\to 0\ \text{ as }n \to\infty.$$
Thus, $\delta$ is continuous on $(0,1)$.
\end{proof}

\subsubsection{Differentiability}
Let us prove about the differentiability of $\delta^*$ on $\eval_{\wt Q}(\cEsbar)$.
\begin{lemma}
\label{lem:r}
Let $\ep\in \cEsbar - \cE^*\fin$.
Then, there are infinitely many indices $r$ such that 
$\ep - \beta^r \in \cEsbar$.
\end{lemma}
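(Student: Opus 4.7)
The plan is to split into two cases based on the dichotomy in Definition \ref{def:LL-decomposition}: either $\ep\in\cE^*$, or $\ep$ has the mixed form $\sum_{m=1}^{\ell}\zeta^m + \bbeta^{b+1}$ with finite $\ell$. In each case I will exhibit an explicit infinite family of indices $r$ together with a concrete decomposition certifying $\ep-\beta^r\in\cEsbar$.

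For $\ep\in\cE^*-\cE^*\fin$, take the full $L^*$-block decomposition $\ep=\sum_{m=1}^{\infty}\zeta^m$ with support intervals $[i_m,c_m]$. Since $\ep$ has infinite support, infinitely many $\zeta^m$ are non-zero; for each such $m$, let $r_m$ denote the largest index in $[i_m,c_m]$ with $\zeta^m_{r_m}\ge 1$. A direct check against Definition \ref{def:periodic-real} shows that $\zeta^m-\beta^{r_m}$ is again a proper $L^*$-block at index $i_m$ with support $[i_m,r_m]$: the match condition on $[i_m,r_m-1]$ is inherited from $\zeta^m$, the tail vanishes on $(r_m,\infty)$ by the maximality of $r_m$, and the strict inequality $(\zeta^m-\beta^{r_m})_{r_m}<\bbeta^{i_m}_{r_m}$ holds because $\bbeta^{i_m}_{r_m}\ge \zeta^m_{r_m}\ge 1$ (either by matching, if $r_m<c_m$, or by the defining inequality of $\zeta^m$, if $r_m=c_m$). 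Replacing $\zeta^m$ by $\zeta^m-\beta^{r_m}$ keeps the blocks pairwise disjoint, so $\ep-\beta^{r_m}\in\cE^*\subset\cEsbar$, and distinctness of the $r_m$ is immediate from the disjointness of the original support intervals.

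For $\ep=\sum_{m=1}^{\ell}\zeta^m+\bbeta^{b+1}\in\cEsbar-\cE^*$, the naive choice of $r$ inside the maximal block fails because the tail $\res^{r+1}(\bbeta^{b+1})$ typically starts with the wrong phase of the period $(e_1,\dots,e_N)$ and is therefore not itself a maximal $L^*$-block; this is the main obstacle, and the key idea is to align $r$ with the period so the tail begins with a run of zeros before resuming as a genuine maximal block. Let $j^*$ denote the largest index in $\{1,\dots,N\}$ with $e_{j^*}\ge 1$, which exists because $e_1\ge 1$. For $k\ge 0$ set $r_k:=b+kN+j^*$, so that $\bbeta^{b+1}_{r_k}=e_{j^*}\ge 1$, and set $n_k:=r_k+(N-j^*)+1$, which satisfies $n_k-b\equiv 1\moD{N}$. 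The vanishing $e_{j^*+1}=\cdots=e_N=0$ forces $\bbeta^{b+1}$ to be zero on $[r_k+1,n_k-1]$ and makes $\res^{n_k}(\bbeta^{b+1})=\bbeta^{n_k}$. Accordingly,
\[
\ep-\beta^{r_k}=\sum_{m=1}^{\ell}\zeta^m+\bigl(\res_{r_k}(\bbeta^{b+1})-\beta^{r_k}\bigr)+\bbeta^{n_k},
\]
which I read as an $\Lsbar$-block decomposition by inserting $N-j^*$ zero proper $L^*$-blocks with singleton supports $[r_k+1,r_k+1],\dots,[n_k-1,n_k-1]$ (each valid because $\bbeta^{t}_{t}=e_1\ge 1$) between the new proper block of support $[b+1,r_k]$ and the trailing maximal block $\bbeta^{n_k}$. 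Since the last proper-block support ends at $n_k-1$, the trailing $\bbeta^{n_k}$ is exactly the form required by Corollary \ref{cor:LL-decomposition}, and $k$ ranges freely over $\nat_0$.
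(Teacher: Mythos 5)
Your proof follows the same argument as the paper in both cases: for $\ep\in\cE^*-\cE^*\fin$ you shave the last nonzero entry off a nonzero block to keep a proper $L^*$-block inside the same support, and for $\ep\in\cEsbar-\cE^*$ you choose $r\equiv b+j^*\moD{N}$ with $j^*$ the last nonzero index of $L$ (the paper's $r_0$) so that the residual tail $\res^{r_k+1}(\bbeta^{b+1})$ is again a maximal block $\bbeta^{n_k}$ with $n_k=b+(k+1)N+1$, matching the paper's $\bbeta^{r+N-r_0+1}$ exactly. Your explicit insertion of the $N-j^*$ zero proper $L^*$-blocks on singleton supports between $r_k$ and $n_k$, which is needed so the last proper-block support ends at $n_k-1$ and the decomposition actually meets the form in Corollary \ref{cor:LL-decomposition} when $j^*<N$, is a detail the paper leaves implicit; it is a correct and worthwhile clarification rather than a different approach.
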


\begin{proof}
Let $\ep=\sum_{m=1}^\infty \zeta^m\in \cE^*-\cE^*\fin$ be
a non-zero
$L^*$-\bdecom.
Then, for each
$m$,
there is a largest index $r$ such that $[i,b]$ is the support interval of $\zeta^m$
and $\zeta^m_r\ne 0$.
Thus, $\zeta^m - \beta^r$ is a proper $L^*$-block with support interval $[i,r]$, and
$\sum_{m=1}^\infty \zeta^m - \beta^r\in \cE^*$.

Let
$\ep=\sum_{m=1}^\ell \zeta^m+\bbeta^{b+1}\in \cEsbar-\cE^*$
be an $\Lsbar$-\bdecom. 
Recall that $ L=(  e_1,\dots,   e_N)$ is the list,
by which the period \zec\ collection $\cE^*$ is determined, and 
 there is the largest index $r_0\le N$ such that $  e_{r_0}>0$.
 Then,  for all large $r \equiv b+r_0 \moD N$,
 $$\ep - \beta^r=\sum_{m=1}^\ell \zeta^m
 +\left(\res_r(\bbeta^{b+1}) - \beta^r\right) + \res^{r+1}(\bbeta^{b+1}) 
 = \sum_{m=1}^\ell \zeta^m
 +\left(\res_r(\bbeta^{b+1}) - \beta^r\right) + \bbeta^{r+N-r_0+1}.  $$
Since $\res_r(\bbeta^{b+1}) - \beta^r$ is a proper $\Lstar$-block
with support interval  $[b+1,r]$, 
the above decomposition implies that 
$\ep-\beta^r \in \cEsbar$. 
\end{proof}

\begin{theorem}\label{thm:not-diff}
The function $\delta^*$ is not differentiable on $\eval_{\wt Q}(\cEsbar)$.
\end{theorem}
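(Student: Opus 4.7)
The plan is to show that for each $x = \sum \ep \wt Q$ with $\ep \in \cEsbar$, there is a sequence of points $y_k \to x$ along which the difference quotient $(\delta^*(x) - \delta^*(y_k))/(x - y_k)$ diverges to $+\infty$; this precludes any finite derivative of $\delta^*$ at $x$.

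For the principal case $\ep \in \cEsbar - \cE^*\fin$, Lemma \ref{lem:r} produces infinitely many indices $r$ with $\ep - \beta^r \in \cEsbar$. Setting $y_r := \sum (\ep - \beta^r) \wt Q = x - \wt\ome^r$, Lemma \ref{lem:ep-bar-ep} gives $\revbar(\ep) = \ep$ and $\revbar(\ep - \beta^r) = \ep - \beta^r$, so
\begin{equation*}
\delta^*(x) - \delta^*(y_r) = \frac{\sum\ep Q}{x^\gamma} - \frac{\sum\ep Q - \ome^r}{(x - \wt\ome^r)^\gamma}.
\end{equation*}
Using the first-order Taylor expansion of $(x - \wt\ome^r)^{-\gamma}$ together with the identity $\ome = \wt\ome^\gamma$ (an immediate consequence of $\gamma = \log_{\phitilde}\phi$ from Theorem \ref{thm:ratios}), division by $\wt\ome^r$ yields
\begin{equation*}
\frac{\delta^*(x) - \delta^*(y_r)}{\wt\ome^r} = \frac{\wt\ome^{(\gamma - 1)r}}{x^\gamma} - \frac{\gamma \sum\ep Q}{x^{\gamma + 1}} + O(\wt\ome^{\gamma r}).
\end{equation*}
Since $\gamma - 1 < 0$ and $\wt\ome < 1$, the leading term $\wt\ome^{(\gamma - 1)r}/x^\gamma \to +\infty$ as $r \to \infty$, ruling out a finite left derivative at $x$.

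For the residual case $\ep \in \cE^*\fin$, Lemma \ref{lem:ep+-} shows $\revbar(\ep^-) = \ep$ for all sufficiently small $\Delta x > 0$, so $\delta^*$ is already smooth on the left of $x$, and the failure of differentiability must come from the right. I will approach $x$ from above along $\ep^+ := \ep + \beta^p$ for a sequence of indices $p \to \infty$, first choosing $p$ in the correct residue class modulo the period $M$ of $\wt L$ so that $\beta^p$ is a proper $\wt L^*$-block disjoint from the support of $\ep$ and $\ep + \beta^p \in \cEtilde^*$. A parallel Taylor expansion, now relying on $\sum \revbar(\beta^p) Q = \ome^p$, produces the same divergent leading term $\wt\ome^{(\gamma - 1)p}/x^\gamma$ in the right difference quotient and rules out a finite right derivative. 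The main obstacle is the bookkeeping in this last case --- verifying both the admissibility of $\ep + \beta^p$ as a $\cEtilde^*$-representation and the identification $\revbar(\ep + \beta^p) = \ep + \beta^p$ via Lemma \ref{lem:ep-bar-real} --- but both are routine consequences of the periodic structure in Definition \ref{def:periodic-real}.
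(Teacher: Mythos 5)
Your proof is correct and follows essentially the same route as the paper: use Lemma~\ref{lem:r} to produce infinitely many indices $r$ with $\ep - \beta^r \in \cEsbar$, apply $\revbar(\cdot) = \mathrm{id}$ on $\cEsbar$ (Lemma~\ref{lem:ep-bar-ep}), and Taylor-expand the difference quotient so that the ratio $(\ome/\wt\ome)^r = \wt\ome^{(\gamma-1)r}$ dominates; the residual case $\ep \in \cE^*\fin$ is handled symmetrically by adding a far-out $\beta^r$ rather than subtracting one.

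One small slip in your residual case: you key the choice of $p$ to the period $M$ of $\wt L$ and only ask that $\ep + \beta^p \in \cEtilde^*$, but membership in $\cEtilde^*$ is not what makes $\revbar(\ep + \beta^p) = \ep + \beta^p$. By Lemma~\ref{lem:ep-bar-ep}, what you actually need is $\ep + \beta^p \in \cEsbar$ (the paper asks for the stronger $\ep + \beta^p \in \cE^*\fin$), and this is governed by the fine list $L$ of period $N$, not $\wt L$. The slip is harmless here because for any periodic list $L$ the basis \cf{} $\beta^p$ (with $p\ge 2$) is always a proper $\Lstar$-block --- with support interval $[p,p]$ if $e_1\ge 2$, and $[p,p+j_0-1]$ where $e_{j_0}$ is the next positive entry of $\beta^*$ otherwise --- so $\ep + \beta^p \in \cE^*\fin$ automatically once $p$ lies beyond the support of $\ep$, with no residue-class restriction needed. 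It would be cleaner to state that directly and drop the reference to $\wt L$.
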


\begin{proof}

Let $\ep\in \cEsbar - \cE^*\fin$, and let $x:=\sum\ep \wt Q$ and $y: = \sum \ep Q$.
By Lemma \ref{lem:r}, there is  sufficiently large $r$ such that $\ep -\beta^r\in \cEsbar$, and  let $\Delta x:=\wt Q_r=\wt\ome^r$, and let $\Delta y: = Q_r=\ome^r$.
Then, by Lemma \ref{lem:ep-bar-real} and the linear approximation of $1/x^\gamma$,
\begin{align*}
\frac1{-\Delta x}&\left( \delta^*(x-\Delta x) - \delta^*(x) \right)
= \frac1{-\wt\ome^r}\left( \frac{y -\ome^r}{(x-\wt\ome^r)^\gamma}
- \frac y{x^\gamma}\right) \\
&
=\frac1{-\wt\ome^r}\left( (y -\ome^r)\left( \frac1{x^\gamma}
	+ \frac{\gamma}{x^{\gamma+1}} \wt\ome^r
	 + O(\wt\ome^{2r})\right)
	- \frac y{x^\gamma}\right)
=-\frac{y\gamma}{x^{\gamma+1}} + O(\wt \ome^r)+
	\frac{\ome^r}{\wt\ome^r}\,\left( \frac1{x^\gamma}
	 + O(\wt\ome^{r})\right).
\end{align*}
As $r\to\infty$, the ratio $(\ome/\wt \ome)^r \to\infty$, and hence,
$\delta^*$ is not differentiable at $x\in \eval_{\wt Q}( \cEsbar-\cE^*\fin)$.
If $ \ep=\sum_{m=1}^\ell \zeta^m\in \cE^*\fin$ where $\ell$ is a positive integer,
a similar calculation for $\ep + \beta^r\in \cE^*\fin$ for sufficiently large $r$ shows
that the difference quotient approaches $+\infty$ as well, and hence,
$\delta^*$ is not differentiable at $x\in\eval_{\wt Q}( \cE^*\fin)$.

\end{proof}

Thus, $\delta^*$ is differentiable precisely on the subset $U $ defined in Notation \ref{notation:U}.

\subsubsection{Lesbegue meausre}
Let us prove that $U$ is a \lesb\ measurable subset of measure $1$.

\begin{prop}\label{prop:measurable}
Given an $\Lsbar$-\bdecom\ $\ep =\sum_{m=1}^\ell \zeta^m + \bbeta^{b+1}\in \cEsbar - \cE^*$,
let $x_\ep:=\sum\ep \wt Q$ and $x_\ep':=\sum(\sum_{m=1}^\ell \zeta^m+\beta^b)\wt Q
\in \cE^*\fin$. Then,
the subset $U$ is the disjoint union of the open intervals $(x_\ep,x_\ep')$ where $ \ep\in \cEsbar - \cE^* $;
in particular, $U$ is open with respect to the usual topology on $\OI$, and it is
Lesbegue measurable.
\end{prop}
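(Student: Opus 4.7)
The plan is to exploit the uniqueness of the $\Lsbar$-block decomposition from Corollary \ref{cor:LL-decomposition} to identify, for each $\ep\in\cEsbar-\cE^*$, exactly which $\eta\in\cEtilde^*$ have evaluations lying in $(x_\ep,x_\ep')$. The picture is that $\cEsbar$ sits inside $\cEtilde^*$ in the descending \lex\ order and the ``gaps'' of this inclusion are precisely what the parameter set $\cEsbar-\cE^*$ indexes.

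First I would check that each interval is nonempty. For $\ep=\sum_{m=1}^\ell \zeta^m+\bbeta^{b+1}$ with $b\ge 1$, both $\bbeta^{b+1}$ and $\beta^b$ vanish on $[1,b-1]$, but $\bbeta^{b+1}_b=0<1=\beta^b_b$, so $\bbeta^{b+1}\dord\beta^b$. Since $\sum_{m=1}^\ell\zeta^m$ is supported in $[1,b]$ and is common to both, adding it gives $\ep\dord\ep_*$ where $\ep_*:=\sum_{m=1}^\ell\zeta^m+\beta^b$; the monotonicity of $\eval_{\wt Q}$ from Theorem \ref{thm:real} then yields $x_\ep<x_\ep'$. (The degenerate case $\ell=0$, $b=0$, $\ep=\bbeta^1$ is handled by interpreting $x_\ep'=1$, the supremum of $\OI$.)

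Second I would establish $(x_\ep,x_\ep')\subset U$. Given $x\in(x_\ep,x_\ep')$, Theorem \ref{thm:real} produces a unique $\eta\in\cEtilde^*$ with $x=\sum\eta\wt Q$, and monotonicity gives $\ep\dord\eta\dord\ep_*$. A digit-by-digit comparison forces $\eta$ to agree with $\ep$ on $[1,b]$: if $\eta_j<\ep_j$ for some $j\le b-1$ (first index of disagreement with $\ep_*$, which equals $\ep$ there), then also $\eta\dord\ep$, contradicting $\ep\dord\eta$; and if $\eta_b=\zeta^\ell_b+1=(\ep_*)_b$, then since $(\ep_*)_k=0$ for $k>b$ agreement on $[b+1,\infty)$ would force $\eta=\ep_*$, contradicting $\eta\dord\ep_*$ strictly. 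Setting $\mu:=\res^{b+1}(\eta)$, the remaining relation $\ep\dord\eta$ forces $\bbeta^{b+1}\dord\mu$ strictly. Corollary \ref{cor:mu} gives $\mu\in\cEtilde^*$, and by the uniqueness part of Corollary \ref{cor:LL-decomposition}, $\eta=\sum_{m=1}^\ell\zeta^m+\mu$ is the $\Lsbar$-block decomposition of $\eta$. Since $\mu\ne\bbeta^{b+1}$, Definition \ref{def:LL-decomposition} places $\eta\in\cEtilde^*-\cEsbar$, so $x\in U$.

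Third, I would prove the reverse containment and pairwise disjointness simultaneously. Given $\eta\in\cEtilde^*-\cEsbar$, its unique $\Lsbar$-block decomposition $\eta=\sum_{m=1}^\ell\xi^m+\mu$ has $\ell<\infty$ and $\bbeta^{b+1}\dordeq\mu$, with $\mu\ne\bbeta^{b+1}$ (since $\eta\notin\cEsbar$) and $\mu\ne 0$ (since $\eta\notin\cE^*$); hence $\bbeta^{b+1}\dord\mu\dord\beta^b$. Setting $\ep:=\sum_{m=1}^\ell\xi^m+\bbeta^{b+1}\in\cEsbar-\cE^*$ and $\ep_*=\sum_{m=1}^\ell\xi^m+\beta^b$, monotonicity delivers $x_\eta\in(x_\ep,x_\ep')$. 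Uniqueness of the $\Lsbar$-block decomposition of $\eta$ now forces this $\ep$ to be the only one whose interval contains $x_\eta$, which simultaneously establishes both the coverage of $U$ and the pairwise disjointness. Openness of $U$ as a union of open intervals, and hence Lebesgue measurability, then follows immediately.

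The delicate technical point I anticipate is the digit matching at index $b$ in the second step: one must carefully rule out $\eta$ ``straddling'' $\ep$ and $\ep_*$, leveraging the fact that these two cfs differ by exactly one unit in the single digit $b$. Everything else rests on two pillars already established in the paper: the monotonicity of $\eval_{\wt Q}$ from Theorem \ref{thm:real}, and the uniqueness of the $\Lsbar$-block decomposition from Corollary \ref{cor:LL-decomposition}.
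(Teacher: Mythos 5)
Your proposal is correct and follows essentially the same route as the paper: both hinge on the monotonicity of $\eval_{\wt Q}$ (Theorem \ref{thm:real}) and the uniqueness of the $\Lsbar$-block decomposition (Corollary \ref{cor:LL-decomposition}). The main difference is that you flesh out the digit-by-digit comparison forcing $\tau$ to agree with $\ep$ on $[1,b]$ and the strictness of $\bbeta^{b+1}\dord\mu$, which the paper compresses into a single sentence.
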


\begin{proof}

Let us show that the open interval $(x_\ep,x_\ep')$ is contained in $U$.
Let $x=\sum\tau \wt Q$ where $x\in (x_\ep,x_\ep')$.
By Theorem \ref{thm:real}, $\sum_{m=1}^\ell \zeta^m +\bbeta^{b+1} \dord \tau \dord \sum_{m=1}^\ell \zeta^m + \beta^b$.
This implies that  $\tau=\sum_{m=1}^\ell \zeta^m + \mu $ where
$\bbeta^{b+1}\dord \mu $. Thus, the sum is the $\Lsbar$-\bdecom\ of $\tau\in \cEtilde^*-\cEsbar$, i.e., $x\in U$.

By Corollary \ref{cor:LL-decomposition},
given $x=\sum\tau\wt Q\in U$,
there is a unique full $\Lsbar$-\bdecom\ of $\tau=\sum_{m=1}^\ell \zeta^m + \mu $ where
$b$ is the largest $\Lsbar$-support index, $\bbeta^{b+1} \dord \mu$, and 
the union of the support intervals of $\zeta^m$ for $1\le m \le \ell$ is $[1,b]$.
Thus, by Theorem \ref{thm:real},
if $\ep=\sum_{m=1}^\ell \zeta^m + \bbeta^{b+1}$, then
$x_\ep < x < x_\ep'$.

Notice that by Corollary \ref{cor:LL-decomposition}, given $x\in U$, the \cf\ $\ep\in \cEsbar - \cE^*$ described
 above is uniquely determined, and
this implies the intervals $(x_\ep, x_\ep')$ are disjoint.
This concludes the proof of the fact that $U$ is a disjoint union of the intervals $(x_\ep, x_\ep')$.

\end{proof}

The length of the interval $(x_\ep, x_{\ep'})$ described in Proposition \ref{prop:measurable}
is 
\begin{equation}
I_b:=\wt\ome^b - \sum\bbeta^{b+1} \wt Q \label{eq:Ib}
\end{equation} 
 where $b\ge 1$, and 
to find the measure of $U$, we need to figure out 
the number of intervals $(x_\mu, x_{\mu'})$ where $\mu\in\cEsbar - \cE^*$, for which
$x_{\mu'}-x_\mu = I_b$.
Recall the list $L=(e_1,\dots, e_N)$, which determines the periodic \zec\ collection $\cE^*$.
\begin{lemma}\label{lem:count-proper}
Let $i\le b$ be two positive integers such that $b-i+1\equiv j \moD N$ for some $1\le j\le N$.
The number of proper $\Lstar$-blocks with \supint\ $[i,b]$ is equal to $e_j$.
\end{lemma}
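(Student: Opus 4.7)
The plan is to unpack the definition of a proper $L^*$-block with support interval $[i,b]$ and show that such a block is uniquely parametrized by the value $\zeta_b$, which has exactly $e_j$ admissible choices.

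First I would observe that by Definition \ref{def:periodic-real}, a proper $L^*$-block $\zeta$ with support interval $[i,b]$ is a \cf\ satisfying $\res_{b-1}(\zeta) = \res_{b-1}(\bbeta^i)$, $\zeta_b < \bbeta^i_b$, and $\res^{b+1}(\zeta) = 0$. The first condition forces $\zeta_k = \bbeta^i_k$ for all $k < b$; in particular, since $\res_{i-1}(\bbeta^i) = 0$, we automatically get $\zeta_k = 0$ for $k < i$, and $\zeta_k = \bbeta^i_k$ for $i \le k \le b-1$. The third condition forces $\zeta_k = 0$ for all $k > b$. Hence every entry of $\zeta$ except possibly $\zeta_b$ is completely determined by $i$ and $b$.

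Next I would use the periodic structure of $\bbeta^i$ to identify $\bbeta^i_b$. By the definition of $\bbeta^i$, we have $\bbeta^i_b = e_j$ where $1 \le j \le N$ and $b - i + 1 \equiv j \pmod N$. The remaining condition $\zeta_b < \bbeta^i_b = e_j$, together with $\zeta_b \in \nat_0$, means $\zeta_b$ ranges over $\{0, 1, \ldots, e_j - 1\}$, giving exactly $e_j$ possibilities. Since each such choice yields a distinct valid \cf, and every proper $L^*$-block with support interval $[i,b]$ arises this way, the total count is $e_j$.

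There is no real obstacle in this argument: it is a direct unpacking of definitions combined with the periodicity of $\bbeta^i$. The only point worth a brief remark is the degenerate case $e_j = 0$: then $\bbeta^i_b = 0$, no admissible value of $\zeta_b$ exists, and the lemma correctly reports $0$ blocks. So the proof is really a one-paragraph counting argument, and no auxiliary lemma from earlier in the section is needed beyond the definitions themselves.
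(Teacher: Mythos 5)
Your proof is correct and takes essentially the same approach as the paper: unpack Definition \ref{def:periodic-real} to see that a proper $L^*$-block with support interval $[i,b]$ is determined by the single free entry $\zeta_b\in\{0,\dots,e_j-1\}$, where $\bbeta^i_b=e_j$ by periodicity. The paper's proof is the same one-paragraph counting argument, phrased slightly more compactly.
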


\begin{proof} Let $\zeta$ be a member of $\cE^*$ supported on $[i,b]$.
Notice that $\zeta$ is a proper $\Lstar$-block with \supint\ $[i,b]$ if and only if
$0\le \zeta_b <e_j$ and $\zeta_k =\bbeta^i_k$ for all $k$ such that $i\le k<b$.
Since the number of the possibilities for $\zeta_b$ is $e_j$, we prove the lemma.

\end{proof}

The value  $C_b$ described in Proposition \ref{prop:Cn} below
is the number of intervals $(x_\mu, x_{\mu'})$ whose length is $I_b$ defined in 
(\ref{eq:Ib}).
\begin{prop}\label{prop:Cn}
Let $S_n$ for $n\ge 1$ be the subset of $\ep\in\cE^*$ such that $\ep=\sum_{m=1}^\ell\zeta^m$
is an $L^*$-block decomposition and the support interval of $\zeta^\ell$ is $[i,n]$ for some $i\le n$, and
let $S_0:=\set{0}$.
Let $C_n$ for $n\ge 0$
be the number of elements in $S_n$.
Then,
\begin{equation}\label{eq:Cn}
\begin{aligned}
C_k &= e_1 C_{k-1} + \cdots + e_{k } C_{0}\ \text{ for }k=1,\dots, N\\
C_n &= \sum_{n=1}^{N-1 } e_k C_{n-k} + (1+e_N) C_{n-N}\ \text{ for all }n\ge N+1.
\end{aligned}
\end{equation}
\end{prop}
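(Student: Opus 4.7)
The plan is to first establish a single one-step recurrence of the form $C_n=\sum_{j=1}^{n}q_j\,C_{n-j}$, and then to collapse this infinite-looking sum into the finite linear recurrence of (\ref{eq:Cn}) using the periodicity of $L$.

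First I would introduce $q_j$ as the common value, supplied by Lemma \ref{lem:count-proper}, of the number of proper $L^*$-blocks having a support interval of length $j$; explicitly $q_j=e_{j'}$ for $j\equiv j'\pmod N$, $1\le j'\le N$. In particular $q_j=e_j$ for $1\le j\le N$, and $q_{j+N}=q_j$ in general. Given $\ep\in S_n$, its $L^*$-block decomposition $\sum_{m=1}^{\ell}\zeta^m$ has a last block $\zeta^\ell$ whose support interval $[i,n]$ has some length $j:=n-i+1\in[1,n]$. Stripping off $\zeta^\ell$ yields an $L^*$-block decomposition $\sum_{m=1}^{\ell-1}\zeta^m$ whose supports partition $[1,n-j]$, hence an element of $S_{n-j}$ (with $S_0=\{0\}$ handling the case $j=n$); conversely, pairing any element of $S_{n-j}$ with one of the $q_j$ proper $L^*$-blocks with support $[n-j+1,n]$ reassembles uniquely to an element of $S_n$. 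This bijection gives
\[
C_n=\sum_{j=1}^{n}q_j\,C_{n-j}\qquad (n\ge 1).
\]

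For $1\le k\le N$, the one-step sum reads $C_k=\sum_{j=1}^{k}e_j\,C_{k-j}$, which is exactly the first line of (\ref{eq:Cn}). For $n\ge N+1$, I would split the sum at $j=N$ and reindex the tail by $j=j'+N$, using $q_{j'+N}=q_{j'}$:
\[
C_n=\sum_{j=1}^{N}e_j\,C_{n-j}+\sum_{j=N+1}^{n}q_j\,C_{n-j}=\sum_{j=1}^{N}e_j\,C_{n-j}+\sum_{j'=1}^{\,n-N}q_{j'}\,C_{(n-N)-j'}.
\]
Since $n-N\ge 1$, the one-step recurrence applied at the index $n-N$ identifies the second sum with $C_{n-N}$, giving $C_n=\sum_{j=1}^{N-1}e_j\,C_{n-j}+e_N C_{n-N}+C_{n-N}$, which is the second line of (\ref{eq:Cn}).

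I do not foresee a substantive obstacle. The only care required is in justifying the one-step recurrence, specifically that the ``delete the last block'' map is a bijection between $S_n$ and the disjoint union $\bigsqcup_{j=1}^{n}\bigl(S_{n-j}\times\{\text{proper }L^*\text{-blocks with support }[n-j+1,n]\}\bigr)$. This is immediate once one observes that an $L^*$-block decomposition is reconstructed uniquely from its ordered tuple of blocks, combined with Lemma \ref{lem:count-proper}, which already counts the (trivial) zero $L^*$-block of support $[i,i]$ on the same footing as the non-zero blocks, so no boundary adjustments are needed when $j=1$ or $j=n$.
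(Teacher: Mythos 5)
Your proof is correct and follows essentially the same route as the paper: the paper likewise first establishes the one-step recurrence $C_n=\sum_{k=0}^{n-1} C_k\,\#T_{k+1}$ via the disjoint decomposition $S_n=\bigsqcup_{k=0}^{n-1}(S_k+T_{k+1})$ (the ``strip the last block'' bijection), and then collapses the tail into $C_{n-N}$ by reindexing and applying the same one-step recurrence at index $n-N$. Your write-up is a slightly more explicit rendering of the same argument.
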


\begin{proof}
Let $\zeta$ be a proper $\Lstar$-block with \supint\ $[1,1]$.
Then, by Lemma \ref{lem:count-proper}, there are $e_1$ such blocks, and hence, $C_1=e_1$.
We shall calculate $C_n$ inductively.

Given an integer $n\ge 1$, let $T_i$ for $1\le i\le n$ be the set of proper $\Lstar$-blocks with support interval $[i,n]$.
Notice that the uniqueness of the proper $\Lstar$-block decomposition implies that $S_n$ is the disjoint union of
$S_k+ T_{k+1}:=\set{\ep + \mu : \ep\in S_k,\ \mu \in T_{k+1}}$ for $k=0,\dots,n-1$ where
the cardinality of $S_k+T_{k+1}$ is equal to $C_k \cdot \#T_{k+1}=C_k e_j$ for $1\le j\le N$
such that $n-k \equiv j \moD N$.
Thus, $C_n = C_{n-1} e_1 + C_{n-2} e_2 + \cdots + C_1 e_{t-1}+ C_0 e_t$ where $1\le t\le N$ and
$ t\equiv n \moD N$, and the coefficients of $C_k$ periodically repeat.
Via induction,
if $n\ge N+1$, then $C_n = e_1 C_{n-1} + \cdots + e_{N} C_{n-N}+ (e_1C_{n-N-1}+\cdots +e_t C_0)
	=e_1 C_{n-1} + \cdots + e_{N-1} C_{n-N+1}+(1+e_N) C_{n-N} $.

\end{proof}

\begin{prop}\label{prop:generating}
Let $f(x)=1-\sum_{k=1}^{N-1} e_k x^k- (1+e_N) x^N$.
Then, the generating function $p(x):=\sum_{k=0}^\infty C_k x^k$
is identical to $(1-x^N)/f(x)$ on an open interval containing $0$.
\end{prop}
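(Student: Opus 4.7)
The plan is to multiply $p(x)$ by $f(x)$ and read off the coefficients using the recursion from Proposition \ref{prop:Cn}. First, I would note that the series $\sum_{k=0}^\infty C_k x^k$ has positive radius of convergence: the recursion $C_n=\sum_{k=1}^{N-1}e_kC_{n-k}+(1+e_N)C_{n-N}$ for $n\ge N+1$ is a linear recurrence with constant non-negative integer coefficients, so $C_k$ grows at most exponentially in $k$, and hence $p(x)$ converges on some open interval about $0$. On this interval, $f(x)p(x)$ is a genuine analytic function whose coefficients may be computed by Cauchy product.

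Next, I would compute the coefficient of $x^n$ in $f(x)p(x)$ case by case. Writing $f(x)p(x)=p(x)-\sum_{k=1}^{N-1}e_k x^k p(x)-(1+e_N)x^N p(x)$, the coefficient of $x^n$ equals
\[
[x^n](f(x)p(x))=C_n-\sum_{k=1}^{\min(n,N-1)}e_kC_{n-k}-(1+e_N)C_{n-N},
\]
where $C_{n-N}$ is interpreted as $0$ if $n<N$. For $n=0$ this evaluates to $C_0=1$. For $1\le n\le N-1$, it is $C_n-\sum_{k=1}^n e_kC_{n-k}$, which vanishes by the first line of (\ref{eq:Cn}) (the terms $e_kC_{n-k}$ with $k>n$ are absent because $C_{n-k}$ has negative index). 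For $n=N$, it is $C_N-\sum_{k=1}^{N-1}e_kC_{N-k}-(1+e_N)C_0=C_N-\sum_{k=1}^{N}e_kC_{N-k}-C_0=-1$, again using the first line of (\ref{eq:Cn}) at $k=N$ and the fact that $C_0=1$. For $n\ge N+1$, it is zero by the second line of (\ref{eq:Cn}).

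Collecting these coefficients yields $f(x)p(x)=1-x^N$. Since the constant term of $f$ is $1$, $f$ does not vanish at $0$, so $f$ is invertible on a sufficiently small open interval about $0$, and dividing gives $p(x)=(1-x^N)/f(x)$ as claimed. The only minor subtlety is the convergence remark at the start; once that is in place the identity is purely a bookkeeping exercise with the recursion, so I do not expect any real obstacle.
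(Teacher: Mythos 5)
Your proof is correct and follows essentially the same approach as the paper's: multiply $p(x)$ by $f(x)$ and use the recursion (\ref{eq:Cn}) to show that the product telescopes to $1-x^N$. Your coefficient-by-coefficient casework is a slightly tidier presentation of the same bookkeeping the paper does by collecting full sums, and your opening remark justifying the positive radius of convergence (via the exponential bound from the linear recurrence) fills a small gap that the paper leaves implicit.
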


\begin{proof}
For a non-negative integer $t\le N$,
we have $x^t p(x) = \sum_{k=0}^\infty C_k x^{k+t}= \sum_{k=t}^\infty C_{k-t} x^{k}
=C_0 x^t + \cdots + C_{N-t} x^N +\sum_{k=N+1}^\infty C_{k-t} x^{k}
=\sum_{k=t}^N C_{k-t} x^k +\sum_{k=N+1}^\infty C_{k-t} x^{k}$.
Below we shall calculate $p(x)-\sum_{t=1}^{N-1} e_tx^t p(x) - (1+e_N)x^N p(x)$, and
by (\ref{eq:Cn}), the sums over $k\ge N+1$ shall be cancelled out;
\begin{align*}
p(x) -& (e_1 x p(x) +\cdots + e_{N-1} x^{N-1} p(x)+(1+e_N)x^N p(x ))\\
& =
\sum_{k=0}^N C_{k } x^k +\sum_{k=N+1}^\infty C_{k } x^{k}
\\
&\HS{2em}
-\sum_{t=1}^{N-1}
\left(\sum_{k=t}^{N } e_tC_{k-t} x^k +\sum_{k=N+1}^\infty e_tC_{k-t} x^{k}\right)
- (1+e_N)C_0 x^N -\sum_{k=N+1}^\infty (1+e_N)C_{k-N} x^{k}
\\
&= \sum_{k=0}^N C_{k } x^k
- \sum_{t=1}^{N-1}
\sum_{k=t}^{N } e_tC_{k-t} x^k - (1+e_N)C_0 x^N \\
&= \sum_{k=0}^N C_{k } x^k
- \sum_{t=1}^{N-1}
\sum_{k=t}^{N -1} e_tC_{k-t} x^k- \sum_{t=1}^{N-1}
e_tC_{N-t} x^N - (1+e_N)C_0 x^N \\
&= \sum_{k=0}^N C_{k } x^k
-
\sum_{k=1}^{N-1} x^k \sum_{t=1}^{k} e_tC_{k-t}
- \sum_{t=1}^{N-1}
e_tC_{N-t} x^N - (1+e_N)C_0 x^N \\
&= \sum_{k=0}^N C_{k } x^k
-
\sum_{k=1}^{N-1} C_k x^k - x^N \sum_{t=1}^{N-1}
e_tC_{N-t} - (1+e_N)C_0 x^N =1 - x^N.
\end{align*}
Thus, $p(x)f(x) = 1-x^N$, and it proves the result.
\end{proof}

\begin{deF}\label{def:rho}
Let $\rho$ denote the real number $\sum\bbeta^1\wt Q$ (where $\bbeta^1$ is the
maximal $\Lsbar$-block at index $1$).
\end{deF}

\begin{lemma}\label{lem:rho}
Let $\ell\ge 1$ be an integer.
Then, $\sum\bbeta^\ell \wt Q= \wt\ome^{\ell-1}\rho$.
\end{lemma}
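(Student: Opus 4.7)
The plan is to evaluate $\sum \bbeta^\ell \wt Q$ by a direct calculation, using the explicit periodic description of the maximal $\Lstar$-block from Definition \ref{def:periodic-real} together with the geometric formula $\wt Q_k = \wt\ome^k$ from Theorem \ref{thm:real}. Because $\wt Q$ is purely geometric, the index shift $\ell\mapsto 1$ of $\bbeta$ must translate into a clean multiplicative factor, and the value of that factor has to be $\wt\ome^{\ell-1}$ on dimensional grounds.

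First I would unpack $\bbeta^\ell$: since $\res_{\ell-1}(\bbeta^\ell)=0$ and $\bbeta^\ell_k = e_j$ for $k\ge \ell$ with $1\le j\le N$ and $k-\ell+1\equiv j\moD N$, the series reduces to $\sum_{k=\ell}^{\infty}\bbeta^\ell_k\, \wt\ome^k$. Substituting $m=k-\ell+1$ (equivalently $k=m+\ell-1$, $m\ge 1$) and factoring out $\wt\ome^{\ell-1}$ turns this into $\wt\ome^{\ell-1}\sum_{m=1}^{\infty} \bbeta^\ell_{m+\ell-1}\, \wt\ome^m$.

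The key observation is that the periodicity built into $\bbeta$ is invariant under the index shift $k\mapsto k-\ell+1$: by Definition \ref{def:periodic-real}, $\bbeta^\ell_{m+\ell-1}=e_j$ for the same $j\equiv m\moD N$ that defines $\bbeta^1_m$, so $\bbeta^\ell_{m+\ell-1}=\bbeta^1_m$ for every $m\ge 1$. The remaining sum is therefore $\sum\bbeta^1\wt Q = \rho$ by Definition \ref{def:rho}, yielding $\sum\bbeta^\ell\wt Q = \wt\ome^{\ell-1}\rho$. There is no real obstacle here: the lemma simply records that an $\ell$-shift of $\bbeta$'s index preserves the $N$-periodic pattern of its entries, while the geometric form of $\wt Q$ converts that shift into the factor $\wt\ome^{\ell-1}$.
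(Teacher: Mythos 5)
Your proof is correct and follows the same approach as the paper: both unfold $\sum\bbeta^\ell\wt Q$ into the series $\sum_{k\ge\ell}\bbeta^\ell_k\,\wt\ome^k$, use the $N$-periodicity of $\bbeta$ to identify $\bbeta^\ell_k=\bbeta^1_{k-\ell+1}$, and factor out $\wt\ome^{\ell-1}$ to recover $\rho$. No substantive difference.
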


\begin{proof}
Notice that $\bbeta^\ell=(\bar 0,\overline{e_1,\dots,e_N})$ implies
$$
\sum \bbeta^\ell \wt Q
=\sum_{k=\ell}^\infty \bbeta^\ell_k \omtilde^k=\sum_{k=\ell}^\infty \bbeta^1_{k-\ell+1} \omtilde^k
=\omtilde^{\ell-1}\sum_{k=\ell}^\infty \bbeta^1_{k-\ell+1} \omtilde^{k-\ell+1}
=\rho \omtilde^{\ell-1}.$$
\end{proof}

\begin{theorem}\label{thm:measure}
The open subset $U $ has Lesbegue measure $1$.
\end{theorem}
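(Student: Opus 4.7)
The plan is to express $m(U)$ as an explicit series, evaluate it using the generating function $p$ from Proposition \ref{prop:generating}, and observe a clean telescoping to $1$.

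First, I invoke Proposition \ref{prop:measurable}, which writes $U$ as a disjoint countable union of open intervals $(x_\ep, x_\ep')$ indexed by $\ep\in\cEsbar-\cE^*$. The length of such an interval is exactly $I_b = \wt\ome^b-\sum\bbeta^{b+1}\wt Q$ where $b$ is the largest $\Lsbar$-support index of $\ep$. By Lemma \ref{lem:rho}, $\sum\bbeta^{b+1}\wt Q=\rho\wt\ome^b$, so the length depends only on $b$:
\[
I_b = (1-\rho)\wt\ome^b.
\]

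Next, I count how many $\ep$ give rise to the same $b$. Any $\ep\in\cEsbar-\cE^*$ has the form $\ep = \sum_{m=1}^\ell\zeta^m+\bbeta^{b+1}$ with $[i,b]$ the support interval of $\zeta^\ell$ (or $\ell=0$, $b=0$, $\ep=\bbeta^1$). The map $\ep\mapsto \sum_{m=1}^\ell\zeta^m$ gives a bijection between the $\ep$ with largest $\Lsbar$-support index $b$ and the set $S_b$ defined in Proposition \ref{prop:Cn}, so there are exactly $C_b$ intervals of length $I_b$. Since $U\subset(0,1)$ is a disjoint countable union of these intervals,
\[
m(U)=\sum_{b=0}^\infty C_b I_b = (1-\rho)\sum_{b=0}^\infty C_b\wt\ome^b = (1-\rho)\,p(\wt\ome),
\]
where convergence follows from $C_n$ satisfying the same recurrence as $H_n$ (growth $\asymp\phi^n=\ome^{-n}$) together with $\wt\ome<\ome$, which holds by Theorem \ref{thm:ratios} since $\wt\phi>\phi$.

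By Proposition \ref{prop:generating}, $p(\wt\ome)=(1-\wt\ome^N)/f(\wt\ome)$ for $f(x)=1-\sum_{k=1}^{N-1}e_k x^k-(1+e_N)x^N$, and $f(\wt\ome)>0$ since $f$ is strictly decreasing on $(0,\infty)$ with $f(\ome)=0$ and $\wt\ome<\ome$. Finally, from $\bbeta^1=(\overline{e_1,\dots,e_N})$ I compute the geometric series
\[
\rho = \sum_{k=1}^\infty \bbeta^1_k\wt\ome^k = \frac{\sum_{t=1}^N e_t\wt\ome^t}{1-\wt\ome^N},
\]
so $(1-\rho)(1-\wt\ome^N) = 1-\wt\ome^N-\sum_{t=1}^N e_t\wt\ome^t = f(\wt\ome)$. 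Substituting,
\[
m(U) = (1-\rho)\,\frac{1-\wt\ome^N}{f(\wt\ome)} = \frac{f(\wt\ome)}{f(\wt\ome)} = 1.
\]

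The only genuinely delicate point is the bookkeeping step of identifying the intervals with $\bigsqcup_b S_b$ (particularly handling the $\ell=0$, $b=0$ case $\ep=\bbeta^1$ correctly so that $C_0=1$ contributes $I_0=1-\rho$); everything else is a direct calculation once Propositions \ref{prop:measurable}, \ref{prop:Cn}, and \ref{prop:generating} are in hand.
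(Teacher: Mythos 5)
Your proof is correct and follows essentially the same route as the paper: decompose $U$ into the disjoint intervals from Proposition \ref{prop:measurable}, use Lemma \ref{lem:rho} to compute the length $(1-\rho)\wt\ome^b$, count $C_b$ intervals per $b$, and evaluate the generating function of Proposition \ref{prop:generating} at $\wt\ome$. Your algebraic route to the identity $(1-\rho)(1-\wt\ome^N)=f(\wt\ome)$ is a minor rearrangement of the paper's computation of $\rho=1-f(\wt\ome)/(1-\wt\ome^N)$, and your explicit remark that $\wt\ome<\ome$ puts the evaluation point within the radius of convergence of $p$ is a useful clarification that the paper leaves implicit.
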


\begin{proof}
Let $\ep = \sum_{m=1}^\ell \zeta^m +\bbeta^{b+1}$ be an $\Lsbar$-\bdecom\ %
where $b\ge 0$ is the largest $\Lsbar$-\supint\ of $\ep$. Let $\ep^0= \sum_{m=1}^\ell \zeta^m$,
$x_\ep = \sum\ep\wt Q$,
and $x_\ep'= \sum( \sum_{m=1}^\ell \zeta^m +\beta^b)\wt Q$, which is interpreted as $1$ if $b=0$,
i.e., $\ep=\bbeta^1$.
By Proposition \ref{prop:measurable}, $U$ is the disjoint union of
$(x_\ep,x_\ep')$ where $\ep^0$ varies over $\cE^*\fin$, and it is \lesb\ measurable.

Recall the set $S_b$ and the number $C_b$ defined in Proposition \ref{prop:Cn}.
Notice that by Lemma \ref{lem:rho},
the length of $(x_\ep,x_\ep')$
is $ \wt\ome^b - \wt\ome^b \rho=\wt\ome^b(1-\rho)$.
By Proposition \ref{prop:generating},
the measure of $U$ is
$$\sum_{b=0}^\infty C_b \wt \ome^b (1-\rho) = (1-\rho)\frac{1-\wt\ome^N}{f(\wt\ome)}$$
where $f(x)$ is the characteristic polynomial defined in the proposition.
\newcommand{\wto}{\wt\ome}
Let us rearrange $\rho$ as follows.
Notice that $ e_1 \wto+\cdots +e_N \wto^N=1-f(\wto)-\wto^N$:
\begin{align*}
\rho& =( e_1 \wto+\cdots +e_N \wto^N) + \wto^N( e_1 \wto+\cdots +e_N \wto^N)+\cdots\\
&=(1-f(\wto)-\wto^N)(1+\wto^N+\wto^{2N}+\cdots)
=1-\frac{ f(\wto) }{1-\wto^N}\\
&\implies (1-\rho)\frac{1-\wt\ome^N}{f(\wt\ome)}=1.
\end{align*}
Therefore, the measure of $U$ is equal to $1$.
\end{proof}

\subsection{The local and global maximum and minimum values}

Theorem \ref{thm:local-extrm} below is the technical version of Theorem \ref{thm:local-extremum}, and we prove it in this section.
First notice that by Theorem \ref{thm:continuous},
$\delta^*$ is locally decreasing on $U $, and hence, $\delta^*(x)$
is not a local extremum value if $x\in U$.

\begin{theorem}\label{thm:local-extrm}
The function $\delta^*$ does not assume a local extremum value at points in
$\eval_{\wt Q}((\cEtilde^*-\cEsbar)\cup(\cE^*-\cE^*\fin))$.
It assumes a local minimum value at points in $\cE^*\fin$, and a local maximum value
at points in $\eval_{\wt Q}(\cEsbar - \cE^*)$.
\end{theorem}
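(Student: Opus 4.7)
The proof proceeds by partitioning the domain $\OI \cong \cEtilde^*$ into the four disjoint subsets $\cEtilde^* - \cEsbar$, $\cE^* - \cE^*\fin$, $\cE^*\fin$, and $\cEsbar - \cE^*$, and analyzing each separately. The first case follows immediately from Theorem \ref{thm:continuous}: on $U = \eval_{\wt Q}(\cEtilde^* - \cEsbar)$, $\delta^*$ is locally decreasing, so no point of $U$ is a local extremum.

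For the other three cases, the common tool is a Taylor expansion combined with Lemma \ref{lem:ep+-}. Writing $x = \sum\ep\wt Q$, $y = \sum\revbar(\ep) Q$, and $y^\pm = \sum\revbar(\ep^\pm) Q$, one computes
\[
\delta^*(x\pm\Delta x) - \delta^*(x) = \frac{y^\pm - y}{x^\gamma} \mp \frac{\gamma\, y\,\Delta x}{x^{\gamma+1}} + O((\Delta x)^2).
\]
Lemma \ref{lem:ep+-} forces $\res_n(\revbar(\ep^\pm)) = \res_n(\revbar(\ep))$ for some $n \to \infty$ as $\Delta x\to 0$, so $y^\pm - y = O(\ome^n)$, while Lemma \ref{lem:ep-converge} yields $\Delta x = O(\wt\ome^n)$. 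Since $\wt\ome < \ome$ by Theorem \ref{thm:ratios}, the $(y^\pm - y)/x^\gamma$ term strictly dominates whenever nonzero, and the sign of $\delta^*(x \pm \Delta x) - \delta^*(x)$ agrees with the sign of $y^\pm - y$.

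For $\ep \in \cEsbar - \cE^*$, Proposition \ref{prop:measurable} places $x = x_\ep$ at the left endpoint of an open interval $(x_\ep, x_\ep')$ of $U$ on which $\delta^*$ decreases, so $\delta^*(x+\Delta x) < \delta^*(x)$ immediately to the right. On the left, unpacking the $\Lsbar$-\bdecom\ of $\ep^-$ (via Lemma \ref{lem:ep+-} and Lemma \ref{lem:ep-bar-real}) shows $\sum\revbar(\ep^-) Q < \sum \ep Q$, i.e.\ $y^- < y$, so $\delta^*$ is also smaller just to the left, making $x$ a local maximum. The case $\ep \in \cE^*\fin$ is symmetric: Proposition \ref{prop:measurable} now places $x$ at the right endpoint of an interval of $U$ (so the left side is handled by Theorem \ref{thm:continuous}), and on the right the $\Lsbar$-\bdecom\ of $\ep^+$ contributes a strictly positive tail past the support of $\ep$, giving $y^+ > y$ and thus a local minimum.

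For $\ep \in \cE^* - \cE^*\fin$, both sides require the dominance analysis. Following the proof of Lemma \ref{lem:ep+-}, $\revbar(\ep^\pm) = \sum_{m=1}^{T-1}\zeta^m + \eta^\pm$ where $T\to\infty$ and $\eta^\pm$ comes from processing the tail $\tau^\pm$ through the $\Lsbar$-\bdecom. Since $\ep^+ >_\text{d}\ep >_\text{d}\ep^-$ in $\cEtilde^*$ and the initial blocks $\zeta^1,\ldots,\zeta^{T-1}$ are common to both $\revbar(\ep^\pm)$ and $\revbar(\ep)=\ep$, one shows $y^+ > y > y^-$, using Theorem \ref{thm:real} on the $\cE^*$-part and (\ref{eq:full-recursion-real}) to reduce $\cEsbar-\cE^*$ values to their $\cE^*\fin$ counterparts. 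Hence $\delta^*(x - \Delta x) < \delta^*(x) < \delta^*(x + \Delta x)$, so $\delta^*$ is strictly monotone through $x$ and attains no local extremum. The main obstacle lies in this last case: one must verify uniformly across all sub-cases (whether $\ep^\pm \in \cEsbar$ or not) that $\eta^\pm$ contributes on the correct side of $\sum_{m=T}^\infty \zeta^m$ in the $\sum \cdot Q$ comparison, which requires the careful $\bbeta$-tail substitution from Lemma \ref{lem:ep-bar-real}.
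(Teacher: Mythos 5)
The overall structure of your proof matches the paper's---you partition $\cEtilde^*$ into the same four subsets, handle $\cEtilde^*-\cEsbar$ via Theorem \ref{thm:continuous}, and aim for a ``dominant term'' argument for the rest---but the proof has a real gap at its core. You assert that ``$y^{\pm}-y = O(\ome^n)$ and $\Delta x = O(\wt\ome^n)$, hence the $(y^\pm-y)/x^\gamma$ term strictly dominates whenever nonzero.'' Both of those are \emph{upper} bounds, and two upper bounds say nothing about the ratio $(y^\pm-y)/\Delta x$. For example, nothing in your argument rules out $|y^\pm-y|\asymp\ome^{2n}$ with $\Delta x\asymp\wt\ome^n$, in which case $(y^\pm-y)/\Delta x\asymp(\ome^2/\wt\ome)^n$, which in the Fibonacci/binary case is $\approx(0.38/0.5)^n\to 0$, not $\infty$. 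What is actually required is a \emph{lower} bound on $|y^\pm-y|$ at the same scale as $\Delta x$; this is precisely what Lemma \ref{lem:gamma-ineq} encapsulates (the hypothesis $\Delta y/\Delta x \ge \ome^p/\wt\ome^p$), and the paper spends the bulk of Propositions \ref{thm:local-minimum}, \ref{thm:not-local-extrm}, and \ref{thm:local-max} establishing such lower bounds explicitly ($\Delta y\ge\ome^n$ against $\Delta x<\wt\ome^{n-1}$, etc.). Your sketch omits this essential estimate.

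The $\cE^*-\cE^*\fin$ case also diverges in strategy. You claim the strict monotonicity $\delta^*(x-\Delta x)<\delta^*(x)<\delta^*(x+\Delta x)$ for \emph{all} sufficiently small $\Delta x$, which is considerably stronger than ``not a local extremum.'' The paper does not attempt this: Proposition \ref{thm:not-local-extrm} constructs two \emph{specific} families of perturbations---$\ep-\beta^r$ with $r\to\infty$ (which stays inside $\cE^*$, giving $\delta^*[\ep-\beta^r]<\delta^*[\ep]$ by Corollary \ref{thm:add-one-term}), and $\ep^+=\sum_{m=1}^t\zeta^m+\beta^{\ell-1}+\beta^r$ (giving $\delta^*[\ep^+]>\delta^*[\ep]$ via Lemma \ref{lem:gamma-ineq})---which together disprove both the local-max and local-min possibilities. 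Your monotonicity claim, even if it turns out to be true, would need its own proof and is not delivered here. Finally, the $\cEsbar-\cE^*$ case in the paper (Proposition \ref{thm:local-max}) contains a non-obvious step you do not reproduce: to control the left side, one must first argue that the maximum of $\delta^*$ on a sub-interval just left of $x$ is attained at a point of $\cEsbar-\cE^*$ (not at a point of $U$ or $\cE^*$), using Theorem \ref{thm:measure} and Propositions \ref{thm:local-minimum} and \ref{thm:not-local-extrm}; only then does the explicit $\Delta y/\Delta x$ estimate for two $\cEsbar-\cE^*$ points close the argument. That reduction is essential and missing from your one-line ``$y^-<y$, so $\delta^*$ is smaller to the left.''
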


The following terminology is explained in
Lemma \ref{lem:upper-bound} below.
\begin{deF}\label{def:p-bound}\rm
Let $p$ denote the smallest positive integer such that
$\gamma \omtilde^{p-1}< \ome^p$, and let us call the integer
{\it the exponent of the generic upper bound of $\delta^*$}.
\end{deF}

\begin{lemma}\label{lem:upper-bound}
If $\ep\in\cEtilde^*$, then
$\delta^*[\ep]<\sum \revbar(\ep) Q/\sum \ep \wt Q<\frac1\gamma(\ome/\omtilde)^p$.
\end{lemma}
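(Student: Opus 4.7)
The plan is to set $x := \sum \ep \wt Q$ and $y := \sum \revbar(\ep) Q$, and to argue in the regime $x \in [\ome, 1)$ where the definition of $\delta^*$ collapses to $\delta^*[\ep] = \delta_1^*(x) = y/x^\gamma$. In this regime, the first inequality $\delta^*[\ep] < y/x$ reduces to $y/x^\gamma < y/x$, i.e., to $x^\gamma > x$; since $0 < x < 1$ and $0 < \gamma < 1$ by Theorem \ref{thm:ratios}, raising $x$ to the smaller positive power indeed enlarges it, which handles the first part.

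For the second inequality $y/x < \frac{1}{\gamma}(\ome/\omtilde)^p$, I will bound numerator and denominator separately. For the numerator, $\revbar(\ep) \in \cEsbar$ (Lemma \ref{lem:ep-bar-ep} and Definition \ref{def:rev-bar}), and $\bbeta^1$ is the maximum element of $\cEsbar$ in the descending lexicographical order, since any $\mu \in \cEsbar$ has $\mu_1 \le e_1 = \bbeta^1_1$; applying the order-preserving property of $\eval_Q$ (Theorem \ref{thm:real}), together with the identity $\sum \bbeta^1 Q = 1$ (which follows from the characteristic polynomial (\ref{eq:char-polynomial-real}) via $\sum_{k=1}^N e_k \ome^k = 1 - \ome^N$ and summing the resulting geometric series), yields $y \le 1$. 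For the denominator, $x \ge \omtilde$ in the regime under consideration (equivalently, $\ord^*(\ep) = 1$), so $y/x \le 1/\omtilde$.

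The argument concludes by checking $1/\omtilde < \frac{1}{\gamma}(\ome/\omtilde)^p$: the defining inequality $\gamma \omtilde^{p-1} < \ome^p$ of Definition \ref{def:p-bound}, divided through by $\gamma \omtilde^p$, is exactly $1/\omtilde < (\ome/\omtilde)^p/\gamma$, which is the required strict bound. I expect the main obstacle to be the careful verification that the order-preserving property of $\eval_Q$ from $\cE^*$ extends to the enlarged collection $\cEsbar$, whose members include a trailing maximal $L^*$-block, and the reconciling of the regime used above with the cases $x \in [\omtilde, \ome)$ where $\delta^*$ is defined through a nontrivial rescaling by $\ome^{n-1}$ and the clean identification $\delta^*[\ep] = y/x^\gamma$ must be traded for a more delicate comparison between the rescaled $\ep'$ determined by $x/\ome^{n-1}$ and the original $\ep$.
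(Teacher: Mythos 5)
Your proposal follows the same approach as the paper's proof: bound the numerator by $\sum\revbar(\ep)Q\le\sum\bbeta^1 Q=1$, bound the denominator from below by $\omtilde$ after a WLOG reduction to $\ord^*(\ep)=1$, replace $x^\gamma$ with the smaller $x$ since $0<x<1$ and $0<\gamma<1$, and unwind Definition~\ref{def:p-bound} to get $1/\omtilde<\tfrac1\gamma(\ome/\omtilde)^p$. The paper phrases the WLOG step as ``the set of values of $\delta^*$ is equal to that of $\delta_1^*:[\omtilde,1)\to\real$, so we may assume $\ep_1\ge 1$,'' which is exactly the reconciliation you flag as a potential obstacle; your worry about $x\in[\omtilde,\ome)$ is legitimate but the paper's resolution is the one you anticipate, namely working directly with $\delta_1^*$ on $[\omtilde,1)$ rather than with $\delta^*$ through the rescaling. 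Your other stated concern, about the order-preserving property of $\eval_Q$ on $\cEsbar$, is also handled implicitly in the paper: it only uses $\sum\mu Q\le 1$ for $\mu\in\cEsbar$, which follows from $\sum\bbeta^1 Q=1$, from $\sum\mu Q<1$ for $\mu\in\cE^*$, and from the identity $\sum\mu Q=\sum\inF_{\cE^*}\Inv(\mu) Q$ for $\mu\in\cEsbar-\cE^*$. So there is no genuine gap; you have identified the two places where the paper is terse, and both close in the way you expect.
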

\begin{proof}
Notice that $ \sum \ep \wt Q<\left( \sum \ep \wt Q\right)^\gamma$ for all  $\ep\in \cEtilde^*$, so
$\delta^*[\ep]<\sum \revbar(\ep) Q/\sum \ep \wt Q$.
Since $\revbar(\ep)\le \bbeta^1$, we have $\sum \revbar(\ep) Q\le 1$.
Since the set of values of $\delta^*$ is equal to that of $\delta_1^*:[\wt\ome,1)\to \real$, we may assume that $\ep_1\ge 1$, $\sum\ep \wt Q \ge \omtilde$, and hence,
$ \delta^*[\ep]<\sum \revbar(\ep) Q/\sum \ep \wt Q \le 1/\omtilde < \frac1\gamma(\ome/\omtilde)^p$.
\end{proof}

\begin{lemma}\label{lem:gamma-ineq}
Let $\ep\in\cEtilde^*$, and $x=\sum \ep \wt Q$, $y=\sum \revbar(\ep) Q$.
If
$\Delta x$ and $\Delta y$ are positive real numbers such that
$\ome^p/\wt\ome^p\le \Delta y /\Delta x$, then
$y/x^\gamma < (y+\Delta y)/(x+ \Delta x)^\gamma$.
\end{lemma}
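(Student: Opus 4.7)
The target inequality $y/x^\gamma<(y+\Delta y)/(x+\Delta x)^\gamma$ is equivalent, after multiplying by the positive quantity $x^\gamma(x+\Delta x)^\gamma$, to
\[
 y\bigl[(x+\Delta x)^\gamma-x^\gamma\bigr] < \Delta y\cdot x^\gamma .
\]
So my plan is to bound the left side from above and the right side from below using two separate inputs: the concavity of the power map $t\mapsto t^\gamma$, and the generic upper bound on $y/x$ supplied by Lemma~\ref{lem:upper-bound}.

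First, since $0<\gamma<1$, the function $t\mapsto t^\gamma$ is strictly concave on $(0,\infty)$, so that for any $\Delta x>0$ one has the strict tangent-line estimate
\[
(x+\Delta x)^\gamma - x^\gamma < \gamma\,x^{\gamma-1}\,\Delta x .
\]
Multiplying by $y>0$ shows $y[(x+\Delta x)^\gamma-x^\gamma]<\gamma\,y\,x^{\gamma-1}\,\Delta x$, so it suffices to prove
$\gamma\,y\,x^{\gamma-1}\,\Delta x\le \Delta y\cdot x^\gamma$, i.e.\ $\gamma\,y/x\le \Delta y/\Delta x$.

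Second, I would apply Lemma~\ref{lem:upper-bound} to $\ep$: since $\revbar(\ep)\dordeq \bbeta^1$ and $\omtilde\le x$ (as the image of $\delta_1^\ast$ lies in $[\wt\ome,1)$), we obtain
\[
\frac{y}{x}\;=\;\frac{\sum\revbar(\ep)\,Q}{\sum \ep\,\wt Q}\;<\;\frac{1}{\gamma}\Bigl(\frac{\ome}{\omtilde}\Bigr)^{p},
\]
so $\gamma\,y/x<(\ome/\omtilde)^{p}=\ome^p/\wt\ome^p$. Combining this with the hypothesis $\ome^p/\wt\ome^p\le \Delta y/\Delta x$ gives $\gamma\,y/x<\Delta y/\Delta x$, which plugged back above yields the required strict inequality. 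There is no real obstacle here beyond correctly aligning the strict inequality from strict concavity with the strict bound from Lemma~\ref{lem:upper-bound}; the role of the integer $p$ (Definition~\ref{def:p-bound}) is precisely to ensure that $(\ome/\omtilde)^p$ dominates $\gamma\cdot (y/x)$ uniformly for every $\ep\in\cEtilde^*$, which is exactly what makes the hypothesis $\Delta y/\Delta x\ge \ome^p/\wt\ome^p$ sufficient.
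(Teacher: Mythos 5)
Your proof is correct and is essentially the paper's own argument, just reorganized: you reduce the target inequality to $\gamma y/x \le \Delta y/\Delta x$ via strict concavity of $t\mapsto t^\gamma$ (equivalently $(1+t)^\gamma < 1 + \gamma t$ for $t>0$) and then close the gap using Lemma~\ref{lem:upper-bound} together with the hypothesis $\ome^p/\wt\ome^p \le \Delta y/\Delta x$, which is exactly the paper's chain of inequalities run in reverse.
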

\begin{proof}
Since $\frac{\ome^p}{\omtilde^p}\le \frac{\Delta y}{\Delta x}$, Lemma \ref{lem:upper-bound} implies
$\frac yx <\frac1\gamma \frac{\ome^p}{\omtilde^p}\le \frac1\gamma \frac{\Delta y}{\Delta x}$.
Notice that $\frac yx < \frac1\gamma \frac{\Delta y}{\Delta x}
\implies
1 + \gamma \frac{\Delta x}x< 1 + \frac{\Delta y }y$.
Since $(1+t)^\gamma < 1+ \gamma t$ for real numbers $t>0$, we have
$\left( 1 + \frac{\Delta x}x\right)^\gamma < 1 + \gamma \frac{\Delta x}x$, and hence,
\begin{gather*}
\left( 1 + \frac{\Delta x}x\right)^\gamma <1 + \gamma \frac{\Delta x}x< 1 + \frac{\Delta y }y
\implies 
\left( 1 + \frac{\Delta x}x\right)^\gamma < 1 + \frac{\Delta y }y
\implies \frac y{x^\gamma} < \frac{ y + \Delta y }{ ( x + \Delta x)^\gamma }
.\notag
\end{gather*}
\end{proof}

The following corollary is useful for further reducing the finite cases listed in Theorem
\ref{thm:main-theorem} and Theorem \ref{thm:main-minimum} below.
\begin{cor}\label{thm:add-one-term}
Suppose that $\ep\in\cEtilde^*$. If $n\ge p$, then
$$ \delta^*[\ep]=\frac{ \sum\revbar(\ep) Q }{ \left(\sum\ep \wt Q \right)^\gamma}
<\frac{ \sum\revbar(\ep) Q + Q_n }{\left(\sum\ep \wt Q + \wt Q_n \right)^\gamma}.$$
\end{cor}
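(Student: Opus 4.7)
The plan is to derive this statement as a direct specialization of Lemma \ref{lem:gamma-ineq}. Setting $x := \sum \ep \wt Q$ and $y := \sum \revbar(\ep) Q$ as in that lemma, I would take the increments $\Delta x := \wt Q_n$ and $\Delta y := Q_n$. By Theorem \ref{thm:real} these are given explicitly by $\wt Q_n = \wt \ome^n$ and $Q_n = \ome^n$, so the ratio appearing in the hypothesis of Lemma \ref{lem:gamma-ineq} becomes
$$\frac{\Delta y}{\Delta x} = \left(\frac{\ome}{\wt\ome}\right)^n.$$

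The only thing to verify is the inequality $(\ome/\wt\ome)^p \le (\ome/\wt\ome)^n$. From Theorem \ref{thm:ratios} we have $\phi < \wt \phi$, which is equivalent to $\wt \ome < \ome$, so the base $\ome/\wt\ome$ is strictly greater than $1$. Combined with the assumption $n \ge p$, this immediately gives the required bound $(\ome/\wt\ome)^p \le (\ome/\wt\ome)^n$, and hence the hypothesis $\ome^p/\wt\ome^p \le \Delta y/\Delta x$ of Lemma \ref{lem:gamma-ineq} is satisfied.

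Applying the conclusion of that lemma then yields
$$\frac{y}{x^\gamma} < \frac{y + \Delta y}{(x + \Delta x)^\gamma},$$
which, after substituting back, is exactly the inequality claimed in the corollary. There is no real obstacle here, since all the analytic content (namely, the comparison between the power $x^\gamma$ and the linear increment in $y$, effected via the tangent-line bound $(1+t)^\gamma < 1 + \gamma t$) has already been absorbed into Lemma \ref{lem:gamma-ineq}; the corollary's role is merely to record the convenient choice of increments for which Theorem \ref{thm:ratios} makes the hypothesis automatic whenever $n \ge p$.
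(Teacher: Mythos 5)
Your proof is correct and matches the paper's own argument: both take $\Delta x = \wt Q_n = \wt\ome^n$ and $\Delta y = Q_n = \ome^n$, observe that $\wt\ome < \ome$ (from Theorem \ref{thm:ratios}) makes $\ome^p/\wt\ome^p \le \ome^n/\wt\ome^n$ for $n \ge p$, and then invoke Lemma \ref{lem:gamma-ineq}. The paper is simply more terse, stating the inequality $\ome^p/\wt\ome^p \le Q_n/\wt Q_n$ without spelling out the monotonicity step you made explicit.
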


\begin{proof}
If $p\le n$, then $\ome^p/\wt\ome^p \le Q_n/\wt Q_n = \ome^n/\wt \ome^n$.
By Lemma \ref{lem:gamma-ineq}, we prove the result.

\end{proof}

\begin{prop}\label{thm:local-minimum}
The function $\delta^*$ assumes a local minimum value at the points in $\eval_{\wt Q}(\cE^*\fin)$.
\end{prop}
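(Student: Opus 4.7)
The plan is to estimate $\delta^*(x\pm\Delta x)-\delta^*(x)$ separately using the $\cE^*\fin$ case of the proof of Lemma \ref{lem:ep+-}, which explicitly describes $\ep^\pm$ when $x\pm\Delta x=\sum\ep^\pm\wt Q$. Fix $\ep\in\cE^*\fin$ and set $x:=\sum\ep\wt Q$, $y:=\sum\ep Q=\sum\revbar(\ep)Q$ (the second equality by Lemma \ref{lem:ep-bar-ep}); by the scaling identity in Definition \ref{def:rev-bar}, one reduces to the case $x\in[\wt\ome,1)$, so that $\delta^*(x)=y/x^\gamma$.

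The left-hand side is immediate: Lemma \ref{lem:ep+-} asserts $\revbar(\ep^-)=\ep$, so $\sum\revbar(\ep^-)Q=y$ and hence
\[
\delta^*(x-\Delta x)\;=\;\frac{y}{(x-\Delta x)^\gamma}\;>\;\frac{y}{x^\gamma}\;=\;\delta^*(x).
\]

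The substantive step is the right-hand side. From Lemma \ref{lem:ep+-}, $\ep^+=\ep+\tau^+$ with $\ord^*(\tau^+)>n$ where $n\to\infty$ as $\Delta x\to 0$, and $\revbar(\ep^+)=\ep+\revbar(\tau^+)$. Setting $\Delta y:=\sum\revbar(\tau^+)Q$, the target strict inequality $\delta^*(x)<\delta^*(x+\Delta x)$ reduces, via Lemma \ref{lem:gamma-ineq} applied to $(x,y,\Delta x,\Delta y)$, to producing the bound $\Delta y/\Delta x\ge \ome^p/\wt\ome^p$. Put $r:=\ord^*(\tau^+)$. Uniform boundedness of the entries of members of $\cEtilde^*$ gives $\Delta x\le C\wt\ome^r$ for some constant $C$ depending only on $\wt L$, while the key claim $\revbar(\tau^+)_r=\tau^+_r\ge 1$ gives $\Delta y\ge \ome^r$. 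Combining,
\[
\frac{\Delta y}{\Delta x}\;\ge\;\frac{1}{C}\,\Bigl(\frac{\ome}{\wt\ome}\Bigr)^{\!r},
\]
which tends to $\infty$ since $\wt\ome<\ome$ by Theorem \ref{thm:ratios} and $r\to\infty$ as $\Delta x\to 0$.

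The main obstacle will be the identity $\revbar(\tau^+)_r=\tau^+_r$, which requires a short case analysis on $\tau^+$ using the structural description of $\revbar$ in Lemma \ref{lem:ep-bar-real}. If $\tau^+\in\cEsbar$, then $\revbar(\tau^+)=\tau^+$ by Lemma \ref{lem:ep-bar-ep}, so there is nothing to check. Otherwise $\tau^+$ admits an $\Lsbar$-block decomposition $\sum_{m=1}^\ell\zeta^m+\mu$ with $\bbeta^{b+1}\dord \mu$; the smallness of $\Delta x$ forces $\tau^+\dord\bbeta^1$, hence $\ell\ge 1$ (the decomposition cannot collapse to pure $\mu$), and Lemma \ref{lem:ep-bar-real} then yields $\revbar(\tau^+)=\sum_{m=1}^\ell\zeta^m+\beta^b$, which agrees with $\tau^+$ at the index $r=\ord^*(\zeta^1)$. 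This gives the required $\ge 1$ lower bound on $\revbar(\tau^+)_r$ and completes the argument.
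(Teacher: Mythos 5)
Your proof follows essentially the same route as the paper for the $x+\Delta x$ direction: describe $\ep^+=\ep+\tau^+$ via the proof of Lemma \ref{lem:ep+-}, bound $\Delta y/\Delta x$, and invoke Lemma \ref{lem:gamma-ineq}. For the $x-\Delta x$ direction your direct computation $\delta^*(x-\Delta x)=y/(x-\Delta x)^\gamma>y/x^\gamma$ from $\revbar(\ep^-)=\ep$ is cleaner than the paper's, which instead exhibits the interval $(x^-,x)\subset U$ via Proposition \ref{prop:measurable} and appeals to strict decreasingness and continuity on it (Theorem \ref{thm:continuous}).

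There is, however, a gap in the claim you flag as the key step, namely $\revbar(\tau^+)_r=\tau^+_r$ with $r:=\ord^*(\tau^+)$. In your Case 2 you deduce $\ell\ge 1$ and then assert $\revbar(\tau^+)=\sum_{m=1}^\ell\zeta^m+\beta^b$ ``agrees with $\tau^+$ at $r=\ord^*(\zeta^1)$''. But $\ell\ge 1$ does not prevent $\zeta^1$ (nor any of $\zeta^1,\dots,\zeta^\ell$) from being zero proper $\Lstar$-blocks: since $\tau^+_1=0<e_1$, the first block of the full $\Lsbar$-decomposition is forced to be the zero block with support $[1,1]$, so $\ord^*(\zeta^1)=\infty\ne r$ for all small $\Delta x$. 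Worse, \emph{all} of $\zeta^1,\dots,\zeta^\ell$ can be zero, in which case $b=\ell=r-1$, $\revbar(\tau^+)=\beta^{r-1}$, and $\revbar(\tau^+)_r=0$; this actually occurs (e.g.\ $L=(1,0)$, $\wt L=(1,1)$, $\tau^+=\beta^r+\beta^{r+1}$). The target bound $\Delta y\ge\ome^r$ does survive in that case, since $\Delta y=\ome^{r-1}>\ome^r$, but your argument does not establish it. To close the gap, handle the degenerate all-zero-blocks case separately, or argue directly that $\ord^*(\revbar(\tau^+))\le r$ by observing that the first nonzero index of $\revbar(\tau^+)$ is $\min\bigl(\ord^*(\sum_m\zeta^m),\,b\bigr)$ and that both candidates are $\le r$; this is what the paper's stated estimate $\Delta y\ge\ome^n$ actually relies on.
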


\begin{proof}
Let $\ep=\sum_{m=1}^\ell \zeta^m$ be an $\Lstar$-\bdecom\ %
where $\zeta^\ell$ is nonzero, and $[i,b]$ is the \supint\ of $\zeta^\ell$, and let $x=\sum\ep\wt Q$.
For sufficiently small positive real number $\Delta x$,
we have $x+\Delta x = \sum\ep^+ \wt Q$ where $\ep^+\in \cEtilde^*$,
and by Lemma \ref{lem:ep-converge},
$\ep^+ = \ep + \tau$ where
$\tau\in \cEtilde^*$ such that $\ord^*(\tau)=n>b+1$ is sufficiently large, i.e.,
it is an $\wt L^*$-\decom.
By Lemma \ref{lem:ep-bar-real} and \ref{lem:ep+-}, $\revbar(\ep^+)=\ep + \revbar(\tau)$, we have
$\Delta y := \sum\revbar(\ep^+) Q - \sum\ep Q =\sum\revbar(\tau) Q \ge \ome^n$ and
$\Delta x = \sum \tau \wt Q < \wt\ome^{n-1}$ by Theorem \ref{thm:real}.
Thus, $\Delta y /\Delta x >\wt \ome (\ome/\wt \ome)^n\to \infty $ as $n\to \infty$.
By Lemma \ref{lem:gamma-ineq},
$\delta^*[\ep]<\delta^*[\ep^+]$.

Since $\zeta^\ell$ is non-zero,
there is a largest index $r$ such that $\zeta^\ell_r>0$, and
$\xi:=\zeta^\ell - \beta^r$ is a proper $\Lstar$-block with support interval $[i,r]$.
Then, $\ep^-:=\sum_{m=1}^{\ell-1} \zeta^m + \xi + \bbeta^{r+1}\in \cEsbar$
is an $\Lsbar$-\bdecom,
and let $x^-:=\sum\ep^- \wt Q$.
By Theorem \ref{thm:continuous} and Proposition \ref{prop:measurable}, $\delta^*$ is decreasing on $(x^-,x)$, and
the continuity of $\delta^*$ implies that $\delta^*(x_1)>\delta^*(x)$ for all $x_1\in (x^-,x)$.

\end{proof}

\begin{prop}\label{thm:not-local-extrm}
The function $\delta^*$ does not assume a local extremum value at the points in
$\eval_{\wt Q}(\cE^*-\cE^*\fin)$.
\end{prop}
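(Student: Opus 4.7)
The strategy is to witness the failure of both the local-maximum and local-minimum properties at $x:=\sum\ep\wt Q$ by constructing two sequences in $\cEsbar$ converging to $x$ from opposite sides. Write $\ep=\sum_{m=1}^\infty\zeta^m$ as its full $L^*$-block decomposition with support intervals $[i_m,b_m]$, where $b_m\to\infty$ since $\ep$ has infinite support. For each $k\ge 1$ set $\ep_k^-:=\sum_{m=1}^k\zeta^m\in\cE^*\fin$ and $\ep_k^+:=\sum_{m=1}^k\zeta^m+\bbeta^{b_k+1}\in\cEsbar-\cE^*$, and write $x_k^\pm:=\sum\ep_k^\pm\wt Q$. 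Since $\zeta^{k+1}$ is a proper $L^*$-block at $b_k+1$, one has $\sum_{m>k}\zeta^m\dord\bbeta^{b_k+1}$, and Theorem \ref{thm:real} yields $x_k^-<x<x_k^+$ with $x_k^\pm\to x$ as $k\to\infty$. All three of $\ep_k^-,\ep,\ep_k^+$ lie in $\cEsbar$, so by Lemma \ref{lem:ep-bar-ep} they are fixed by $\revbar$, and hence each $\delta^*$-value equals the bare quotient $\sum(\cdot)Q/\bigl(\sum(\cdot)\wt Q\bigr)^\gamma$.

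To rule out a local minimum, I will apply Lemma \ref{lem:gamma-ineq} to $(\ep_k^-,\ep)$ with $\Delta y:=\sum_{m>k}\zeta^m Q$ and $\Delta x:=\sum_{m>k}\zeta^m\wt Q$. Letting $j^*$ denote the smallest index where $\sum_{m>k}\zeta^m$ is non-zero, one has $j^*\ge b_k+1\to\infty$. A one-term lower bound gives $\Delta y\ge\ome^{j^*}$, while the coefficient bound $(\sum_{m>k}\zeta^m)_j\le E:=\max_i e_i$ yields $\Delta x\le E\wt\ome^{j^*}/(1-\wt\ome)$. Since $\wt\ome<\ome$ by Theorem \ref{thm:ratios}, the ratio $\Delta y/\Delta x$ is bounded below by a positive constant times $(\ome/\wt\ome)^{j^*}$ and therefore eventually exceeds $\ome^p/\wt\ome^p$, so Lemma \ref{lem:gamma-ineq} produces $\delta^*[\ep_k^-]<\delta^*[\ep]$. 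A parallel analysis for $(\ep,\ep_k^+)$, now with $\Delta y:=\ome^{b_k}-\sum_{m>k}\zeta^m Q$ and $\Delta x:=\wt\ome^{b_k}\rho-\sum_{m>k}\zeta^m\wt Q$ (both positive by Theorem \ref{thm:real} and Lemma \ref{lem:rho}, using the identity $\sum\bbeta^{b_k+1}Q=\ome^{b_k}$ from (\ref{eq:full-recursion-real})), yields ratio estimates of order $(\ome/\wt\ome)^{b_{k+1}}$, hence $\delta^*[\ep]<\delta^*[\ep_k^+]$.

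The main obstacle lies in the quantitative lower bound on $\Delta y$ in the upper comparison. Although $\Delta y>0$ is guaranteed by the increasing property of $\eval_Q$ in Theorem \ref{thm:real}, a naive coefficient-wise estimate permits the signed tail at indices beyond $b_{k+1}$ to partially cancel the leading term $(\bbeta^{b_k+1}_{b_{k+1}}-\zeta^{k+1}_{b_{k+1}})\ome^{b_{k+1}}\ge\ome^{b_{k+1}}$. To clinch a clean lower bound of order $\ome^{b_{k+1}}$, one exploits the unique disjoint-proper-$L^*$-block structure of $\sum_{m>k}\zeta^m$, which forces each block to forfeit at least one unit at its last-index position relative to $\bbeta$ and thereby bounds the residual tail by a bounded-constant multiple of $\ome^{b_{k+1}+1}$. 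Once the asymptotic ratios are established, the two one-sided witnesses together show that $\delta^*(x)$ is neither a local maximum nor a local minimum, completing the proof.
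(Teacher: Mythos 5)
Your overall strategy---approach $x=\sum\ep\wt Q$ from both sides by points of $\cEsbar$ (so that $\revbar$ fixes them) and invoke Lemma \ref{lem:gamma-ineq}---is in the same spirit as the paper's proof. The local-minimum half of your argument is sound and is a minor variant of the paper's: where the paper removes a single coefficient, forming $\ep-\beta^r$ and invoking Corollary \ref{thm:add-one-term}, you truncate to $\ep_k^-=\sum_{m\le k}\zeta^m$ and estimate $\Delta y/\Delta x$ directly; both give the one-term lower bound $\Delta y\ge\ome^{j^*}$ against a geometric upper bound for $\Delta x$, and the ratio diverges.

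The local-maximum half has a genuine gap. You choose the witness $\ep_k^+=\sum_{m\le k}\zeta^m+\bbeta^{b_k+1}$, which makes $\Delta y=\ome^{b_k}-\sum\bigl(\sum_{m>k}\zeta^m\bigr)Q$ a subtraction subject to cancellation, and you acknowledge this is ``the main obstacle.'' But the bound you gesture at is not established. After the $\Lstar$-block decomposition one has
$\Delta y=d\,\ome^{b_{k+1}}+\sum\res^{b_{k+1}+1}(\bbeta^{b_k+1})Q-\sum\bigl(\sum_{m>k+1}\zeta^m\bigr)Q$
with $d\ge1$, and to beat the negative term one must show $\sum\bigl(\sum_{m>k+1}\zeta^m\bigr)Q<\ome^{b_{k+1}}$; this is not the coefficient-wise tail estimate you propose but a structural fact about $\cE^*$ members supported in $[b_{k+1}+1,\infty)$ (via a shift or an inductive comparison against $\bbeta^{b_{k+1}+1}$). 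Moreover, the upper bound you state for $\Delta x$, namely $\wt\ome^{b_k}\rho$, is too weak: if the support intervals $[i_m,b_m]$ grow in length, the ratio $\ome^{b_{k+1}}/\wt\ome^{b_k}$ can fail to dominate $\ome^p/\wt\ome^p$. One needs $\Delta x\le C\wt\ome^{b_{k+1}}$, obtained by applying the same block decomposition to $\wt Q$, before the product $(\ome/\wt\ome)^{b_{k+1}}$ emerges. The paper avoids all of this by a different construction: with $[i,\ell-1]$ the support interval of $\zeta^t$, it takes $\ep^+=\sum_{m\le t}\zeta^m+\beta^{\ell-1}+\beta^{\ell+N}$, which yields the clean bound $\Delta y=Q_{\ell-1}+Q_{\ell+N}-\sum\bigl(\sum_{m>t}\zeta^m\bigr)Q>Q_{\ell+N}$ because $\sum\bigl(\sum_{m>t}\zeta^m\bigr)Q<Q_{\ell-1}$, and $\Delta x<2\wt\ome^{\ell-1}$ trivially. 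Your route can be closed, but only with the two finer estimates above, neither of which the proposal furnishes; the paper's choice of $\ep^+$ sidesteps the cancellation entirely.
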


\begin{proof}
Let $\ep=\sum_{m=1}^\infty \zeta^m $ be the non-zero $\Lstar$-\bdecom.
For each $m$, there is the largest index $r$ in the \supint\ of $\zeta^m$ and
$\zeta^m_r>0$.
Then, $\ep - \beta^r \in \cE^*$, and by Corollary \ref{thm:add-one-term},
$\delta^*[\ep - \beta^r] <\delta^*[\ep]$ for infinitely many $r$.
So, $\delta^*[\ep]$ is not a local minimum.

Let us show it is not a local maximum.
Given $t\ge 1$, let $[i,\ell-1]$ be the support interval of $\zeta^t$.
Let $\ep^+:= \sum_{m=1}^t \zeta^m + \beta^{\ell-1} + \beta^{r}$ where $r=\ell +N $.
Then, the proper $\Lstar$-block $\xi:=\zeta^t + \beta^{\ell-1}$ has
  \supint\ $[i,c]$ such that
$c\le \ell+N-1$ since $\cE^*$ is periodic and $\bbeta^i_{i+kN}=e_1\ge 1$ for $k\ge 0$.
Since $r>c$, $\ep^+=\sum_{m=1}^{t-1}\zeta^m + \xi + \beta^{r}$ is an $\Lstar$-\bdecom. Thus,
\begin{gather*}
\begin{aligned}
\Delta y:&=\sum\ep^+ Q - \sum \ep Q=
Q_{\ell-1} + Q_r -\sum(\tsum_{m=t+1}^\infty \zeta^m) Q>Q_r=\ome^r,\\
\Delta x:&=\sum\ep^+\wt Q - \sum \ep\wt Q
	= \wt Q_{\ell-1} + \wt Q_r -\sum(\tsum_{m=t+1}^\infty \zeta^m)\wt Q \le \wt Q_{\ell-1} + \wt Q_r
	<2 \wt \ome^{\ell -1 }
\end{aligned}\\
	\implies
	\frac{\Delta y}{\Delta x } > \frac{ \ome^r}{2\wt\ome^{\ell -1 }}
	=\frac{ \ome^{\ell+N-1}}{2\wt\ome^{\ell -1 }},\quad
	\text{where $ \ome^\ell /\wt \ome^\ell \to \infty $ as $\ell\to\infty$.}
\end{gather*}
Thus, for sufficiently large $t$,
$\Delta y /\Delta x > \ome^p/\wt \ome^p$.
Notice that $y + \Delta y = \sum\ep^+ Q=\sum \revbar(\ep^+) Q$ and $x + \Delta x = \sum\ep^+ \wt Q$
where $\ep^+\in\cE^*$.
By Lemma \ref{lem:gamma-ineq}, $\delta^*[\ep]<\delta^*[\ep^+]$.
Since $t$ can be arbitrarily large, this proves that  $\delta^*[\ep]$ is not a local maximum.
\end{proof}

\begin{prop}\label{thm:local-max}
The function $\delta^*$ assumes a local maximum value at the points $x$ in $\eval_{\wt Q}(\cEsbar - \cE^*)$.
Moreover, $ x\in\eval_{\wt Q}(\cEsbar - \cE^*)$ is locally the only value that attains the local maximum.
\end{prop}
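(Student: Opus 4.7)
The plan is to prove that $\delta^*$ attains a strict local maximum at $x_\ep:=\sum\ep\wt Q$ for every $\ep\in\cEsbar-\cE^*$; strictness automatically delivers the local uniqueness claim. Writing $\ep=\sum_{m=1}^\ell\zeta^m+\bbeta^{b+1}$ as its $\Lsbar$-block decomposition and setting $\eta:=\sum_{m=1}^\ell\zeta^m+\beta^b$, Proposition \ref{prop:measurable} identifies $x_\ep$ as the left endpoint of an open interval $(x_\ep,x_\ep')\subset U$ with right endpoint $x_\ep'=\sum\eta\wt Q$ and $\eta\in\cE^*\fin$. The argument splits at $x_\ep$ into a right-side and a left-side analysis.

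The right side follows from the structural description already used in Theorem \ref{thm:continuous}. For every $y=\sum\mu\wt Q\in(x_\ep,x_\ep')$, Proposition \ref{prop:measurable} forces $\mu\in\cEtilde^*-\cEsbar$ with $\Lsbar$-block decomposition $\sum_{m=1}^\ell\zeta^m+\nu$ where $\bbeta^{b+1}\dord\nu$, and Lemma \ref{lem:ep-bar-real} yields $\revbar(\mu)=\eta$ independent of $y$. Thus $\delta^*(y)=C/y^\gamma$ with $C:=\sum\eta Q=\sum_{m=1}^\ell\zeta^m Q+\ome^b$, and since (\ref{eq:full-recursion-real}) gives $\sum\bbeta^{b+1}Q=Q_b=\ome^b$, the value $\delta^*(x_\ep)$ also equals $C/x_\ep^\gamma$; the strict inequality $\delta^*(y)<\delta^*(x_\ep)$ is immediate from $y>x_\ep$.

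The real work is on the left, where I plan to apply Lemma \ref{lem:gamma-ineq} with base point $\ep^-$. For $y=x_\ep-\Delta x$ with $\Delta x>0$ small and $y=\sum\ep^-\wt Q$, let $n$ be the smallest index at which $\ep^-$ and $\ep$ disagree; Lemma \ref{lem:ep-converge} gives $n\to\infty$ as $\Delta x\to 0$, eventually $n>b$, so $\ep_n=\bbeta^{b+1}_n$ and $d_1:=\bbeta^{b+1}_n-\ep^-_n\ge 1$, whence $\Delta x=\Theta(\wt\ome^n)$. To see that $\Delta y:=\sum\ep Q-\sum\revbar(\ep^-)Q>0$, I pass to the $\cE^*$-representative $\sigma$ of $\revbar(\ep^-)$ (itself if $\revbar(\ep^-)\in\cE^*$, its $\cE^*$-partner otherwise). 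Lemma \ref{lem:ep+-} makes $\sigma$ agree with $\ep$ on the prefix $[1,b]$, so $\sigma$ agrees with $\eta$ on $[1,b-1]$ while $\sigma_b=\zeta^\ell_b<\zeta^\ell_b+1=\eta_b$; hence $\sigma\dord\eta$, and Theorem \ref{thm:real} applied in $\cE^*$ gives $\sum\revbar(\ep^-)Q=\sum\sigma Q<\sum\eta Q=\sum\ep Q$.

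For the lower bound on $\Delta y$, a case analysis on $\ep^-$ via Lemmas \ref{lem:ep-bar-ep} and \ref{lem:ep-bar-real} shows $\Delta y\ge c_0\,\ome^n$ in every case except a borderline sub-case: when $\ep^-\in\cEtilde^*-\cEsbar$ admits exactly one proper $\Lstar$-block $\zeta'^{\ell+1}$ after $\zeta^\ell$ with support ending at the largest $\Lsbar$-support index $n^-=n$ and $d_1=1$, the extra $\beta^{n^-}$ produced by $\revbar$ makes $\sigma_n=\zeta'^{\ell+1}_n+1=\bbeta^{b+1}_n=\ep_n$, the leading $\ome^n$ term of $\Delta y$ cancels, and the first effective disagreement is pushed up to the smallest index $k>n$ with $\bbeta^{b+1}_k>0$. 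Periodicity of $\bbeta^{b+1}$ with period $N$ together with $e_1>0$ forces such a $k$ into $[n+1,n+N]$, so $\Delta y\ge\ome^{n+N}$. In either regime $\Delta y/\Delta x\ge c\,\ome^N(\ome/\wt\ome)^n$, which tends to infinity as $n\to\infty$ since $\wt\ome<\ome$ by Theorem \ref{thm:ratios}; once this ratio exceeds $(\ome/\wt\ome)^p$, Lemma \ref{lem:gamma-ineq} delivers $\delta^*(y)<\delta^*(x_\ep)$, completing the strict-local-maximum argument. The main technical obstacle I anticipate is precisely this borderline cancellation, where periodicity of $L$ and $e_1>0$ are indispensable for preserving the required lower bound on $\Delta y$.
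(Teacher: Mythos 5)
Your proposal is correct in outline and genuinely differs from the paper's route. The paper first reduces the left--side analysis to the case $\ep^-\in\cEsbar-\cE^*$: it considers the sequence $x_n\to x_\ep$ built from $\res_n(\bbeta^{b+1})$, invokes continuity (Theorem~\ref{thm:continuous}), the measure result (Theorem~\ref{thm:measure}), and Propositions~\ref{thm:local-minimum} and~\ref{thm:not-local-extrm} to conclude that a point maximizing $\delta^*$ on each compact interval to the left of $x_\ep$ must correspond to some $\ep'\in\cEsbar-\cE^*$, and only then carries out the $\Delta y/\Delta x$ computation and applies Lemma~\ref{lem:gamma-ineq}. You instead apply Lemma~\ref{lem:gamma-ineq} directly at an arbitrary $\ep^-\in\cEtilde^*$ close to $\ep$, passing through $\revbar(\ep^-)$ and its $\cE^*$-representative $\sigma$. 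This is a cleaner route in that it avoids the auxiliary maximality argument and does not rely on Propositions~\ref{thm:local-minimum} and~\ref{thm:not-local-extrm}, but the price is a heavier case analysis on $\ep^-$, parts of which you have compressed. Two concrete points to tighten: (i) the ``borderline'' cancellation you describe is not special to $\ep^-\in\cEtilde^*-\cEsbar$; it also occurs for $\ep^-\in\cEsbar-\cE^*$ whenever the only proper $\Lstar$-block after $\zeta^\ell$ ends exactly at the largest $\Lsbar$-support index and $d_1=1$, because $\sigma$ still carries the extra $\beta$ at that index. (ii) The claim ``$\Delta y\ge c_0\,\ome^n$ in every other case'' needs the same refactoring trick you use in the borderline case: write $\sum\res^{n+1}(\bbeta^{b+1})Q$ as a finite piece plus $\ome^{r-1}$ with $r\le n+N$ via (\ref{eq:full-recursion-real}), and bound the tail $\sum\res^{n+1}(\sigma)Q<\ome^{n}$ (which holds because $\sigma$ lies in $\cE^*$ and the block of $\sigma$ containing the disagreement index terminates there). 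Doing so yields the uniform bound $\Delta y\ge\ome^{r-1}$ in all cases, which is exactly what the paper's computation produces for $\ep'\in\cEsbar-\cE^*$, and which suffices for $\Delta y/\Delta x\to\infty$.
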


\begin{proof}
Let $\ep=\sum_{m=1}^\ell \zeta^\ell + \bbeta^{b+1}$ be an $\Lsbar$-\bdecom\ of $\ep\in\cEsbar-\cE^*$,
and let $x=\sum\ep\wt Q$.
By Theorem \ref{thm:continuous} and Proposition \ref{prop:measurable},
we have $\delta^*(x)>\delta^*(x')$ for all values $x'>x$ that are  sufficiently close to $x$.

\newcommand{\xnhat}{\wh x_n}
Let us show that $\delta^*(x)>\delta^*(x')$ for all values $x'<x$ that are  sufficiently close to $x$.
Given an integer $n\ge b+1$, let $\ep^n:= \sum_{m=1}^\ell \zeta^\ell + \res_n(\bbeta^{b+1})\in \cE^*\fin$,
  let $x_n:=\sum\ep^n \wt Q$, which approaches $x$ as $n\to\infty$, and let $\xnhat \in I_n$ such that 
  $\delta^*(\xnhat)=\max\set{ z \in I_n : \delta^*(z)}$.
  Let  $\xnhat=\sum \ep^- \wt Q$  for $\ep^-\in\cEtilde^*$.
  By Proposition \ref{thm:local-minimum},
  $\delta^*(x_n)$ is a local minimum, and hence, $\xnhat\ne x_n$.
  Let us claim that  
$\ep^-\in \cEsbar-\cE^*$.
By Theorem \ref{thm:measure}, Proposition \ref{thm:local-minimum} and \ref{thm:not-local-extrm}, 
if $\ep^-\in\cE^*\cup(\cEtilde^*-\cEsbar)$ and $\xnhat > x_n$, then $\delta^*$ assumes a value higher than $\delta^*(\xnhat)$ at  values arbitrarily  near $\xnhat$, which contradicts the choice of $\xnhat$.

\renewcommand{\xnhat}{x'}
Below we shall show that if
 $x'=\sum \ep' \wt Q < x$, $\ep'\in \cEsbar-\cE^*$, and $x'$ is sufficiently close to $x$,
 then 
  $\delta^*(\xnhat) <
\delta^*(x)$. 
This shall imply that $\delta^*(x)$ is a local maximum, and 
$x$ is locally the only  value that attains the local maximum.
 
Let $x'=\sum \ep' \wt Q < x$ where $\ep'\in \cEsbar-\cE^*$ such that $x'$ is sufficiently close to $x$.
By Lemma \ref{lem:ep-converge} applied to $\cEtilde^*$,
there are proper $\Lstar$-blocks $\zeta^{m}$ for $\ell+1\le m\le t$
such that $\ep'=\sum_{m=1}^\ell \zeta^\ell + \sum_{m=\ell+1}^t \zeta^m + \bbeta^{e+1}$
is an $\Lsbar$-\bdecom, and $[b+1,c]$ is the \supint\ of $\zeta^{\ell+1}$ where $c\to\infty$ as $\xnhat\to x$.
Notice that the periodic property of $\cE^*$ and Lemma \ref{lem:ep-converge} imply that
there is an index $r$ such that $c+1\le r\le c+N$ and
$\res^r(\bbeta^{b+1}) =(\bar 0,\overline{e_1,\dots,e_N})$.
Recall $\rho:=\sum\bbeta^1\wt Q$ and Lemma \ref{lem:rho}. Then,
\begin{align*}
\Delta x &:= x - \xnhat=\sum\ep \wt Q - \sum\ep'\wt Q
	\le \rho\wt \ome^b - \sum \zeta^{\ell+1} \wt Q\\
	&=  \rho\wt \ome^b -
	(\sum \bbeta^{b+1}\wt Q - \sum_{k=c}^{r-1}(\bbeta^{b+1}_k-\zeta^{\ell+1}_k)\wt\ome^k 
	- \sum \bbeta^{r}\wt Q)\\
	&= \rho\wt \ome^b -
	(\rho\wt \ome^b - \sum_{k=c}^{r-1}(\bbeta^{b+1}_k-\zeta^{\ell+1}_k)\wt\ome^k - \rho \wt
	 \ome^{r-1}),\quad \res^{c+1}(\zeta^{\ell+1})=0.\\
	 \intertext{Let $M$ be the maximum value of $e_k$ for $k=1,\dots,N$;}
\Delta x	 &\le \sum_{k=c}^{r-1}(\bbeta^{b+1}-\zeta^{\ell+1})\wt\ome^k + \rho \wt
	 \ome^{r-1}\le\wt\ome^{r-1}\left( \sum_{k=c}^{r-1}M \wt\ome^{k-r+1}+\rho \right),\ r-N\le c\\
	 &\le \wt\ome^{r-1}\left( \sum_{k=r-N}^{r-1}M \wt\ome^{k-r+1}+\rho\right)
	 \le \wt\ome^{r-1}\left( \sum_{k=0}^{N-1}M \wt\phi^k+\rho \right)
	 \le\lambda \wt\ome^{c} .
\end{align*}
where $\lambda= \sum_{k=0}^{N-1}M \wt\phi^k+\rho$.
Let us estimate $\Delta y:= \sum\ep Q - \sum\ep' Q$.  
Then, by (\ref{eq:full-recursion-real}), $\sum(\sum_{m=\ell+2}^t \zeta^m)Q + \sum\bbeta^{e+1} Q
=\sum(\sum_{m=\ell+2}^t \zeta^m)Q + Q_{e}<Q_c $.
So,
\begin{align*}
\Delta y &=\sum\bbeta^{b+1} Q - \sum\left( \sum_{m=\ell+1}^t \zeta^m +\bbeta^{e+1}\right)Q
\ge \ome^b -\sum ( \zeta^{\ell+1} +\beta^c)Q\\
	&=  \ome^b -
	(\sum \bbeta^{b+1}  Q - \sum_{k=c}^{r-1}(\bbeta^{b+1}_k-\zeta^{\ell+1}_k)\wt\ome^k 
	- \sum \bbeta^{r}  Q + Q_c)\\
&=\ome^b - (\ome^b - \sum_{k=c}^{r-1}(\bbeta^{b+1}_k - \zeta^{\ell+1}_k)\ome^k
- \ome^{r-1}+\ome^c ) = \sum_{k=c}^{r-1}(\bbeta^{b+1}_k - \zeta^{\ell+1}_k)\ome^k
+ \ome^{r-1}-\ome^c \\
&
\ge \ome^c+ \ome^{r-1} -\ome^c= \ome^{r-1}\ge \ome^{c+N-1}.
\end{align*}
Thus,
$
\frac{\Delta y}{\Delta x} \ge \left(\frac\ome{\wt \ome}\right)^c \frac{\ome^{N-1}}{\lambda}$,
and since $c\to\infty$ as $\Delta x\to 0$, it follows $\Delta y/\Delta x\to\infty$.
By Lemma \ref{lem:gamma-ineq}, $\delta^*(x-\Delta x) < \delta^*(x)$.
This proves that $\delta^*(\xnhat) < \delta^*(x)$, and it  concludes the proof of the proposition.

\end{proof}

This concludes the proof of Theorem \ref{thm:local-extrm}.
Using the following theorems, we reduce the task of finding the global extremum values to a finite search,
which is the assertion of Theorem \ref{thm:main-introduction}, Part (2).
Let
\begin{equation}
p^*:= p + 1+\frac{N\ln \phi+\ln(1-\rho+\wt\ome^N)}{ \ln\phitilde - \ln \phi }.
\label{eq:ell-bound}
\end{equation}
\begin{theorem} \label{thm:main-theorem}
The maximum value of $\delta^*$ is obtained only at $\ep\in \cEsbar - \cE^*$.
Suppose that $\ep= \sum_{m=1}^t \zeta^m+\bbeta^{\ell}$ is an $\Lsbar$-block decomposition
with the largest $\Lsbar$-support index $\ell-1\ge 0$, and $\delta^*[\ep]$ is the maximum value.
Then, $\ell < \max\set{2,p^*}$.

\end{theorem}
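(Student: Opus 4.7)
The first assertion follows immediately from Theorem \ref{thm:local-extrm}, which identifies $\eval_{\wt Q}(\cEsbar - \cE^*)$ as the only set of points at which $\delta^*$ can have a local (and hence global) maximum: the three other categories are either locally decreasing (on $U$), non-extremal (on $\cE^* - \cE^*\fin$), or local minima (on $\cE^*\fin$). For attainment of the global maximum, one combines continuity of $\delta^*$ (Theorem \ref{thm:continuous}) with the self-similarity $\delta^*(y\ome) = \delta^*(y)$ (a direct consequence of $\delta^*(y) = \delta^*_1(y/\ome^{n-1})$): the sup on $\OI$ equals the sup on $[\ome, 1)$, on which $\delta^*$ coincides with $\delta^*_1$ and approaches $1$ at the boundaries, while Proposition \ref{thm:local-max} guarantees strictly larger values at points of $\cEsbar - \cE^*$, so the sup is attained on some compact subinterval.

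For the $\ell$-bound I argue by contradiction: suppose $\ep = \sum_{m=1}^t\zeta^m + \bbeta^\ell$ attains the maximum with $\ell \ge \max\{2, p^*\}$. The case $t = 0$ forces $\ell = 1$, so I may assume $t \ge 1$, and consider the competitor $\ep' := \sum_{m=1}^{t-1}\zeta^m + \bbeta^{i_t} \in \cEsbar - \cE^*$, obtained by replacing the final proper block $\zeta^t$ (with support $[i_t, \ell-1]$) by the maximal block $\bbeta^{i_t}$ at its left endpoint. Writing $\zeta^t = \res_{\ell-1}^{i_t}(\bbeta^{i_t}) - c\beta^{\ell-1}$ for $c := \bbeta^{i_t}_{\ell-1} - \zeta^t_{\ell-1} \ge 1$, and using $\sum \bbeta^{n+1} Q = \ome^n$ (Theorem \ref{thm:real}) together with $\sum \bbeta^n \wt Q = \rho \wt\ome^{n-1}$ (Lemma \ref{lem:rho}), a direct calculation yields
\begin{equation*}
\Delta y := \sum\ep' Q - \sum\ep Q = T_t + (c-1)\ome^{\ell-1}, \qquad \Delta x := \sum\ep'\wt Q - \sum\ep\wt Q = \wt T_t + (c-\rho)\wt\ome^{\ell-1},
\end{equation*}
where $T_t := \sum_{k\ge\ell}\bbeta^{i_t}_k\ome^k$ and $\wt T_t := \sum_{k\ge\ell}\bbeta^{i_t}_k\wt\ome^k$ are tails of $\bbeta^{i_t}$ past index $\ell$, governed by the period-$N$ structure of $\bbeta^{i_t}$.

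By Lemma \ref{lem:gamma-ineq}, the raw quotient inequality $\sum\ep Q/(\sum\ep\wt Q)^\gamma < \sum\ep' Q/(\sum\ep'\wt Q)^\gamma$ holds as soon as $\Delta y/\Delta x \ge (\ome/\wt\ome)^p$; and since $\Delta x = O(\wt\ome^{\ell-1})$ is geometrically small, self-similarity lets me reduce to the case $\sum\ep\wt Q \in [\ome, 1)$ so that both values lie in the same sector $[\ome^n, \ome^{n-1})$, lifting the inequality to $\delta^*[\ep] < \delta^*[\ep']$ and contradicting maximality. The main obstacle is the sharp tail estimate $\Delta y/\Delta x \ge (\ome/\wt\ome)^{\ell-1}/\bigl(\phi^N(1-\rho+\wt\ome^N)\bigr)$, which I expect to extract from the identity $\rho = 1 - f(\wt\ome)/(1-\wt\ome^N)$ derived in the proof of Theorem \ref{thm:measure} together with careful period-$N$ bookkeeping of $T_t, \wt T_t$ against $\rho\wt\ome^{\ell-1}$ and $\ome^{\ell-1}$; combining this with the threshold $(\ome/\wt\ome)^p$ and taking logarithms converts the geometric condition into the additive $\ell - 1 \ge p + \log_{\wt\phi/\phi}\bigl(\phi^N(1-\rho+\wt\ome^N)\bigr) = p^* - 1$, which is exactly $\ell \ge p^*$; the $\max\{2,\cdot\}$ in the statement safeguards the degenerate regime $p^* < 2$ where only $\ell = 1$ remains admissible.
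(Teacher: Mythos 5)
Your handling of the first assertion (confining the maximizer to $\eval_{\wt Q}(\cEsbar - \cE^*)$ via Theorem \ref{thm:local-extrm}, and arguing attainment from continuity and self-similarity) is sound, and the observation that $t=0$ forces $\ell=1$ is correct.

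The bound $\ell < \max\{2, p^*\}$ is where the argument has a genuine gap, which you partly acknowledge. You compare $\ep$ against $\ep' = \sum_{m=1}^{t-1}\zeta^m + \bbeta^{i_t}$, obtained by absorbing $\zeta^t + \bbeta^\ell$ into the maximal block at $i_t$. The resulting increments $\Delta y = T_t + (c-1)\ome^{\ell-1}$ and $\Delta x = \wt T_t + (c-\rho)\wt\ome^{\ell-1}$ involve tails $T_t, \wt T_t$ of $\bbeta^{i_t}$ past index $\ell$, and these are \emph{not} universal quantities: they depend on the residue class of $\ell - i_t \pmod N$, i.e.\ on where the period of $\bbeta^{i_t}$ lands at $\ell$, so $T_t/\ome^{\ell-1}$ and $\wt T_t/\wt\ome^{\ell-1}$ each take $N$ distinct possible values (and $c = \bbeta^{i_t}_{\ell-1} - \zeta^t_{\ell-1}$ varies too). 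The estimate you need, $\Delta y/\Delta x \ge (\ome/\wt\ome)^{\ell-1}/\bigl(\phi^N(1-\rho+\wt\ome^N)\bigr)$, is therefore really $N$ separate inequalities (one per residue class, minimized over $c$), and you state explicitly that you only \emph{expect} to extract it. As written, the crux of the proof is deferred rather than proved, and it is not obvious that the stated lower bound holds uniformly across all residue classes; the worst $T_t$ and the worst $\wt T_t$ need not occur together.

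The paper's proof is designed precisely to sidestep this bookkeeping. It uses the competitor $\ep^+ := \sum_{m=1}^t\zeta^m + \beta^{\ell-1} + \beta^r$ with $r = \ell + N - 1$, i.e.\ it keeps all proper blocks of $\ep$, increments the last coefficient of $\zeta^t$, and appends a single term at index $r$. Since $\bbeta^\ell$ is simply removed and replaced by $\beta^{\ell-1} + \beta^r$, the long recursion (\ref{eq:full-recursion-real}) at index $\ell-1$ telescopes to give $\Delta y = Q_{\ell-1} + Q_r - \sum\bbeta^\ell Q = Q_r = \ome^{\ell+N-1}$ exactly, and Lemma \ref{lem:rho} gives $\Delta x = \wt\ome^{\ell-1} - \rho\wt\ome^{\ell-1} + \wt\ome^{\ell+N-1} = \wt\ome^{\ell-1}(1-\rho+\wt\ome^N)$ exactly, with no dependence on $i_t$, $c$, or the residue class of $\ell$. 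The threshold $\Delta y/\Delta x \ge (\ome/\wt\ome)^p$ for $\ell \ge p^*$ then follows by one line of algebra, and Lemma \ref{lem:gamma-ineq} finishes. If you want to salvage your version, the cleanest fix is to switch to this competitor; otherwise you would have to carry out the full residue-class case analysis, which is nontrivial and is exactly what the paper's construction avoids.
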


\begin{proof}
By Theorem \ref{thm:local-extrm}, the global maximum is obtained only on $\cEsbar - \cE^*$,
and let $\ep$ be a member of $\cEsbar - \cE^*$ with the $\Lsbar$-block decomposition as described in the statement
where $\ell \ge 1$.
Suppose that $\ell\ge \max\set{2, p^*}\ge 2$.
Let $\ep^+:= \sum_{m=1}^t \zeta^m + \beta^{\ell-1} + \beta^{r}$ where $r=\ell +N-1$.
Then, $\ep^+\in\cE^*$, and $\res^\ell(\ep)=\bbeta^\ell$.
Thus,
$\Delta y:= \sum\ep^+ Q - \sum \ep Q=Q_r$ by (\ref{eq:full-recursion-real}), and
$\Delta x:=\sum\ep^+ \wt Q - \sum \ep \wt Q
= \wt Q_{\ell-1} -\sum\bbeta^\ell \wt Q + \wt Q_r $.
Thus, by Lemma \ref{lem:rho}, $\Delta x = \wt\ome^{\ell-1} -\rho \omtilde^{\ell-1} + \wt \ome^{\ell+N-1}
=\wt\ome^{\ell-1}(1 -\rho+\wt\ome^N)$, and $\ell\ge p^*$ implies
$$
\frac{\Delta y}{\Delta x } =\frac{\ome^{\ell+N-1}}{\wt\ome^{\ell-1}(1 -\rho+\wt\ome^N)} =
\frac{\wt\phi^{\ell -1}}{\phi^{\ell+N-1}(1 -\rho+\wt\ome^N)}
\ge \frac{\wt\phi^p}{\phi^p}=\frac{\ome^p}{\wt \ome^p}.
$$
By Lemma \ref{lem:gamma-ineq}, $\delta^*[\ep]<\delta^*[\ep^+]$,
and it implies that $\delta^*[\ep]$ is not a maximum value.
Therefore, $\ell < \max\set{2, p^*}$.

\end{proof}

\begin{theorem}\label{thm:main-minimum}
Let $p^\dagger:=\max\set{2,p}$.
Then, there is $\ep\in\cE^*$ such that $\res^{p^\dagger}(\ep)=0$ and $\delta^*[\ep]$ is the minimum value.
In particular, if $p\le 2$, then the minimum value is
$\dstar[ \beta^1] =1 $.
\end{theorem}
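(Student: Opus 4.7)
The plan is to first show that the global minimum of $\delta^*$ is attained at some element of $\eval_{\wt Q}(\cE^*\fin)$, and then to apply Corollary \ref{thm:add-one-term} as a one-term reduction to locate such a minimizer with support in $[1, p^\dagger-1]$.

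For existence, $\delta^*$ is continuous on $\OI$ by Theorem \ref{thm:continuous}, and $\delta_1^*$ extends continuously to the compact interval $[\omtilde, 1]$ because the proof of Theorem \ref{thm:continuous} establishes $\delta_1^*(\omtilde) = 1 = \lim_{y\to 1^-}\delta_1^*(y)$; this supplies a global minimum. Any global minimum is a local minimum, so by Theorem \ref{thm:local-extrm} it is realized at some $\ep^* \in \cE^*\fin$. Writing the nonzero $L^*$-block decomposition $\ep^* = \sum_{m=1}^\ell \zeta^m$ and letting $r$ be the largest index with $\ep^*_r \ge 1$ (which sits in the support interval of $\zeta^\ell$), inspection of Definition \ref{def:periodic-real} shows that $\zeta^\ell - \beta^r$ is either zero or again a proper $L^*$-block at the same starting index with possibly shorter support, so $\ep^* - \beta^r$ lies in $\cE^* \cup \{0\}$.

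Now suppose $r \ge p^\dagger \ge p$. In the non-degenerate case $\ep^* - \beta^r \neq 0$, I apply Corollary \ref{thm:add-one-term} to $\ep^* - \beta^r$ with $n = r$; both $\ep^*$ and $\ep^* - \beta^r$ lie in $\cE^* \subset \cEsbar$, so Lemma \ref{lem:ep-bar-ep} makes $\revbar$ the identity on them, which collapses the right-hand side of the inequality to $\delta^*[\ep^*]$ and yields $\delta^*[\ep^* - \beta^r] < \delta^*[\ep^*]$, contradicting minimality. In the degenerate case $\ep^* = \beta^r$, the identity $\omtilde^\gamma = \ome$ (which follows from $\gamma = \log_{\wt\phi}\phi$) together with $\revbar(\beta^r) = \beta^r$ gives $\delta^*[\beta^r] = \ome^r/\omtilde^{r\gamma} = 1$; the same calculation yields $\delta^*[\beta^1] = 1$, so $\beta^1 \in \cE^*$ serves as an alternative minimizer with $\res^{p^\dagger}(\beta^1) = 0$. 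In either case a minimizer with $\res^{p^\dagger}(\ep) = 0$ exists, proving the main claim.

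For the specialization $p \le 2$: now $p^\dagger = 2$, so any such minimizer is supported in $\{1\}$ and takes the form $c\beta^1$ for an integer $c \ge 1$ allowed by the $L^*$-block structure. Lemma \ref{lem:ep-bar-ep} gives $\revbar(c\beta^1) = c\beta^1$, and the computation $\delta^*[c\beta^1] = c\ome/(c\omtilde)^\gamma = c^{1-\gamma}$ is strictly minimized at $c = 1$ since $\gamma < 1$, yielding $\delta^*[\beta^1] = 1$. The main obstacle in the plan is the degenerate reduction case $\ep^* = \beta^r$: here subtracting $\beta^r$ yields zero and Corollary \ref{thm:add-one-term} offers no direct contradiction, so the argument must instead evaluate $\delta^*[\beta^r]$ exactly and exploit the coincidence $\delta^*[\beta^r] = \delta^*[\beta^1] = 1$ to swap in $\beta^1$ as a substitute minimizer.
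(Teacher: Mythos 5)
Your proof is correct and follows essentially the same line as the paper's: establish that the global minimum lies in $\eval_{\wt Q}(\cE^*\fin)$ via Theorem \ref{thm:local-extrm}, strip off the top nonzero index $r$ with Corollary \ref{thm:add-one-term}, and conclude $r < p^\dagger$. The one divergence is how the degenerate reduction $\ep^* = \beta^r$ is handled. The paper normalizes first: since the range of $\delta^*$ coincides with that of $\delta_1^*$ on $[\wt\ome,1)$, one may assume the minimizer $\ep$ has $\ep_1\ge 1$, and then for $r\ge p^\dagger\ge 2$ the reduced $\ep-\beta^r$ still has first entry $\ge 1$, so it is automatically nonzero and the degenerate case never arises. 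You instead keep the un-normalized $\ep^*$, and in the degenerate case you compute $\delta^*[\beta^r]=\ome^r/\wt\ome^{r\gamma}=1=\delta^*[\beta^1]$ and swap in $\beta^1$ as a substitute minimizer. Both resolutions are sound, though the paper's WLOG is cleaner and makes the shift-invariance $\delta^*[\shf_j(\eta)]=\delta^*[\eta]$ do the work implicitly. Your proof also adds an explicit compactness argument for the existence of the global minimum and a slightly more explicit computation $\delta^*[c\beta^1]=c^{1-\gamma}$ in the $p\le 2$ specialization; neither appears in the paper's proof, and both are harmless additions.
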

\begin{proof}
By Theorem \ref{thm:local-extrm},
the minimum value is obtained by $\ep\in\cE^*\fin$, i.e.,
$\ep = \sum_{m=1}^\ell \zeta^m$, which is the non-zero $\Lstar$-\bdecom.
Since $\delta_1: [\wt\ome,1) \to \real$ takes the same set of values as $\delta$,
WLOG, assume that $\ep_1\ge 1$.
Let $[i ,s ]$ be the support interval of $\zeta^\ell$, and let $r $ be the largest integer such that
$\zeta^\ell_{r }>0$.
Then, $\zeta^\ell - \beta^{r }$ is a proper $\Lstar$-block with support interval $[i,r]$, and
hence $\ep^-:=\ep-\beta^{r } \in \cE^*\fin$.
Suppose that $\delta^*[\ep]$ is the minimum value of $\delta^*$.
If $r \ge p^\dagger\ge 2$, then $\ep^-_1\ge 1$, so $\ep^-$ is non-zero.
By Lemma \ref{thm:add-one-term}, $\delta^*[\ep^-]<\delta^*[\ep^-+\beta^{r }]=\delta^*[\ep]$ contradicting that $\delta^*[\ep]$ is a minimum.
Thus, $r <p^\dagger$, which proves that $\res^{p^\dagger}(\ep) = 0$.
\end{proof}

\section{Examples}\label{sec:examples}

We consider several examples in this section, and use the results in Section \ref{sec:proofs} to find the global maximum and minimum values of $\delta^*$.
Recall the constant $\al$ defined in Theorem \ref{thm:ratios},
and for the closure of calculations, we introduce the following theorem.
The idea of the proof of the following is also available in \cite[Theorem 2.4]{miller-2021}.
\begin{theorem}\label{thm:alpha}
Let $\cE$ be the periodic \zec\ collections for positive integers determined by a list $L=(e_1,\dots,e_N)$.
Let $f$ be the
characteristic polynomial of   $L $ for positive integers, defined in Definition \ref{def:char-poly}, and let $\phi$ be its dominating (real) zero.
Let $H$ be the fundamental sequence of $\cE$.
Then,
\begin{gather*}
\lim_{n\to\infty} \frac{ H_n}{\phi^{n-1}} =
\frac1{f'(\phi)}
\sum_{k=1}^N \frac{H_k}{(k-1)!} \left[ \frac{d^{k-1}}{dx^{k-1}} \frac{f(x)}{x-\phi}
\right]_{x=0}.
\end{gather*}
If $H_k=B^{k-1}$ for $1\le k\le N$ and $B\ne \phi$, then
$$
\lim_{n\to\infty} \frac{ H_n}{\phi^{n-1}}
= \frac{ f(B) }{(B-\phi)f'(\phi)}.$$

\end{theorem}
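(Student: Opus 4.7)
The plan is to exploit the linear recurrence (\ref{eq:recurrence}) satisfied by $H$, whose characteristic polynomial is exactly $f$, together with the fact, noted after Definition \ref{def:char-poly}, that $\phi$ is the unique simple root of $f$ of maximum modulus. Writing the roots of $f$ in $\mathbb{C}$ as $\phi=\phi_1,\phi_2,\ldots,\phi_N$, standard theory of linear recurrences gives
\[
H_n = c_1 \phi^{n-1} + E_n, \qquad E_n = O(n^r \lambda^n)
\]
for some constant $c_1$, some $\lambda$ with $\max_{i\ne 1}|\phi_i|<\lambda<\phi$, and some integer $r\ge 0$ accounting for possible multiplicities of the subdominant roots. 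Dividing by $\phi^{n-1}$ and letting $n\to\infty$ shows $H_n/\phi^{n-1}\to c_1$, so the task is to identify $c_1$ explicitly from the initial segment $(H_1,\ldots,H_N)$.

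To extract $c_1$, I would use Lagrange interpolation on the roots of $f$. Set $g(x):=f(x)/(x-\phi)$, a polynomial of degree $N-1$; since $f'(\phi)=\prod_{i\ne 1}(\phi-\phi_i)$, the Lagrange basis polynomial at $\phi$ with respect to all $N$ roots of $f$ is $L_1(x)=g(x)/f'(\phi)$. In the case where the $\phi_i$ are distinct, inverting the Vandermonde system $H_k=\sum_{i=1}^N c_i\phi_i^{k-1}$ for $1\le k\le N$ reduces exactly to reading off the coefficients of the Lagrange basis polynomials: the coefficient of $x^{k-1}$ in $L_1$ is $g^{(k-1)}(0)/((k-1)!\,f'(\phi))$, which yields
\[
c_1=\frac{1}{f'(\phi)}\sum_{k=1}^N \frac{H_k}{(k-1)!}\,g^{(k-1)}(0),
\]
the asserted formula.

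The main obstacle is the possible multiplicity of the subdominant roots, since the clean Vandermonde argument uses distinctness. I would sidestep this by routing through the generating function $G(z)=\sum_{n\ge 1} H_n z^n$, which is rational: $G(z)=P(z)/F(z)$ where $F(z):=z^N f(1/z)=1-\sum_{k=1}^{N-1} e_k z^k-(1+e_N)z^N$ and $P$ is an explicit polynomial of degree $\le N$ assembled from $(H_1,\ldots,H_N)$ via the recurrence. Since $\phi$ is the unique simple dominant root of $f$, $\omega:=1/\phi$ is a simple pole of $G$ and every other pole lies strictly further from the origin; hence $H_n$ equals the residue contribution at $\omega$ plus a term of the form $O(n^r\lambda^n)$ with $\lambda<\phi$. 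Using $F'(\omega)=N\omega^{N-1}f(\phi)-\omega^{N-2}f'(\phi)=-\omega^{N-2}f'(\phi)$, a short computation comparing the residue with the polynomial long division $f(x)=(x-\phi)g(x)$ verifies that this residue contributes exactly $c_1\phi^{n-1}$ with $c_1$ as displayed, giving the general formula unconditionally.

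Finally, the asserted special case is immediate: when $H_k=B^{k-1}$ for $1\le k\le N$, the degree-$(N-1)$ Taylor expansion of $g$ at $0$ is exact, so
\[
\sum_{k=1}^N \frac{H_k}{(k-1)!}\,g^{(k-1)}(0) \;=\; \sum_{k=1}^N \frac{B^{k-1}}{(k-1)!}\,g^{(k-1)}(0) \;=\; g(B) \;=\; \frac{f(B)}{B-\phi},
\]
and dividing by $f'(\phi)$ yields the claimed closed form $f(B)/((B-\phi)f'(\phi))$, valid because $B\ne\phi$.
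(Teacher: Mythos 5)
Your argument is correct, and for the repeated-roots case it takes a genuinely different route from the paper. The paper first treats the distinct-root case by Cramer's rule on the Vandermonde system, manipulating the quotient of Vandermonde determinants into $f(x)/(x-\phi)$; your Lagrange-interpolation formulation of that step is essentially the same inversion, just packaged as ``the coefficient of $x^{k-1}$ in the Lagrange basis polynomial at $\phi$'', which is cleaner notation for the same content. The real divergence is in handling repeated subdominant roots: the paper switches to a confluent (generalized) Vandermonde matrix with derivative columns and cites \cite{chang-PME,Kalman} for the determinant identity $\det M(1,x,\dots,x^{N-1})=Ef(x)/(x-\phi)$, whereas you route through the rational generating function $G(z)=P(z)/F(z)$ with $F(z)=z^N f(1/z)$ and extract the dominant asymptotic from the simple pole at $\omega=1/\phi$. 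Your approach is self-contained and uniform in the multiplicity structure of the non-dominant roots, which is what the confluent-Vandermonde detour is working hard to achieve; the only thing it quietly relies on is that $\omega$ is genuinely a pole, i.e.\ $P(\omega)\ne 0$, but that is harmless here (and in fact the displayed formula remains correct even if $P(\omega)=0$, since both sides would vanish). The one place you wave your hands is the ``short computation comparing the residue with the polynomial long division'': it is indeed short --- the coefficients $g_{m-1}$ of $g$ satisfy $g_{m-1}=\phi^{N-m}-\sum_{t=1}^{N-m}e_t\phi^{N-m-t}$ by long division of $f$ by $x-\phi$, and matching this against $P(\omega)\phi^N=\sum_{m=1}^N H_m\,\phi^{N-m}\bigl(1-\sum_{t=1}^{N-m}e_t\omega^t\bigr)$ gives exactly $P(\omega)\phi^N/f'(\phi)=\frac1{f'(\phi)}\sum_k H_k\,g^{(k-1)}(0)/(k-1)!$ --- but you should actually write it out, since it is the step that replaces the paper's determinant identity. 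The special-case computation via degree-$(N-1)$ Taylor expansion of $g$ is correct and matches the paper's evaluation $\det M(H_1,\dots,H_N)=Ef(B)/(B-\phi)$.
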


\begin{proof}\newcommand{\vecr}{\mathbf{r}}
Suppose that $f$ has no repeated zeros.
Let $\lambda_1,\dots,\lambda_{N-1}$ be the remaining distinct zeros of $f$.
Given a complex number $x$, let $\vecr_k(x) = (\lambda_1^{k-1},\dots,\lambda_{N-1}^{k-1},x)$ for $1\le k\le N$ be row vectors, and
let $M(x_1,\dots,x_N)$ be the matrix whose $k$th row is $\vecr_k(x_k)$
where $x_k$ are complex numbers.
By Binet's formula, $H_n = \sum_{k=1}^{N-1} \al_k \lambda_k^{n-1}
+ \al \phi^{n-1}$ for constants $\al_k$ and $\al$.
By specializing this formula at $1\le n\le N$, we have a system of linear equations for $\al_k$ and $\al$, and by Cramer's rule, we have
$\al = \det M(H_1,\dots,H_N)/ \det M(1,\phi,\dots,\phi^{N-1})$.

By the determinant formula of the Vandermonde matrix,
$\det M(1,\phi,\dots,\phi^{N-1}) =
E\prod_{k=1}^{N-1} (\phi - \lambda_k)$ where $E$ is a quantity
determined only by $\lambda_1,\dots,\lambda_{N-1}$, and in particular, $E$ is independent
of $\phi$.
Since $f(x) = (x-\phi)g(x)$ for $g(x)\in \real[x]$, $f'(\phi)=g(\phi)
=\prod_{k=1}^{N-1} (\phi - \lambda_k)$.
Hence, $\det M(1,\phi,\dots,\phi^{N-1}) =E f'(\phi)$.

Let $x$ be a polynomial variable.
Then, $\det M(1,x,\dots,x^{N-1})=Ef (x)/(x-\phi)$ since it is a Vandermonde matrix.
For $1\le k \le N$, let $C_k$ be the cofactor of $M(1,x,\dots,x^{N-1})$ at the position
$(k,N)$, so that $\det M(1,x,\dots,x^{N-1})=
\sum_{k=0}^{N-1} C_{k+1} x^k$. Then,
for $1\le k\le N$,
\begin{gather*}
Ef (x)/(x-\phi) =\sum_{k=0}^{N-1} C_{k+1} x^k \implies
C_k = \frac E{(k-1)!} \left[ \frac{d^{k-1}}{dx^{k-1}} \frac{f(x)}{x-\phi}
\right]_{x=0}\\
\implies
\det M(H_1,\dots,H_N)= \sum_{k=1}^{N } C_k H_k
=\sum_{k=1}^{N } \frac{EH_k}{(k-1)!} \left[ \frac{d^{k-1}}{dx^{k-1}} \frac{f(x)}{x-\phi}
\right]_{x=0}\\
\implies
\al =\frac{\det M(H_1,\dots,H_N)}{ \det M(1,\phi,\dots,\phi^{N-1})}= \frac1{f'(\phi)}
\sum_{k=1}^N \frac{H_k}{(k-1)!} \left[ \frac{d^{k-1}}{dx^{k-1}} \frac{f(x)}{x-\phi}
\right]_{x=0}.\\
\intertext{If $H_k=B^{k-1}$ for $1\le k\le N$ and $B\ne \phi$, then
}
\det M(H_1,\dots,H_N)=E \prod_{k=1}^{N-1} (B - \lambda_k)
= E \frac{ f(B) }{B-\phi}
\implies \al =
\frac{\det M(H_1,\dots,H_N)}{ \det M(1,\phi,\dots,\phi^{N-1})}
= \frac{ f(B) }{(B-\phi)f'(\phi)}.
\end{gather*}

Suppose that $f$ has repeated zeros. The dominant zero $\phi$ remains simple,
and let $d_k$ be the multiplicity of $\lambda_k$ for $1\le k \le N'$ where $N'$ is the number of distinct zeros of $f$ other than $\phi$.
Let
the rows of $M(x_1,\dots,x_N)$ consist of
the $d_k-1$ consecutive derivatives of $\lambda_j^{k-1}$ where
$\lambda_j$ is treated as a variable.
For example, if $\lambda_1$ is the only repeated zero, and it has multiplicity $3$,
then
\begin{align*}
\vecr_k (x_k)& =(\lambda_1^{k-1},(k-1)\lambda_1^{k-2},(k-1)(k-2) \lambda_1^{k-3},\lambda_2^{k-1},\dots,\lambda_{N-3}^{k-1},x_k)\\
&=\left(\lambda_1^{k-1},\tfrac{k-1}{\lambda_1}\lambda_1^{k-1},
\tfrac{(k-1)(k-2)}{\lambda_1^2} \lambda_1^{k-1},\lambda_3^{k-1},\dots,\lambda_{N-1}^{k-1},x_k\right).
\end{align*}
See \cite[Section 1]{chang-PME} for more examples.
Let us use these components as a basis for Binet's formula.
By \cite[Theorem 1]{chang-PME} or \cite{Kalman},
$\det M(1,x,\dots,x^{N-1})= E f(x)/(x-\phi)$ where $E$ depends only on $\lambda_k$ and their multiplicities, and in particular, $E$ is independent of $x$.
Also, $\det M(1,\phi,\dots,\phi^{N-1})= E f'( \phi)$.
Using this result, the argument for $f$ with no repeated roots applies to this situation with no difficulty, and we leave it to the reader.

\end{proof}

Recall the exponent $p$ of the generic upper bound of $\delta^*$ from Definition \ref{def:p-bound}, and the values $p^*$ and $p^\dagger$
described in Theorem \ref{thm:main-theorem} and \ref{thm:main-minimum}.

\subsection{The $N$th order \zec\ base-$N$ expansions}\label{sec:base-N}
Let $\cE^*$ be the periodic collection for $\OI$ determined by $L=(1,0)$ and let $\cEtilde$ be
the one for $\OI$ determined by the list $\wt L=(1,1)$.
This is an example introduced in Section \ref{sec:introduction} called the \zec\ binary expansions.
Then, the exponent of generic upper bound of $\delta^*$ is $p=2$, and $p^*\approx 4.998$.
The value of $p^\dagger$ is $2$, so $\dstar[\beta^1]=1$ is the minimum value.
For the maximum values, consider $\ep=\sum_{m=1}^t \zeta^m + \bbeta^{\ell}$,
which is an $\Lsbar$-block decomposition with the largest $\Lsbar$-support index $\ell-1$.
By Theorem \ref{thm:main-theorem} with $p=2$,
we have $\ell\le 4$, and hence, the only possibilities of $\ep$ are $ (1,0,0, \overline{1,0})$ and $
\bbeta^1$.
It turns out that $\delta^*[\bbeta^1]$ is the largest, and its value is $1/\rho^\gamma=(3/2)^\gamma$.
Using Theorem \ref{thm:alpha} and Proposition \ref{prop:del-delstar}, we obtain the result (\ref{eq:lim-inf-sup-2}).

We may generalize it to the setting of the following two lists of length $N$.
\begin{deF}
\rm \label{def:N-Zec}
Let $N\ge 2$ be an integer.
The periodic \zec\ collections $\cE$ for positive integers and $\cE^*$
determined by $L=(1,1,\dots,1,0)$ where 
$1$ is repeated $N-1$ times are called
{\it the $N$th order \zec\ collections for positive integers and for $\OI$}, respectively. 
The periodic \zec\ collections $\cEtilde$ for positive integers and $\cEtilde^*$ for $\OI$ determined by
$\wt L=(N-1,N-1,\dots,N-1)$ are called  {\it the base-$N$ collections for positive integers and 
for $\OI$}, respectively.
\end{deF}

Let us demonstrate the third order \zec\ collections for positive integers, i.e.,   $N=3$.
Since $L=(1,1,0)$, the collection $\cE$ consists of \cf s that have no three consecutive $1$s.
For example, $100= 3^0+ 2\cdot 3^2+3^4 =\wt H_1 +2 \wt H_3 + \wt H_5$ is not a $3$th order \zec\ expression
while $1000 = \wt H_1 + \wt H_4 + \wt H_6 + \wt H_{7}$ is a $3$th order \zec\ expression.

Theorem \ref{thm:main-theorem} is what we could do for general cases, but
for the setup considered in Definition \ref{def:N-Zec}, we obtain
a better version of the theorem using Lemma \ref{lem:gamma-ineq} alone.
\begin{lemma}\label{lem:shift}
Let $\cE^*$ be the periodic \zec\ collection defined in Definition \ref{def:N-Zec} where $N\ge 3$, and let $\ep\in\cE^*$.
Then, the exponent of generic upper bound of $\delta^*$ is $p=1$.
If $\ep_n=1$ and $\zeta$ is a proper $\Lstar$-block of $\ep$ with \supint\ $[i,n-1]$ for $n\ge 3$,
then $\ep':=\ep +\beta^{n-1} - \beta^n\in\cE^*$ and $\delta^*[\ep] < \delta^*[\ep']$.

\end{lemma}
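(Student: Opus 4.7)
The plan has three steps: verify $p = 1$, confirm $\ep' \in \cE^*$, and establish the strict inequality via Lemma~\ref{lem:gamma-ineq}.

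First I would verify $p = 1$. Definition~\ref{def:p-bound} requires $\gamma < \ome$, which is not immediate in the strictest reading; however, the proof of Lemma~\ref{lem:upper-bound} admits a sharpening in the present setting. Because $\delta^*$ on $\OI$ factors through $\delta_1^*$ restricted to the subinterval $[\ome, 1) \subset [\omtilde, 1)$, one may use $\sum\ep\wt Q \ge \ome$ (not merely $\omtilde$), giving the strengthened bound $y/x < 1/\ome$. Under this sharper bound, $p = 1$ suffices provided $\gamma\omtilde < \ome^2$, equivalently $\phi^2 \ln\phi < N\ln N$, which holds for $N \ge 3$.

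Second, I would show $\ep' \in \cE^*$. In the $N$th order setting, $\cE^*$ coincides with the $0$-$1$ sequences that avoid $N$ consecutive $1$s (the pattern of any proper $\Lstar$-block carries at most $N-1$ ones in a row, and distinct blocks are separated by at least one zero). Setting $r := (n-i) \bmod N$, the proper-block hypothesis forces $r \in \{1, \dots, N-1\}$ together with $\zeta_{n-1} = 0$, and the rightmost run of $1$s in $\zeta$ occupies the interval $[n-r, n-2]$ of length $r - 1 \le N - 2$ (bounded on the left by the natural zero of $\bbeta^i$ at position $n-r-1$). After the shift, $\ep'_{n-1} = 1$ extends this run to length $r \le N - 1$, while $\ep'_n = 0$ separates it from the $1$s contributed by the block of $\ep$ at index $n$ and beyond; the latter has no $N$ consecutive $1$s being itself a sum of proper $\Lstar$-blocks. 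Hence $\ep'$ has no $N$ consecutive $1$s and lies in $\cE^*$.

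Third, since $\ep, \ep' \in \cE^*$ we have $\revbar(\ep) = \ep$ and $\revbar(\ep') = \ep'$. Computing
\[
\Delta y = Q_{n-1} - Q_n = \ome^{n-1}(1 - \ome), \qquad \Delta x = \omtilde^{n-1}(1 - \omtilde),
\]
we obtain $\Delta y/\Delta x = (\ome/\omtilde)^{n-1}(1 - \ome)/(1 - \omtilde)$. Applying Lemma~\ref{lem:gamma-ineq} with $p = 1$ requires $\Delta y/\Delta x \ge \ome/\omtilde$; since $n \ge 3$, the extremal case is $n = 3$, reducing to $\ome(1-\ome) \ge \omtilde(1-\omtilde)$. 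Factoring as $(\ome - \omtilde)(1 - \ome - \omtilde) \ge 0$ and using $\ome > \omtilde$ converts this into $\ome \le (N-1)/N$, i.e., $\phi \ge N/(N-1)$. The main obstacle is this last inequality: evaluating the characteristic polynomial $f(x) = x^N - x^{N-1} - \cdots - 1$ at $a := N/(N-1)$ via a geometric sum gives $f(a) = -(N-2)a^N + (N-1)$, so $f(a) \le 0$ is equivalent to $(1 + 1/(N-1))^N \ge (N-1)/(N-2)$. Bernoulli's inequality yields $(1 + 1/(N-1))^N \ge (2N-1)/(N-1)$, and the comparison $(2N-1)(N-2) \ge (N-1)^2$ reduces to $N^2 - 3N + 1 \ge 0$, which holds for $N \ge 3$ and fails for $N = 2$, matching precisely the hypothesis of the lemma. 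Applying Lemma~\ref{lem:gamma-ineq} then yields $\delta^*[\ep] < \delta^*[\ep']$.
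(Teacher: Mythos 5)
You have correctly spotted that the paper's claim $p=1$ is false for $N=3$: with the Tribonacci constant $\phi\approx 1.8393$ one computes $\gamma=\ln\phi/\ln 3\approx 0.5547$ but $\ome=1/\phi\approx 0.5437$, so $\gamma>\ome$ and the condition $\gamma\omtilde^{p-1}<\ome^p$ of Definition~\ref{def:p-bound} fails at $p=1$. The paper's assertion that the chain $\phi\ln\phi<\ln 4\le \ln N$ \lq\lq also holds for $N=3$\rq\rq\ is simply wrong, since $\phi\ln\phi\approx 1.121>\ln 3\approx 1.099$. This is a genuine error that the paper misses.

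However, your repair is itself incorrect. The sharpened bound $y/x<1/\ome$ would require $x=\sum\ep\wt Q\ge \ome$, but that is not available: the factorization of $\delta^*$ through $\delta_1^*$ on $[\ome,1)$ constrains where $\delta_1^*$ is sampled, not the particular $\ep$ to which Lemma~\ref{lem:gamma-ineq} must be applied, and the hypotheses of Lemma~\ref{lem:shift} only force $\sum\ep\wt Q \ge \omtilde^{i}$. Concretely, for $N=3$ take $\ep=(1,0,1,\bar 0)\in\cE^*$, which satisfies the lemma's hypotheses with $n=3$, $\zeta=(1,0,\bar 0)$, $i=1$. Then $x=1/3+1/27=10/27\approx 0.370<\ome$, and $y/x=(\ome+\ome^3)/(10/27)\approx 1.902>1/\ome\approx 1.839$, so the claimed sharpening already fails at this $\ep$. (The lemma's \emph{conclusion} does hold: $\delta^*[\ep]\approx 1.222<1.316\approx \delta^*[\ep']$ for $\ep'=(1,1,0,\bar 0)$, but neither your chain nor the paper's establishes it for $N=3$.) A chain that does work is to bypass the $p$--bookkeeping and verify directly what the proof of Lemma~\ref{lem:gamma-ineq} actually needs, namely $y/x<\frac1\gamma\,\Delta y/\Delta x$: since $y<1$ and $x\ge \omtilde$ give $y/x\le 1/\omtilde$, it suffices that $\Delta y/\Delta x>\gamma/\omtilde$, which at the extremal case $n=3$ reduces to $\ome^2(1-\ome)>\gamma\,\omtilde(1-\omtilde)$; one can check this holds for all $N\ge 3$.

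The remaining parts of your argument are fine and actually cleaner than the paper's. Your run-of-ones verification that $\ep'\in\cE^*$ agrees with the paper's two-case analysis on $(\zeta_{n-2},\zeta_{n-1})$, and your algebraic derivation of $\ome(1-\ome)\ge\omtilde(1-\omtilde)$ via $\phi\ge N/(N-1)$, the geometric-sum evaluation of $f(N/(N-1))$, and Bernoulli's inequality is a genuine improvement over the paper's numerical bounds $\ome<.55$ and $\ome-\tfrac12<.05$.
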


\begin{proof}
Let us prove that $p=1$.
Recall the notations from Notation \ref{def:bbeta}.
Notice that $\wt\ome = 1/N$, and the characteristic polynomial of $L$ for positive integers is $f_N(x)=x^N -(x^{N-1}+\cdots +x+1)$.
Since $f_N(2) = 2^N- (2^N-1)=1$, we have $\phi<2$, and hence, $\ome>1/2$.
If $\phi'$ is the dominant real zero of $f_{N-1}$, then
the collection for positive integers determined by $L_0=(1,\dots,1,0)$ with $N-2$ copies of $1$ is a subcollection of
the one determined by $L$, and hence, by Theorem \ref{thm:ratios}, we have $\phi'<\phi$.
This implies that $\ome\le \ome_0$ where $\ome_0$ is the reciprocal of the dominant zero of $x^3 -(x^2+x+1)$, which is $< .55$.
If $N\ge 4$ and $p=1$,
then
\begin{gather*}
\phi^{p }\ln \phi<
2 \ln 2 =\ln 4 \le N^{p-1} \ln N \implies
\gamma \wt\ome^{p-1} < \ome^p 
\end{gather*}
where $\gamma$ is the ratio defined in Theorem \ref{thm:ratios}.
The inequality also holds for $N=3$ as well.

Suppose that 
$\ep_n=1$, and that $\zeta$ is a proper $\Lstar$-block of $\ep$ with \supint\ $[i,n-1]$ for $n\ge 3$.
Let  $\ep':=\ep +\beta^{n-1} - \beta^n\in\cE^*$, and recall the fundamental sequences 
$Q$ and $\wt Q$ from  Notation \ref{def:bbeta}
Let $x=\sum\ep\wt Q$, $y=\sum\ep Q$, $x'=\sum\ep'\wt Q$, and $y'=\sum\ep' Q$.
Let us show that $\ep'\in\cE^*$.
If $(\zeta_{n-2},\zeta_{n-1})=(0,0)$,
then
$(\ep'_{n-2},\ep'_{n-1},\ep'_n)=(0,1,0)$, and hence, $\ep'\in \cE^*$.
If $(\zeta_{n-2},\zeta_{n-1})=(1,0)$,
then by the definition of the \supint\ of a proper $\Lstar$-block,
the number of $1$s preceding the entry $\zeta_{n-1}=0$ must be $<N-1$
since $\ep_n=1$.
Thus, $(\ep'_{n-2},\ep'_{n-1},\ep'_n)=(1,1,0)$, and
the number of $1$s preceding the entry $\ep'_n=0$ is less than or equal to $N-1$.
Hence, $\ep'\in\cE^*$.

Thus, we have $\delta^*(x') = y'/(x')^\gamma$.
Let $\Delta x = x'-x=\wt\ome^{n-1} - \wt\ome^n$
and $\Delta y = y' -y =\ome^{n-1} - \ome^n=\ome^{n-1}(1-\ome)$.
Notice that $n\ge 3$, and  $(1-x)x$ is an increasing function on $[0,1/2]$ and
decreasing on $[1/2,1]$.
Then, $\frac1N=\wt\ome\le \frac13< \frac12<\ome\le \ome_0<.55$, and
$ \frac12 - \wt\ome \ge \frac16>0.1$ and $\ome - \frac12 <.05$.
Since the graph of $(1-x)x$ is symmetric about the vertical line $x=\frac12$,
it follows that $(1-\wt\ome)\wt\ome < (1-\ome)\ome$, and hence,
\begin{gather}
\frac{\Delta y}{\Delta x}=\frac{1-\ome}{1-\wt\ome}\left(\frac{\ome^{n-1}}{\wt\ome^{n-1}}\right)\ge
\frac{1-\ome}{1-\wt\ome}\left(\frac{\ome^2}{\wt\ome^2}\right)>
\frac{\ome }{\wt\ome }=
\frac{\ome^p }{\wt\ome^p }.\label{eq:shift}
\end{gather}
Thus, by Lemma \ref{lem:gamma-ineq}, $\delta^*(x)<\delta^*(x+\Delta x)$ where $N\ge 3$.

\end{proof}

By Theorem \ref{thm:local-max}, if $\delta^*[\ep]$
is the maximum and $\ep_1\ge 1$, then $\ep\in \cEsbar - \cE^*$.
Let $\ep=\sum_{m=1}^\ell\zeta^m + \bbeta^{b+1}$ be an $\Lsbar$-\bdecom\ where $b\ge 2$.
Since $\zeta^\ell$ is a proper $\Lstar$-block, we may shift $\ep_{b+1}=1$, and obtain a higher value of $\delta^*$ by Lemma \ref{lem:shift}.
Thus, $b<2$, and $\ep_1\ge 1$ implies that $b=0$, i.e., $\ep = \bbeta^1$.
By Theorem \ref{thm:main-minimum}, the minimum is $\delta^*[\beta^1]$.
By Theorem \ref{thm:alpha}, Proposition \ref{prop:del-delstar}, and the above calculations, we proved the following.
\begin{theorem}\label{thm:N-order}
Let $N\ge 2$ be an integer, let $\phi$ be the dominant
positive real zero of $f(x)=x^N - \sum_{k=0}^{N-1} x^k$,
and let $\gamma:=\log_N \phi$.
Let $z(x)$ be the number of non-negative integers $n< x$ whose base-$N$ expansions  
 are  $N$th order \zec\ expressions.
Then,
\begin{gather}
\lim\sup_x\ \frac{z(x)}{x^\gamma}
	= \frac{\al}{P^\gamma\phi^N},\quad
\lim\inf_x\ \frac{z(x)}{x^\gamma} =\al
\label{eq:lim-inf-sup-N}	\\
\intertext{where}
\al:= \frac{f(N)}{(N-\phi)f'(\phi)},\quad
P:=\frac{N^{N-1}-1}{(N-1)(N^N-1)N^{N-1}}.	\notag
\end{gather}
\end{theorem}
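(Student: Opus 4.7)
The plan is to specialise the general machinery of Section \ref{sec:proofs} to the pair $L = (1,1,\ldots,1,0)$ (with $N-1$ ones and a zero) and $\wt L = (N-1,\ldots,N-1)$ (length $N$), and then invoke Theorem \ref{thm:alpha} to make $\al$ explicit. On the $\wt L$ side everything trivialises: $\wt H_k = N^{k-1}$ for all $k$, so $\wt\phi = N$, $\wt\al = 1$, and $\gamma = \log_{\wt\phi}\phi = \log_N\phi$. On the $L$ side, the identity $\hbeta^n = (1,1,\ldots,1)$ (with $n-1$ ones) for $2 \le n \le N$ follows from $\theta^n = \rev_{n-1}(\beta^*)$, and a short induction using (\ref{eq:full-recursion}) then yields $H_k = 2^{k-1}$ for $1 \le k \le N$; Theorem \ref{thm:alpha} now presents $\al$ as an explicit rational function of $\phi$, $f$ and $f'$.

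For the minimum I would invoke Theorem \ref{thm:main-minimum}: Lemma \ref{lem:shift} supplies $p = 1$ when $N \ge 3$ and the binary case has $p = 2$ (Section \ref{sec:base-N}), so in either case $p^\dagger = 2$ and the minimiser must lie in $\cE^*\fin$ with $\res^2(\ep) = 0$. The entry bound $\ep_1 \le e_1 = 1$ then forces $\ep = \beta^1$, and a direct computation with $Q_n = \ome^n$ and $\wt Q_n = N^{-n}$ gives $\delta^*[\beta^1] = \ome/\wt\ome^\gamma = \ome\cdot N^\gamma = \ome\phi = 1$, delivering the stated $\liminf$.

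For the maximum, Theorem \ref{thm:local-max} locates the maximiser in $\cEsbar - \cE^*$, and the definition of $\delta^*$ via $\delta_1^*$ lets me assume $\ep_1 \ge 1$. Writing $\ep = \sum_{m=1}^\ell \zeta^m + \bbeta^{b+1}$ as an $\Lsbar$-\bdecom, I argue as follows: if $b \ge 2$ then $\ep_{b+1} = \bbeta^{b+1}_{b+1} = 1$ and the shift inequality of Lemma \ref{lem:shift} (whose argument uses only Lemma \ref{lem:gamma-ineq} and therefore carries over to this mixed $\ep$) contradicts maximality; if $b = 1$ then the only proper $\Lstar$-block with support interval $[1,1]$ is the zero block, collapsing $\ep$ to $\bbeta^2$ and violating $\ep_1 \ge 1$; hence $b = 0$ and $\ep = \bbeta^1$.

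The value at $\bbeta^1$ is then explicit. By Lemma \ref{lem:ep-bar-ep}, $\revbar(\bbeta^1) = \bbeta^1$, and telescoping (\ref{eq:full-recursion-real}) together with the identity $\sum_{k=1}^N e_k\ome^k = 1-\ome^N$ yields $\sum\bbeta^1 Q = 1$; the denominator is the geometric sum
\[
\rho = \sum_{k\ge 1,\, N\nmid k} N^{-k} = \frac{1}{N-1} - \frac{1}{N^N-1} = \frac{N(N^{N-1}-1)}{(N-1)(N^N-1)} = N^N \cdot P,
\]
so $\rho^\gamma = P^\gamma N^{N\gamma} = P^\gamma\phi^N$ (using $N^\gamma = \phi$) and $\delta^*[\bbeta^1] = 1/(P^\gamma\phi^N)$. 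Proposition \ref{prop:del-delstar}, applied with $\wt\al = 1$, then translates the two extremum values of $\delta^*$ into the asserted $\limsup = \al/(P^\gamma\phi^N)$ and $\liminf = \al$. The main technical hurdle is the third paragraph: one must verify that the shift estimate of Lemma \ref{lem:shift} genuinely extends from $\ep \in \cE^*$ to $\ep \in \cEsbar - \cE^*$, i.e.\ that the key estimate $\Delta y \ge \ome^{n-1}(1-\ome)$ survives when the tail of $\ep$ is a maximal $\Lstar$-block rather than a genuine element of $\cE^*$.
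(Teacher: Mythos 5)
Your plan reproduces the paper's own argument, which occupies the two paragraphs immediately preceding the theorem: the maximizer of $\delta^*$ is located at $\bbeta^1$ via the shift inequality of Lemma \ref{lem:shift}, the minimizer at $\beta^1$ via Theorem \ref{thm:main-minimum} with $p^\dagger=2$, and the constants are then extracted from Theorem \ref{thm:alpha} and Proposition \ref{prop:del-delstar}. The supplementary computations you supply --- $H_k=2^{k-1}$ for $1\le k\le N$, $\sum\bbeta^1 Q=1$, and $\rho = N^N P$ so that $\rho^\gamma = P^\gamma\phi^N$ --- are all correct, and the caveat you raise about applying Lemma \ref{lem:shift} to $\ep\in\cEsbar-\cE^*$ is benign: Lemma \ref{lem:ep-bar-ep} still gives $\revbar(\ep)=\ep$, and one checks directly that the shifted $\ep'=\ep+\beta^b-\beta^{b+1}$ stays in $\cEsbar$ (it remains a $0$--$1$ configuration without $N$ consecutive $1$s), so $\Delta y = \ome^{n-1}(1-\ome)$ exactly as in the lemma. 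The paper applies Lemma \ref{lem:shift} in this mixed setting without further comment, and your reasoning for excluding $b=1$ and $b\ge 2$ matches the paper's.

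One point you should have closed out. Having derived $H_k = 2^{k-1}$ for $1\le k\le N$, Theorem \ref{thm:alpha} yields $\al = f(2)/\bigl((2-\phi)f'(\phi)\bigr)$, and since $f(2)=2^N-(2^N-1)=1$ this is $\al = 1/\bigl((2-\phi)f'(\phi)\bigr)$. The theorem statement instead displays $\al = f(N)/\bigl((N-\phi)f'(\phi)\bigr)$. The two expressions coincide when $N=2$ (which is why the binary formula (\ref{eq:lim-inf-sup-2}) checks out), but they differ for $N\ge 3$: in the Tribonacci case $N=3$ one computes $\lim_n H_n/\phi^{n-1}\approx 1.137$, matching $1/\bigl((2-\phi)f'(\phi)\bigr)$ and not $14/\bigl((3-\phi)f'(\phi)\bigr)\approx 2.205$. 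So your invocation of Theorem \ref{thm:alpha} establishes the correct limiting constant, but it does not reproduce the displayed formula; the factors $f(N)$ and $N-\phi$ in the statement appear to be typos for $f(2)$ and $2-\phi$. Citing Theorem \ref{thm:alpha} as a black box, as you do, leaves this discrepancy unexamined; a complete write-up should compute $\al$ explicitly and flag it.
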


\subsection{Non-base-$N$ expansions}\label{sec:non-base}

Let $\cE$ be the collection for positive integers determined by $L=(2,0,1)$ and let $\cEtilde$ be the one determined by $\wt L=(10,4)$.
By (\ref{eq:full-recursion}), the \funds\ $H$ of $\cE$ is given by
$( H_1, H_2, H_3 )=(1,3,7)$ and
$
H_n = 2 H_{n-1} + 2H_{n-3} $ for $n\ge 4$, and the \funds\ $\wt H$ of $\cEtilde$ is given by
$(\wt H_1,\wt H_2 )=(1,11)$ and
$
\wt H_n = 10\wt H_{n-1} +5 \wt H_{n-2} $ for $n\ge 3$.
The characteristic polynomials of $L$ and $\wt L$ for positive integers are $f(x) = x^3-2x^2-2$ and $\wt f(x) = x^2 - 10x - 5$,
respectively, and $\phi\approx 2.36$ and $\wt \phi\approx 10.48$ are their dominant (real) zeros, respectively.
Then, the exponent of generic upper bound of $\delta^*$ is $p=1$, and $p^*\approx 3.59$.

By Theorem \ref{thm:main-theorem} and Corollary \ref{thm:add-one-term},
the following are the
possibilities of $\ep\in \cEsbar -\cE^*$ for the maximum values of $\delta_1$.
The symbol $\star$ indicates the repeating blocks $(\overline{2,0,1})$:
\begin{gather*}
\bbeta^1,\ (1,\star),\ (1,1,\star).
\end{gather*}
For example, $\ep=(1, 0, \star)\in \cEsbar - \cE^*$ is not listed above since 	
by Corollary \ref{thm:add-one-term}, $\delta^*[\ep]<\delta^*[\ep']$ where
$\ep'=(1, 1, \star)\in\cEsbar - \cE^*$.
According to the numerical calculation of $\delta^*$
at the above three values, $\delta^*$ attains the highest value at $\ep^M:=(1,\overline{2,0,1})$,
and
$$\delta^*[\ep^M]=\frac{\ome + \sum\bbeta^2 Q}{(\wt \ome + \sum \bbeta^2 \wt Q)^\gamma}
= \frac{ 2 \ome }{ (\wt \ome + \wt \ome \rho)^\gamma}
= \frac{ 2 }{ (1+\rho)^\gamma}\approx 1.8757.$$
Since $p=1$, the minimum of $\delta^*$ is $\delta^*[\beta^1]
=1$.

Using Theorem \ref{thm:ratios} and \ref{thm:alpha},
we find the exact values of the maximum and minimum values to be as follows:
\begin{gather*}
\al:=\lim_{n\to\infty} \frac{H_n}{\phi^{n-1}}=\frac{4+3\phi+17\phi^2}{86},\quad
\wt \al:=\lim_{n\to\infty} \frac{\wt H_n}{\wt\phi^{n-1}}=\frac{\phitilde}{10}\\
\rho=(2\wt \ome + \wt \ome^3) + \wt \ome^3(2\wt \ome + \wt \ome^3) +\cdots
=\frac{2\wt \phi^2 +1}{\wt \phi^3-1}=\frac{-227 + 25\wt \phi}{182}.
\end{gather*}
Thus, by Proposition \ref{prop:del-delstar}, we proved
\begin{align*}
\lim\sup_x\ \frac{z(x)}{x^\gamma}
	&=\frac{2\al}{(\wt \al (1+\rho))^\gamma}
	=\frac{ 364^\gamma}{43}\frac{4+3\phi+17\phi^2}{(25+41\phitilde)^\gamma}
	\approx 2.2666 ,\\
\lim\inf_x\ \frac{z(x)}{x^\gamma}
&=\frac{\al}{\wt \al^\gamma}=\frac{10^\gamma}{86}
		(3+13\phi+2\phi^2)
\approx 1.2084.
\end{align*}

\section{Future work}\label{sec:gen-reg-seq}

From Definition \ref{def:N-Zec}, recall  base-$N$ collections for positive integers.
If  $\cEtilde$  is a base-$N$ collection for positive integers, and $\cE$ is a periodic \zec\ 
subcollection of $\cEtilde$ for positive integers,
then the counting function $z(x)$ defined in Definition \ref{def:counting-function} turns out to be the summatory function 
of an
$N$-regular sequence $\chi$, which is defined in \cite{shallit}.
In \cite{heuberger}, it is proved that given a regular sequence $\chi$,  the following 
asymptotic relation holds:
\begin{equation}\label{eq:full-asymptotic}
\sum_{k=1}^n \chi(k) \sim n^\gamma \Phi(\set{\log_{\wt \phi}(n)}) 
\end{equation} 
where $\gamma<1$ is a real number, and $\Phi$ is a continuous function on $(-\infty,\infty)$.

If $\cEtilde$ is a periodic \zec\ collection but not a base-$N$ collection,
then the result of \cite{heuberger}
does not apply as the definition of regular sequences
is given in terms of base-$N$ expansions.
In our future work, 
we aim to establish the  definition of {\it generalized regular sequences} associated with a periodic \zec\ collection 
for positive integers such that 
given a generalized regular sequence $\chi$,
 the relation (\ref{eq:full-asymptotic}) holds
 where $\wt \phi$ is the dominant zero defined in Definition \ref{def:char-poly}.

\newpage

\end{document}